\renewcommand\dagger\diamond
\newcommandx\work[2][1=]{\todo[linecolor=RoyalBlue,backgroundcolor=RoyalBlue!25,bordercolor=RoyalBlue,#1]{{\scshape todo} #2}}
\newcommandx\comment[2][1=]{\todo[linecolor=OliveGreen,backgroundcolor=OliveGreen!25,bordercolor=OliveGreen,#1]{{\scshape comment} #2}}
\newcommandx\mistake[2][1=]{\todo[linecolor=red,backgroundcolor=red!25,bordercolor=red,#1]{{\scshape mistake} #2}}
\newcommandx\improve[2][1=]{\todo[linecolor=orange,backgroundcolor=orange!25,bordercolor=orange,#1]{{\scshape improve} #2}}
\newcommandx\change[2][1=]{\todo[linecolor=yellow,backgroundcolor=yellow!25,bordercolor=yellow,#1]{{\scshape changed} #2}}
\newcommandx\mem[2][1=]{\todo[linecolor=orange,backgroundcolor=orange!25,bordercolor=orange,#1]{{\scshape mem} #2}}
\newcounter{n}
\newcounter{bigresults}
\numberwithin{n}{section}
\theoremstyle{plain}
\newtheorem{claim}{Claim}
\newtheorem{lemma}[n]{Lemma}
\newtheorem{theorem}[bigresults]{Theorem}
\newtheorem*{theorem*}{Theorem}
\newtheorem{proposition}[n]{Proposition}
\newtheorem{corollary}[n]{Corollary}
\theoremstyle{definition}
\newtheorem{definition}[n]{Definition}
\newtheorem*{definition*}{Definition}
\newtheorem{remark}[n]{Remark}
\newtheorem*{remark*}{Remark}
\newtheorem*{claim*}{Claim}
\newtheorem*{assertion*}{Assertion}
\newtheorem*{proposition*}{Proposition}
\theoremstyle{remark}
\definecolor{colorlinks}{RGB}{0, 24, 168}
\definecolor{colorcites}{RGB}{124, 10, 2}
\renewcommand\varphi\phi
\renewcommand\epsilon\varepsilon
\newcommand\eps\varepsilon
\renewcommand\setminus\smallsetminus
\renewcommand\O[1]{\(\mathrm{O}(#1)\)}
\newcommand\T{\mathbb T}
\newcommand\E{\mathbb E}
\newcommand\R{\mathbb R}
\renewcommand\P{\mathbb P}
\newcommand\Z{\mathbb Z}
\renewcommand\H{\mathbb H}
\newcommand\N{\mathbb N}
\newcommand\calC{\mathcal C}
\newcommand\calD{\mathcal D}
\newcommand\calH{\mathcal H}
\newcommand\calL{\mathcal L}
\newcommand\calP{\mathcal P}
\newcommand\calV{\mathcal V}
\newcommand\ind[1]{\mathds{1}_{\{#1\}}}
\newcommand\muLoop{{\sf Loop}}
\newcommand\muLip{{\sf Lip}}
\newcommand\muSpin{{\sf Spin}}
\newcommand\muHom{{\sf Hom}}
\newcommand\hexlattice{{\mathbb H}}
\newcommand\spaceLoop[1]{\mathfrak{S}_{\muLoop}(#1)}
\newcommand\spaceLip{\mathfrak{S}_{\muLip}}
\newcommand\spaceSpin{\mathfrak{S}_{\muSpin}}
\newcommand\Zising{Z_{\text{Ising}}}
\newcommand\squarelattice{{\mathbb L}}
\newcommand\face{F}
\newcommand\faces[1]{\face(#1)}
\newcommand\domain\Omega
\newcommand\vertices[1]{{V(#1)}}
\newcommand\edges[1]{{E(#1)}}
\newcommand\edgesA{E_a} 
\newcommand\edgesB{E_b} 
\newcommand\spaceHom{\mathfrak{S}_{\muHom}}
\newcommand\Circ{\mathsf{Circ}}
\newcommand\Rect{\mathsf{Rect}}
\newcommand\Cyl{\mathsf{Cyl}}
\newcommand\Strip{\mathsf{Strip}}
\renewcommand\varnothing\emptyset
\newcommand\intvert{V_{\mathrm{int}}}
\newcommand{\xlra}{\xleftrightarrow} 
\newcommand\concel[2]{\ooalign{$\hfil#1\mkern0mu/\hfil$\crcr$#1#2$}}  
\newcommand\nxlra[1]{\mathrel{\mathpalette\concel{\xlra{#1}}}} 
\newcommand\upvert{\mathrm{Y}} 
\newcommand\triedges{\triangle}	
\newcommand{\black}{{\bullet}}
\newcommand{\white}{{\circ}}
\newcommand{\mblack}{{\bullet}}
\newcommand{\blackp}{%
	{
	\begin{adjustbox}{trim= 0 0 0 0}
	\begin{tikzpicture}[scale=0.1]
		\draw [fill=black, black, line width=0.02em]
		(-0.75,-0.25)  rectangle (0.75,0.25);
		\draw [fill=black, black, line width=0.02em]
		(-0.25,-0.75)  rectangle (0.25,0.75);
	\end{tikzpicture}
	\end{adjustbox}
	}
}
\newcommand{\blackm}{%
	{
	\begin{adjustbox}{trim= 0 0 0 0}
	\begin{tikzpicture}[scale=0.1]
		\draw [fill=black, black, line width=0.02em]
		(-0.75,-0.25)  rectangle (0.75,0.25);
		\draw [fill=white, white, line width=0.02em]
		(-0.75,-0.75)  rectangle (0.75,-0.65);
	\end{tikzpicture}
	\end{adjustbox}
	}
}
\newcommand{\whitep}{%
	{
	\begin{adjustbox}{trim= 0 0 0 0}
	\begin{tikzpicture}[scale=0.1]
		\draw[black,line width=0.02em]
		(-0.25,-0.75) -- (-0.25,-0.25)
		(0.25,-0.75) -- (0.25,-0.25)
		(-0.25,-0.75)  -- (0.25,-0.75)
		(-0.25,0.75) -- (-0.25,0.25)
		(0.25,0.75) -- (0.25,0.25)
		(-0.25,0.75)  -- (0.25,0.75)
		(-0.75,-0.25) -- (-0.25,-0.25)
		(-0.75,0.25) -- (-0.25,0.25)
		(-0.75,-0.25)  -- (-0.75,0.25)
		(0.75,-0.25) -- (0.25,-0.25)
		(0.75,0.25) -- (0.25,0.25)
		(0.75,-0.25)  -- (0.75,0.25);
	\end{tikzpicture}
	\end{adjustbox}
	}
}
\newcommand{\whitem}{%
	{
	\begin{adjustbox}{trim= 0 0 0 0}
	\begin{tikzpicture}[scale=0.1]
		\draw[black,line width=0.02em]
		(-0.75,-0.25) -- (0.75,-0.25)
		(-0.75,0.25) -- (0.75,0.25)
		(-0.75,-0.25)  -- (-0.75,0.25)
		(0.75,-0.25)  -- (0.75,0.25);
		\draw [fill=white, white, line width=0.02em]
		(-0.75,-0.75)  rectangle (0.75,-0.65);
	\end{tikzpicture}
	\end{adjustbox}
	}
}
\newcommand\Zfree{Z^{\operatorname{free}}}
\newcommand\Zwired{Z^{\operatorname{wired}}}
\newcommand\phifree{\phi^{\operatorname{free}}}
\newcommand\phiwired{\phi^{\operatorname{wired}}}
\newcommand\etawired{\eta^{\operatorname{wired}}}
\newcommand\pselfdual{p_{\operatorname{sd}}}
\newcommand\blank{\,\cdot\,}
\newcommand\Var{\operatorname{Var}}
\renewcommand\subset\subseteq
\author{Alexander Glazman}
\address{Universität Innsbruck, Innsbruck, Austria}
\email{alexander.glazman@uibk.ac.at}
\author{Piet Lammers}
\address{Institut des Hautes \'Etudes Scientifiques, Bures-sur-Yvette, France}
\email{lammers@ihes.fr}
\title[Delocalisation and continuity in 2D]{Delocalisation and continuity in 2D: loop~\O{2}, six-vertex, and random-cluster models}
\keywords{Loop \O{2} model, six-vertex model, random-cluster model, delocalisation, continuous phase transition}
\subjclass[2020]{Primary 82B20, 82B41; secondary 82B30}
\begin{document}

\maketitle

\begin{abstract}
We prove the existence of macroscopic loops in the loop~\O{2} model with
$\frac12\leq x^2\leq 1$ or, equivalently, delocalisation of the associated
integer-valued Lipschitz function on the triangular lattice. This settles one
side of the conjecture of Fan, Domany, and Nienhuis (1970s--80s) that~$x^2 =
\frac12$ is the critical point.

We also prove delocalisation in the six-vertex model with $0<a,\,b\leq c\leq
a+b$.  This yields a new proof of continuity of the phase transition in the
random-cluster and Potts models in two dimensions for $1\leq q\leq 4$ relying
neither on integrability tools (parafermionic observables, Bethe Ansatz), nor on
the Russo--Seymour--Welsh theory.

Our approach goes through a novel FKG property required for the non-coexistence
theorem of Zhang and Sheffield, which is used to prove delocalisation all the
way up to the critical point.  We also use the $\T$-circuit argument in the case
of the six-vertex model.

Finally, we extend an existing renormalisation inequality in order to quantify
the delocalisation as being logarithmic, in the regimes~$\frac12\leq x^2\leq 1$
and~$a=b\leq c\leq a+b$.  This is consistent with the conjecture that the
scaling limit is the Gaussian free field.

\end{abstract}

\setcounter{tocdepth}{1}
\tableofcontents

\section{Introduction}

\subsection{Preface}

\emph{Phase transitions} are natural phenomena in which a small change in an external
parameter causes a dramatic change in the qualitative structure of the object.
Mathematical models for the analysis of phase transitions in statistical
mechanics have been studied for more than 100 years, with the purpose of
unifying physical intuition with mathematical formalism. The mathematical
methods used for this can be divided into \emph{integrable} and \emph{probabilistic}.
Integrable methods (e.g., Bethe Ansatz, parafermionic observable) give a precise quantitative description of the system but are
not robust as they rely on identities that hold only in a small number
of special models at specific parameters.
In this work, we use only
probabilistic methods which rely on fundamental correlation inequalities, percolation theory, and the planar duality, and which
are more robust.

Our main goal is to establish the \emph{localisation-delocalisation phase transition} in
two models of integer-valued height functions. We focus on dimension two where
this transition is linked to \emph{the continuity-discontinuity transition}~\cite{DumGagHar16} in the
random-cluster model, the Berezinskii--Kosterlitz--Thouless transition~\cite{Ber71,KosTho73,FroSpe81} in the XY
model, and others. Planar height functions are thus at the crossroads of these
interesting phenomena. Height functions are expected to be
localised in all higher dimensions.

The seminal Peierls argument~\cite{Pei36} often implies the existence of a localised phase, and therefore
proving phase transition is more or less equivalent to establishing the
existence of a delocalised phase.
Several delocalisation results have been
derived in the last three decades~\cite{Ken97,She05,DumGlaPel21,GlaMan21,ChaPelSheTas21,Lam21deloc,LamOtt21,Lis21,DumKarManOul20} (we discuss these works later in further detail).

The current work develops a unified probabilistic argument for delocalisation in
the loop~\O{2} and six-vertex models. Our proof applies in the entire
delocalisation regime, under a convexity assumption on the potential. To the
best of our knowledge, this is the first time that the (conjectured) transition
point is reached through a probabilistic percolation-planarity argument. In
fact, the loop~\O{2} model is not believed to be integrable away from its
critical point. Our argument uses a percolation structure that reveals the
hidden planarity of the interaction potential at and above the critical point.

Finally, we use the Baxter--Kelland--Wu (BKW) coupling~\cite{BaxKelWu76} to derive continuity of the
phase transition in the random-cluster model from the delocalisation of the
six-vertex model. Our proof of continuity is conceptually different from the
original argument~\cite{DumSidTas17} as we do not rely on the parafermionic observable, nor on the
Bethe Ansatz.

\subsection{Informal statement of the main results}

This article concerns three models on two-dimensional lattices:
the \emph{loop~\O{n}} model, the \emph{six-vertex} model,
 and the \emph{FK percolation} or \emph{random-cluster} model.

\subsubsection{Delocalisation in the loop~\O{2} model}

The loop~\O{n} model is a model supported on loop configurations on the
hexagonal lattice (Fig.~\ref{fig:smallsamples}) and has two real parameters:
a {\em loop weight} $n>0$ and an {\em edge weight} $x>0$.  At~$n=2$, these loops
may be viewed as level lines of an integer-valued Lipschitz function on the dual
triangular lattice.  We prove that this height function delocalises whenever
$\frac12\leq x^2\leq 1$: the variance of the
height at a given vertex tends to infinity as boundary conditions are taken
further and further away; on the loop side, this means that the number of loops
surrounding a fixed hexagon tends to infinity (Theorem~\ref{thm:loop_soft_deloc}).
When $x^2=\frac12$, this was
proved in~\cite{DumGlaPel21} and, when $x^2=1$, this was proved
in~\cite{GlaMan21} (Fig.~\ref{fig:phase-diagram}).  Fan, Domany, and Nienhuis conjectured that the point
$x^2=\frac12$ is critical~\cite{Fan72,DomMukNie81,Nie82}.
Thus, we settle one half of
this conjecture (in the ferromagnetic regime $x\leq 1$).

\subsubsection{Delocalisation in the six-vertex model}

The six-vertex model is supported on edge orientations of the square lattice
that satisfy the {\em ice rule}: every vertex has two incoming and two outgoing
edges (Fig.~\ref{fig:smallsamples}).  Such an edge orientation may be interpreted as the gradient
of a certain {\em graph homomorphism} from $\Z^2$ to~$\Z$: an integer-valued
height function on the faces of the square lattice that differs by one at any
two adjacent faces.  There are three real parameters $a,\, b,\, c>0$ which describe
the weight of each vertex depending on the orientation of the edges incident to
it (Fig.~\ref{fig:six_vertex}).  We prove that the height function delocalises whenever $0<a,\,b\leq c\leq
a+b$ (Theorem~\ref{thm:hard_six_deloc}).  This was known previously under
certain additional assumptions on the parameters: the symmetric case~$a=b\leq
c\leq a+b$ was treated in~\cite{DumKarManOul20} and the case~$\sqrt{a^2+b^2 + ab}
\leq  c \leq a+b$ is treated in~\cite{DumKarKraManOul20} (the latter relies on the BKW correspondence to the random-cluster model with~$1\leq q\leq 4$).
Both works rely on integrability methods as well as on the Russo--Seymour--Welsh theory~--- we do not
require either of those for the qualitative delocalisation.

\begin{figure}
	\begin{subfigure}{0.3\textwidth}
		\centering
		\includegraphics{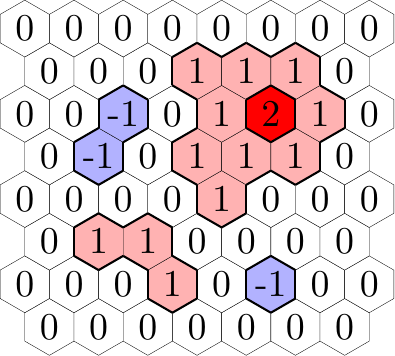}
		\subcaption*{The loop~\O{2} model}
	\end{subfigure}
	\begin{subfigure}{0.3\textwidth}
		\centering
		\includegraphics{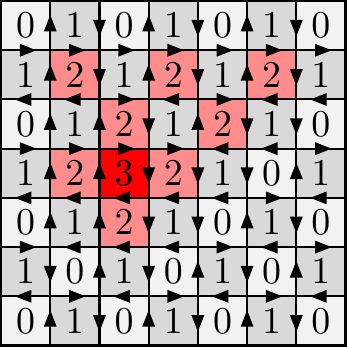}
		\subcaption*{The six-vertex model}
	\end{subfigure}
	\qquad
	\begin{subfigure}{0.3\textwidth}
		\centering
		\includegraphics{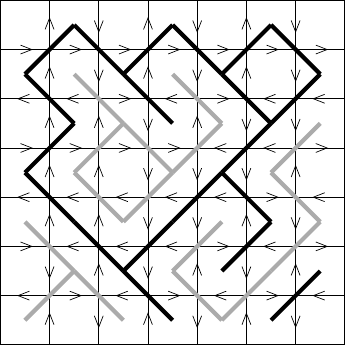}
		\subcaption*{The random-cluster model}
	\end{subfigure}
	\caption{\textsc{Left}: A sample from the loop~\O{2} model and its associated Lipschitz function.
	\textsc{Middle}: A sample from the six-vertex model and the associated graph homomorphism.
	\textsc{Right}: A sample from the random-cluster model. Primal clusters are black,
	dual clusters are grey. This model is coupled to the six-vertex model (through a complex-valued measure).}
	\label{fig:smallsamples}
\end{figure}

\subsubsection{Continuity of the phase transition in the planar random-cluster model}

The random-cluster model generalises independent (Bernoulli) bond percolation.
It is supported on spanning subgraphs of a given graph and has two real parameters: the \emph{edge weight} $p\in[0,1]$,
and the \emph{cluster weight} $q>0$.
We consider the random-cluster model on the two-dimensional square lattice
(Fig.~\ref{fig:smallsamples}).
Using our delocalisation result for the
six-vertex model and the Baxter--Kelland--Wu (BKW) coupling (Fig.~\ref{fig:BKWcomplete}), we provide a an elementary proof of continuity of the phase transition when $1\leq q\leq 4$ (Theorem~\ref{thm:continuity_asymetric}).
This was first derived in~\cite{DumSidTas17}. That approach goes through establishing a dichotomy;
the side of the dichotomy is decided on by appealing to the parafermionic observable.
We rely on neither of these tools, nor on sharpness of the phase transition or on the known value of the critical edge weight~$p$,
both of which are rigorously established in~\cite{BefDum12}.

Our proof does not rely on invariance under rotations by~$\pi/2$ and hence applies also to the anisotropic case with different edge weights on vertical and horizontal edges.
This statement was originally shown in~\cite{DumLiMan18} using the Yang--Baxter transformation.

For~$q>4$, the transition was proven to be discontinuous in~\cite{DumGagHar16} via the Bethe Ansatz.
An elementary proof has also recently emerged~\cite{RaySpi20}.
When~$q<1$, the random-cluster model does not satisfy the Fortuin--Kasteleyn--Ginibre (FKG) inequality and its phase diagram remains almost entirely open.

\subsubsection{Logarithmic delocalisation}

\label{subsubsec:log_early_intro}

The methods used for proving delocalisation are elementary in spirit and allow
to extend the RSW theory developed in~\cite{GlaMan21} for the uniform random
Lipschitz function to the non-uniform case and to the symmetric ($a=b$)
six-vertex model.  Thus, the renormalisation inequality first developed
for the random-cluster model in~\cite{DumSidTas17} (see also~\cite{DumTas19} which does not rely on self-duality) yields \emph{logarithmic
delocalisation} (Theorems~\ref{thm:soft_Lip_deloc}
and~\ref{thm:hard_six_deloc}).  This means that the variance of the height at a
given face in the loop \O{2} and six-vertex models grows logarithmically in the
distance from this face to the boundary of the domain.  This is in agreement
with the conjecture that the height functions of the loop~\O{2} model
and the six-vertex model converge to the conformally invariant Gaussian free field (GFF) (see for example~\cite{DumKarManOul20}).

\begin{figure}
	\begin{subfigure}{0.50\textwidth}
	\includegraphics[width=\textwidth]{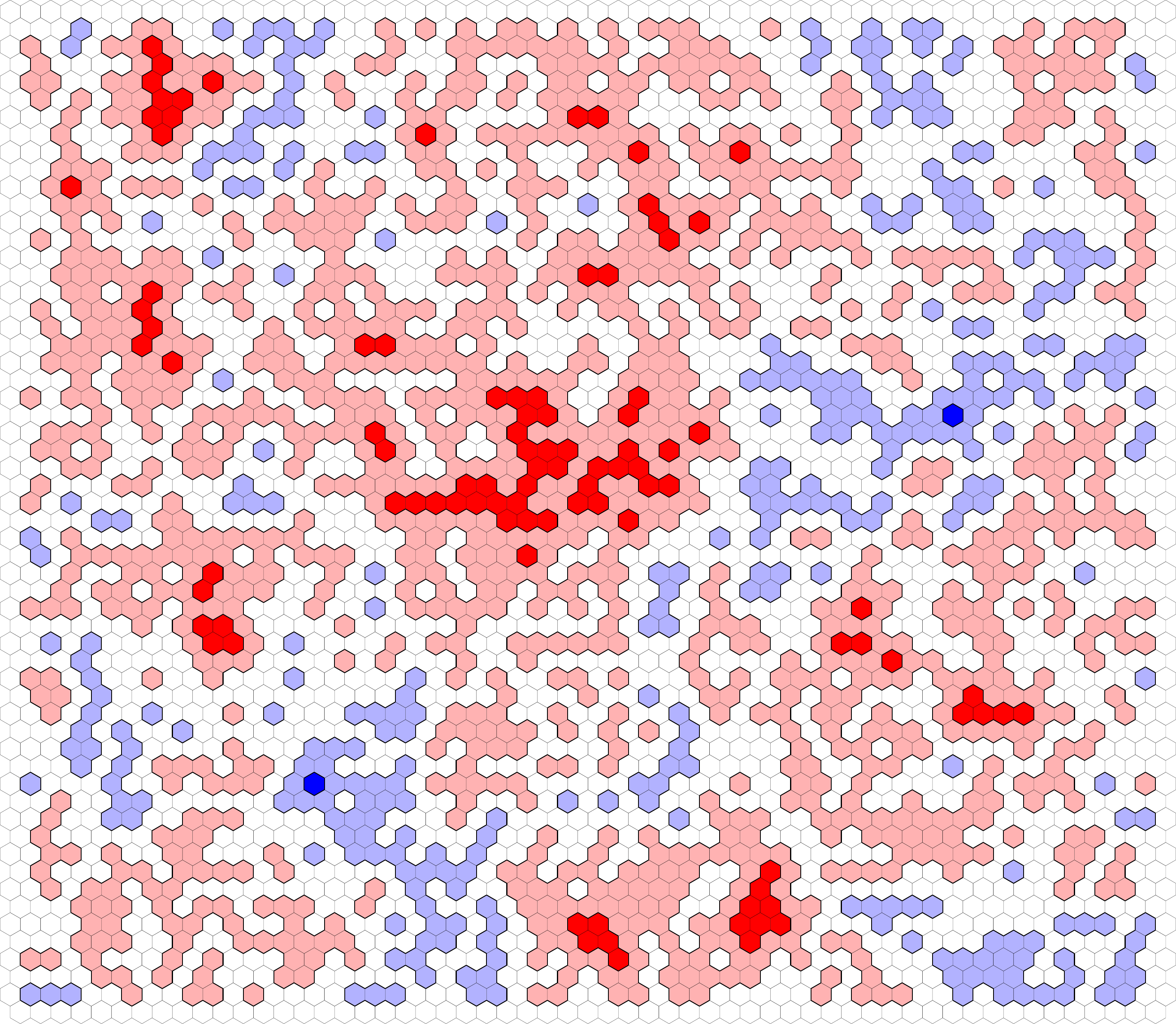}
	\subcaption*{The loop~\O{2} model with $x=1$}
	\end{subfigure}
	\quad
	\begin{subfigure}{0.43\textwidth}
	\includegraphics[width=\textwidth]{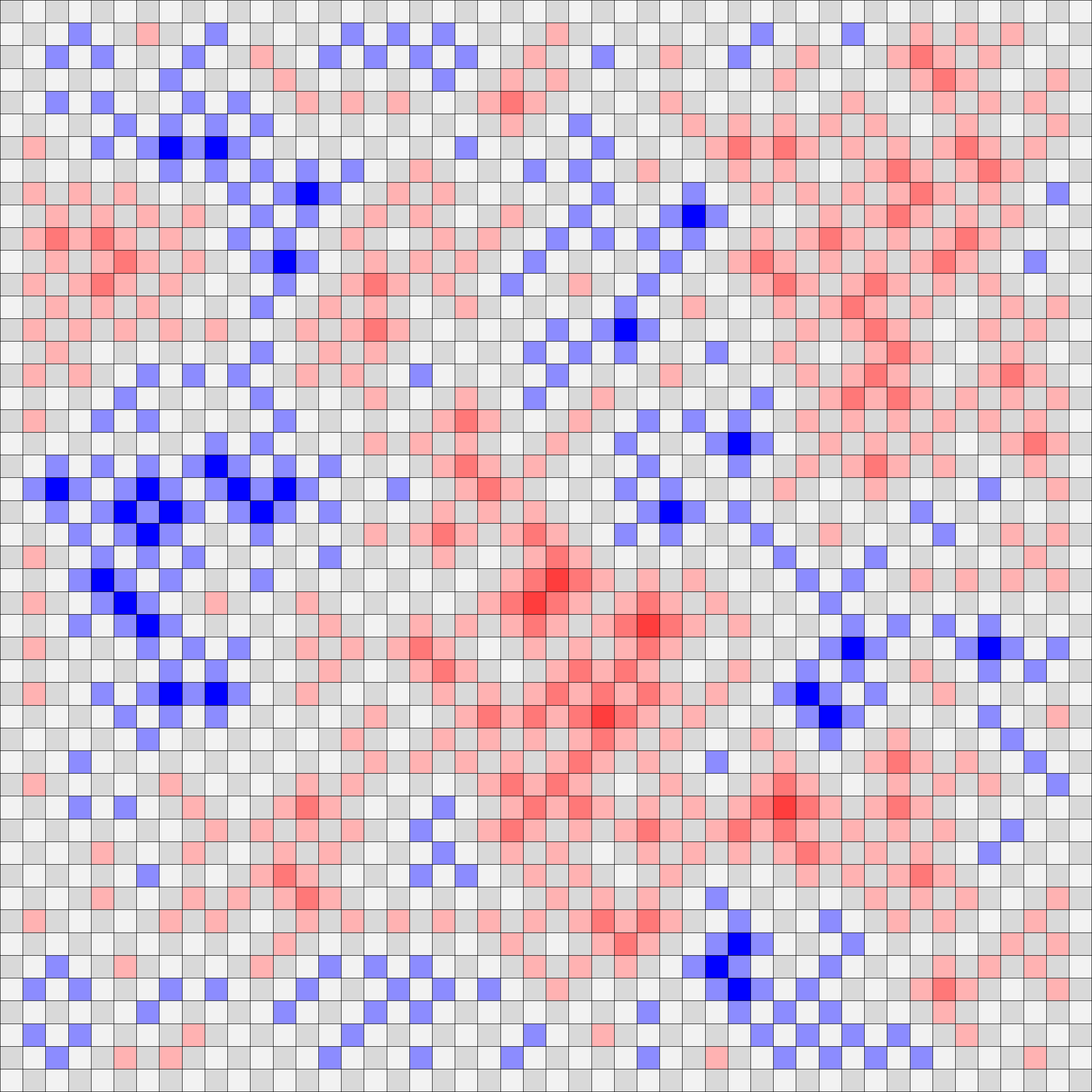}
	\subcaption*{The uniform six-vertex model}
	\end{subfigure}
	\caption{
		Two samples.
	In both cases, the radius of the largest red cluster is
	of the same order of magnitude as the size of the domain.
	}
	\label{fig:largesamples}
\end{figure}

\subsection{Proof strategy}

Our proofs of qualitative delocalisation and of continuity do not rely on
integrable methods nor on Russo--Seymour--Welsh (RSW) theory (the latter only
starts playing a role in the logarithmic quantification of the delocalisation).
This subsection sketches the main ingredients for our proofs.

\subsubsection{A two-spin representation}

Each height function is studied through an appropriate pair of~$\pm1$-valued
black and white spin configurations~$(\sigma^\black,\sigma^\white)$ that describes the heights modulo~$4$ (Figs.~\ref{fig:loop_percolations} and~\ref{fig:sixvertex_percolations}).
In the loop~\O{2} model, each face of the hexagonal lattice is assigned one black
and one white spin.
In the six-vertex model, the faces (\emph{squares}) are
partitioned into black squares and white squares (in a checkerboard
pattern);
black squares contain black spins and white squares contain white spins.
These representations are classical~\cite{Rys63,DomMukNie81} and have recently received significant attention~\cite{GlaMan21,GlaPel19,Lis22,Lis21}.

\subsubsection{The domain Markov property}

Given~$\sigma^\black$, the distribution of~$\sigma^\white$ is that of an Ising model on a modified graph.
Applying an Edwards--Sokal-type expansion, one arrives at the {\em black percolation}~$\xi^\black$ that satisfies the following domain Markov property (Figs.~\ref{fig:loop_percolations} and~\ref{fig:sixvertex_percolations}):
a circuit of~$\xi^\black$ determines the measure in its interior.
This eventually allows one to use the methods developed for the uniform cases in joint works of
the first author with Manolescu~\cite{GlaMan21} and
Peled~\cite[Section~$9$]{GlaPel19}.

We draw attention to an unusual definition of~$\xi^\black$ in the loop~\O{2} model: this is a {\em site percolation} on the triangular lattice formed by vertices of the hexagonal lattice belonging to {\em one chosen partite class} (out of the two,
see Fig.~\ref{fig:loop_percolations}).
The first author learnt about it from Harel and Spinka, and we are unaware of any previous mentioning of this representation in the literature.
In the six-vertex model, $\xi^\black$ is defined in a standard way: this is a {\em bond percolation} on the square lattice formed by black squares.
These edges were implicitly present already in the duality relation for the Ashkin--Teller model~\cite{AshTel43,MitSte71} (see also~\cite{HuaDenJacSal13}) and were described explicitly in~\cite{Lis22,RaySpi22,GlaPel19}.

\subsubsection{Planar duality}

The next step is to define the white percolation~$\xi^\white$ in a similar way by swapping the role of black and white spins:
in the loop~\O{2} model, both~$\xi^\black$ and $\xi^\white$ are site percolations on the same triangular lattice (Fig.~\ref{fig:loop_percolations});
in the six-vertex model, $\xi^\black$ and $\xi^\white$ are bond percolations on the square lattices dual to each other (Fig.~\ref{fig:sixvertex_percolations}).
In the
delocalisation regimes~$\frac12 \leq x^2\leq 1$ and~$a,\,b\leq c\leq a+b$,
one can couple~$\xi^\black$ and~$\xi^\white$ to have the following {\em
super-duality} relation: in the loop~\O{2} model, the {\em site} percolation
dual (that is, the complement) of~$\xi^\black$ is contained
inside~$\xi^\white$; in the six-vertex model, the {\em
bond} percolation dual of~$\xi^\black$ is contained
inside~$\xi^\white$.
For each model, the super-duality turns into exact duality precisely at the critical point; this indicates a deep structural connection between the loop~\O{2} model on the hexagonal lattice and
the six-vertex model on the square lattice.
In the case of the loop~\O{2} model, the coupling is new.
In the case of the six-vertex model, this coupling was introduced by Lis~\cite{Lis22}.

\subsubsection{Joint FKG property for spins and edges}

In the six-vertex model, the edges~$\xi^\black$ satisfy the positive correlation (or Fortuin--Kasteleyn--Ginibre (FKG)) inequality only when~$a+b\leq c$~\cite{Lis22,RaySpi22,GlaPel19}, that is, in the {\em localised} regime and at the transition line.
Our main innovation is the FKG inequality in the {\em delocalised} regime that we discover by representing~$\xi^\black$ as the disjoint union~$\xi^\blackp \cup \xi^\blackm$.
Indeed, by the definition of~$\xi^\black$, the following holds: in the loop~\O{2} model, $\sigma^\black$ is constant at the three faces surrounding a vertex in~$\xi^\black$; in the six-vertex model, $\sigma^\black$ takes the same value at the endpoints of an edge of~$\xi^\black$.
We define~$\xi^\blackp$ and~$\xi^\blackm$ as the subsets of~$\xi^\black$ having black spins plus and minus respectively.
We show that the {\em joint distribution} of the triple~$(\sigma^\black, \xi^\blackp, -\xi^\blackm)$ satisfies the FKG inequality.
Notice that open sites or edges for $\xi^\blackm$ are considered \emph{negative} information
in this setup.
This gives existence of the infinite-volume limit taken under certain maximal boundary conditions for~$\sigma^\black$; moreover, the marginal of this limit on~$(\sigma^\black,\xi^\black)$ is ergodic and extremal.
Using the domain Markov property and some height flipping operation (reminiscent of the cluster swap~\cite{She05}, see also~\cite{CohPel20}), the delocalisation will follow once we show that each of~$\xi^\black$ and~$\xi^\white$ contains infinitely many circuits around the origin.

This joint FKG inequality is inspired by a joint work of the second author with
Ott~\cite[Section~7]{LamOtt21} (see also~\cite[Lemma~3.2]{Lam22}).
The FKG inequalities for the distribution of only the spins were derived in~\cite{GlaMan21} (loop~\O{2} model at~$x=1$) and~\cite{Lis22,RaySpi22,GlaPel19} (in the six-vertex model with~$a,\,b\leq c$).

\subsubsection{The non-coexistence theorem}

In recent years, a significant number of delocalisation
results~\cite{She05,GlaMan21,ChaPelSheTas21,Lam21deloc,LamOtt21} have been derived
from a deep general non-coexistence result in percolation theory.

\begin{theorem*}[Non-coexistence theorem]
	Let $\mu$ denote a translation-invariant (site or bond) percolation measure
	on a planar locally finite doubly periodic graph $\mathbb G$ which satisfies the FKG inequality.
	Then, it is impossible that both a percolation configuration and its dual
	contain a unique infinite cluster $\mu$-almost surely.
\end{theorem*}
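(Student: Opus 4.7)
The plan is to proceed by contradiction, combining Zhang's FKG-based four-arm argument with a planar topological obstruction. Assume that $\mu$-almost surely both $\omega$ and its dual $\omega^{*}$ contain a unique infinite cluster.

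\textbf{Zhang's four-arm argument.} Fix a sequence of doubly-periodic boxes $B_n\uparrow\mathbb{G}$ and decompose $\partial B_n=S_1\cup\dots\cup S_4$ in cyclic order, chosen so that the pairs $(S_1,S_3)$ and $(S_2,S_4)$ of opposite sides are exchanged by period translations of $\mathbb{G}$. For each $i\in\{1,\dots,4\}$, let
\[
A_i^n:=\bigl\{\,S_i \text{ is connected to infinity by an open primal path in } \mathbb{G}\setminus B_n\,\bigr\},
\]
an increasing event. Since the primal infinite cluster exists, is unique, and intersects $\partial B_n$ from outside with probability tending to one, $\P[\bigcup_i A_i^n]\to 1$. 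Applying FKG to the decreasing events $(A_i^n)^c$ gives
\[
\prod_i \P[(A_i^n)^c]\;\le\;\P\Bigl[\bigcap_i (A_i^n)^c\Bigr]\;\longrightarrow\;0.
\]
Combining this with the translation-invariance identities $\P[A_1^n]=\P[A_3^n]$ and $\P[A_2^n]=\P[A_4^n]$, and a suitable choice of aspect ratio of $B_n$ to balance the two pairs, one deduces $\P[A_i^n]\to 1$ for each $i$. A final application of FKG then yields $\P[\mathrm{PFA}_n]\to 1$ for the \emph{primal four-arm event} $\mathrm{PFA}_n:=\bigcap_i A_i^n$. The same argument applied to $\omega^{*}$ -- whose FKG inequality for increasing events is inherited from the FKG inequality for $\omega$ applied to decreasing events -- yields $\P[\mathrm{DFA}_n]\to 1$ for the dual four-arm event.

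\textbf{Planar obstruction.} On $\mathrm{PFA}_n\cap\mathrm{DFA}_n$, one has four disjoint primal arcs $\gamma_1,\dots,\gamma_4$ and four disjoint dual arcs $\delta_1,\dots,\delta_4$ from $\partial B_n$ to infinity, interleaved in cyclic order around $\partial B_n$ and forming a non-crossing planar family (since open primal and open dual edges are disjoint). Uniqueness of the primal infinite cluster forces $\gamma_1,\dots,\gamma_4$ to lie in the same primal connected set; likewise $\delta_1,\dots,\delta_4$ lie in the same dual connected set. A classical planar duality argument now shows that this simultaneous joint connectivity of both four-arm structures is topologically incompatible: analysing the possible ways the four primal arcs (respectively dual arcs) may merge, either inside $B_n$ by routing through the interior or outside $B_n$ by merging at infinity, and using that primal and dual open paths cannot cross, one finds that any non-crossing matching consistent with the alternating order $p_1,q_1,\dots,p_4,q_4$ on $\partial B_n$ forces a forbidden primal-dual crossing either inside or outside $B_n$. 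This refutes the coexistence hypothesis.

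\textbf{Main obstacle.} The delicate part is the first step -- extracting $\P[A_i^n]\to 1$ individually for each $i$ in the absence of any rotational symmetry of $\mathbb{G}$ -- which is handled by applying Zhang's estimate separately to the two pairs of opposite sides and balancing via the aspect ratio of the boxes $B_n$. Once the primal and dual four-arm events are secured, the topological step is essentially a combinatorial non-crossing matching argument on the eight alternating anchor points of $\partial B_n$, combined with the uniqueness of the primal and dual infinite clusters.
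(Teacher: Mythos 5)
The paper does not prove this theorem: it cites Sheffield~\cite{She05} for the original proof and Duminil-Copin--Raoufi--Tassion~\cite{DumRaoTas19} for a simplified one, and explicitly distinguishes it from Zhang's argument, noting that Zhang's version requires ``additional symmetry assumptions on $\mu$.'' Your proposal attempts to close this symmetry gap, but the key step fails.

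The flaw is the claim that $\P[A_1^n]=\P[A_3^n]$ by translation invariance. The event $A_i^n$ depends on the \emph{entire} box $B_n$, not just the side $S_i$, because it asks for a connection to infinity \emph{in $\mathbb G\setminus B_n$}. A lattice translation $\tau_v$ that carries $S_1$ onto $S_3$ also carries $B_n$ to $B_n+v$, so $\tau_v(A_1^n)$ is the event that $S_3$ connects to infinity in $\mathbb G\setminus(B_n+v)$ --- a genuinely different event from $A_3^n$, which avoids $B_n$. No single translation can fix $B_n$ setwise while exchanging opposite sides. In the classical Zhang argument, the equality of opposite-side arm probabilities is obtained from the \emph{point reflection} through the centre of the box (or from $\pi/2$ rotation / axis reflection), which does fix $B_n$; such symmetries are precisely what the general theorem does \emph{not} assume. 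Without them, FKG only yields that $\prod_i(1-\P[A_i^n])\to 0$, i.e.\ that at least one arm probability is large along a subsequence, which is far from the simultaneous four-arm event $\P[\mathrm{PFA}_n]\to 1$ that your planar obstruction step requires. The ``aspect-ratio balancing'' you invoke does not repair this: it concerns comparing horizontal against vertical arms, not relating the two arms within a single opposite pair, and moreover the dual four-arm event must be made to hold on the \emph{same} boxes, so even a crude balancing of two independent quantities would not suffice.

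This gap is not cosmetic: it is exactly the reason the general non-coexistence theorem is significantly harder than Zhang's lemma and required a new idea (Sheffield's cluster-swapping and absorption argument, or the later approach of~\cite{DumRaoTas19}). Your planar obstruction step (the non-crossing matching of eight alternating anchor points, given simultaneous primal and dual four-arm structures) is the standard and correct topological input, but the probabilistic input feeding it is not available from translation invariance and FKG alone.
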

A version of this statement under additional symmetry assumptions on $\mu$ is
known as {\em Zhang's argument} (see~\cite[Lemma~11.12]{Gri99a}); it has a
short elementary proof and suffices for our delocalisation result in the
loop~\O{2} model.
The general
non-coexistence theorem was first proved by Sheffield~\cite{She05} (see
also~\cite{DumRaoTas19} for a simpler proof).
We require this general statement to show delocalisation in the six-vertex model.

\subsubsection{$\T$-circuits}

From the non-coexistence theorem, we obtain that, in the infinite-volume limit taken under maximal boundary conditions for~$\sigma^\black$, the configuration~$\sigma^\white$ is disordered.
In the loop~\O{2} model, an additional symmetry between~$\sigma^\black$ and~$\sigma^\white$ readily implies that~$\sigma^\black$ is also disordered, which implies that the height function delocalises.

In the case of the six-vertex model, we need an additional argument to rule out the scenario in which the even heights remain ordered, while the odd heights are disordered.
This is done via so-called~$\T$-circuits and the related coupling of height functions (Fig.~\ref{fig:T-circuits}) introduced
in~\cite{GlaPel19}.
These circuits are constructed on a triangulation obtained
from a square lattice by adding the diagonals parallel to the horizontal axis
(the square lattice itself is rotated by an angle of $\pi/4$ with respect to the standard orientation and therefore such diagonals indeed exist).

\subsubsection{The Baxter--Kelland--Wu coupling}

The seminal BKW coupling between the six-vertex model and the random-cluster model is
real-valued when~$q\geq 4$ and complex-valued when~$q<4$.  Even in the latter case,
the coupling yields nontrivial identities between observables in the two models.
To the best of our
knowledge, such expressions have first appeared in the work of
Dubédat~\cite[p.~398]{Dub11}.
They were used to prove the following results for the six-vertex model:
\begin{itemize}
	\item Delocalisation when~$\sqrt{2+\sqrt{2}}\cdot a=\sqrt{2+\sqrt{2}}\cdot b\leq c\leq a+b$ \cite{Lis21};
	\item Asymptotic rotational invariance when~$\sqrt{a^2+b^2+ab}\leq c\leq a+b$ \cite{DumKarKraManOul20}.
\end{itemize}
It is nontrivial to handle boundary conditions in the BKW coupling.
We follow~\cite{Lis21} in this regard:
we first work on the torus, then send the size of the torus to infinity
in order to obtain full-plane limits.

\subsection{Background}

\subsubsection{The loop~\O{n} model}

Introduced in~1981~\cite{DomMukNie81}, the loop~\O{n} model has
attracted significant attention due to its numerous connections with other models
of statistical mechanics, a rich phase diagram, and conjectured conformally
invariant behaviour.
Recall that the samples consist of a family of non-intersecting loops
on the hexagonal lattice~$\H$, see Fig.~\ref{fig:smallsamples}.
The relative weight of each configuration is calculated by assigning a loop weight $n>0$
to each loop, as well as an edge weight $x>0$ to each edge in the configuration.
Particular cases of the loop~\O{n} model include the ferromagnetic Ising
model ($n=1$, $x\leq 1$), Bernoulli site percolation ($n=x=1$), the dimer model ($n=1$, $x=\infty$),
and the self-avoiding walk ($n=0$, under Dobrushin boundary conditions).
For integer values of~$n$, the model is
heuristically related to the spin~\O{n} model; see~\cite{PelSpi17} for a
survey.

Nienhuis~\cite{Nie82} related the loop~\O{n} model to the six-vertex model on
the Kagomé lattice and conjectured that, for $n\in[0,2]$, the point
\[
	x_c(n):=\tfrac{1}{\sqrt{2+\sqrt{2-n}}}
\]
is critical.
He furthermore conjectured that the model exhibits {\em conformal invariance} for all~$x\geq x_c(n)$.
At~$n=2$, this agrees with the earlier prediction~$x_c(2)=1/\sqrt2$~\cite{DomMukNie81,Fan72} based on a relation with the Ashkin--Teller model.
With the arrival of the \emph{Schramm--Löwner evolution} (SLE)~\cite{Sch00}, this conjecture has taken a precise form:
it is expected that the loops converge to the \emph{conformal loop ensemble} (CLE) of parameter
\[
	\kappa: =
	\begin{cases}
		\tfrac{4\pi}{2\pi -\arccos(-n/2)}\in (\tfrac83, 4], & \text{ if } x=x_c(n),\\
		\tfrac{4\pi}{\arccos(-n/2)}\in [4,8), & \text{ if } x> x_c(n);
	\end{cases}
\]
see~\cite[Section~5.6]{KagNie04}.
This has been proved in the groundbreaking works developing the method of Smirnov's parafermionic observables: for the independent site percolation~\cite{Smi01,CamNew06} (see also~\cite{KhrSmi21})
and for the critical Ising model~($n=1,\, x=1/{\sqrt{3}}$)~\cite{Smi10,CheSmi11,CheDumHonKemSmi14}.
For other parameters, where such a complete description is currently out of reach,
the focus is on studying the coarser properties of the model.
Criticality of the edge weight~$x=x_c(0)$ in the self-avoiding walk ($n=0$)
was rigorously established by Duminil-Copin and Smirnov~\cite{DumSmi12}.

For~$n\in (0,2]$, the loop~\O{n} model is expected to undergo a phase transition
in terms of loop lengths at~$x_c(n)$: for $x< x_c(n)$, the loops lengths are
expected to have \emph{exponential tails}, while, for  $x\geq x_c(n)$, one expects
to see \emph{macroscopic loops}.  The latter means that any annulus of a fixed
aspect ratio is crossed by a loop with a uniformly positive probability.  Note
that this would be an immediate corollary of conformal
invariance.

At $n=2$, we may interpret the loops in the loop~\O{n} model as the level lines
of an integer-valued Lipschitz function on the faces of~$\H$ with the
nearest-neighbour interaction: orienting every loop clockwise or counterclockwise
with probability $\frac12$ makes the loop weight vanish since~$\frac12 \cdot n =
1$ whenever $n=2$.  The macroscopic behaviour is equivalent to logarithmic delocalisation of
this height function: the latter means that the variance at a given face grows
logarithmically in the distance from this face to the boundary of the domain.
The logarithmic delocalisation has been established at the conjectured critical point $x=x_c(2)=1/\sqrt{2}$ in~\cite{DumGlaPel21} (that work establishes macroscopic behaviour at~$x_c(n)$ for all~$n\in [1,2]$, see below) and in the uniform case~$x=1$ in~\cite{GlaMan21}.
We point out that our Theorems~\ref{thm:loop_soft_deloc} and~\ref{thm:soft_Lip_deloc} extend the approach developed in~\cite{GlaMan21} to the non-uniform case.  In
particular, we obtain a new proof for $x=x_c(n)$: unlike in~\cite{DumGlaPel21},
we do not use the parafermionic observable or other integrability tools.  We
point out that the the Lipschitz function at~$x=x_c(n)$ is in fact related to
the uniform graph homomorphism from the \emph{vertices} of~$\H$ to~$\Z$ and that
delocalisation in this case can also be derived from the non-coexistence theorem
via simpler arguments~\cite{ChaPelSheTas21} (see also \cite{Lam21deloc}).
A concurrent work of Karrila proves dichotomy for integer-valued Lipschitz functions on periodic trivalent graphs~\cite{Kar23}.

The only other known regimes for macroscopic behaviour are (see Fig.~\ref{fig:phase-diagram}):
\begin{itemize}
	\item At~$x=x_c(n)$ when~$n\in [1,2]$~\cite{DumGlaPel21},
	\item The supercritical
	Ising model $n=1$, $\tfrac{1}{\sqrt{3}}<x\leq 1$~\cite{Tas16},
	\item An area~$n\in [1, 1+\varepsilon], x\in [1-\varepsilon,
	\tfrac{1}{\sqrt{n}}]$ containing the percolation point~\cite{CraGlaHarPel20}.
\end{itemize}
The loop lengths are known to have exponential
tails in several regimes:
\begin{itemize}
	\item For~$n$ large enough and
	any~$x>0$~\cite{DumPelSamSpi17},
	\item For any~$n>0$ and~$x\leq
	\frac1{\sqrt{2+\sqrt{2}}}$~\cite{Tag18},
	\item For any~$n\geq 1$ and~$x < \frac1{\sqrt{3}} +
	\varepsilon(n)$, where~$\varepsilon(n)$ is some strictly increasing function with $\epsilon(1)=0$~\cite{GlaMan21b}.
\end{itemize}
At~$n=2$, exponential decay remains
open for all~$x\in [1/\sqrt{3}+\varepsilon(2),\frac1{\sqrt{2}})$.

\begin{figure}
	\begin{center}
		\begin{subfigure}{0.6\textwidth}
		\hspace{-2em}
		\includegraphics[scale=1]{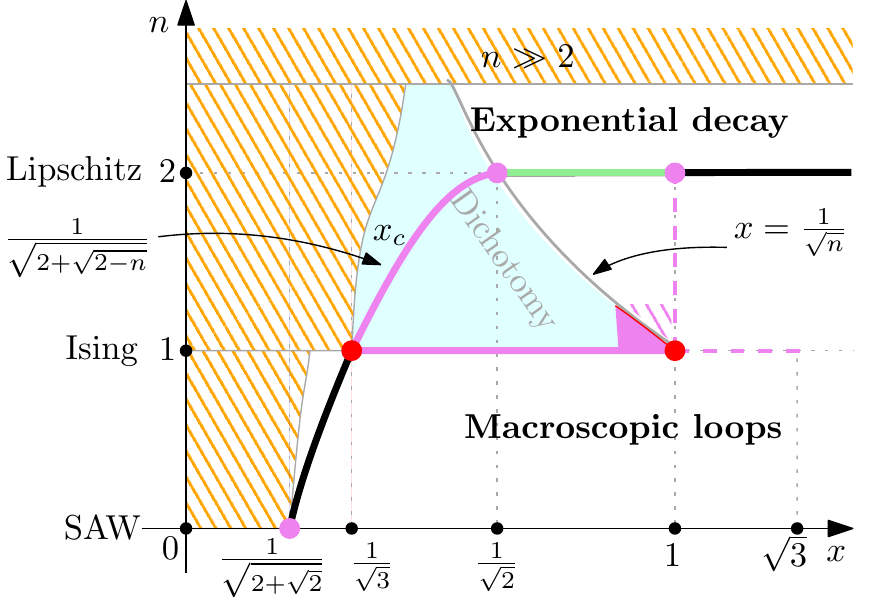}
		\subcaption*{\textbf{The loop~\O{n} model}}
		\end{subfigure}\hfill
		\begin{subfigure}{0.35\textwidth}
		\includegraphics[scale=1]{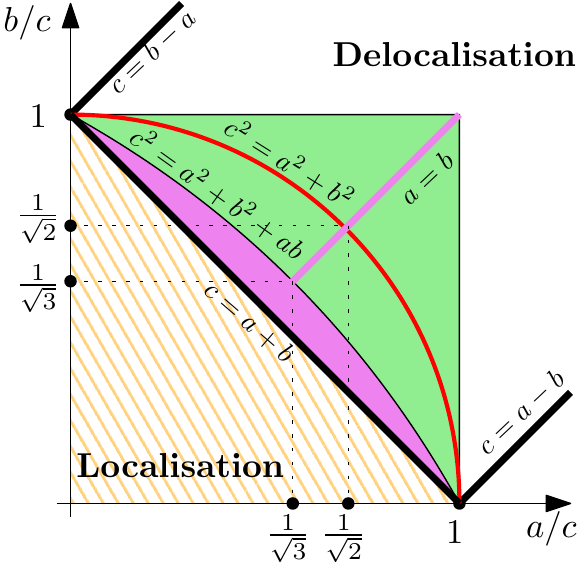}
		\subcaption*{\textbf{The six-vertex model}}
		\end{subfigure}
	\end{center}
	\caption{\textsc{Left}: Phase diagram of the loop~\O{n} model: conjectured transition curve (bold black), known conformal invariance (red), macroscopic behaviour (violet), exponential decay (orange), and dichotomy (blue); new macroscopic behaviour (green).
	 \textsc{Right}: Phase diagram of the six-vertex model: conjectured transition curve (bold black); known conformal invariance (red),  delocalisation (violet) and localisation (orange); new delocalisation (green).}
	\label{fig:phase-diagram}
\end{figure}

\subsubsection{The six-vertex model}

The model has three
parameters~$a,\,b,\,c>0$: each vertex receives the weight~$a$ or~$b$ if the heights
disagree along one of the
two pairs of diagonally adjacent faces, and weight~$c$ if the heights agree
along both diagonals (Fig.~\ref{fig:six_vertex}).  The model was originally
introduced by Pauling~\cite{Pau35} in three dimension as a simplified
representation of ice crystals; two-dimensional versions then appeared in the
works of Slater~\cite{Sla41} and Rys~\cite{Rys63}.  Based on the work of
Yang--Yang~\cite{YanYan66}, Lieb~\cite{Lie67KDP,Lie67b,Lie67c} and
Sutherland~\cite{Sut67} computed the free energy of the model via the
celebrated Bethe Ansatz~\cite{Bet31}.  The latter gives an explicit guess for
the eigenvectors and eigenvalues of a (very large) transfer matrix,
which in certain cases can be verified to be correct; see~\cite{DumGagHar16b}
for a review.
The free energy computations point to the following phase transition under flat boundary conditions (Fig.~\ref{fig:phase-diagram}):
\begin{itemize}
	\item Delocalisation when~$|a-b| \leq c\leq a+b$: the variance of the height function at a given face grows logarithmically in the distance of this face to the boundary; the height function is expected to converge to the GFF,
	\item Localisation when~$a+b < c$: the variance of the heights is uniformly bounded.
\end{itemize}
We also mention that in the uniform case~$a=b=c$, delocalisation has been established also under boundary conditions with a slope~\cite{She05}.

Convergence to the GFF was established at the {\em free fermion line}~$a^2+b^2=c^2$ (corresponding to the dimer model) in the celebrated works of Kenyon~\cite{Ken00,Ken01} and in a neighbourhood of this line by Giuliani, Mastropietro, and Toninelli~\cite{GiuMasTon17}.

The recent years have seen significant progress in the rigorous mathematical
analysis of the phase diagram of the model via a combination of probabilistic and integrable
arguments for~$a,\,b\leq c$.  Peled and the first author derived delocalisation at $a+b=c$ and
localisation at $a+b<c$ using the BKW coupling~\cite{GlaPel19} and known
continuity~\cite{DumSidTas17} and discontinuity~\cite{DumGagHar16} results for
the random-cluster model (obtained via the parafermionic observable and
the Bethe Ansatz respectively).  Ray and Spinka later found a simplified argument
for the localisation not relying on integrability~\cite{RaySpi20}.
Lis~\cite{Lis21} used the BKW coupling and the continuity of the phase
transition to derive the delocalisation of six-vertex model in the
range~$\sqrt{2+\sqrt{2}}\cdot a = \sqrt{2+\sqrt{2}}\cdot b\leq c\leq a+b$.
 Delocalisation for the uniform case
$a=b=c$ was derived from the non-coexistence theorem in~\cite{ChaPelSheTas21}
via a technique called \emph{cluster swapping} introduced by Sheffield~\cite{She05}
(see also~\cite{CohPel20,Lam19});
a shorter and more direct argument via~$\T$-circuits appeared later
in~\cite[Section~$9$]{GlaPel19}.
In a recent
work, Duminil-Copin, Karrila, Manolescu, and Oulamara
proved delocalisation when $a=b\leq c \leq a+b$ by combining information
coming from the Bethe Ansatz with probabilistic arguments~\cite{DumKarManOul20}.

Outside of the regime~$a,\,b\leq c$, the FKG inequality breaks down and most of the known probabilistic tools do not apply.
However, the tools of representation theory do apply when~$c \leq |a-b|$ which is called the stochastic six-vertex model; see~\cite{BorCorGor16} and references therein.
Similar tools also apply to the regime~$c > |a-b|$ and give a very precise information about the distribution of the height function close to the boundary~\cite{GorPan15,AggGor22,GorLie23}.

\subsubsection{The random-cluster model}

Fortuin and Kasteleyn~\cite{ForKas72} introduced the random-cluster model in 1972 as a graphical representation of the \emph{Ising} ($q=2$) and the \emph{Potts} ($q\in\mathbb Z_{\geq  2}$) models.
Configurations are spanning subgraphs of a given finite graph and the probability depends on the number of edges (via the edge weight~$p\in [0,1]$) and connected components (via the cluster weight~$q>0$).
The case~$q=1$ corresponds to the standard Bernoulli bond percolation model.

The understanding of the random-cluster model in two dimensions with~$q\geq 1$ has significantly developed over the past two decades.
The observation at the core of the probabilistic analysis of the model
is the FKG inequality~\cite{ForKasGin71}, which asserts that increasing events are positively correlated as soon as $q\geq 1$; see~\cite{Gri06} for an overview of the classical results.
In~\cite{BefDum12}, Beffara and Duminil-Copin proved that the self-dual point~$\pselfdual = \sqrt{q}/(1+\sqrt{q})$ is critical,
and that the phase transition is sharp: when~$p<\pselfdual$, the probability that two vertices are in the same connected component decays exponentially in the distance between them; when~$p> \pselfdual$, each vertex belongs to the unique infinite connected component with a positive probability.
The latter probability is called the {\em density} of the infinite cluster.

The type of the phase transition can be classified by looking at continuity properties of this density function: the transition is called {\em continuous} if the density is continuous in~$p$, and it is called {\em discontinuous} otherwise.
The free energy computations for the six-vertex models (alluded to above), together with the BKW coupling with the random-cluster model, allowed Baxter~\cite{Bax73} to non-rigorously derive that the transition is continuous when~$q \leq 4$ and discontinuous when~$q>4$.
For~$q\geq 1$, this was established rigorously:
Duminil-Copin, Sidoravicius, and Tassion~\cite{DumSidTas16} proved continuity for~$1\leq q\leq 4$ using the parafermionic observable;
Duminil-Copin, Gagnebin, Harel, Manolescu, and Tassion proved discontinuity for $q>4$~\cite{DumGagHar16} using the Bethe Ansatz.
For~$q>4$, Ray and Spinka~\cite{RaySpi20} later found an elementary argument for discontinuity
that does not rely on integrability.
The above results have been extended to the anisotropic case with different edge weights on vertical and horizontal edges
using the Yang--Baxter transformation~\cite{DumLiMan18}.

The spin correlations in the Ising and the Potts models can be expressed via connection probabilities in the random-cluster model.
Thus, continuity of the phase transition in the Ising and the Potts models
is essentially equivalent to continuity of the phase transition in the corresponding
random-cluster model.
We also point out the existence of an elementary proof for continuity for the planar Ising model due to Werner~\cite{Wer09a}.

Finally, we mention a beautiful work of K\"ohler-Schindler and Tassion that establishes the Russo--Seymour--Welsh estimates
in a very general setting relying only on symmetries and the FKG inequality~\cite{KohTas20}.

\subsubsection{The two-spin representation and percolations}

The two-spin representation of the loop~\O{2} model and the six-vertex model
is reminiscent of the Ashkin--Teller model that describes a pair of interacting Ising models~\cite{AshTel43}.
Such representations were introduced in the physics literature a long time ago~\cite{Rys63,DomMukNie81} and recently appeared in the analysis of the phase
diagram of several models~\cite{GlaPel19,GlaMan21,Lis21,AouDobGla23}.
Similarly to the Ising model, one can perform an Edwards--Sokal-type expansion and obtain a pair of coupled percolation configurations~$\xi^\black$ and~$\xi^\white$ as follows.
\begin{itemize}
	\item In the loop~\O{2} model, $\xi^\black$ and~$\xi^\white$ are site percolations on a triangular lattice.
	The definition of~$\xi^\black$ was communicated to the first author by Harel and Spinka during their joint stay at the Tel Aviv University in~2019.
	As far as we know, the coupling of the two percolations~$\xi^\black$ and~$\xi^\white$ is new.
	\item In the six-vertex model, $\xi^\black$ and~$\xi^\white$ are bond percolation on a square lattice.
	The coupling of $\xi^\black$ and~$\xi^\white$ was introduced by Lis who also showed that non-percolation in~$\xi^\black$ and~$\xi^\white$ implies delocalisation~\cite[Theorem~6.4]{Lis22}.
	Each of~$\xi^\black$ and~$\xi^\white$ separately implicitly appears already in the duality mappings for the Ashkin--Teller models~\cite{AshTel43,MitSte71}  (see also~\cite{HuaDenJacSal13} for a review) and are used in~\cite{GlaPel19,RaySpi22}.
\end{itemize}
The similarity in proofs for the loop~\O{2} and the six-vertex models in this
article suggests that the two models are linked (at least in spirit) and that other qualitative ideas
may be transported from one model to the other.
A prime candidate would be the proof of localisation in the
six-vertex model due to Ray and Spinka~\cite{RaySpi20}:
at the present, the problem of deriving
exponential decay of the loop length for $n=2$ and for $x$ just below $x_c(2)=1/\sqrt2$ remains open.
On the
other hand, our unifying approach to delocalisation may apply to other
integer-valued height functions, such as Lipschitz functions on the square lattice
and graph homomorphisms on the hexagonal lattice with a nontrivial (20-vertex-type) interaction.

\subsection{Organisation of the paper}
Section~\ref{sec:results} defines the loop~\O{2} model, the six-vertex
model, and the random-cluster model and states our main results for each model.
Sections~\ref{section:loop_on} and~\ref{sec:six-vertex} prove our qualitative
delocalisation results for the loop~\O{2} and the six-vertex models.
Most steps of the two proofs are essentially identical, so we structured the sections similarly.
Section~\ref{sec:continuity} derives the continuity of the phase transition in the random-cluster model
from the delocalisation result for the six-vertex model.
Appendix~\ref{app:dicho} describes how to apply the strategy of~\cite{GlaMan21}
to quantify the delocalisation result.

\subsection{Acknowledgements}
AG is grateful to Matan Harel and Yinon Spinka for sharing in~2019 their ideas
regarding vertex percolations in the loop~\O{2} model and to Marcin Lis for
discussions regarding connections between the six-vertex and the random-cluster
models.
PL thanks François
Jacopin, Alex Karrila, and Mendes Oulamara for a collaboration in
which he learned about the BKW coupling,
and Hugo Duminil-Copin for the same collaboration as well as for fruitful discussions regarding the current article.

The authors want to
thank Moritz Dober, Laurin Köhler-Schindler, Ioan Manolescu, and Maran
Mohanarangan for fruitful discussions, especially regarding the non-symmetric
six-vertex model,
and Romain Panis for commenting on earlier versions of the manuscript.
This project started at the University of Vienna and we want
to thank it for the hospitality.

AG is supported by the Austrian Science Fund grant P3471.
This project has received funding from the European Research
Council (ERC) under the European Union's Horizon 2020 research and innovation programme (grant
agreement No.~757296).

\section{Definitions and formal statements}
\label{sec:results}

\subsection{The loop \O{2} model}
\label{sec:results-lip}

Let $\hexlattice=(\vertices{\hexlattice},\edges{\hexlattice})$ denote the
hexagonal lattice whose faces $\faces{\hexlattice}$ are centred at $\{k+\ell
e^{i\pi/3}:k,\ell\in\Z\}\subset\mathbb C$.  A \emph{domain} is a finite subgraph
$\domain=(\vertices{\domain},\edges{\domain})\subset\hexlattice$ consisting precisely of the
sets of vertices and edges which are on or contained inside a cycle on
$\hexlattice$.  For a given domain $\domain$, this cycle
is denoted by $\partial\domain\subset\domain$.  A \emph{loop configuration} on~$\domain$ is a
spanning subgraph of~$\domain \setminus \partial \domain$ in which every vertex
has degree~$0$ or~$2$.  The term comes from the observation that each nontrivial connected
component of $\omega$ is a cycle that we call a \emph{loop}.  Denote the
set of all loop configurations on~$\domain$ by~$\spaceLoop{\domain}$.

\begin{definition}[The loop~\O{n} model]
	Let $n,x > 0$.  The \emph{loop~\O{n} model} on~$\domain$ with \emph{edge weight}~$x$ (and \emph{loop
	weight} $n$) is the probability measure
	$\muLoop_{\domain,n,x}$ on $\spaceLoop{\domain}$ defined by
	\begin{equation}
		\label{eq:loop_on_weight}
		\muLoop_{\domain,n,x}(\omega)
		:= \tfrac{1}{Z_{\domain,n,x}} \cdot  n^{\ell(\omega)} \cdot x^{|\omega|},
	\end{equation}
	where~$\ell(\omega)$ and~$|\omega|$ denote the numbers of loops and edges
	in~$\omega$ respectively, and~$Z_{\domain,n,x}$ is the normalising
	constant (called the {\em partition function}) that
	renders~$\muLoop_{\domain,n,x}$ a probability measure.
\end{definition}

Our results address the $\Omega\nearrow\H$ limit of this family of measures,
and for this reason we now introduce full-plane loop measures and an appropriate topology.
	A \emph{loop configuration} on~$\H$ is a
spanning subgraph of~$\H$ in which every vertex
has degree~$0$ or~$2$.
Denote the
set of all loop configurations on~$\H$ by~$\spaceLoop{\H}$.
Observe that the nontrivial connected components of a loop configuration
on $\H$ are loops or bi-infinite paths.
Write $\calP$ for the family of all probability measures on the sample space $\spaceLoop{\H}$ endowed with the $\sigma$-algebra generated by \emph{cylinder events}
(events depending on the state of finitely many edges).
We view each configuration $\omega\in\spaceLoop{\domain}$
	as a configuration in $\spaceLoop{\H}$ by identifying
	$\omega=(\vertices{\domain},\edges{\omega})$
	with $(\vertices{\H},\edges{\omega})$.
	This also allows us to view each measure $\muLoop_{\domain,n,x}$
	as a measure in $\calP$.

\begin{definition}[Full-plane Gibbs measures of the loop~\O{n} model]
	For any domain $\domain$ and~${\omega'}\in \spaceLoop{\H}$, we define
\[
\spaceLoop{\H;\domain;{\omega'}}
:=
\{
	\omega\in\spaceLoop{\H}
	:
	\edges{\omega}\setminus\edges{\domain}
	=
	\edges{{\omega'}}\setminus\edges{\domain}
\}.
\]
We define~$\muLoop_{\domain,n,x}^{\omega'}\in\calP$ as the following probability measure supported on $\spaceLoop{\H;\domain;{\omega'}}$:
\[
	\muLoop_{\domain,n,x}^{\omega'}(\omega) = \tfrac{1}{Z_{\domain,n,x}^{\omega'}} \cdot  n^{\ell(\omega;\domain)}\cdot x^{|\omega\cap\domain|},
\]
where~$\ell(\omega;\domain)$ is the number loops and bi-infinite paths in $\omega$ intersecting $\vertices{\domain}$,
$|\omega\cap\Omega|$ is the number of edges in $\omega\cap\Omega$, and~$Z_{\domain,n,x}^{\omega'}$ is the partition function.
A measure $\mu\in\calP$ is called a \emph{Gibbs measure}
	if, for any domain $\domain$
	and for $\mu$-almost every ${\omega'}$,
	the measure $\mu$ conditional on $\{\omega\in\spaceLoop{\H;\domain;{\omega'}}\}$
	equals $\muLoop_{\domain,n,x}^{\omega'}$.
	(This conditional measure is uniquely defined as a regular conditional probability distribution
	(r.c.p.d.)
	up to $\mu$-almost nowhere modifications.)
This definition of a Gibbs measure is equivalent to asking that for any domain $\domain$ and for any bounded measurable function $f:\spaceLoop{\H}\to\R$,
we have
\[
	\mu(f)=	\int\muLoop_{\domain,n,x}^{\omega'}(f) d\mu({\omega'})
	.
\]
Write $\mathcal G_{2,x}\subset\calP$ for the set of Gibbs measures.
The family $(\muLoop_{\domain,n,x}^{\omega'})_{\omega'}$ is a \emph{probability kernel},
and the family $(\muLoop_{\Omega,n,x}^{\omega'})_{\Omega,{\omega'}}$ is a \emph{specification}.
\end{definition}

Although not immediately apparent, the definition of the specification required us to make some arbitrary choices limiting in some
sense the universality of our main result;
more details may be found in Remark~\ref{rem:limitations} below.

\begin{definition}[The weak topology]
	The \emph{weak topology} on $\calP$ is the coarsest
	topology such that the map $\mu\mapsto \mu(A\subset \edges{\omega})$
	is continuous for any finite $A\subset\edges{\hexlattice}$.
\end{definition}

\begin{theorem}
	\label{thm:loop_soft_deloc}
	Let $n=2$ and
	$x\in[1/\sqrt2,1]$.
	Then, $\mathcal G_{2,x}$ consists of a unique measure, which we denote by $\muLoop_{2,x}$.
	This measure
	is extremal, shift-invariant, and ergodic with respect to the symmetries of~$\hexlattice$,
	exhibits no bi-infinite paths almost surely, and satisfies
	\begin{equation}\label{eq:inf-many-loops-lip}
		\muLoop_{2,x}(\text{each face in $\faces{\hexlattice}$ is surrounded by infinitely many loops}) = 1.
	\end{equation}
	Moreover, $\muLoop_{\domain_k,2,x}$ converges to~$\muLoop_{2,x}$ in the weak topology
	for any increasing sequence of domains $(\domain_k)_k\nearrow\hexlattice$.
\end{theorem}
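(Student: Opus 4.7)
The strategy is to pass to the two-spin representation, exploit a new joint FKG inequality to build infinite-volume limits under maximal boundary conditions, apply the non-coexistence theorem to a dual pair of site percolations, and then leverage the black/white symmetry special to the loop~\O{2} model.

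First, I pass from $\omega\sim\muLoop_{\domain,2,x}$ to the associated integer-valued Lipschitz height function $h$ on $\faces{\hexlattice}$: given $\omega$, one orients every loop by an independent fair coin and defines $h$ so that $\omega$ is its set of level lines; this step is loss-less because $\tfrac12\cdot n=1$ when $n=2$. The residue $h\bmod 4$ is encoded by a pair of $\pm1$ spin configurations $(\sigma^{\black},\sigma^{\white})$ on $\faces{\hexlattice}$. Conditioning on $\sigma^{\black}$, the law of $\sigma^{\white}$ is an Ising model on a modified graph, and an Edwards--Sokal-type expansion produces a site percolation $\xi^{\black}$ on the triangular lattice formed by one partite class of $\vertices{\hexlattice}$. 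Any $\xi^{\black}$-circuit determines the conditional law inside (domain Markov property), which allows importing the strategies of~\cite{GlaMan21}.

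Next, I decompose $\xi^{\black}=\xi^{\blackp}\sqcup\xi^{\blackm}$ according to the sign of $\sigma^{\black}$ and establish the joint FKG inequality for $(\sigma^{\black},\xi^{\blackp},-\xi^{\blackm})$, in which open vertices for $\xi^{\blackm}$ count as negative information. This inequality makes the sequence $\muLoop_{\domain_k,2,x}^{+}$ under maximal $+$ boundary conditions on $\sigma^{\black}$ monotone, so it converges to a shift-invariant limit $\muLoop^{+}$ whose $(\sigma^{\black},\xi^{\black})$-marginal is extremal and ergodic; a mirror limit $\muLoop^{-}$ is built analogously. Defining $\xi^{\white}$ symmetrically from $\sigma^{\white}$, the range $1/\sqrt2\le x\le 1$ permits a coupling in which the site-complement of $\xi^{\black}$ is contained in $\xi^{\white}$ (super-duality, becoming tight at $x=1/\sqrt2$).

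Under $\muLoop^{+}$, the measure on $\xi^{\black}$ is FKG, shift-invariant, and ergodic, so Zhang's version of the non-coexistence theorem forbids $\xi^{\black}$ and its site-dual from both having a unique infinite cluster. Combined with super-duality, this forces at least one of $\xi^{\black}$, $\xi^{\white}$ to have no infinite cluster almost surely, and the black/white symmetry of the loop~\O{2} model promotes this conclusion to both. Consequently $\xi^{\black}$ contains infinitely many circuits around every face of $\hexlattice$ $\muLoop^{+}$-almost surely; each such circuit forces a loop of $\omega$ to encircle that face, which yields~\eqref{eq:inf-many-loops-lip}. The same property precludes bi-infinite paths, as every such path would need to cross infinitely many disjoint loops.

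Finally, the infinitely-many-circuits property combined with the domain Markov property shows $\muLoop^{+}=\muLoop^{-}$, because arbitrarily far $\xi^{\black}$-circuits decouple any fixed region from the boundary data. Every Gibbs measure is FKG-sandwiched between $\muLoop^{-}$ and $\muLoop^{+}$, so $\mathcal{G}_{2,x}=\{\muLoop_{2,x}\}$ with $\muLoop_{2,x}:=\muLoop^{\pm}$, and the same sandwich delivers convergence of $\muLoop_{\domain_k,2,x}$ along any exhausting sequence together with the stated invariance and ergodicity. I expect the hardest step to be the joint FKG inequality for $(\sigma^{\black},\xi^{\blackp},-\xi^{\blackm})$ in a form compatible with the super-duality coupling throughout the full range $\tfrac12\le x^2\le 1$ (rather than only at the endpoints $x^2=\tfrac12$ and $x^2=1$ treated previously); once those two ingredients are in place, Zhang's theorem, the black/white symmetry, and the sandwich argument are classical in spirit.
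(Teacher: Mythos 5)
Your overall scheme aligns closely with the paper's: two-spin representation, site percolations $\xi^\black,\xi^\white$ on the triangular lattice of $\upvert$-vertices, the joint FKG inequality for $(\sigma^\black,\xi^\blackp,-\xi^\blackm)$, super-duality, Zhang's non-coexistence theorem, black/white symmetry, and comparison between boundary conditions. The convergence, shift-invariance, ergodicity, and the infinitely-many-loops statement~\eqref{eq:inf-many-loops-lip} all come out as you sketch (with the minor caveat that a single $\xi^\black$-circuit does not by itself force a loop; one needs alternating $\xi^\blackp$- and $\xi^\blackm$-circuits, which ergodicity and the flip symmetry provide).

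The genuine gap is the final uniqueness claim. You write that ``every Gibbs measure is FKG-sandwiched between $\muLoop^-$ and $\muLoop^+$,'' but $\mathcal G_{2,x}$ is the set of Gibbs measures for the \emph{loop} specification, and loop configurations carry no partial order for which such a sandwich makes sense. The FKG comparison lives in the spin world, so one must first lift an arbitrary $\rho\in\mathcal G_{2,x}$ to a Gibbs measure for the spin specification. This lifting is the crux: $\rho$ may exhibit bi-infinite paths, and a loop Gibbs measure with bi-infinite paths does not determine a spin Gibbs measure canonically. The paper handles this by explicitly constructing a candidate spin measure $\tilde\rho$ (bi-infinite paths deterministically coloured black, finite loops coloured by independent fair coins, spins then fixed modulo a global sign) and then \emph{proving} that $\tilde\rho$ is indeed a spin Gibbs measure. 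That last step is not a formality: it requires an invariance-under-Glauber-dynamics argument built around an auxiliary probability kernel $\pi$ (the kernel resamples a loop configuration in a small window, then re-randomises the colouring of affected loops), and the check that $\tilde\rho\pi=\tilde\rho$ and that $\pi$ fixes the marginal on spins in the correct way uses the specific choice that bi-infinite paths are black, so that $\{\sigma^\white=-\}$ does not percolate under $\tilde\rho$ and Lemma~\ref{lemma:thm1aux} applies. Your proposal has no analogue of this step, so the chain of deductions breaks exactly where you move from ``the two extremal spin limits coincide'' to ``$\mathcal G_{2,x}$ is a singleton.''
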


\begin{remark}
	\label{rem:limitations}
	The probability kernel $(\muLoop_{\domain,n,x}^{\omega'})_{\omega'}:\spaceLoop{\hexlattice}\to\calP$
	is not continuous: the points of discontinuity are precisely the configurations
	which have at least two bi-infinite paths intersecting the domain.
	For this reason, Theorem~\ref{thm:loop_soft_deloc} does not classify
	all \emph{thermodynamical limits}, that is, weak limits of $\muLoop_{\domain_k,n,x}^{\omega'}$
	as $k\to\infty$ for some increasing sequence of domains $(\domain_k)_k\nearrow\hexlattice$.
	Such rogue limits cannot be invariant under lattice translations,
	since in that case the classical and robust Burton--Keane argument~\cite{BurKea89} would rule out the appearance
	of more than a single infinite interface~\cite{LamTas20}.

	The existence of Gibbs measures which are not translation-invariant has been ruled out for several
	statistical mechanics models in
	two dimensions: the Ising model~\cite{Aiz80,Hig81,georgii2000percolation}, the Potts and
	FK-percolation models, and the loop~\O{n} model for~$n\geq 1,\, x\leq
	1/\sqrt{n}$~\cite{CoqDumIofVel14,GlaMan21c},
	and finally the dimer model~\cite{Agg19}.
	The problem remains intricately open for the XY model,
	in which case only the uniqueness of the shift-invariance Gibbs measure has been established~\cite{Cha98}.
\end{remark}

The loops of the loop~\O{2} model appear naturally as the level lines of an integer-valued Lipschitz function on $\faces{\H}$.
Let $\face(\domain)\subset\faces{\hexlattice}$ denote the set of faces enclosed
by $\partial\domain$,
and let $\partial_\face\domain\subset\faces{\domain}$ denote the set of faces adjacent to $\partial\domain$.

\begin{definition}[Random Lipschitz function]
	A function~$h\colon \face(\domain) \to \Z$ is called a \emph{Lipschitz function} on~$\domain$ if,
	for any adjacent faces $u,v\in \face(\domain)$,
	\[
		|h(u) - h(v)| \in \{0,1\}.
	\]
	Let~$\spaceLip^0(\domain)$ denote the set of Lipschitz functions that satisfy~$\left.h\right|_{\partial_\face\domain} \equiv 0$.
	The \emph{domain wall} of $h$ is the spanning subgraph~$\omega[h]$ of~$\domain$ given by the set of edges in $\edges{\domain}\setminus\edges{\partial\domain}$ separating faces where~$h$ takes different values ({\em disagreement edges}).
	The \emph{random Lipschitz function} on domain~$\domain$ with parameter~$x>0$ under zero boundary conditions
	is the probability measure on $\spaceLip^0(\domain)$ defined by
	\begin{equation}
		\label{eq:homomorph_weight}
		\muLip^0_{\domain,x}(h) := \tfrac{1}{Z_{\domain,x}^0}\cdot x^{|\omega[h]|},
	\end{equation}
	where~$Z_{\domain,x}^0$ is the partition function.
\end{definition}

It is easy to see that the map
\[
	\spaceLip^0(\domain)\to \spaceLoop{\domain}
	,\,
	h\mapsto \omega[h]
\]
is well-defined and that the pushforward of $\muLip^0_{\domain,x}$
along this map is $\muLoop_{\domain,2,x}$.
Indeed, the gradient of a Lipschitz function consists of oriented loops, and so each loop configuration~$\omega$ has exactly~$2^{\ell(\omega)}$ Lipschitz functions corresponding to it;
see Proposition~\ref{prop:lip-spin-loop} below.
Theorem~\ref{thm:loop_soft_deloc} implies that the Lipschitz function is
delocalised for~$x\in[1/\sqrt2,1]$.
The following result also quantifies the delocalisation.

Let $\operatorname{dist}:\faces{\hexlattice}\times\faces{\hexlattice}\to\R$
denote the metric induced by the Euclidean distance between the centres of two faces.

\begin{theorem}[Logarithmic delocalisation of the Lipschitz function]
	\label{thm:soft_Lip_deloc}
	Let $x\in[1/\sqrt2,1]$.
	Then, the random Lipschitz function delocalises \emph{logarithmically}:
	there exist universal constants~$c,C>0$ (not depending on $x$) such that, for any domain~$\domain$ and any face~$u\in \faces\domain$,
	\[
		c\cdot\log\operatorname{dist}(u,\partial_\face\domain)
		\leq
		\Var_{\muLip^0_{\domain,x}}[h(u)]
		\leq
		C\cdot\log\operatorname{dist}(u,\partial_\face\domain).
	\]
	The same estimates hold also for the expected number of loops
	surrounding~$u$ in the corresponding loop~\O{2} model since this expectation
	equals the variance in the above display.
\end{theorem}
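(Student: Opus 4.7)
The plan is to reduce both inequalities to estimates on the expected number of loops surrounding $u$, and then to establish these estimates by transplanting the Russo--Seymour--Welsh / renormalisation machinery developed in~\cite{GlaMan21,DumSidTas17} from the uniform case $x=1$ to the non-uniform regime $x\in[1/\sqrt2,1]$.

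First I would establish the identity $\Var_{\muLip^0_{\domain,x}}[h(u)]=\E_{\muLoop_{\domain,2,x}}[N_u]$, where $N_u$ denotes the number of loops surrounding $u$. This follows from the coupling between Lipschitz functions and loop configurations: conditional on the loop configuration $\omega$, the height $h(u)$ is a sum of independent $\pm 1$ Rademacher variables, one per loop surrounding $u$ (each loop is traversed with height jumping up or down independently). The conditional mean vanishes by the $h\mapsto -h$ symmetry, and the conditional variance equals $N_u$, so the law of total variance gives the identity. It therefore suffices to show $\E[N_u]\asymp\log\operatorname{dist}(u,\partial_\face\domain)$.

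For the lower bound, I would combine Theorem~\ref{thm:loop_soft_deloc} with a quantitative RSW estimate. Using the joint FKG property for the augmented configuration $(\sigma^\black,\xi^\blackp,-\xi^\blackm)$ introduced in this paper, the super-duality relation between $\xi^\black$ and $\xi^\white$, and a finite-energy argument, one obtains a uniform lower bound $c_0>0$ on the probability that any dyadic annulus $\{v:2^k\leq\operatorname{dist}(u,v)\leq 2^{k+1}\}\subset\domain$ contains a circuit of $\xi^\black$ surrounding $u$; each such circuit produces at least one loop around $u$ inside that annulus. Summing this estimate over the $\asymp\log\operatorname{dist}(u,\partial_\face\domain)$ admissible scales yields the lower bound. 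For the upper bound, I would invoke the renormalisation inequality of~\cite{DumSidTas17} (or the self-duality-free variant of~\cite{DumTas19}), whose key input is the matching RSW estimate: the probability that no loop surrounds $u$ in a given dyadic annulus is likewise bounded away from $0$, uniformly in the scale. This follows from the domain Markov property of $\xi^\black$ (a circuit of $\xi^\black$ determines the conditional measure in its interior) together with joint FKG. Iterating across dyadic scales shows that the number of loops inside each annulus has expectation $O(1)$, whence $\E[N_u]=O(\log\operatorname{dist}(u,\partial_\face\domain))$.

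The main obstacle is extending the RSW and renormalisation machinery of~\cite{GlaMan21}, developed for the uniform graph homomorphism case $x=1$, to general $x\in[1/\sqrt2,1]$, where the enhanced symmetries of the uniform model are lost. The new joint FKG inequality of the present paper is what compensates for this loss of symmetry: it permits the same conditioning arguments that drive the renormalisation scheme to be carried out in the non-uniform setting. Formalising this replacement is precisely the content of Appendix~\ref{app:dicho}.
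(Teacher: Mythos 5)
Your proposal captures the overall architecture correctly: the identity $\Var_{\muLip^0_{\domain,x}}[h(u)]=\E_{\muLoop_{\domain,2,x}}[N_u]$, the reduction to counting nested circuits across dyadic scales, and the role of the new joint FKG inequality and super-duality in replacing the enhanced symmetries of the $x=1$ case. This matches the paper's route. However, the way you have split the work between the lower and upper bounds contains a genuine gap.

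You assert that the lower bound ``$\E[N_u]\gtrsim \log$'' follows from joint FKG, super-duality, and a finite-energy argument alone, which would yield a scale-uniform constant $c_0>0$ for circuits in dyadic annuli. That is too optimistic. What FKG plus super-duality give directly (Lemma~\ref{lemma:loop_rhombus_crossing}) is a lower bound of $1/4$ on a single symmetric rhombus crossing under favourable boundary conditions; finite energy would only give a bound that degenerates geometrically with the annulus scale. The scale-uniform circuit estimate $\inf_n\alpha_n>0$ is precisely what requires the full machinery: the RSW estimate~\eqref{eq:RSW-lip}, the pushing lemma~\eqref{eq:pushing-lip}, the renormalisation inequality~\eqref{eq:renorm-ineq-lip}, and then a dichotomy in which the stretched-exponential branch (ExpDec) must be excluded by a separate argument using super-duality and the black/white symmetry. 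So both the lower \emph{and} upper bounds rest on the same renormalisation/dichotomy input; they are not obtained by independent mechanisms as your proposal suggests. A secondary point: the renormalisation inequality you should invoke is the one adapted in~\cite{GlaMan21} (and re-proved in Appendix~\ref{app:dicho}), not the one from~\cite{DumSidTas17} verbatim — the latter uses a stronger domain Markov property than the one available here, which is the reason~\cite{GlaMan21} needed a modified argument in the first place. Finally, your upper-bound sketch (``expectation $O(1)$ per annulus'') is right in spirit, but the paper's argument is more delicate: one bounds the probability of \emph{two} nested loops of opposite colour within an annulus, which requires the Markov property over a $\xi^\blackp$-circuit together with the circuit estimate~\eqref{eq:circuits-every-scale-lip}.
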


\subsection{The six-vertex model}
\label{subsec:intro:6v}

Let $\squarelattice=(\vertices{\squarelattice},\edges{\squarelattice})$ denote the
square lattice whose faces $\faces{\squarelattice}$ are centred at $\Z^2$.
Faces in this context are also called \emph{squares}.
A square is called \emph{even} if the coordinate sum of its centre is even, and it is called \emph{odd} otherwise.
Even and odd squares are also thought of as being \emph{black} and \emph{white}
respectively, so that $\squarelattice$ resembles an infinite chessboard.
A \emph{domain} is a finite subgraph
$\domain=(\vertices{\domain},\edges{\domain})\subset\squarelattice$ consisting precisely of the
sets of vertices and edges which are on or contained inside a cycle on
$\squarelattice$. For a given domain $\domain$, this cycle
is denoted by $\partial\domain\subset\domain$.
Let $\face(\domain)\subset\faces{\squarelattice}$ denote the set of squares enclosed
by $\partial\domain$,
and let $\partial_\face\domain\subset\faces{\domain}$ denote the set of squares which share
a \emph{vertex} with $\partial\domain$.
Two squares are \emph{adjacent} if they share an edge.

A \emph{graph homomorphism} is a parity-preserving function $h:\faces{\domain}\to\Z$ such that
\[
	|h(v)-h(u)|=1
\]
for any two adjacent squares $u,v\in\faces{\domain}$.
Define~$\spaceHom^{0,1}(\domain)$ as the set of graph homomorphisms on $\domain$ with~$0,1$ boundary conditions: they satisfy
\[
	h(u)\in\{0,1\} \qquad\forall u\in \partial_\face\domain.
\]
Remark that the parity constraint forces the height function to take a value zero
at even boundary squares and a value one at odd boundary squares.

Define the set of \emph{interior vertices} by $\intvert(\domain):=V(\domain)\setminus V(\partial\domain)$.
Consider a vertex $v\in \intvert(\domain)$.
The gradient of $h$ between the four squares incident to $v$
can take one of six possible values.
For~$i=1,\dots, 6$, define~$n_i(h)$ as the number of vertices in $\intvert(\domain)$ that have type~$i$ for~$h$; see Fig.~\ref{fig:six_vertex}.

\begin{figure}
	\centering
	\includegraphics[scale=1.5]{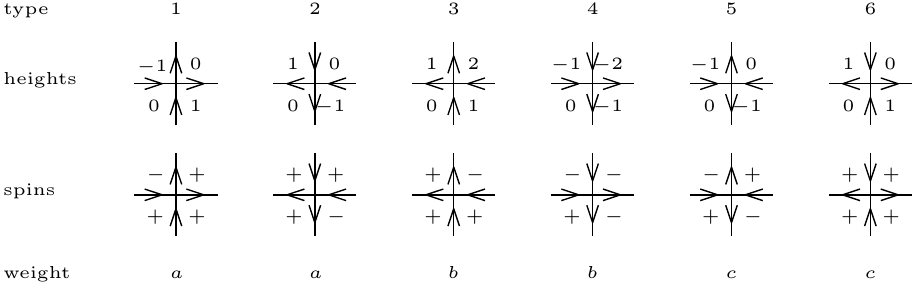}
	\caption{The six types of arrow configurations and their weights. The edge orientations determine the heights up to an additive constant and the spins up to the global spin flip.}
	\label{fig:six_vertex}
\end{figure}

\begin{definition}[Random graph homomorphism]
Given some parameters $a,b,c>0$, the probability measure $\muHom_{\domain,a,b,c}^{0,1}$ on
$\spaceHom^{0,1}(\domain)$
is defined by
\begin{equation}
	\label{eq:6v_weight_def}
	\muHom_{\domain,a,b,c}^{0,1}(h):=\tfrac1{Z_{\domain,a,b,c}^{0,1}}\cdot
	a^{n_1(h)+n_2(h)}b^{n_3(h)+n_4(h)}c^{n_5(h)+n_6(h)},
\end{equation}
where $Z_{\domain,a,b,c}^{0,1}$ is the partition function.
\end{definition}

One way to view the random gradient of~$h$ is by orienting each edge of~$\squarelattice$ in such a way that the larger height is on its right.
The pushforward of~$\muHom_{\domain,a,b,c}^{0,1}$ along this bijective map is the \emph{six-vertex model}.
We now state our main result for the six-vertex model.

Let $\operatorname{dist}:\faces{\squarelattice}\times\faces{\squarelattice}\to\R$
denote the metric induced by the Euclidean distance between the centres of two squares.

\begin{theorem}[Delocalisation in the six-vertex model]
	\label{thm:hard_six_deloc}
	Consider $a,b,c>0$ such that~$a,b\leq c\leq a+b$.
	Then, the graph homomorphism \emph{delocalises}: for any sequence of domains $(\domain_k)_k\nearrow\Z^2$ containing
	some fixed square $u\in\faces{\squarelattice}$, we have
	\begin{equation}
		\label{thm:hard_six_deloc:qual}
		\lim_{k\to\infty}\Var_{\muHom^{0,1}_{\domain_k,a,b,c}}[h(u)]=\infty
	\end{equation}
	If furthermore $a=b$, then the variance grows {\em logarithmically}:
	there exist universal constants~$r,R>0$ (not depending on~$a,b,c$) such that, for any finite domain~$\domain$ and any square~$u\in \faces{\domain}$,
	\begin{equation}
		\label{thm:hard_six_deloc:quant}
		r\cdot \log\operatorname{dist}(u,\partial_F\domain) \leq \Var_{\muHom^{0,1}_{\domain,a,b,c}}[h(u)] \leq R\cdot \log\operatorname{dist}(u,\partial_F\domain).
	\end{equation}
\end{theorem}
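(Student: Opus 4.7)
The plan is to follow the two-spin/two-percolation framework outlined in the introduction, push it through Sheffield's non-coexistence theorem to obtain disorder of the white spin field, and then upgrade this to full delocalisation of $h$ via the $\T$-circuit construction. The logarithmic quantification in the symmetric case $a=b$ will then follow from a Russo--Seymour--Welsh argument combined with the renormalisation inequality of~\cite{DumSidTas17,DumTas19}.

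First, I would encode $h$ modulo $4$ via the $\pm 1$-valued spin fields $\sigma^\black$ on black squares and $\sigma^\white$ on white squares. Given $\sigma^\black$, the conditional law of $\sigma^\white$ is an Ising-type measure on a modified graph, so an Edwards--Sokal expansion produces the bond percolation $\xi^\black$ on the lattice of black squares together with its dual-like companion $\xi^\white$ on the lattice of white squares; the coupling is the one of~\cite{Lis22}. Two structural properties must be established. First, the domain Markov property: a circuit of $\xi^\black$ determines the law inside it. Second, the joint FKG inequality for the triple $(\sigma^\black,\xi^{\blackp},-\xi^{\blackm})$, where $\xi^\black=\xi^{\blackp}\sqcup\xi^{\blackm}$ is split according to the common sign of the two adjacent black spins. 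This joint FKG inequality, the main technical innovation, holds throughout $a,b\leq c\leq a+b$, whereas FKG for $\xi^\black$ alone is available only in the localised regime $a+b\leq c$. Monotone passage to the limit under $\sigma^\black$-maximal boundary conditions then yields an ergodic, shift-invariant infinite-volume Gibbs measure $\mu^{\max}$.

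Next, combining super-duality between $\xi^\black$ and $\xi^\white$ with the maximality of $\sigma^\black$ under $\mu^{\max}$, I would invoke Sheffield's general non-coexistence theorem (the full strength is needed, since the diagonal reflection symmetry required for Zhang's simpler argument fails when $a\neq b$) to show that $\sigma^\white$ is disordered, in the sense that no sign percolates in $\xi^\white$. This gives disorder modulo $4$ on the odd sublattice of heights, but a priori even-sublattice heights could still be localised. To rule this scenario out I would use the $\T$-circuit construction on the triangulation obtained from $\squarelattice$ by adding horizontal diagonals, as in~\cite[Section~9]{GlaPel19}: disorder of $\sigma^\white$ produces infinitely many $\T$-circuits around any fixed square $u$, and pairing each $\T$-circuit with a height-flip coupling of the configuration inside it yields a nontrivial additive contribution to $\Var[h(u)]$. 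Summing over infinitely many nested $\T$-circuits gives~\eqref{thm:hard_six_deloc:qual}.

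For the logarithmic bounds~\eqref{thm:hard_six_deloc:quant} in the symmetric case $a=b$, the additional diagonal symmetry combined with the joint FKG inequality allows one to extend the RSW machinery of~\cite{GlaMan21}, originally developed for the uniform loop~\O{2} model, to produce uniform crossing estimates for $\xi^\black$ (and hence for $\T$-circuits) in rectangles of fixed aspect ratio. Feeding these into the renormalisation inequality of~\cite{DumSidTas17,DumTas19} (the latter version notably not requiring self-duality) shows that the expected number of $\T$-circuits around $u$ in a dyadic annulus is uniformly bounded above and below; summing over $\log_2\operatorname{dist}(u,\partial_F\domain)$ dyadic annuli yields both inequalities in~\eqref{thm:hard_six_deloc:quant}. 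The main obstacle throughout is the joint FKG inequality for $(\sigma^\black,\xi^{\blackp},-\xi^{\blackm})$ in the full regime $a,b\leq c\leq a+b$: without it, neither the existence of the one-sided infinite-volume limit $\mu^{\max}$ nor the percolation input into the non-coexistence theorem would be available, and the remainder of the argument collapses.
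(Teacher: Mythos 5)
Your outline hits all of the paper's main ingredients — the spin/percolation representations, the domain Markov property, the joint FKG inequality for $(\sigma^\black,\xi^{\blackp},-\xi^{\blackm})$, the one-sided infinite-volume limit, Sheffield's non-coexistence theorem via super-duality, the $\T$-circuit argument, and finally RSW plus the renormalisation inequality for the quantitative bound when $a=b$. The deployment of the $\T$-circuits, however, contains a genuine gap.

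You propose that ``pairing each $\T$-circuit with a height-flip coupling of the configuration inside it yields a nontrivial additive contribution to $\Var[h(u)]$, and summing over nested $\T$-circuits gives~\eqref{thm:hard_six_deloc:qual}.'' This cannot work as stated: the height function is \emph{not} constant along a $\T$-circuit. A $\T^\white$-circuit lives in the set $\{h\in\{1,3,\dots\}\}$, and the heights along it can vary (a $\T$-edge connects faces at $\ell^\infty$-distance $1$ or $2$, along which the height can change by $\pm 2$). So one cannot ``flip the inside'' and produce a symmetric $\pm$ contribution to the variance at each $\T$-circuit. The circuits on which the height is genuinely constant are the $\xi^\black$- and $\xi^\white$-circuits: $\sigma^\black$ is constant along a $\xi^\black$-circuit, and since diagonally adjacent black squares differ by at most $2$ in height while agreeing mod $4$, the height is actually constant there.

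The paper therefore separates the two roles that you merged. First, the $\T$-circuit argument produces a \emph{coupling} $\pi$ of $\muHom^0$ (heights pinned to $0$ on the infinite $\{\sigma^\black=+\}$-cluster) and $\muHom^1$ (heights pinned to $1$ on the infinite $\{\sigma^\white=+\}$-cluster) using the fundamental shift-and-reflect symmetry $\tilde h(u)=1-h(u-e_1)$; Sheffield's theorem applied to $\T^\white$-site percolation gives infinitely many $\T^\white$-circuits in $\{h\geq 1\}$, the $\T$-geometry guarantees $\partial I\subset F(\gamma_N)\cup(F(\gamma_N)+e_1)$, and Strassen then produces $\muHom^1\preceq\muHom^0$. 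Iterating gives $\muHom^4\preceq\muHom^0$, a contradiction, which rules out the percolation of $\xi^\black$. Only then, knowing that neither $\xi^\black$ nor $\xi^\white$ percolates (and using super-duality), does one obtain infinitely many alternating $\xi^\black$- and $\xi^\white$-circuits around $u$; the random-walk/variance decomposition is done on \emph{these} circuits (where $h$ is constant), not on $\T$-circuits. So you have the right tools, but the mechanism you propose for extracting variance directly from $\T$-circuits would fail, and the $\muHom^1\preceq\muHom^0$ step, which is what the $\T$-circuits are actually for, is missing from your account.

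A smaller omission in the $a=b$ quantitative part: unlike the triangular-lattice loop $\mathrm{O}(2)$ case, $\{\sigma^\black=+\}$ and $\{\sigma^\black=-\}$ are not planar-dual on the square lattice, so the RSW argument of~\cite{GlaMan21} needs an extra step (replacing spin crossings by $\T^\black$-crossings and then by $\xi^\white$-paths via the FKG inequality) before the super-duality kicks in. Worth flagging, but it is a detail of the adaptation rather than a structural gap.
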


\subsection{The random-cluster model}
\label{subsec:introRCM}

Work on the square lattice graph $\squarelattice$ described above.
Let $\domain$ be a domain,
and recall that $\intvert(\domain)$ denotes the set of
interior vertices of $\domain$.
The squares $F(\squarelattice)$ decompose as a disjoint union of black squares $F^\black(\squarelattice)$
and white squares $F^\white(\squarelattice)$.
For any domain $\domain$, we write $F^\black(\domain)$ and $F^\white(\domain)$
for the intersections of these two sets with $F(\domain)$ respectively.
Also write $\partial_{F^\black}\domain$ and $\partial_{F^\white}\domain$
for the intersections with $\partial_F\domain$.
Write $\domain^\black=(V(\domain^\black),E(\domain^\black))$ for the graph whose vertex set
is $F^\black(\domain)$ and such that two squares are neighbours if and only if they both contain the same vertex
in $\intvert(\domain)$.
The graph $\domain^\white=(V(\domain^\white),E(\domain^\white))$ is defined similarly,
and we shall also allow $\squarelattice=\domain$ in these definitions.

We view every~$\eta\in \{0,1\}^{E(\domain^\black)}$ as a percolation
configuration by stating that an edge~$e\in E(\domain^\black)$ is {\em open}
if~$\eta_e = 1$ and {\em closed} otherwise.  We identify~$\eta$ with the set of open edges and with the
spanning subgraph of~$\domain^\black$ given by it.
The dual of $\eta$, written $\eta^*$, is a spanning subgraph of $\Omega^\white$ and defined such that, for every edge $e^*\in E(\Omega^\white)$,
\[
	\text{$e^*$ is $\eta^*$-open
if and only if $e$ is $\eta$-closed.}
\]

The non-quantitative part of our delocalisation arguments does not rely on the rotation by~$\pi/2$ and hence applies readily to the {\em asymmetric} random-cluster model.
To define this model, we need some additional notation.
Let~$E_a^\black$ (resp.~$E_b^\black$) be the set of all edges in~$E(\squarelattice^\black)$ parallel to~$e^{\pi i/4}$ (resp.~$e^{3\pi i/4}$).
Note that~$E_a^\black$ and~$E_b^\black$ are disjoint and~$E_a^\black\cup E_b^\black = E(\squarelattice^\black)$.
(The notation is chosen to fit the weights~$a$ and~$b$ in the six-vertex model; specifically in Section~\ref{sec:six-vertex}.)

\begin{definition}[Random-cluster model]
	\label{def:RCM}
	Given~$p_a,p_b\in [0,1]$ and $q>0$, the random-cluster model on~$\domain^\black$
	is supported on $\{0,1\}^{E(\domain^\black)}$ and, in the case of {\em free}
	boundary conditions, is defined by
	\[
		\phifree_{\domain^\black,p_a,p_b,q}(\eta):= \tfrac{1}{\Zfree_{\domain^\black,p_a,p_b,q}}
		p_a^{|\eta\cap E_a^\black|}
		p_b^{|\eta\cap E_b^\black|}
		(1-p_a)^{|E(\domain^\black)\cap E_a^\black\setminus \eta|}
		(1-p_b)^{|E(\domain^\black)\cap E_b^\black\setminus \eta|}
		q^{k(\eta)},
	\]
	where~$|\cdot|$ denotes the cardinality of the set and~$k(\eta)$ is the number
	of connected components in~$\eta$ (including isolated vertices). Below, we refer to the connected components as {\em clusters}.

	Under the wired boundary conditions, the model is defined by
	\[
		\phiwired_{\domain^\black,p,q}(\eta):= \tfrac{1}{\Zwired_{\domain^\black,p_a,p_b,q}}
		p_a^{|\eta\cap E_a^\black|}
		p_b^{|\eta\cap E_b^\black|}
		(1-p_a)^{|E(\domain^\black)\cap E_a^\black\setminus \eta|}
		(1-p_b)^{|E(\domain^\black)\cap E_b^\black\setminus \eta|}
		q^{k(\etawired)},
	\]
	where~$\etawired$ is the graph obtained from~$\eta$ by identifying all vertices of~$\partial_{F^\black}\domain$.
\end{definition}

We restrict to the case~$q\geq 1$; the FKG inequality fails for $q<1$ which
renders essentially all known probabilistic methods useless.
The FKG inequality implies that all measures are stochastically increasing in $p_a$ and~$p_b$,
and that:
\begin{enumerate}
    \item The measure $\phifree_{\domain^\black,p_a,p_b,q}$ is stochastically
    increasing in $\domain$,
    \item The measure $\phiwired_{\domain^\black,p_a,p_b,q}$ is stochastically
    decreasing in $\domain$,
    \item The measure $\phifree_{\domain^\black,p_a,p_b,q}$ is stochastically dominated by $\phiwired_{\domain^\black,p_a,p_b,q}$.
\end{enumerate}
The first two properties imply the existence of the weak
limits
\[
	\phifree_{\domain^\black,p_a,p_b,q} \xrightarrow[\domain\nearrow\squarelattice]{} \phifree_{p_a,p_b,q}, \quad \text{and} \quad
	\phiwired_{\domain^\black,p_a,p_b,q} \xrightarrow[\domain\nearrow\squarelattice]{} \phiwired_{p_a,p_b,q},
\]
where~$\phifree_{p_a,p_b,q}$ and~$\phiwired_{p_a,p_b,q}$ are probability measures on $\{0,1\}^{E(\squarelattice^\black)}$.
The third property implies that~$\phifree_{p_a,p_b,q}$ is stochastically dominated by $\phiwired_{p_a,p_b,q}$.

It is well-known that at the line (see~\cite[Section~6]{Gri06}):
\begin{equation}\label{eq:self-dual-line}
	\tfrac{p_a}{1-p_a} \cdot \tfrac{p_b}{1-p_b}=q,
\end{equation}
the random-cluster model is self-dual: the distribution of $\eta^*$ in $\phiwired_{p_a,p_b,q}$
is identical to the distribution of $\eta+(1,0)$ in $\phifree_{p_a,p_b,q}$
(we shift  $\eta$ to the right by one so that it becomes
a spanning subgraph of $\squarelattice^\white$ rather than $\squarelattice^\black$).

The BKW coupling relates the random-cluster model at the self-dual line to the six-vertex model (see Fig.~\ref{fig:BKWcomplete}).
This enables us to derive from the delocalisation of the six-vertex model the following result that was first established in~\cite{DumSidTas16} (symmetric case) and~\cite{DumLiMan18} (asymmetric case).

\begin{theorem}[Continuity of the phase transition]
    \label{thm:continuity_asymetric}
    Fix~$1\leq q \leq 4$.
	Then:
	\begin{itemize}
		\item For any $p_a,p_b\in[0,1]$, $\phifree_{p_a,p_b,q}=\phiwired_{p_a,p_b,q}$,
		\item At the self-dual line~\eqref{eq:self-dual-line} neither
		$\eta$ nor $\eta^*$ contains an infinite cluster almost surely in $\phifree_{p_a,p_b,q}=\phiwired_{p_a,p_b,q}$.
	\end{itemize}
\end{theorem}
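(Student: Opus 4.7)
The plan is to exploit the Baxter--Kelland--Wu (BKW) coupling to convert the delocalisation provided by Theorem~\ref{thm:hard_six_deloc} into the absence of infinite clusters in both the primal and the dual random-cluster configurations at the self-dual line. Following~\cite{Lis21}, I would work on a torus where the BKW coupling can be set up rigorously despite involving complex weights when $q<4$, and only then pass to the full-plane limit.

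First, I would match parameters: given $q\in[1,4]$ and $(p_a,p_b)$ on the self-dual relation~\eqref{eq:self-dual-line}, one finds six-vertex weights $(a,b,c)$ with $a,b\leq c\leq a+b$ (and with $a=b$ whenever $p_a=p_b$) such that the BKW loop representation identifies the two models. In particular, the range of parameters covered by Theorem~\ref{thm:hard_six_deloc} is exactly the range of $(p_a,p_b,q)$ we need.

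Second, I would run the BKW identity on a torus $\T_N$ of side $N$. On the torus, the BKW coupling expresses the random-cluster partition function, and expectations of suitable observables, as a finite complex combination of six-vertex quantities whose coefficients involve roots of unity weighted by the winding numbers of loops around the two generators of the torus. Under this coupling the six-vertex height function tracks, modulo~$4$, a signed count of BKW loops separating a reference face from a variable face. Consequently, delocalisation of the six-vertex height forces arbitrarily many BKW loops around the origin, and each such loop separates the primal cluster (or the dual cluster, depending on its parity) of the origin from infinity. This gives, on the torus, the absence of macroscopic primal and dual clusters at the self-dual line.

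Finally, I would transfer the conclusion from $\T_N$ to the full plane. Using FKG-based comparison estimates between torus and $\squarelattice$ measures, I would show that neither $\eta$ nor $\eta^*$ contains an infinite cluster almost surely under both $\phifree_{p_a,p_b,q}$ and $\phiwired_{p_a,p_b,q}$ on the self-dual line. Since the dual of the wired measure at self-duality is the free measure, the absence of infinite clusters on both sides forces $\phifree_{p_a,p_b,q}=\phiwired_{p_a,p_b,q}$ along the self-dual line; off the self-dual line, equality follows from the known critical curve and sharpness~\cite{BefDum12}. The main obstacle is precisely this last transfer: the BKW weights are complex for $q<4$, and one must translate the delocalisation statement of Theorem~\ref{thm:hard_six_deloc} (proved on $\Z^2$ under $0,1$ boundary conditions) into usable information on $\T_N$ and then back to the full-plane random-cluster measures. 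This is the technical core of the argument, carried out by adapting the machinery of~\cite{Lis21} to the asymmetric setting.
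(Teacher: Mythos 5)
Your high-level route is the paper's route: use the BKW coupling on a torus (following Lis) to carry delocalisation of the six-vertex model into the random-cluster world, then pass to full-plane limits. But two points deserve attention.

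First, the step you summarise as ``delocalisation of the six-vertex height forces arbitrarily many BKW loops around the origin, and each such loop separates the primal cluster of the origin from infinity, hence no infinite cluster'' does not go through as stated. For $q<4$ the BKW loop-and-orientation configurations carry \emph{complex} weights $e^{\pm i\lambda/4}$ per turn, so there is no genuine probability space on which both ``the height is delocalised'' and ``there are many loops separating the origin'' hold simultaneously, and one cannot transfer almost-sure statements between the six-vertex and random-cluster marginals. The paper replaces this by a purely numerical identity: it introduces the two quantities $\fZ_n$ and $\fZ_{n,k}$ of~\eqref{eq:crazyobservables}, shows (Lemma~\ref{lemma:contlemma1}) that the ratio $\fZ_{n,k}/\fZ_n$ equals $\muSpin_n[e^{i\frac\pi8\int_{p^k}h_\nabla}]$ and vanishes as $k\to\infty$ by the six-vertex delocalisation, and then shows (Lemma~\ref{lemma:contlemma2}) that the same ratio, expanded the other way, is bounded below by $c_\lambda\cdot\phitorus\bigl((0,0)\leftrightarrow(2k,0)\bigr)$ with $c_\lambda>0$ since the integrand in~\eqref{eq:next} is nonnegative for $\lambda\in[0,\pi/3]$. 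That nonnegativity, together with the careful truncation to finite loops (the event $A_r$), is what makes the transfer rigorous; you have named it the ``technical core'' but your sketch does not actually supply a mechanism for it.

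Second, for the first bullet (equality of free and wired off the self-dual line) you invoke the known critical value and sharpness from~\cite{BefDum12}. The paper explicitly avoids this: absence of an infinite cluster under $\phiwired$ \emph{at} the self-dual line, combined with monotonicity in $(p_a,p_b)$, gives $\phifree=\phiwired$ \emph{below} the self-dual line directly (no infinite wired cluster implies equality of limits), and the dual argument handles the region above. Using~\cite{BefDum12} is not wrong, but it undoes the stated point of the theorem, which is to give a proof independent of parafermionic observables, the Bethe Ansatz, and the Beffara--Duminil-Copin computation of $p_c$.
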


The first statement follows from the second for all~$(p_a,p_b)$ below the self-dual line~\eqref{eq:self-dual-line}, once we use
the above corollaries of the FKG inequality and observe that
$\phifree_{p_a,p_b,q}=\phiwired_{p_a,p_b,q}$ as soon as $\phiwired_{p_a,p_b,q}$ does not exhibit
an infinite cluster.
For the points~$(p_a,p_b)$ above the self-dual line~\eqref{eq:self-dual-line}, the first statement follows by a
dual argument.
Notice that the theorem does not imply that the phase transition occurs at the self-dual line.
This is known~\cite{BefDum12,DumLiMan18}, but our arguments do not rely on this.
The theorem directly implies continuity of the phase
transition of the Potts model with two, three, and four colours on the rectangular lattice.

\begin{remark}
	For $0<q<1$ and for $(p_a,p_b)$ on the self-dual line~\eqref{eq:self-dual-line},
	our proofs yield a construction of a self-dual shift-invariant full-plane Gibbs measure $\phi$
	of the random-cluster model
	in which neither $\eta$ nor $\eta^*$ percolates.
	However, we cannot interpret this measure as a full-plane limit with free or wired
	boundary conditions, by lack of a suitable FKG inequality.
	The same lack of monotonicity does not allow us to derive from this anything away from the self-dual line.
\end{remark}

\section{Delocalisation of Lipschitz functions on the triangular lattice}
\label{section:loop_on}

\subsection{Notation}
\label{sec:notation-lip}
Each vertex of $\hexlattice$ belongs to precisely one vertical edge.
The vertices of~$\hexlattice$ therefore have a natural bipartition
into those at the \emph{top} and those at the \emph{bottom} of a vertical edge.
We define~$\upvert(\hexlattice)$ as the part that consists of top endpoints:
\[
	\upvert(\hexlattice):=
	\{k+\ell e^{i\pi/3} \colon k,\ell \in \Z\}
	-i/\sqrt3
	\subset
	\vertices{\hexlattice};
\]
this set has the structure of a triangular lattice once endowed with the nearest-neighbour connectivity.
For a domain~$\domain$ on~$\hexlattice$, we write
\[
	\upvert(\domain):=(\upvert(\hexlattice)\cap \vertices\domain)\setminus \vertices{\partial\domain};
	\qquad
	\partial_\upvert\domain:=\upvert(\hexlattice)\cap \vertices{\partial\domain}.
\]
The natural dual to a site percolation on the triangular lattice is formed
by the complementary vertices.
For a given set~$\xi\subseteq\upvert(\domain)$
we write~$\xi^*:=\upvert(\domain)\setminus\xi$ for this dual set.

\subsection{Spin representation}
\label{sec:lip-spin-rep}

Lipschitz functions have a two-spin representation introduced by Manolescu and the first author~\cite{GlaMan21}.
This representation already appeared implicitly in~\cite{DomMukNie81} as a relation between the
loop~\O{2} and Ashkin--Teller models.
As we will show below in Lemma~\ref{lemma:fkg-spins-lip}, the marginals of this spin
representation satisfy the FKG inequality for all~$x\leq 1$.
This key property places Lipschitz functions in the framework of percolation
models and eventually enables the use the Zhang's non-coexistence
argument~\cite[Lemma~11.12]{Gri99a}.
At $x=1$, a variation of this strategy was realised in~\cite{GlaMan21}.

 \begin{definition}[Spin configurations]
	Let $\domain$ be a domain.
	A \emph{spin configuration} is a function $\sigma\in\{\pm 1\}^{\faces{\domain}}$;
	its \emph{domain wall} is the spanning subgraph~$\omega[\sigma]$ of~$\domain$ given by the set of disagreement edges of~$\sigma$ in $\edges{\domain}\setminus\edges{\partial\domain}$.
	We shall also write $\upvert[\sigma]$ for the set of
	vertices in $\upvert(\domain)$ incident to edges of $\omega[\sigma]$.
	We say that a pair of configurations~$\sigma^\black,\sigma^\white \in \{\pm 1\}^{\face(\domain)}$ is {\em consistent}, and denote this by~$\sigma^\black \perp \sigma^\white$, if
	\begin{equation}\label{eq:lip-ice}
		\text{either $\sigma^\black(u) = \sigma^\black(v)$ or $\sigma^\white(u) = \sigma^\white(v)$ for any adjacent $u,v\in\face(\domain)$.}
	\end{equation}
	This is equivalent to asking that the domain walls $\omega[\sigma^\black]$ and $\omega[\sigma^\white]$ are disjoint,
	and furthermore implies that $\upvert[\sigma^\black]$ and $\upvert[\sigma^\white]$ are disjoint.
	Write~$\spaceSpin(\domain)$ for the set of all consistent pairs of spin configurations on~$\domain$.
\end{definition}

The consistency relation is analogous to the {\em ice rule}~\eqref{eq:6v-ice} in the six-vertex model.

\begin{definition}[Spin measure]\label{def:spin-measure-lip}
	The spin measure on~$\domain$ with parameter~$x>0$ under free boundary conditions is a probability measure on~$\spaceSpin(\domain)$ defined by
	\begin{equation}
		\label{eq:lip-meas-via-vertices}
		\muSpin_{\domain,x}(\sigma^\black,\sigma^\white)
		=
		\tfrac1{Z_{\domain,x}}\cdot (x^2)^{|\upvert[\sigma^\black]\cup\upvert[\sigma^\white]|},
	\end{equation}
	where~$Z_{\domain,x}$ is the partition function.
	We call $\sigma^\black$ and $\sigma^\white$ \emph{black} and
\emph{white} spins, respectively.
	Let us also introduce fixed boundary conditions for~$\sigma^\black$, $\sigma^\white$, or both,
	by defining:
	\begin{align*}
		\muSpin_{\domain,x}^\blackp
			:=&
			\muSpin_{\domain,x}\left(\blank\mid\{\sigma^\black|_{\partial_\face \domain}\equiv +\}\right);
			\\
		\muSpin_{\domain,x}^\whitep
			:=&
			\muSpin_{\domain,x}\left(\blank\mid\{\sigma^\white|_{\partial_\face \domain}\equiv +\}\right);
			\\
		\muSpin_{\domain,x}^{\blackp\,\whitep}
			:=&
			\muSpin_{\domain,x}\left(\blank\mid\{\sigma^\black|_{\partial_\face \domain}\equiv\sigma^\white|_{\partial_\face \domain} \equiv +\}\right);
	\end{align*}
	similar definitions apply when $+$ is replaced by $-$.
\end{definition}

\begin{definition}
	Let $\domain$ denote a domain and $h\in\spaceLip^0(\domain)$ a Lipschitz function.
	Its \emph{spin representation} $(\sigma^\black[h],\sigma^\white[h]) \in \spaceSpin(\domain)$ is defined by
	\[
		\{\sigma^\black[h] = + \} = \{h \in \{0,1\} + 4\Z\};
		\qquad
		\{\sigma^\white[h] = + \} = \{h \in \{0,-1\} + 4\Z\}.
	\]
\end{definition}

	Observe that this implies $\omega[h]=\omega[\sigma^\black[h]]\cup\omega[\sigma^\white[h]]$.
The following proposition relates the spin measure to the random Lipschitz function and the loop~\O{2} model.

\begin{proposition}
	\label{prop:lip-spin-loop}
	Let $\domain$ be a domain and~$x>0$.
	Then,
	\begin{enumerate}
		\item $\muSpin_{\domain,x}^{\blackp\,\whitep}$ is the pushforward of $\muLip_{\domain,x}^0$
		along $h\mapsto (\sigma^\black[h],\sigma^\white[h])$, and
		\item $\muLoop_{\domain,2,x}$ is the pushforward of $\muSpin_{\domain,x}^{\blackp\,\whitep}$ along
		$(\sigma^\black,\sigma^\white)\mapsto \omega[\sigma^\black]\cup\omega[\sigma^\white]$.
	\end{enumerate}
\end{proposition}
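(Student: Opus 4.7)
The plan is to prove both parts by exhibiting explicit correspondences and matching weights. For part (1), I would show that $\Phi\colon h\mapsto(\sigma^\black[h],\sigma^\white[h])$ is a weight-preserving bijection from $\spaceLip^0(\domain)$ onto the set of pairs in $\spaceSpin(\domain)$ with $\blackp\whitep$ boundary. The forward direction is mostly unpacking definitions: the zero boundary condition on $h$ gives $\blackp\whitep$ boundary for the spins, and the Lipschitz constraint $|h(u)-h(v)|\in\{0,1\}$ forces at most one of the two indicator bits defining the spin pair to flip across each edge, which is exactly the consistency~\eqref{eq:lip-ice}.

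The nontrivial step is constructing the inverse, and this is the step I expect to require the most care. Given a consistent pair $(\sigma^\black,\sigma^\white)$ with $\blackp\whitep$ boundary, I would set $h\equiv 0$ on $\partial_\face\domain$ and propagate inward: $h$ stays constant across agreement edges and changes by the unique $\pm 1$ compatible with the required residue class modulo~$4$ across disagreement edges. Well-definedness reduces to path-independence around each vertex of $\hexlattice$, which I would resolve cleanly by identifying spin pairs with residues in $\Z/4\Z$ via $(\blackp\whitep,\blackp\whitem,\blackm\whitem,\blackm\whitep)\leftrightarrow(0,1,2,3)$, noting that consistency across an edge means the two residues differ by $0$ or $\pm 1\pmod 4$, and observing that a closed walk of length $3$ around a vertex of $\hexlattice$ can only return to its starting residue if its three increments (each in $\{-1,0,+1\}$) sum to $0$ in $\Z$.

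To match weights in part (1), I would use that $\hexlattice$ is bipartite with $\upvert(\hexlattice)$ as one partite class, so every edge of $\omega[\sigma^\bullet]$ has a unique top endpoint. Since $\omega[\sigma^\bullet]$ is a union of loops for each $\bullet\in\{\black,\white\}$, every vertex of $\upvert[\sigma^\bullet]$ is incident to exactly two edges of $\omega[\sigma^\bullet]$. Consistency further makes $\upvert[\sigma^\black]$ and $\upvert[\sigma^\white]$ disjoint, yielding $|\omega[h]|=2|\upvert[\sigma^\black]\cup\upvert[\sigma^\white]|$ and hence the identity $x^{|\omega[h]|}=(x^2)^{|\upvert[\sigma^\black]\cup\upvert[\sigma^\white]|}$ of the defining weights, so $\Phi$ transports $\muLip^0_{\domain,x}$ onto $\muSpin^{\blackp\whitep}_{\domain,x}$.

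For part (2), I would count preimages of each $\omega\in\spaceLoop{\domain}$ under $(\sigma^\black,\sigma^\white)\mapsto\omega[\sigma^\black]\cup\omega[\sigma^\white]$. Consistency forces $\omega[\sigma^\black]$ and $\omega[\sigma^\white]$ to be disjoint loop configurations whose union is $\omega$, so each individual loop of $\omega$ must be assigned entirely to one of them; the $\blackp\whitep$ boundary condition then uniquely reconstructs each of $\sigma^\black$ and $\sigma^\white$ by propagating $+$ inward from $\partial_\face\domain$. This yields exactly $2^{\ell(\omega)}$ preimages, each of weight proportional to $(x^2)^{|\omega|/2}=x^{|\omega|}$ by the identity from part (1), so summing recovers the weight $2^{\ell(\omega)}x^{|\omega|}$ of $\muLoop_{\domain,2,x}$ prescribed by~\eqref{eq:loop_on_weight}.
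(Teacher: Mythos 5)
Your proof follows the same strategy as the paper's---a direct bijection plus weight-matching---but supplies considerably more detail where the paper simply asserts that "the map is a bijection." Your reconstruction of $h$ from a consistent spin pair via the $\Z/4\Z$ residue correspondence, together with the observation that a closed walk of length three around a hexagonal vertex carries increments in $\{-1,0,1\}$ that must sum to $0\in\Z$ if they sum to $0\bmod 4$, is a clean and correct way to justify path-independence. The preimage count in part~(2) is also correct: consistency forces $\omega[\sigma^\black]$ and $\omega[\sigma^\white]$ to be disjoint, each is automatically a loop configuration (the degree at any interior vertex is $0$ or $2$), and the $\blackp\,\whitep$ boundary uniquely reconstructs the two spin fields once the loops are coloured.

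There is, however, a small gap in the weight-matching step. You derive $|\omega[\sigma^\bullet]| = 2\,|\upvert[\sigma^\bullet]|$ from the facts that every edge has a unique top endpoint and that every vertex of $\upvert[\sigma^\bullet]$ has degree two in $\omega[\sigma^\bullet]$. But $\upvert[\sigma^\bullet]$ is by definition the set of vertices \emph{in $\upvert(\domain)$} (that is, interior top vertices) incident to $\omega[\sigma^\bullet]$, so to conclude you must also know that no edge of $\omega[\sigma^\bullet]$ has its top endpoint on $\partial\domain$ (which would place it in $\partial_\upvert\domain$ rather than $\upvert(\domain)$ and spoil the count). This is exactly the point the paper flags with "since the loops cannot touch $\partial\domain$": an interior edge $e$ at a vertex $v\in\vertices{\partial\domain}$ separates two faces which are both adjacent to $\partial\domain$ via the two boundary edges at $v$, hence both lie in $\partial_\face\domain$ and carry the fixed boundary value, so $e\notin\omega[h]$. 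Once this observation is added, your argument closes.
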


\begin{proof}
	\begin{enumerate}[wide, labelwidth=!, labelindent=0pt]
		\item  It is straightforward that the map is a bijection.
		It suffices to show that it also preserves the weights of the configurations.
		Indeed, on each loop of~$\omega[h]$, the vertices of the two partite classes of~$\hexlattice$ alternate; since the loops cannot touch~$\partial\domain$, exactly one half of their vertices is in~$\upvert{(\domain)}$.
		In conclusion,
		\begin{equation}\label{eq:loop-length-via-y-vertices}
			|\omega[h]|=|\omega[\sigma^\black[h]]\cup\omega[\sigma^\white[h]]|=2|\upvert[\sigma^\black[h]]\cup\upvert[\sigma^\white[h]]|.
		\end{equation}
		\item The preimage of any element $\omega\in\spaceLoop{\domain}$ has cardinality
		$2^{\ell(\omega)}$, since the sign of either black or white spin changes along each loop.
		To prove that the map preserves the weight of each configuration
		up to this combinatorial factor, we use~\eqref{eq:loop-length-via-y-vertices} again.
		\qedhere
	\end{enumerate}
\end{proof}

\begin{remark*}
	Equation~\eqref{eq:lip-meas-via-vertices} suggests that
	the effective parameter is~$x^2$,
	and if $x\in [1/\sqrt2,1]$, then $x^2\in[1/2,1]$.
	From the point of view of the symmetric ($a=b$) six-vertex model, one should regard~$x^2$ as the ratio~$a/c$.
	The known critical value~$c/a=c/b=2$ for the six-vertex model then corresponds to
	the conjectured critical value~$x=1/\sqrt2$ for the loop~\O{2} model.
	Our method reveals a certain non-planarity of the interaction emerging~$x<1/\sqrt2$ and suggests localisation.
\end{remark*}

\subsection{Graphical representation and super-duality}
\label{sec:duality}

Fix some domain~$\domain$.
We now introduce our crucial new ingredient which allows us
to extend~\cite{GlaMan21} to the range $x\in[1/\sqrt2,1]$:
a graphical representation of~$\muSpin_{\domain,x}$ that satisfies a duality relation.
We will need an external source of randomness:
an independent family $U=(U_y)_{y\in \upvert(\domain)}$ of i.i.d.\ random variables
having the distribution $U([0,1])$.
With a slight abuse of notation, we incorporate this family into all our
existing measures without a change of notation.

For~$\sigma\in\{\pm1\}^{\face(\domain)}$ and~$A\subset\upvert(\domain)$,
we say that~$\sigma$ \emph{agrees} on~$A$, and write~$\sigma\perp A$, if, for every~$y\in A$,
the spin configuration~$\sigma$ assigns the same value to the three faces around~$y$;
we write~$\sigma\not\perp A$ otherwise.
If~$A=\{y\}$, then we simply write~$\sigma\perp y$ and~$\sigma\not\perp y$.
Observe that if $\sigma^\black\perp\sigma^\white$,
then at least one of $\sigma^\black\perp y$ and $\sigma^\white\perp y$ holds true for any~$y\in \upvert(\domain)$.

\begin{definition}[Black and white percolations] \label{def:b-w-perco-lip}
	Given a triplet $(\sigma^\black,\sigma^\white,U)$,
	the \emph{black percolation}~$\xi^\black$ and the \emph{white percolation}~$\xi^\white$ are subsets of~$\upvert(\domain)$ defined by
	\[
		\begin{cases}
			\text{$\xi^\black(y) = 0$ and $\xi^\white(y) = 1$} &\text{if $\sigma^\black\not\perp y$,}\\
			\text{$\xi^\black(y) = 1$ and~$\xi^\white(y) = 0$} &\text{if $\sigma^\white\not\perp y$,}\\
			\text{$\xi^\black(y) = \ind{U_y\leq x^2}$ and~$\xi^\white(y) = \ind{U_y> 1-x^2}$} &\text{if $\sigma^\black\perp y$ and~$\sigma^\white\perp y$.}
		\end{cases}
	\]
\end{definition}

\begin{figure}
	\includegraphics[width=0.9\textwidth]{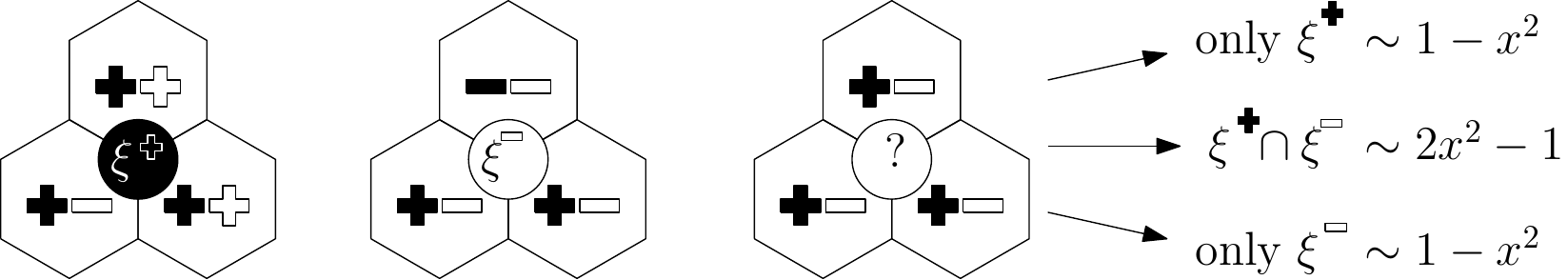}
	\caption{%
		The sampling rule for~$(\xi^\black,\xi^\white)$ at one $\upvert$-vertex given~$(\sigma^\black,\sigma^\white)$.
	}
	\label{fig:activated-vertices}
\end{figure}

See Fig.~\ref{fig:activated-vertices} and~\ref{fig:loop_percolations} for an illustration.
By definition, we have $\sigma^\black \perp \xi^\black$,
and therefore~$\xi^\black$ is the disjoint union of
\begin{align*}
	\xi^\blackp:= \{y\in \xi^\black\colon\text{$\sigma^\black \equiv + $ around $ y$}\}
	\quad \text{and} \quad
	\xi^\blackm:= \{y\in \xi^\black\colon\text{$\sigma^\black \equiv - $ around $ y$}\}.
\end{align*}
In a similar fashion, the set $\xi^\white$ is the disjoint union of
$\xi^\whitep$ and $\xi^\whitem$.

By integrating over $U$, we observe that, conditioned on $(\sigma^\black,\sigma^\white)$, the set $\xi^\black$ is an independent site
percolation with the opening probability at a vertex $y$ being~$0$, $1$, and~$x^2$
respectively in the three cases (this fact is used to prove Lemma~\ref{lemma:lip-marginal-black}).
Until Section~\ref{sec:extremality-lip}, we will need only this joint distribution of $(\sigma^\black,\sigma^\white,\xi^\black)$.
The crucial property of the coupling of~$\xi^\black$ and~$\xi^\white$ that implies delocalisation is the following {\em super-duality} (see Fig.~\ref{fig:loop_percolations}):
whenever $x\in[1/\sqrt2,1]$, we deterministically have
\begin{equation}
	\label{eq:super_duality}
	\upvert(\domain)=\xi^\blackp\cup\xi^\blackm\cup\xi^\whitep\cup\xi^\whitem=\xi^\black\cup\xi^\white.
\end{equation}
Indeed, this is an immediate consequence of the definitions
since $x^2 \geq 1-x^2$ when~$x\geq 1/\sqrt{2}$.
At $x=1/\sqrt 2$, every vertex $y\in\upvert(\domain)$
is contained in exactly one of $\xi^\black$ and $\xi^\white$ and~\eqref{eq:super_duality} turns into an exact duality.

\begin{lemma}\label{lemma:lip-marginal-black}
	Let~$\domain$ be a domain and~$0<x\leq 1$. Then,
	\begin{align}\label{eq:loopexpansion}
		\muSpin_{\domain,x}(\sigma^\black,\sigma^\white,\xi^\black) = \frac1{Z_{\domain,x}}
		&\cdot (x^2)^{|\xi^\black|}
		\cdot (1-x^2)^{|(\xi^\black)^*\setminus\upvert[\sigma^\black]|}
		\cdot (x^2)^{|\upvert[\sigma^\black]|}
		\\
		&\cdot \ind{\sigma^\black\perp\xi^\black}
		\cdot \ind{\sigma^\white\perp(\xi^\black)^*}
		\cdot \ind{\sigma^\black\perp\sigma^\white}.\nonumber
	\end{align}
	The laws of~$(\sigma^\black,\sigma^\white,\xi^\black)$
	under~$\muSpin_{\domain,x}^\blackp$, $\muSpin_{\domain,x}^\whitep$,
	and~$\muSpin_{\domain,x}^{\blackp\,\whitep}$ are obtained by inserting
	indicators for the boundary values, dropping the factor~$\ind{\sigma^\black\perp\sigma^\white}$, and updating the partition function.
\end{lemma}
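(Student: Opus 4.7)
My plan is to unfold the definitions and derive the joint law by integrating out the auxiliary i.i.d.\ uniforms $U=(U_y)_{y\in\upvert(\domain)}$, organising the computation around a three-part decomposition of the vertex set. Since $\sigma^\black\perp\sigma^\white$ forces $\upvert[\sigma^\black]$ and $\upvert[\sigma^\white]$ to be disjoint, we may write
\[
	\upvert(\domain)=\upvert[\sigma^\black]\sqcup\upvert[\sigma^\white]\sqcup A,
\]
where $A$ collects the vertices around which both spin configurations are constant. By Definition~\ref{def:b-w-perco-lip}, the value of $\xi^\black$ is deterministically zero on $\upvert[\sigma^\black]$, deterministically one on $\upvert[\sigma^\white]$, and on $A$ it is an independent Bernoulli field of parameter $x^2$.

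Fixing a realisation $\xi^\black\subseteq\upvert(\domain)$, the conditional probability given $(\sigma^\black,\sigma^\white)$ vanishes unless $\xi^\black\cap\upvert[\sigma^\black]=\varnothing$ and $\upvert[\sigma^\white]\subseteq\xi^\black$, which are precisely the events $\sigma^\black\perp\xi^\black$ and $\sigma^\white\perp(\xi^\black)^*$. On that event the conditional probability factors as $(x^2)^{|\xi^\black\cap A|}(1-x^2)^{|(\xi^\black)^*\cap A|}$, and the cardinalities simplify thanks to the decomposition: $|\xi^\black\cap A|=|\xi^\black|-|\upvert[\sigma^\white]|$ and $|(\xi^\black)^*\cap A|=|(\xi^\black)^*\setminus\upvert[\sigma^\black]|$.

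Multiplying by the spin density $\muSpin_{\domain,x}(\sigma^\black,\sigma^\white)$ from~\eqref{eq:lip-meas-via-vertices} and using that $(x^2)^{|\upvert[\sigma^\black]\cup\upvert[\sigma^\white]|}=(x^2)^{|\upvert[\sigma^\black]|}(x^2)^{|\upvert[\sigma^\white]|}$ by disjointness, the two copies of $(x^2)^{|\upvert[\sigma^\white]|}$ cancel and one arrives at~\eqref{eq:loopexpansion}. The one subtle observation to highlight is that the indicators $\ind{\sigma^\black\perp\xi^\black}$ and $\ind{\sigma^\white\perp(\xi^\black)^*}$ already force $\sigma^\black\perp\sigma^\white$: at every $y\in\upvert(\domain)$ exactly one of $y\in\xi^\black$ or $y\notin\xi^\black$ holds, yielding $\sigma^\black\perp y$ or $\sigma^\white\perp y$ respectively. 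A one-line case check at a $\upvert$-vertex shows that $\sigma^\black\not\perp y$ and $\sigma^\white\not\perp y$ would force $\omega[\sigma^\black]$ and $\omega[\sigma^\white]$ to share an edge at $y$ (each non-constant $\pm1$ assignment on three faces produces two disagreement edges, so by pigeonhole they must overlap). Hence the factor $\ind{\sigma^\black\perp\sigma^\white}$ in~\eqref{eq:loopexpansion} is actually redundant; this is precisely what allows one to drop it in the second half of the lemma.

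The second part is then formal: each conditioned measure is, by Definition~\ref{def:spin-measure-lip}, obtained from $\muSpin_{\domain,x}$ by restricting to the boundary event and renormalising, which amounts to inserting the corresponding boundary indicators and adjusting the partition function. The main obstacle, such as it is, is purely notational bookkeeping—making sure every cardinality is interpreted against the correct reference set—which the three-part decomposition of $\upvert(\domain)$ above makes transparent.
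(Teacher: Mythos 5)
Your derivation of~\eqref{eq:loopexpansion} is correct and tracks the paper's own computation: decompose $\upvert(\domain)$, integrate out $U$, recognise the two deterministic cases as the percolation indicators, and rebalance the exponents. That part is fine.

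The gap is in the redundancy claim. You assert that $\ind{\sigma^\black\perp\xi^\black}\cdot\ind{\sigma^\white\perp(\xi^\black)^*}$ \emph{already} forces $\sigma^\black\perp\sigma^\white$, hence that the factor $\ind{\sigma^\black\perp\sigma^\white}$ in~\eqref{eq:loopexpansion} is redundant for $\muSpin_{\domain,x}$ itself. This is false, and your own argument reveals why: it runs the pigeonhole contradiction only at vertices $y\in\upvert(\domain)$. But an interior edge $e\in\edges{\domain}\setminus\edges{\partial\domain}$ separating two faces of $\face(\domain)$ need not have its $\upvert$-endpoint in $\upvert(\domain)$: at a concave boundary vertex $y\in\vertices{\partial\domain}\cap\upvert(\hexlattice)$, the third (interior) edge separates two interior faces, yet $y\notin\upvert(\domain)$ by definition. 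For such an edge, the percolation indicators impose no constraint at all, so one can construct a triple $(\sigma^\black,\sigma^\white,\xi^\black)$ satisfying both indicators with $e\in\omega[\sigma^\black]\cap\omega[\sigma^\white]$. The lemma's phrasing reflects this: the indicator $\ind{\sigma^\black\perp\sigma^\white}$ appears in~\eqref{eq:loopexpansion} exactly because it is not redundant in the free-boundary case.

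What rescues the boundary-conditioned measures is the boundary condition, which you never invoke. If $\sigma^\black$ (or $\sigma^\white$) is constant on $\partial_\face\domain$, then no edge whose two faces both lie in $\partial_\face\domain$ can be a disagreement edge for that spin configuration, so any candidate shared edge must have its $\upvert$-endpoint strictly inside the domain; then your pigeonhole argument applies. The paper makes exactly this observation explicit. Without it, the proof of the second half of the lemma is incomplete.
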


\begin{proof}
	For any
	spin configurations $\sigma^\black,\sigma^\white\in\{\pm1\}^{\faces{\domain}}$,
	\begin{equation}
		\label{eq:spin-meas-via-Y-lip}
		\muSpin_{\domain,x}(\sigma^\black,\sigma^\white) = \frac1{Z_{\domain,x}}
		\cdot (x^2)^{|\upvert[\sigma^\black]|}
		\cdot (x^2)^{|\upvert[\sigma^\white]|}
		\cdot \ind{\sigma^\black\perp\sigma^\white}.
	\end{equation}
	The definition of $\xi^\black$ implies that
	conditional on $(\sigma^\black,\sigma^\white)$, the probability of observing a particular site percolation
	$\xi^\black$ equals
	\begin{multline}
		(x^2)^{|\xi^\black\setminus (\upvert[\sigma^\black]\cup\upvert[\sigma^\white])|}
		\cdot
		(1-x^2)^{|(\xi^\black)^*\setminus (\upvert[\sigma^\black]\cup\upvert[\sigma^\white])|}
		\cdot \ind{\sigma^\black\perp\xi^\black}
		\cdot \ind{\sigma^\white\perp(\xi^\black)^*}
		\\
		\label{eq:rewriting-xi-Y-lip}
		=
		(x^2)^{|\xi^\black\setminus \upvert[\sigma^\white]|}
		\cdot
		(1-x^2)^{|(\xi^\black)^*\setminus\upvert[\sigma^\black]|}
		\cdot \ind{\sigma^\black\perp\xi^\black}
		\cdot \ind{\sigma^\white\perp(\xi^\black)^*},
	\end{multline}
	where we use that the indicators imply~$\xi^\black \cap \upvert[\sigma^\black] = \emptyset$ and~$\upvert[\sigma^\white]\subseteq \xi^\black$.
	The latter also gives $|\xi^\black|=|\xi^\black\setminus \upvert[\sigma^\white]|+|\upvert[\sigma^\white]|$, and
	we obtain~\eqref{eq:loopexpansion} by taking the product of~\eqref{eq:spin-meas-via-Y-lip} and~\eqref{eq:rewriting-xi-Y-lip}.

	Other boundary conditions are clearly enforced
	by inserting more indicators and updating the partition functions.
	It suffices to establish the claim that the restriction~$\sigma^\black\perp\sigma^\white$ becomes redundant:
	for any triplet $(\sigma^\black,\sigma^\white,\xi^\black)$,
	if either $\sigma^\black|_{\partial_\face\domain}\equiv +$ or $\sigma^\white|_{\partial_\face\domain}\equiv +$, then
	\[
		\ind{\sigma^\black\perp\xi^\black}
		\cdot \ind{\sigma^\white\perp(\xi^\black)^*}
		\cdot \ind{\sigma^\black\perp\sigma^\white}
		=
		\ind{\sigma^\black\perp\xi^\black}
		\cdot \ind{\sigma^\white\perp(\xi^\black)^*}.
	\]
	Assume, in order to derive a contradiction, that there exists a triplet for which the two expressions are not equal.
	Then~$\sigma^\black\perp\xi^\black$ and $\sigma^\white\perp(\xi^\black)^*$, and simultaneously there must exist an edge $yz\in\edges{\domain}$ which belongs to both~$\omega[\sigma^\black]$ and~$\omega[\sigma^\white]$.
	This edge cannot be incident to $\partial\domain$ since either~$\sigma^\black$ or~$\sigma^\white$ is constant on~$\partial_\face\domain$.
	Thus, one endpoint of $yz$, say~$y$, lies in $\upvert(\domain)$.
	However, $\sigma^\black\perp\xi^\black$ and~$\sigma^\white\perp(\xi^\black)^*$ imply~$y\not\in (\xi^\black \cup (\xi^\black)^*) = \upvert(\domain)$.
	This is the desired contradiction, which proves the claim.
\end{proof}

\subsection{The Markov property}
\label{sec:markov-lip}

The dual~$\domain^*$ of a domain $\domain$ is defined as the graph
on~$\vertices{\domain^*}:=\face(\domain)$ with edges~$\edges{\domain^*}$ linking
adjacent faces of~$\domain$.  Given~$\xi\subseteq\upvert(\domain)$,
define~$\triedges(\xi)$ to be the spanning subgraph of~$\domain^*$ whose set of
edges is the union of triplets of edges forming (upward oriented) triangles
around vertices in~$\xi$: for any~$uv\in \edges{\domain^*}$, we
have~$uv\in\triedges(\xi)$ if and only if one of the common vertices of the
faces~$u$ and~$v$ is in~$\xi$.  We also
define~$\triedges(\domain):=\triedges(\upvert(\domain))$.

\begin{lemma}[Sampling~$\sigma^\white$ given~$\xi^\black$]
	\label{lemma:loop_decoupling}
	Let~$\domain$ be a domain and $0<x\leq 1$.
	Consider~$\tau^\black \in \{\pm 1\}^{\face(\domain)}$
	and~$\zeta^\black\subset\upvert(\domain)$ such
	that~$\muSpin_{\domain,x}^\blackp(\sigma^\black=\tau^\black,\xi^\black=\zeta^\black)>0$.
	Then,
	\[
		\text{the law of $\sigma^\white$ under }
		\muSpin_{\domain,x}^\blackp(\blank |  \sigma^\black=\tau^\black,\xi^\black=\zeta^\black)
	\]
	is given by independent fair coin flips valued $\pm$ for the
	connected components of~$\triedges((\xi^\black)^*)$.
	If the boundary condition $\blackp$ is replaced by $\whitep$,
	then the only difference is that all clusters of~$\triedges((\xi^\black)^*)$
	intersecting~$\partial_\face\domain$ are deterministically assigned the value $+$.
\end{lemma}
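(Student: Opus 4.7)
The plan is to read off the conditional law directly from the explicit joint density provided by Lemma~\ref{lemma:lip-marginal-black}. Under the boundary condition $\blackp$, that lemma (together with the redundancy claim for the factor $\ind{\sigma^\black\perp\sigma^\white}$) gives
\[
	\muSpin_{\domain,x}^\blackp(\sigma^\black,\sigma^\white,\xi^\black)
	\propto
	(x^2)^{|\xi^\black|+|\upvert[\sigma^\black]|}
	(1-x^2)^{|(\xi^\black)^*\setminus\upvert[\sigma^\black]|}
	\ind{\sigma^\black\perp\xi^\black}\ind{\sigma^\white\perp(\xi^\black)^*}\ind{\sigma^\black|_{\partial_\face\domain}\equiv +}.
\]
Once we fix $\sigma^\black=\tau^\black$ and $\xi^\black=\zeta^\black$, every factor in this expression becomes a constant \emph{except} for~$\ind{\sigma^\white\perp(\xi^\black)^*}$. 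Hence the conditional law of $\sigma^\white$ is just the uniform measure on $\{\pm1\}^{\faces\domain}$ restricted to configurations that ``agree around every $y\in(\xi^\black)^*$''.

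Next I would translate this agreement condition into the stated combinatorial description. By the definition of~$\triedges$, an edge $uv\in\edges{\domain^*}$ of~$\triedges((\xi^\black)^*)$ corresponds exactly to a pair of faces sharing some vertex $y\in(\xi^\black)^*$; the condition $\sigma^\white\perp(\xi^\black)^*$ says precisely that $\sigma^\white(u)=\sigma^\white(v)$ for each such edge. Thus $\sigma^\white\perp(\xi^\black)^*$ if and only if $\sigma^\white$ is constant on each connected component of $\triedges((\xi^\black)^*)$, and the uniform measure on such configurations factorises into one independent fair $\pm1$ coin flip per component.

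For the $\whitep$ version, the same argument applies with the additional indicator $\ind{\sigma^\white|_{\partial_\face\domain}\equiv +}$ (which again makes the $\sigma^\black\perp\sigma^\white$ factor redundant by the proof of Lemma~\ref{lemma:lip-marginal-black}). This indicator forces the coin flips on any component meeting $\partial_\face\domain$ to be~$+$, while the remaining components are still independent fair flips.

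The only step requiring any care is verifying that no hidden coupling between $\sigma^\black$ and $\sigma^\white$ survives the conditioning. This is exactly what the redundancy of $\ind{\sigma^\black\perp\sigma^\white}$ in Lemma~\ref{lemma:lip-marginal-black} buys us: it guarantees that the joint density factorises as a function of $(\sigma^\black,\xi^\black)$ times a function of $(\sigma^\white,\xi^\black)$, so that after fixing $(\sigma^\black,\xi^\black)$ only a simple uniform-with-constraints law for $\sigma^\white$ remains. Once this is noted, everything else is a purely combinatorial reading of the indicator $\ind{\sigma^\white\perp(\xi^\black)^*}$.
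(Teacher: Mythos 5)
Your proposal is correct and is essentially the paper's own proof, written out in more detail: both read the conditional law directly off the joint density of Lemma~\ref{lemma:lip-marginal-black}, observe that after conditioning on $(\sigma^\black,\xi^\black)$ only the factor $\ind{\sigma^\white\perp(\xi^\black)^*}$ depends on $\sigma^\white$, and interpret that indicator as constancy on the clusters of $\triedges((\xi^\black)^*)$. The extra indicator $\ind{\sigma^\white|_{\partial_\face\domain}\equiv+}$ handles the $\whitep$ case exactly as you describe.
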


\begin{proof}
	By Lemma~\ref{lemma:lip-marginal-black},
	\begin{align}
		\muSpin_{\domain,x}^\blackp(\sigma^\white\, \vert \, \sigma^\black,\xi^\black)
		\propto \ind{\sigma^\white\perp(\xi^\black)^*} =
		\ind{\text{$\sigma^\white$ is constant on each cluster of $\triedges((\xi^\black)^*)$}}.
	\end{align}
	The boundary condition $\whitep$ introduces the extra indicator~$\ind{\sigma^\white|_{\partial_\face\domain} \equiv +}$.
\end{proof}

\begin{figure}
	\includegraphics[width=0.31\textwidth]{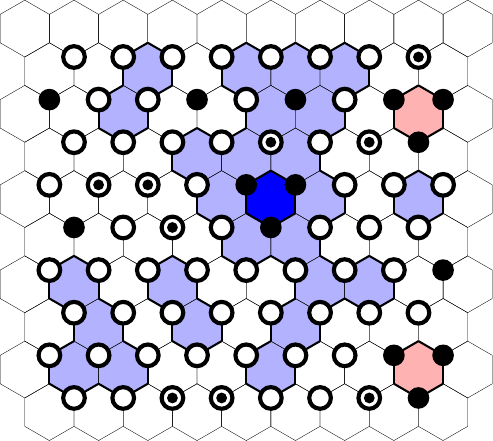}\,\,%
	\includegraphics[width=0.31\textwidth]{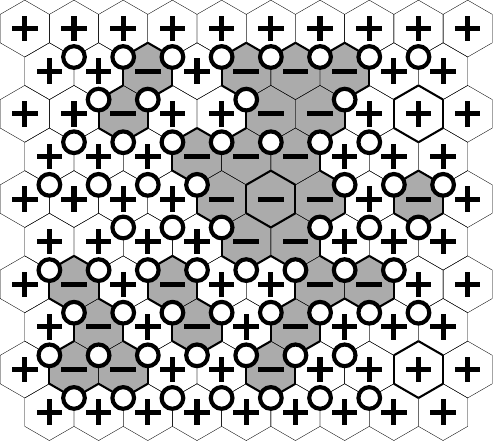}\,\,%
	\includegraphics[width=0.31\textwidth]{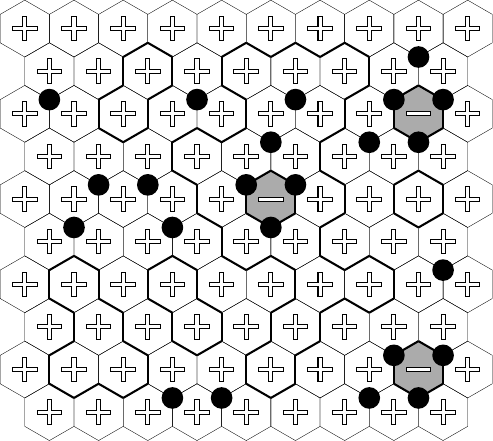}
	\caption{%
	A Lipschitz function~$h$ with the corresponding black and white spin configurations~$(\sigma^\black,\sigma^\white)$ and a sample of the site percolations~$(\xi^\black,\xi^\white)$ on $\upvert(\domain)$.
	\textsc{Left}: Colours of the faces describe the heights:
	the heights $-2$ to $1$ are dark blue, light blue, white, and red respectively.
	Super-duality holds true: every~$\upvert$-type vertex is either black or white, and some are assigned both colours.
	\textsc{Middle}: The black spin configuration~$\sigma^\black$ is determined by~$h$, the white percolation~$\xi^\white$ is sampled given~$h$.
	All $\upvert$-vertices belonging to the loops in~$\omega[\sigma^\black]$ must be in~$\xi^\white\setminus\xi^\black$.
	\textsc{Right}:
	The white spin configuration~$\sigma^\white$ together with the black percolation~$\xi^\black$.}
	\label{fig:loop_percolations}
\end{figure}

\begin{lemma}[Domain Markov property]
	\label{lemma:loop_markov}
	Let $\domain,\domain'$ be domains with $\domain\subset \domain'\setminus
	\partial\domain'$ and let~$0<x\leq 1$.  Then, under
	\[\muSpin_{\domain',x}(\blank|\partial_\upvert\domain\subseteq
	\xi^\blackp),\] the law of the triplet
	$(\sigma^\black,\sigma^\white,\xi^\black)$ restricted to faces in
	$\face(\domain)$ and vertices in $\upvert(\domain)$:
	\begin{enumerate}
		\item Is independent of the restriction to the complementary faces and vertices, and
		\item Is precisely equal to~$\muSpin_{\domain,x}^{\blackp}$.
	\end{enumerate}
	In particular, this remains true when boundary conditions are imposed on $\muSpin_{\domain',x}$.
\end{lemma}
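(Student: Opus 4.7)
The plan is to exploit the explicit product structure of the joint density of $(\sigma^\black,\sigma^\white,\xi^\black)$ given by Lemma~\ref{lemma:lip-marginal-black}. As a first step, I would rewrite the density~\eqref{eq:loopexpansion} applied to $\domain'$ as a product
\[
	\muSpin_{\domain',x}(\sigma^\black,\sigma^\white,\xi^\black)=\tfrac1{Z_{\domain',x}}\prod_{y\in\upvert(\domain')} f_y(\sigma^\black,\sigma^\white,\xi^\black),
\]
where each factor $f_y$ depends only on $\xi^\black(y)$ and on the values of $\sigma^\black,\sigma^\white$ at the three hexagonal faces incident to $y$. This decomposition is legitimate because every exponent in~\eqref{eq:loopexpansion} is a sum over $\upvert$-vertices and every indicator splits into per-$\upvert$-vertex constraints, as each edge of $\hexlattice$ has a unique $\upvert$-endpoint.

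Next I would partition $\upvert(\domain')$ as the disjoint union of the interior $\upvert(\domain)$, the boundary $\partial_\upvert\domain$, and the remaining exterior $\upvert$-vertices, and evaluate $f_y$ on the conditioning event $\{\partial_\upvert\domain\subseteq\xi^\blackp\}$. For every $y\in\partial_\upvert\domain$ this event forces $\xi^\black(y)=1$ and $\sigma^\black\equiv +$ on the three faces around $y$. The local contributions to $\ind{\sigma^\black\perp\xi^\black}$, $\ind{\sigma^\white\perp(\xi^\black)^*}$, $\ind{\sigma^\black\perp\sigma^\white}$, $(1-x^2)^{|(\xi^\black)^*\setminus\upvert[\sigma^\black]|}$, and $(x^2)^{|\upvert[\sigma^\black]|}$ then all become automatic, so that $f_y$ collapses to the constant $x^2$.

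The third step is to verify that the remaining factors decouple the interior and exterior restrictions of $(\sigma^\black,\sigma^\white,\xi^\black)$. An interior $y\in\upvert(\domain)$ has its three incident faces inside $\face(\domain)$, while each exterior $y$ has its three incident faces inside $\face(\domain')\setminus\face(\domain)$; the only potential coupling between interior and exterior values of $(\sigma^\black,\sigma^\white)$ occurs through the three faces around each boundary $y\in\partial_\upvert\domain$. On the conditioning event $\sigma^\black\equiv +$ is pinned on every such face, and the boundary factors $f_y$ are independent of $\sigma^\white$. An elementary argument using that $\partial\domain$ is a cycle in $\hexlattice$ shows that every $u\in\partial_\face\domain$ shares at least one edge with $\partial\domain$ and is hence incident to some $y\in\partial_\upvert\domain$; therefore the conditioning deterministically imposes $\sigma^\black|_{\partial_\face\domain}\equiv +$. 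The conditional density thus splits as an interior factor depending only on data in $\face(\domain)\cup\upvert(\domain)$ times an exterior factor, which yields the independence claim.

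The final step is to identify the interior factor with $\muSpin_{\domain,x}^{\blackp}$. Up to normalisation it equals $\prod_{y\in\upvert(\domain)} f_y$ subject to $\sigma^\black|_{\partial_\face\domain}\equiv +$, which is exactly the density produced by Lemma~\ref{lemma:lip-marginal-black} applied to $\domain$ with the boundary condition $\blackp$. The last sentence of the lemma, allowing extra boundary conditions on $\muSpin_{\domain',x}$, follows at once because any such additional indicators are measurable with respect to exterior data and affect only the exterior factor. The main obstacle is the geometric bookkeeping described above, namely verifying that every $u\in\partial_\face\domain$ is incident to some $y\in\partial_\upvert\domain$ and carefully checking that no residual indicator or exponent couples the interior and exterior $\sigma^\white$ once the trivial boundary factors have been resolved.
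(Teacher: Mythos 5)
Your proof is correct and takes essentially the same route as the paper's: apply Lemma~\ref{lemma:lip-marginal-black} to $\domain'$, split the resulting density into interior/boundary/exterior contributions, observe that the boundary factors trivialise under the conditioning while the remaining factors depend separately on data in $\face(\domain)\cup\upvert(\domain)$ and in $\face(\domain')\setminus\face(\domain)$, and identify the interior part with $\muSpin_{\domain,x}^{\blackp}$ via Lemma~\ref{lemma:lip-marginal-black} again.

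One small imprecision worth flagging: the displayed identity $\muSpin_{\domain',x}(\sigma^\black,\sigma^\white,\xi^\black)=\tfrac{1}{Z_{\domain',x}}\prod_{y\in\upvert(\domain')}f_y$ is not exact as stated, because the consistency indicator $\ind{\sigma^\black\perp\sigma^\white}$ also constrains edges $e\in\edges{\domain'}\setminus\edges{\partial\domain'}$ whose unique $\upvert$-endpoint lies in $\partial_\upvert\domain'$ rather than in $\upvert(\domain')$ (these arise whenever a vertex on $\partial\domain'$ has its third, non-cycle edge pointing inward). The paper handles this explicitly: as in the proof of Lemma~\ref{lemma:lip-marginal-black}, $\ind{\sigma^\black\perp\sigma^\white}$ is redundant at every $y\in\upvert(\domain')$ given the other indicators, so the residual constraint is supported on faces adjacent to $\partial\domain'$, which lie in $\face(\domain')\setminus\face(\domain)$ since $\domain\subset\domain'\setminus\partial\domain'$. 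Once you include this extra purely-exterior factor, your decoupling argument goes through unchanged and the rest of the write-up is sound.
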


\begin{proof}
	Let~$A$ denote the set of faces in~$\domain'$ that are adjacent to edges in~$\partial\domain$.
	By Lemma~\ref{lemma:lip-marginal-black},
	\begin{align*}
		\muSpin_{\domain',x}(\sigma^\black,\sigma^\white,\xi^\black \, & d\vert\, \partial_\upvert\domain\subseteq \xi^\blackp)
		\\
		\propto
		&(x^2)^{|\xi^\black\setminus\partial_\upvert\domain|}
		\cdot(1-x^2)^{|(\xi^\black)^*\setminus(\upvert[\sigma^\black]\cup\partial_\upvert\domain)|}
		\cdot (x^2)^{|\upvert[\sigma^\black]\setminus\partial_\upvert\domain|}
		\\
		\cdot
		&\ind{\sigma^\black\perp(\xi^\black\setminus\partial_\upvert\domain)}
		\cdot \ind{\sigma^\white\perp((\xi^\black)^*\setminus\partial_\upvert\domain)}
		\cdot \ind{\sigma^\black\perp\sigma^\white}
		\cdot \ind{\partial_\upvert\domain\subset\xi^\black}
		\cdot \ind{\left.\sigma^\black\right|_A\equiv +}.
	\end{align*}
	Here we used that~$\partial_\upvert\domain \subseteq \xi^\black$ which implies~$\sigma^\black|_A\equiv +$ and hence~$\sigma^\black\perp\partial_\upvert\domain$.

	Note that all factors except the last three can be written as a product of two terms:
	one depending only on $\face(\domain)$ and~$\upvert(\domain)$
	and the other only on $\face(\domain')\setminus \face(\domain)$ and~$\upvert(\domain')\setminus \upvert(\domain)$.
	The independence property then follows by noting that
	(similarly to Lemma~\ref{lemma:lip-marginal-black}) the condition~$\sigma^\black\perp\sigma^\white$ is redundant everywhere except on~$\partial\domain'$,
	and therefore can be replaced by a factor depending only on $\face(\domain')\setminus \face(\domain)$.
	Finally, taking the product of the parts of the terms that depend on~$\face(\domain)$ and~$\upvert(\domain)$, we obtain precisely $\muSpin_{\domain,x}^\blackp$, by Lemma~\ref{lemma:lip-marginal-black}.
\end{proof}

\subsection{The FKG inequality}
\label{subsec:loop_FKG}

\emph{Positive association} of probability measures plays an important role in the analysis of lattice models.
This property is often referred to as the {\em FKG inequality}, after an influential work of
Fortuin, Kasteleyn, and Ginibre~\cite{ForKasGin71} that provided a general framework for establishing positive association.

Below we give the relevant definitions in a general form.
Let $A:=\prod_{i\in I}A_i$ denote a set which is a product of
subsets $A_i\subset\mathbb R$ over the countable index set $I\ni i$.
This set inherits the standard pointwise partial ordering $\leq$
and the pointwise minimum $\wedge$ and maximum $\vee$ operations from $\mathbb R^I\supset A$.
For simplicity we call $A$ a \emph{product lattice},
and we call it a \emph{finite product lattice} if $I$ is finite.
Let $\P$ denote a probability measure and $\E$ the corresponding expectation functional.

\begin{enumerate}
	\item Let~$A$ be a product lattice and~$X$ an~$A$-valued random variable.
	We say that $X$ is \emph{positively associated}
	or that it \emph{satisfies the FKG inequality} if,
	for any bounded increasing functions
	$f,g:A\to\R$,
	\[
		\E[f(X)\cdot g(X)]\geq\E[f(X)]\cdot\E[g(X)].
	\]
	We also assign this property to $\P$ if the sample space is a product
	lattice and if the identity map satisfies the FKG inequality.

	\item Let~$A$ be a finite product lattice.
	A function $f:A\to(0,\infty)$ is said to \emph{satisfy the FKG lattice condition}
	if, for any~$a,b\in A$,
	\begin{equation}\label{eq:fkg-lattice}
		f(a\vee b)\cdot f(a\wedge b)\geq f(a)\cdot f(b).
	\end{equation}
	An $A$-valued random variable~$X$ is said to satisfy \emph{the FKG lattice condition}
	if~\eqref{eq:fkg-lattice} holds for the function $a\mapsto\P(X=a)$.
	We also extend this notion to $\P$ if $X$ is the identity map.
\end{enumerate}

A major contribution of~\cite{ForKasGin71} is the observation that the FKG
lattice condition implies the FKG inequality when~$a\mapsto\P(X=a)$ is strictly positive (see~\cite[Theorem~2.19]{Gri06}).
The former is often straightforward to check,
which has allowed to establish the FKG inequality for a
variety of models.
We also refer to~\cite[Section~4]{GeoHagMae01} for a survey.

\begin{remark}
	Strassen's theorem (see~\cite[Theorem~4.6; text below Definition~4.10]{GeoHagMae01})
	implies that an $A$-valued random variable $X$ satisfies the FKG inequality
	if and only if
	\[
		\E[f(X)\cdot g(X)]\geq\E[f(X)]\cdot\E[g(X)]
	\]
	for any bounded increasing functions $f,g:A\to\R$ \emph{which depend on finitely many
	coordinates in the index set $I$}.
	As a consequence, the FKG inequality is preserved under weak limits.
	These facts are used in this text without notice.
\end{remark}

Recall the definitions of~$\sigma^\black$, $\xi^\blackp$, and~$\xi^\blackm$.
For brevity, we shall write \[(\sigma^\black,\xi^\blackp,-\xi^\blackm)
:=
(\sigma^\black,1_{\xi^\blackp},-1_{\xi^\blackm})\]
when no confusion is likely to arise.
This puts the triplet in the framework of a random variable
taking values in a set of real-valued functions as described above.
The key novelty of the current article consists in a proof of the FKG inequality for this triplet.

\begin{proposition}[FKG for triplets]
	\label{prop:fkg-joint-lip}
	Let $\domain$ be a domain and $0< x\leq 1$.
	Then, under~$\muSpin_{\domain,x}$, the triplet $(\sigma^\black,\xi^\blackp,-\xi^\blackm)$ satisfies the FKG inequality.
	The same holds also for $(\sigma^\white,\xi^\whitep,-\xi^\whitem)$ and when boundary conditions of the form $\blackp,\blackm$  and/or $\whitep,\whitem$ are imposed.

	In fact, the triplet $(\sigma^\black,\xi^\blackp,-\xi^\blackm)$ also satisfies the FKG inequality
	under the measures
	\[
		\muSpin_{\domain,x}(\blank|\{\sigma^\black|_A=\tau\})
		\quad
		\text{and}
		\quad
		\muSpin_{\domain,x}^\whitep(\blank|\{\sigma^\black|_A=\tau\})
	\]
	for any $A\subset \faces{\domain}$ and $\tau:A\to\{\pm1\}$.
\end{proposition}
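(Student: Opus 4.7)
The plan is to verify the Fortuin--Kasteleyn--Ginibre lattice condition (log-supermodularity) on the joint weight of $(\sigma^\black, \xi^\blackp, -\xi^\blackm)$: by the classical FKG theorem on a finite product lattice, this implies the FKG inequality. The extensions to the conditioned measures will follow immediately once this is established, since fixing coordinates of $\sigma^\black$ to prescribed values (as in $\{\sigma^\black|_A=\tau\}$) is a restriction to a sublattice and preserves the lattice condition; the boundary-conditioned versions \blackp, \blackm, \whitep, \whitem and their combinations are of the same type.

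First I reparametrise: for each $y \in \upvert(\domain)$, introduce $\tau(y) \in \{-,0,+\}$ with $\tau(y) = \pm$ iff $y \in \xi^{\black\pm}$ and $\tau(y) = 0$ otherwise. This automatically enforces the disjointness $\xi^\blackp \cap \xi^\blackm = \emptyset$ and makes each coordinate lie on a chain, so the joint sample space becomes the product lattice $\{-,+\}^{\faces{\domain}} \times \{-,0,+\}^{\upvert(\domain)}$ with the pointwise order. I then check that the admissible set---pairs $(\sigma^\black,\tau)$ such that $\tau(y)=+$ forces $\sigma^\black \equiv +$ on the three faces around $y$, and symmetrically for $\tau(y)=-$---is closed under both $\vee$ and $\wedge$; this is a direct case analysis per $y$.

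For the weight itself, Lemma~\ref{lemma:lip-marginal-black} and summing out $\sigma^\white$ yield, up to an overall constant,
\[
w(\sigma^\black,\tau) \propto \alpha^{|\upvert[\sigma^\black]| + |\{y:\tau(y)\neq 0\}|} \cdot N(\sigma^\black,\xi^\black), \qquad \alpha := \tfrac{x^2}{1-x^2},
\]
where $N(\sigma^\black,\xi^\black)=2^{c}$ counts admissible $\sigma^\white$, and $c$ is the number of connected components of the graph on $\faces{\domain}$ with edge set $\omega[\sigma^\black] \cup \triedges((\xi^\black)^*)$. The per-$y$ contribution $|\{y:\tau(y)\neq 0\}|$ is modular while $|\upvert[\sigma^\black]|$ is submodular, so the local part of the weight alone does not give the lattice condition when $\alpha > 1$: the global factor $N$ must do the rest. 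To handle this cleanly I would keep $\sigma^\white$ as an auxiliary variable, so that $N$ is replaced by the product of local indicators $\ind{\sigma^\white \perp \sigma^\black}$ and $\ind{\sigma^\white \perp (\xi^\black)^*}$ and the joint weight of $(\sigma^\black,\sigma^\white,\tau)$ becomes a product of factors each depending only on the spins on a pair of adjacent faces or on the three faces around one $\upvert$-vertex. I would then verify the FKG lattice condition on this local joint and project onto the $(\sigma^\black,\tau)$-marginal.

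The main obstacle will be the consistency constraint $\sigma^\black \perp \sigma^\white$: it is not monotone under any product ordering on $(\sigma^\black,\sigma^\white)$, as one can exhibit explicit admissible pairs whose pointwise join violates consistency. Following the ideas in~\cite[Section~7]{LamOtt21} and~\cite[Lemma~3.2]{Lam22}, I would handle this by reducing (as usual) to pairs of configurations that differ at exactly two coordinates and performing a finite local case analysis per such pair; the crucial point is that the competing effects of the consistency constraint and of the $\alpha$-weighted local factors cancel precisely in the regime $x \leq 1$. The analogous FKG inequality for $(\sigma^\white, \xi^\whitep, -\xi^\whitem)$ then follows from the black/white symmetry of $\muSpin_{\domain,x}$ built into Definition~\ref{def:spin-measure-lip}.
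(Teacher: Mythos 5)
Your proposal correctly isolates the central obstruction (the non-local factor $N(\sigma^\black,\xi^\black)=2^c$, and the fact that the consistency constraint $\sigma^\black\perp\sigma^\white$ is not closed under $\vee,\wedge$), and the reparametrisation $\tau\in\{-,0,+\}^{\upvert(\domain)}$ is a sensible setup. But the proposed route does not close the argument, for two reasons that cannot be patched by the ``finite local case analysis'' you invoke.

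First, the plan to keep $\sigma^\white$ as an auxiliary coordinate, verify the FKG lattice condition on the joint law of $(\sigma^\black,\sigma^\white,\tau)$, and then ``project onto the $(\sigma^\black,\tau)$-marginal'' is broken on both ends. The lattice condition is \emph{not} preserved under marginalisation (only the FKG \emph{inequality} passes to sub-families of coordinates); and the lattice condition on the joint provably fails, for exactly the reason you yourself identify: there are consistent pairs whose pointwise meet is inconsistent, so $f(a\wedge b)=0$ while $f(a)f(b)>0$. This is a pointwise inequality between weights of four configurations, and no cancellation ``in the regime $x\le 1$'' can make a strict violation go away. The attribution to \cite[Section~7]{LamOtt21} and \cite[Lemma~3.2]{Lam22} is also off: those arguments do not check a lattice condition on a joint including $\sigma^\white$; they use a conditional (tower-property) scheme.

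Second, after integrating out $\sigma^\white$ you rightly note that the local factor $\alpha^{|\upvert[\sigma^\black]|+|\xi^\black|}$ can be log-submodular (when $\alpha>1$), so the lattice condition, if it holds at all, must be extracted from $N$. The paper does not attempt to show the lattice condition for the marginal on $(\sigma^\black,\tau)$ directly. Instead it first proves the lattice condition for the \emph{single coordinate} $\sigma^\black$ (Lemma~\ref{lemma:fkg-spins-lip}), whose key step is a factorisation of the Ising partition function,
\[
2\,\Zising(\mathrm{w}(\sigma^\black))=\Zising(\mathrm{w}^+(\sigma^\black))\cdot\Zising(\mathrm{w}^-(\sigma^\black)),
\]
obtained by splitting $\sigma^\white=\zeta^+\zeta^-$ according to the sign of $\sigma^\black$; this factorisation is the structural input that lets one exploit monotonicity and the second Griffiths inequality for $\Zising$, and it is entirely absent from your proposal. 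The triplet is then handled by a conditional decomposition: one shows that $(\xi^\blackp,-\xi^\blackm)$ given $\sigma^\black$ satisfies FKG (a random-cluster-type weight $2^{k}(x^2)^{|\xi^\blackp|}(1-x^2)^{|(\xi^\blackp)^*|}$ with an indicator), that this conditional is stochastically increasing in $\sigma^\black$, and finally combines the two claims with the FKG lattice condition for $\sigma^\black$ via the tower property. That three-step conditional argument is what actually delivers the triplet statement; your proposal does not reach it.
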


What matters mostly in this proposition is the fact that $\xi^\blackp$ and
$\xi^\blackm$ satisfy the FKG inequality.  At~$x=1$, $\sigma^\black$
determines~$\xi^\black$, and the FKG inequality for the triplet follows from the
FKG lattice condition for $\sigma^\black$ proven in~\cite{GlaMan21}.  We start
by extending the FKG lattice condition for~$\sigma^\black$ to all~$0<x\leq 1$
and derive Proposition~\ref{prop:fkg-joint-lip} via an extra argument
from~\cite[Proof of Lemma~3.2]{Lam22}.

\begin{lemma}[FKG for spins]
	\label{lemma:fkg-spins-lip}
	Let $\domain$ be a domain and let $0<x\leq 1$.
	Then, under~$\muSpin_{\domain,x}$, $\sigma^\black$ satisfies the FKG lattice condition.
	The same holds also for~$\sigma^\white$ and when boundary conditions of the form $\blackp,\blackm$  and/or $\whitep,\whitem$ are imposed.
\end{lemma}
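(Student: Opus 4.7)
I would verify the FKG lattice condition for the marginal law of $\sigma^\black$ under $\muSpin_{\domain,x}$; the case of $\sigma^\white$ follows by the symmetry of the measure under swapping the two colours, and the boundary conditions $\blackp, \blackm, \whitep, \whitem$ arise from $\muSpin_{\domain,x}$ by conditioning on monotone events, an operation that preserves the FKG lattice condition. Since $\sigma^\black \perp \sigma^\white$ forces $\upvert[\sigma^\black] \cap \upvert[\sigma^\white]=\varnothing$, the marginal factorises as
\[
	\muSpin_{\domain,x}(\sigma^\black=\tau) \;\propto\; (x^2)^{|\upvert[\tau]|}\cdot W(\tau), \qquad W(\tau):=\sum_{\sigma'\perp\tau}(x^2)^{|\upvert[\sigma']|}.
\]
Since a product of log-supermodular functions on a finite product lattice is log-supermodular, it is enough to show that both $\tau \mapsto (x^2)^{|\upvert[\tau]|}$ and $\tau \mapsto W(\tau)$ satisfy the FKG lattice condition on $\{\pm\}^{\face(\domain)}$.

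For the first factor, since $0<x^2\leq 1$, log-supermodularity is equivalent to submodularity of $\tau \mapsto |\upvert[\tau]|$. Writing $|\upvert[\tau]|=\sum_{y\in\upvert(\domain)}d_y(\tau)$, where $d_y(\tau)\in\{0,1\}$ is the indicator that $\tau$ is non-constant on the three faces surrounding $y$, a finite case analysis on $\{\pm\}^3$ verifies that each $d_y$ is submodular (equivalently, $1-d_y$ is supermodular), and submodularity passes to the sum.

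For the second factor I would reduce to the pairwise version of the FKG lattice condition, which is equivalent to the full version for full-support measures on $\{\pm\}^V$; here the support is full because $\sigma'\equiv+$ is always perpendicular to any $\tau$, so $W(\tau)>0$. Fix $\tau_1, \tau_2 \in \{\pm\}^{\face(\domain)}$ differing at exactly two faces $\{f_1,f_2\}$. The constraints $\tau \perp \sigma'$ and the weight $(x^2)^{|\upvert[\sigma']|}$ differ between $\tau_1, \tau_2, \tau_1\vee\tau_2, \tau_1\wedge\tau_2$ only at edges incident to $\{f_1,f_2\}$ and at $\upvert$-vertices adjacent to $\{f_1,f_2\}$ respectively. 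Freezing $\sigma'$ outside a finite neighbourhood of $\{f_1,f_2\}$, the inequality $W(\tau_1\vee\tau_2) W(\tau_1\wedge\tau_2) \geq W(\tau_1) W(\tau_2)$ reduces to a local four-functions inequality on finitely many local weights, to be verified explicitly by combining the combinatorial structure used in~\cite{GlaMan21} at $x=1$ with the vertex-by-vertex submodularity established for the first factor.

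\textbf{The main obstacle} is the local check when $f_1$ and $f_2$ are close: two adjacent faces share a common $\upvert$-vertex at which the local weights from both flipped faces couple, and pairs of faces within distance two interact through further shared $\upvert$-vertices. When $f_1, f_2$ are far apart the local effects decouple into independent single-face problems, but the degenerate coupled case requires a careful finite case analysis of the three $\upvert$-vertices adjacent to $f_1$ or $f_2$, confirming that the Ahlswede--Daykin inequality holds for the local weights even when the naive pairing $(\sigma'_1,\sigma'_2)\mapsto(\sigma'_1\vee\sigma'_2, \sigma'_1\wedge\sigma'_2)$ does not preserve the perpendicularity constraint.
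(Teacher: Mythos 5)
Your overall decomposition of the marginal into the explicit factor $(x^2)^{|\upvert[\tau]|}$ and the constrained sum $W(\tau)=\sum_{\sigma'\perp\tau}(x^2)^{|\upvert[\sigma']|}$ is the same as the paper's, and your treatment of the first factor via per-vertex submodularity of $d_y$ is correct (and a useful elaboration, since the paper simply asserts this factor is log-supermodular). However, there is a genuine gap at the key step, and you in fact flag it yourself.

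The problem is your route to log-supermodularity of $W$. You reduce to pairs $\tau_1,\tau_2$ differing at two faces and propose to verify the Ahlswede--Daykin four-function condition pointwise in $\sigma'$ after freezing $\sigma'$ away from $\{f_1,f_2\}$. But, as you correctly note, the indicator constraint does \emph{not} satisfy
$\ind{\sigma'_1\perp\tau_1}\ind{\sigma'_2\perp\tau_2}\leq\ind{\sigma'_1\vee\sigma'_2\perp\tau_1\vee\tau_2}\ind{\sigma'_1\wedge\sigma'_2\perp\tau_1\wedge\tau_2}$,
so the standard Ahlswede--Daykin machinery does not apply as written. Your proposed fix—``a careful finite case analysis... confirming that the Ahlswede--Daykin inequality holds... even when the naive pairing does not preserve the perpendicularity constraint''—is not actually carried out, and it is not clear that any local pairing makes this work: the difficulty is not merely that $f_1,f_2$ can be close, it is that $W(\tau)$ is a genuine partition function of a constrained Ising-type model, and log-supermodularity of such a partition function in an external parameter is a global correlation statement, not a per-configuration inequality. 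Appealing to ``the combinatorial structure used at $x=1$'' does not help here: at $x=1$ one has $W(\tau)=2^{k(\triedges((\upvert[\tau])^*))}$, a clean cluster count, whereas for $x<1$ no such closed form exists and the proof has to go through a correlation inequality.

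The paper circumvents this obstacle entirely. It observes that summing out $\sigma^\white$ given $\sigma^\black$ is an Ising partition function $\Zising(\mathrm{w}(\sigma^\black))$ in $\sigma^\black$-dependent couplings, and then (i) factorises $2\Zising(\mathrm{w}(\sigma^\black))=\Zising(\mathrm{w}^+(\sigma^\black))\cdot\Zising(\mathrm{w}^-(\sigma^\black))$ by writing $\sigma^\white$ conditional on $\sigma^\black$ as a product of two independent Ising spins $\zeta^+\zeta^-$ supported on the $\pm$-regions of $\sigma^\black$; (ii) invokes the second Griffiths inequality to assert that $\mathrm{w}\mapsto\Zising(\mathrm{w})$ is increasing and log-supermodular in the couplings; and (iii) checks the simple lattice identities $\mathrm{w}^{\pm}(\sigma\vee\sigma')\geq\mathrm{w}^{\pm}(\sigma)\vee\mathrm{w}^{\pm}(\sigma')$ and $\mathrm{w}^{\pm}(\sigma\wedge\sigma')=\mathrm{w}^{\pm}(\sigma)\wedge\mathrm{w}^{\pm}(\sigma')$, which hold \emph{coordinatewise} and so require no case analysis. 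This is the missing idea in your proposal: replace the per-configuration Ahlswede--Daykin check by a correlation inequality (Griffiths) applied at the level of partition functions, made applicable by the $\zeta^+\zeta^-$ factorisation.

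A secondary issue: your handling of the white boundary conditions is too glib. Conditioning on $\{\sigma^\white|_{\partial_\face\domain}\equiv+\}$ is not a cylinder event in $\sigma^\black$, so it does not trivially preserve the FKG lattice condition for the $\sigma^\black$-marginal. The paper has to observe that this marginal coincides with that of $\muSpin_{\domain,x}$ conditioned on $\{\sigma^\white|_{\partial_\face\domain}\equiv+\}\cup\{\sigma^\white|_{\partial_\face\domain}\equiv-\}$, and then rerun the whole argument with modified couplings $J'_{uv}=J_{uv}\cdot\ind{uv\not\subseteq\partial_\face\domain}$. For the black boundary conditions your remark is correct, since these do correspond to inserting a log-supermodular indicator in $\sigma^\black$.
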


\begin{proof}
	By symmetry, we can focus on~$\sigma^\black$.
	It suffices to prove the statement for~$\muSpin_{\domain,x}$ and~$\muSpin_{\domain,x}^\whitep$.
	Indeed, the boundary conditions $\whitem$ are analogous to $\whitep$,
	and the FKG lattice condition is preserved when imposing black boundary
	conditions of any kind.

	We start with an important abstract ingredient.
	Let $G=(V,E)$ be a finite graph.
	For any $a:E\to[0,1]$, write
	$\Zising(a)$ for the Ising model partition function given by
	\begin{equation}\label{eq:fk-ising}
		\Zising(a):=\sum_{\sigma^\white\in\{+,-\}^V}\prod_{xy\in E \, \colon \sigma_x^\white\neq\sigma_y^\white}
		\mathrm{w}_{xy}.
	\end{equation}
	The function
	\(
		\mathrm{w}\mapsto\Zising(\mathrm{w})
	\)
	is clearly increasing in $\mathrm{w}$,
	and satisfies the FKG lattice condition due to the second Griffiths inequality~\cite[Lemma~6.1]{LamOtt21}.

	We now express~$\muSpin_{\domain,x}(\sigma^\black)$ using the partition function of the Ising model on~$\domain^*$.
	Indeed, summing~\eqref{eq:lip-meas-via-vertices} over all possible values for~$\sigma^\white$ and using that~$2|\upvert[\sigma^\white]|=|\omega[\sigma^\white]\cap\upvert(\domain)|$,
	we get
	\begin{align}
		\muSpin_{\domain,x}(\sigma^\black) \cdot Z_{\domain,x}
			&=
			(x^2)^{|\upvert[\sigma^\black]|} \cdot \sum_{\sigma^\white\in\{\pm1\}^{\faces{\domain}}}\ind{\sigma^\black\perp\sigma^\white}\cdot(x^2)^{|\upvert[\sigma^\white]|}
			\\&= (x^2)^{|\upvert[\sigma^\black]|} \cdot \Zising(\mathrm{w}(\sigma^\black)),\label{eq:fkg-proof-measure-via-ising}
	\end{align}
	where $\mathrm{w}(\sigma^\black):\edges{\domain^*}\to[0,1]$ is defined by
	\[
		\mathrm{w}(\sigma^\black)_{uv} =
			J_{uv}
			\cdot
			\ind{\sigma^\black(u)= \sigma^\black(v)}
			;
			\qquad
			J_{uv}:=
			\begin{cases}
				x &\text{if $uv\in \triedges(\domain)$,}\\
				1 &\text{otherwise.}
			\end{cases}
	\]
	The function $\sigma^\black\mapsto (x^2)^{|\upvert[\sigma^\black]|}$ in~\eqref{eq:fkg-proof-measure-via-ising} satisfies the FKG
	lattice condition for all~$0<x\leq 1$,
	and therefore it suffices to prove the same for~$\sigma^\black\mapsto  \Zising(\mathrm{w}(\sigma^\black))$.
	We claim that the following factorisation holds:
	\begin{equation}
		\label{eq:fkg-proof-ising-decomp-pm}
		2\Zising(\mathrm{w}(\sigma^\black))= \Zising(\mathrm{w}^+(\sigma^\black))\cdot\Zising(\mathrm{w}^-(\sigma^\black)),
	\end{equation}
	where~$\mathrm{w}^\pm(\sigma^\black)$ is defined by
	\[
		\mathrm{w}^\pm(\sigma^\black)_{uv} = J_{uv} \cdot\ind{\sigma^\black(u)= \sigma^\black(v)=\pm}.
	\]
	Fix~$\sigma^\black$.
	The global strategy to prove~\eqref{eq:fkg-proof-ising-decomp-pm}
	is to view $\sigma^\white$ conditional on $\sigma^\black$ as the product
	of two independent Ising models.

	Observe first that whenever~$\sigma^\white\perp\sigma^\black$,
	we may partition the domain walls~$\omega[\sigma^\white]$ into
	connected components that lie inside $\{\sigma^\black=+\}$ and those that lie inside $\{\sigma^\black=-\}$:
	\[
		\omega^+[\sigma^\white]:=\{e\in \omega[\sigma^\white]\colon\text{$\sigma^\black \equiv +$ on $e^*$}\}
		\quad
		\text{and}
		\quad
		\omega^-[\sigma^\white]:=\{e\in \omega[\sigma^\white]\colon\text{$\sigma^\black \equiv -$ on $e^*$}\}.
	\]
	Since~$\domain$ is simply-connected, we can find a spin configuration $\zeta^+:\faces\domain\to \{\pm\}$
	such that $\omega[\zeta^+]=\omega^+[\sigma^\white]$.
	In fact, there exists exactly one other such spin configuration, namely $-\zeta^+$.
	Similar considerations apply to $\zeta^-:=\sigma^\white/\zeta^+$.
	Observe that $(\zeta^+,\zeta^-)$ and $(-\zeta^+,-\zeta^-)$ are the only two
	pairs which separate the domain walls in the prescribed way and which factorise $\sigma^\white$.
	Conversely, if $\zeta^+,\zeta^-:\faces\domain\to \{\pm 1\}$ are two spin configurations such that
	$\omega^+[\zeta^+]=\omega[\zeta^+]$ and~$\omega^-[\zeta^-]=\omega[\zeta^-]$, then $\sigma^\white:=\zeta^+\zeta^-$
	satisfies $\sigma^\white\perp\sigma^\black$.
	In addition, the contribution of~$\sigma^\white$
	to~$\Zising(\mathrm{w}(\sigma^\black))$ is equal to the product of the contributions
	of~$\zeta^+$ to~$\Zising(\mathrm{w}^+(\sigma^\black))$ and of~$\zeta^-$
	to~$\Zising(\mathrm{w}^-(\sigma^\black))$.  Summing over all
	pairs~$(\zeta^+,\zeta^-)$ yields~\eqref{eq:fkg-proof-ising-decomp-pm}.

	By~\eqref{eq:fkg-proof-ising-decomp-pm}, it suffices to prove that the map $\sigma^\black\mapsto \Zising(a^\pm(\sigma^\black))$
	satisfies the FKG lattice condition.
	Observe that
	\[
		\mathrm{w}^+(\sigma\vee\sigma')\geq \mathrm{w}^+(\sigma)\vee \mathrm{w}^+(\sigma');
		\qquad
		\mathrm{w}^+(\sigma\wedge\sigma')=\mathrm{w}^+(\sigma)\wedge \mathrm{w}^+(\sigma').
	\]
	Therefore, monotonicity and the FKG lattice condition for~$a\mapsto\Zising(a)$ imply
	\begin{align}
		\Zising(\mathrm{w}^+(\sigma\vee\sigma'))
		\cdot
		\Zising(\mathrm{w}^+(\sigma\wedge\sigma'))
		&\geq
		\Zising(\mathrm{w}^+(\sigma)\vee \mathrm{w}^+(\sigma'))
		\cdot
		\Zising(\mathrm{w}^+(\sigma)\wedge \mathrm{w}^+(\sigma'))\\
	   & \geq
		\Zising(\mathrm{w}^+(\sigma))
		\cdot
		\Zising(\mathrm{w}^+(\sigma')).
	\end{align}
	Analogously, the same holds for~$\mathrm{w}^-(\sigma)$.
	This proves the lemma for the measure $\muSpin_{\domain,x}$.

	It remains to prove the same results for $\muSpin_{\domain,x}^\whitep$.
	Notice that the distribution of $\sigma^\black$ under this measure is the same as under
	\[
		\muSpin_{\domain,x}(\blank|
			\{\sigma^\white|_{\partial_\face\domain}\equiv +\}
			\cup
			\{\sigma^\white|_{\partial_\face\domain}\equiv -\}
			);
	\]
	we now can run the same proof as above with
	\(
		J'_{uv}:=J_{uv}\cdot\ind{
			uv\not\subseteq\partial_\face\domain
		}
	\).
\end{proof}

We are now ready to prove Proposition~\ref{prop:fkg-joint-lip}.

\begin{proof}[Proof of Proposition~\ref{prop:fkg-joint-lip}]
	By symmetry, we can focus on~$(\sigma^\black,\xi^\blackp,-\xi^\blackm)$. Let $\P$ denote~$\muSpin_{\domain,x}$.
	Our proof follows the pattern of the proof of~\cite[Lemma~3.2]{Lam22}:
	we first prove two auxiliary claims,
	then tie them together with Lemma~\ref{lemma:fkg-spins-lip}
	to yield the proposition.

	\begin{claim}
		\label{claim:one}
	Conditional on $\sigma^\black$, the pair $(\xi^\blackp,-\xi^\blackm)$ satisfies the FKG inequality.
	\end{claim}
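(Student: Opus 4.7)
My plan is to show that, conditional on $\sigma^\black$, the random sets $\xi^\blackp$ and $\xi^\blackm$ are \emph{independent} and each is individually positively associated; the claim then follows from two abstract facts — independent positively associated random variables are jointly positively associated, and the componentwise negation of an FKG random variable is again FKG (since the covariance inequality holds for decreasing functions as well, as seen by applying the inequality to $-f,-g$).

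The conditional independence rests on a geometric decomposition. For $y\in A$ the three faces around $y$ all have $\sigma^\black=+$, so the triangle $\triedges(\{y\})$ is supported on edges contained in $G^+:=\{\sigma^\black=+\}$; symmetrically for $B$ and $G^-:=\{\sigma^\black=-\}$. Applying the Ising-model factorisation
\[
	2\Zising(\mathrm w(\sigma^\black))=\Zising(\mathrm w^+(\sigma^\black))\cdot\Zising(\mathrm w^-(\sigma^\black))
\]
established inside the proof of Lemma~\ref{lemma:fkg-spins-lip}, the conditional law of $\sigma^\white$ given $\sigma^\black$ is parametrised, up to a global sign, by a pair of independent half-spin configurations $\zeta^+,\zeta^-$ with $\omega[\zeta^\pm]\subseteq G^\pm$. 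Since the edges around any $A$-vertex all lie in $G^+$, one checks that $\upvert[\sigma^\white]\cap A=\upvert[\zeta^+]\cap A$ depends only on $\zeta^+$, and analogously $\upvert[\sigma^\white]\cap B=\upvert[\zeta^-]\cap B$. Combined with the independence of the external uniform noise $U$ across $A$ and $B$ built into Definition~\ref{def:b-w-perco-lip}, this gives the required conditional independence of $\xi^\blackp$ and $\xi^\blackm$ given $\sigma^\black$.

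For the individual FKG of each marginal, one recognises $\xi^\blackp$ (resp.\ $\xi^\blackm$) as arising from an Edwards--Sokal-type coupling of an Ising model living on $G^+$ (resp.\ $G^-$) with an independent Bernoulli noise layer. This gives each marginal a random-cluster-type description with cluster weight $q=2$, and positive association follows from the standard FKG lattice condition, itself a consequence of the supermodularity of the cluster-count function. The main obstacle is to execute the decomposition rigorously: under the free-boundary measure $\muSpin_{\domain,x}$ the indicator $\ind{\sigma^\black\perp\sigma^\white}$ is not automatically implied by $\ind{\sigma^\white\perp(\xi^\black)^*}$ at boundary $\upvert$-vertices (unlike in the boundary-conditioned versions of Lemma~\ref{lemma:lip-marginal-black}), so the argument must pass through the partition-function-level Ising factorisation rather than be carried out configuration by configuration.
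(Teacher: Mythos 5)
Your proposal is correct and follows essentially the same route as the paper: both arguments establish conditional independence of $\xi^\blackp$ and $\xi^\blackm$ given $\sigma^\black$ via the $\zeta^+\zeta^-$ Ising factorisation from the proof of Lemma~\ref{lemma:fkg-spins-lip}, and then deduce positive association of each marginal from a random-cluster-type (cluster weight~$2$) representation via the standard FKG lattice condition. The only difference is one of emphasis: you spell out the conditional independence in more detail, which the paper merely asserts, while the paper writes out the conditional law of $\xi^\blackp$ explicitly rather than gesturing at the Edwards--Sokal picture.
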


	\begin{proof}
	Fix~$\sigma^\black=\tau^\black$.
	Since $\xi^\blackp$ and $\xi^\blackm$ are conditionally independent,
	it suffices to prove the FKG inequality for the former.
	Define~$E^+$ as the set of edges~$uv\in\edges{\domain^*}$ such that~$(\sigma^\black(u),\sigma^\black(v))\neq (+,+)$.
	Recall from the proof of Lemma~\ref{lemma:fkg-spins-lip} that the conditional law of~$\sigma^\white$ can be written as the product of two independent Ising configurations~$\zeta^+$ and~$\zeta^-$ such that~$\zeta^+$ agrees along edges in~$E^+$.
	Summing over possible values of~$\zeta^+$ and applying Lemma~\ref{lemma:lip-marginal-black} yields
	\begin{align}
		\P(\xi^\blackp \big| \sigma^\black = \tau^\black)
			&\propto \sum_{\zeta^+\in\{\pm1\}^{\faces{\domain}}}(x^2)^{|\xi^\blackp|}\cdot (1-x^2)^{|(\xi^\blackp)^*|}\cdot\ind{\zeta^+\perp (\xi^\blackp)^*}\cdot\ind{\triedges(\xi^\blackp)\cap E^+=\emptyset} \nonumber \\
			&= 2^{k((\xi^\blackp)^*)}\cdot(x^2)^{|\xi^\blackp|}\cdot (1-x^2)^{|(\xi^\blackp)^*|}\cdot\ind{\triedges(\xi^\blackp)\cap E^+=\emptyset},
			\label{eq:fkg-proof-xi-given-sigma-lip}
	\end{align}
	where~$k((\xi^\blackp)^*)$ denotes the number of clusters in the graph on~$\faces\domain$ given by the union of~$\triedges((\xi^\blackp)^*)$ and~$E^+$.
	It is a standard fact that measures of this type satisfy the FKG lattice condition;
	see~\cite[Theorem~3.8]{Gri06} for the proof for FK-percolation.
	This proves Claim~\ref{claim:one}.
	\renewcommand\qedsymbol{}
	\end{proof}

	\begin{claim}
		\label{claim:two}
		The distribution of $(\xi^\blackp,-\xi^\blackm)$ is stochastically increasing in $\sigma^\black$.
	\end{claim}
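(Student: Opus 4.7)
My plan is to reduce the joint stochastic monotonicity of $(\xi^\blackp,-\xi^\blackm)$ in $\sigma^\black$ to a single marginal statement, and then apply Holley's criterion. Recall from the proof of Claim~\ref{claim:one} that, conditionally on $\sigma^\black=\tau$, the law of $\sigma^\white$ factors as the product of two independent Ising configurations on~$\faces{\domain}$, and consequently $\xi^\blackp$ and $\xi^\blackm$ are conditionally independent given $\sigma^\black$. Coupling the two conditionally independent marginals monotonically reduces the joint claim to (i) $\xi^\blackp$ being stochastically increasing in $\sigma^\black$, and (ii) $\xi^\blackm$ being stochastically decreasing. The global spin flip $\sigma^\black\mapsto-\sigma^\black$ exchanges the roles of $\xi^\blackp$ and $\xi^\blackm$, so (ii) follows from (i); it is therefore enough to treat (i).

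For (i), Holley's criterion reduces the task to a pointwise inequality on the unnormalised conditional density from~\eqref{eq:fkg-proof-xi-given-sigma-lip}: writing
\[
    g_\tau(\zeta)
    :=
    2^{k_\tau(\zeta^*)}
    \cdot(x^2)^{|\zeta|}
    \cdot(1-x^2)^{|\zeta^*|}
    \cdot\ind{\triedges(\zeta)\cap E^+(\tau)=\emptyset},
\]
where $E^+(\tau):=\{uv\in\edges{\domain^*}:(\tau(u),\tau(v))\neq(+,+)\}$, I need to establish
\[
    g_{\tau_1}(\zeta_1\wedge\zeta_2)\cdot g_{\tau_2}(\zeta_1\vee\zeta_2)\geq g_{\tau_1}(\zeta_1)\cdot g_{\tau_2}(\zeta_2)
\]
for all $\tau_1\leq\tau_2$ and all $\zeta_1,\zeta_2\subset\upvert(\domain)$. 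The $(x^2)$ and $(1-x^2)$ factors cancel by modularity of $|\cdot|$. For the indicator factors, monotonicity of $\triedges$ handles the $\wedge$ side, while the antimonotonicity $E^+(\tau_2)\subset E^+(\tau_1)$ (forced by $\tau_1\leq\tau_2$) combined with $\triedges(\zeta_1\vee\zeta_2)=\triedges(\zeta_1)\cup\triedges(\zeta_2)$ handles the $\vee$ side.

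The core step, and the one I expect to be the main obstacle, is controlling the cluster-count factor $2^{k_\tau(\zeta^*)}$. I would set $A_i:=\triedges(\zeta_i^*)\cup E^+(\tau_i)$, so that $k_{\tau_i}(\zeta_i^*)=k(A_i)$. The key observations are that $\triedges((\zeta_1\wedge\zeta_2)^*)\cup E^+(\tau_1)$ equals $A_1\cup A_2$ exactly (since $\triedges$ maps unions to unions and $E^+(\tau_2)\subset E^+(\tau_1)$), while $\triedges((\zeta_1\vee\zeta_2)^*)\cup E^+(\tau_2)$ is only contained in $A_1\cap A_2$ (because $\triedges$ is merely subadditive on intersections). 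Since removing edges can only increase the cluster count, this yields the identity $k_{\tau_1}((\zeta_1\wedge\zeta_2)^*)=k(A_1\cup A_2)$ together with the inequality $k_{\tau_2}((\zeta_1\vee\zeta_2)^*)\geq k(A_1\cap A_2)$. The desired inequality then follows from the standard cluster-count supermodularity $k(A_1\cup A_2)+k(A_1\cap A_2)\geq k(A_1)+k(A_2)$, the very inequality underlying the FKG property of the random-cluster model for $q\geq 1$. The subtle point is recognising that the strict inclusion on the $\vee$ side is a gain rather than a loss: the extra edges in $A_1\cap A_2$ can only merge clusters, so discarding them only helps.
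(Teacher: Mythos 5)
Your proof is correct and takes essentially the same approach as the paper: reduce to $\xi^\blackp$ alone via conditional independence and the global spin-flip symmetry, then verify Holley's criterion for the explicit density in~\eqref{eq:fkg-proof-xi-given-sigma-lip}. The paper merely asserts that $E^+$, the indicator, and the cluster-count factor behave correctly under Holley, whereas you have spelled out the supermodularity argument $k(A_1\cup A_2)+k(A_1\cap A_2)\geq k(A_1)+k(A_2)$ (together with the one-sided inclusions $\triedges((\zeta_1\wedge\zeta_2)^*)\cup E^+(\tau_1)=A_1\cup A_2$ and $\triedges((\zeta_1\vee\zeta_2)^*)\cup E^+(\tau_2)\subseteq A_1\cap A_2$), which is the genuine content of that verification.
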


	\begin{proof}
	Since $\xi^\blackp$ and $\xi^\blackm$ are conditionally independent,
	it suffices to show that the conditional distribution of $\xi^\blackp$
	is increasing in $\sigma^\black$.
	Indeed, $E^+$ is decreasing in~$\sigma^\black$, and so are~$\ind{\triedges(\xi^\blackp)\cap E^+=\emptyset}$ and~$k((\xi^\blackp)^*)$.
	This allows one to check the Holley criterion for~\eqref{eq:fkg-proof-xi-given-sigma-lip} and prove Claim~\ref{claim:two}.
	\renewcommand\qedsymbol{}
	\end{proof}

	The remainder of the proof of Proposition~\ref{prop:fkg-joint-lip} is standard.
	Let $f$ and $g$ denote two bounded increasing functions.
	Let $\P^{\sigma^\mblack}$ denote the measure $\P(\blank|\sigma^\black)$,
	and let $f^{\sigma^\mblack}(\xi^\black):=f(\sigma^\black,\xi^\black)$ and
	$g^{\sigma^\mblack}(\xi^\black):=g(\sigma^\black,\xi^\black)$.
	Claim that
	\begin{align*}
		\P(fg)=\int\P^{\sigma^\mblack}(f^{\sigma^\mblack}\cdot g^{\sigma^\mblack}) d\P(\sigma^\black)
		&\geq
		\int\P^{\sigma^\mblack}(f^{\sigma^\mblack}) \cdot \P^{\sigma^\mblack}(g^{\sigma^\mblack}) d\P(\sigma^\black)
		\\
		&\geq
		\int\P^{\sigma^\mblack}(f^{\sigma^\mblack})d\P(\sigma^\black)\int\P^{\sigma^\mblack}(g^{\sigma^\mblack}) d\P(\sigma^\black)
		=\P(f)\P(g).
	\end{align*}
	Indeed, the first inequality follows from Claim~\ref{claim:one} since~$f^{\sigma^\mblack}$ and~$g^{\sigma^\mblack}$ are increasing in $(\xi^\blackp,-\xi^\blackm)$;
	the second inequality follows from Lemma~\ref{lemma:fkg-spins-lip} since~$\P^{\sigma^\mblack}(f^{\sigma^\mblack})$ and~$\P^{\sigma^\mblack}(g^{\sigma^\mblack})$ are increasing in~$\sigma^\black$ by Claim~\ref{claim:two};
	the equalities come from the tower property.

	The proof is identical when~$\P=\muSpin_{\domain,x}(\blank|\{\sigma^\black|_A=\tau\})$;
	in particular, the conditioning does not break the FKG inequality for $\sigma^\black$
	because we proved the FKG lattice condition for this lattice-valued random variable.
	For boundary conditions~$\whitep$ or $\whitem$ we just need to add to~$E^+$ all edges between adjacent boundary faces in~$\partial_\face\domain$.
\end{proof}

We write $\mu \preceq_\black \nu$ if the distribution of the triplet $(\sigma^\black,\xi^\blackp,-\xi^{\blackm})$
in $\mu$ is stochastically dominated by its distribution in $\nu$.
The Markov property and the FKG inequality allow us to draw a comparison between boundary conditions.

\begin{corollary}[Comparison between boundary conditions]
	\label{cor:cbc-lip}
	Let $\domain$ and $\domain'$ be domains such that~$\domain\subset\domain'\setminus\partial\domain'$, and let~$0<x\leq 1$.
	Then, all of the following hold true:
	\begin{enumerate}
		\item $\muSpin_{\domain,x}^{\blackm\,\whitep} \preceq_\black \muSpin_{\domain,x}^{\blackp\,\whitep}$,
		\item $\left.\muSpin_{\domain',x}^{\blackp}\right|_\domain\preceq_\black  \muSpin_{\domain,x}^{\blackp}$,
		\item \label{cbc:elaborate}$\left.\muSpin_{\domain',x}^{\whitep}(\blank|\{\sigma^\black|_A=\tau\})\right|_\domain\preceq_\black  \muSpin_{\domain,x}^{\blackp}$
		for any $A\subset\partial_\face\domain'$ and $\tau:A\to\{\pm1\}$.
	\end{enumerate}
\end{corollary}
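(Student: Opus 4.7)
All three statements follow by combining the FKG inequality of Proposition~\ref{prop:fkg-joint-lip} with either a direct conditioning argument or the domain Markov property (Lemma~\ref{lemma:loop_markov}). The key observation, used throughout, is that the events
\[
	\{\sigma^\black|_{\partial_\face\domain} \equiv +\}
	\qquad\text{and}\qquad
	\{\partial_\upvert \domain \subseteq \xi^\blackp\}
\]
are \emph{increasing} for the triplet $(\sigma^\black,\xi^\blackp,-\xi^\blackm)$, since they are increasing in $\sigma^\black$ and $\xi^\blackp$ respectively. Recall also the standard fact: if $X$ satisfies the FKG inequality and $E$ is an increasing event for $X$, then $\P(\blank|E) \succeq \P$ in the stochastic order on $X$ (and $\P(\blank|E) \preceq \P$ if $E$ is decreasing).

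For statement~(1), note that $\muSpin_{\domain,x}^{\blackpm\,\whitep}$ is obtained from $\muSpin_{\domain,x}^{\whitep}$ by conditioning on $\{\sigma^\black|_{\partial_\face\domain}\equiv \pm\}$, which is an increasing (resp.\ decreasing) event for the triplet. Since the triplet satisfies the FKG inequality under $\muSpin_{\domain,x}^{\whitep}$ (Proposition~\ref{prop:fkg-joint-lip}), the chain
\[
	\muSpin_{\domain,x}^{\blackm\,\whitep}
	\preceq_\black
	\muSpin_{\domain,x}^{\whitep}
	\preceq_\black
	\muSpin_{\domain,x}^{\blackp\,\whitep}
\]
follows.

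For statement~(2), condition $\muSpin_{\domain',x}^{\blackp}$ on the increasing event $\{\partial_\upvert\domain \subseteq \xi^\blackp\}$. The FKG inequality for the triplet under $\muSpin_{\domain',x}^{\blackp}$ yields
\[
	\muSpin_{\domain',x}^{\blackp}\big|_\domain
	\preceq_\black
	\muSpin_{\domain',x}^{\blackp}\big(\blank\,\big|\,\partial_\upvert\domain \subseteq \xi^\blackp\big)\Big|_\domain,
\]
and the right-hand side equals $\muSpin_{\domain,x}^{\blackp}$ by the domain Markov property (Lemma~\ref{lemma:loop_markov}).

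Statement~(3) is proved identically to~(2), now starting from the measure $\muSpin_{\domain',x}^{\whitep}(\blank|\sigma^\black|_A=\tau)$, which still satisfies the triplet FKG inequality by Proposition~\ref{prop:fkg-joint-lip}. Conditioning on the increasing event $\{\partial_\upvert\domain \subseteq \xi^\blackp\}$ and applying Lemma~\ref{lemma:loop_markov} (which remains valid regardless of the boundary conditions on $\domain'$) yields the domination by $\muSpin_{\domain,x}^{\blackp}$.

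The only non-routine ingredient is the triplet FKG inequality itself, which is already Proposition~\ref{prop:fkg-joint-lip}; the comparison of boundary conditions is then entirely formal. The one point to verify carefully is that the conditioning events used are measurable with respect to, and monotone in, the triplet $(\sigma^\black,\xi^\blackp,-\xi^\blackm)$, which is immediate from their definitions.
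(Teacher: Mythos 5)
Your proof is correct and follows essentially the same route as the paper: part~(1) by conditioning $\muSpin_{\domain,x}^{\whitep}$ on the increasing/decreasing boundary events and applying the triplet FKG inequality, and parts~(2)–(3) by conditioning on the increasing event $\{\partial_\upvert\domain\subseteq\xi^\blackp\}$, using FKG, and identifying the conditioned measure via the domain Markov property (Lemma~\ref{lemma:loop_markov}). The only thing worth making explicit is that the conditioning event $\{\sigma^\black|_A=\tau\}$ in part~(3) depends only on faces outside $\domain$ (since $A\subset\partial_\face\domain'$ and $\domain\subset\domain'\setminus\partial\domain'$), which is precisely why the Markov property applies unchanged under this extra conditioning — you gesture at this with ``remains valid regardless of the boundary conditions,'' which is what the paper also asserts.
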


\begin{proof}
	The first statement is obvious because the two measures may be viewed as the measure
	$\muSpin_{\domain,x}^{\whitep}$ conditioned on a decreasing and an increasing event respectively.
	For the remaining statements, observe that the measure
	$\muSpin_{\domain',x}$ conditioned
	on the increasing event $\partial_\upvert\domain\subseteq \xi^\blackp$
	produces precisely $\muSpin_{\domain,x}^{\blackp}$ within $\Omega$;
	this is the Markov property (Lemma~\ref{lemma:loop_markov}).
	The same holds true with boundary conditions imposed.
\end{proof}

\subsection{Infinite-volume limit}

The comparison between boundary conditions naturally leads
to the existence of an infinite-volume limit,
which we now describe.
In order to do this, we extend the notion of a Gibbs measure and the weak topology
to the spin representation.
Observe that the spin interaction is local, and therefore all probability kernels in the specification
of this model are continuous.

\begin{proposition}[Convergence under $\blackp$ boundary conditions]
	\label{prop:inf-vol-lip}
	Let~$0<x\leq 1$.
	Then, for any increasing sequence of domains~$(\domain_k)_k\nearrow\hexlattice$, we have the following weak convergence:
	\[
		\muSpin_{\domain_k,x}^\blackp\xrightarrow[k\to\infty]{}\muSpin_{x}^\blackp,
	\]
	where $\muSpin_x^\blackp$ is a Gibbs measure for the spin representation independent
	of the choice of the sequence $(\domain_k)_k$ and invariant under the symmetries of~$\upvert(\hexlattice)$.
	 Moreover:
	 \begin{enumerate}
		\item The law of $(\sigma^\black,\xi^\black)$ is extremal and ergodic.
		\item Both $(\sigma^\black,\xi^\blackp,-\xi^\blackm)$ and~$(\sigma^\white,\xi^\whitep,-\xi^\whitem)$
		satisfy the FKG inequality in $\muSpin_x^\blackp$.
		\item\label{prop:inf-vol-lip:three}
		The distribution of~$\sigma^\white$ given~$(\sigma^\black,\xi^\black)$
		is obtained by assigning~$\pm$ to the clusters
		of~$\triedges((\xi^\black)^*)$ via independent fair coin flips.
		\item\label{prop:inf-vol-lip:four}
		The distribution of~$\sigma^\black$ given~$(\sigma^\white,\xi^\white)$
		is obtained by assigning~$+$ to the infinite cluster of~$\triedges((\xi^\white)^*)$ (if it exists) and~$\pm$ all finite clusters via independent fair coin flips.
	 \end{enumerate}
\end{proposition}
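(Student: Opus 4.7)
The plan is to extract the limit from the stochastic monotonicity $\preceq_\black$ (Corollary~\ref{cor:cbc-lip}) and then transport each finite-volume property to infinite volume through standard weak-limit arguments.

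First I would build the limit. By Corollary~\ref{cor:cbc-lip}(2), for any fixed window $\Lambda$ and any domains $\Lambda \subset \domain \subset \domain' \setminus \partial\domain'$, the restriction of $(\sigma^\black, \xi^\blackp, -\xi^\blackm)$ to $\face(\Lambda) \cup \upvert(\Lambda)$ is stochastically smaller under $\muSpin_{\domain',x}^\blackp$ than under $\muSpin_{\domain,x}^\blackp$. Since a bounded decreasing sequence of measures on a finite lattice converges, the marginals on $(\sigma^\black, \xi^\black)$ converge along $\domain_k \nearrow \hexlattice$; Lemma~\ref{lemma:loop_decoupling} then determines the conditional law of $\sigma^\white$ by a cluster-wise rule that is continuous in $(\sigma^\black, \xi^\black)$, yielding weak convergence of the joint distribution. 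Cofinality of any two increasing sequences and the same stochastic domination force the limits to coincide, and choosing a sequence invariant under any finite subgroup of symmetries of $\upvert(\hexlattice)$ gives invariance of $\muSpin_x^\blackp$. The Gibbs property follows because the specification of the spin representation has local continuous kernels.

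Next I would transfer the FKG inequalities and the conditional rules. Proposition~\ref{prop:fkg-joint-lip} gives FKG for both triplets under $\muSpin_{\domain,x}^\blackp$, and since the FKG inequality is preserved under weak limits, both $(\sigma^\black, \xi^\blackp, -\xi^\blackm)$ and $(\sigma^\white, \xi^\whitep, -\xi^\whitem)$ satisfy FKG in $\muSpin_x^\blackp$. Part~(\ref{prop:inf-vol-lip:three}) is immediate from Lemma~\ref{lemma:loop_decoupling}: the sampling rule is local in $(\sigma^\black, \xi^\black)$ and passes verbatim to the infinite-volume measure. For part~(\ref{prop:inf-vol-lip:four}) I would use the symmetric analogue at finite volume under $\muSpin_{\domain_k,x}^\blackp$: since $\sigma^\black \equiv +$ on $\partial_\face \domain_k$, every cluster of $\triedges((\xi^\white)^*)$ touching $\partial_\face\domain_k$ is forced to $+$, while all other clusters receive independent fair coins. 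As $k \to \infty$, any cluster that is finite in the limit is eventually detached from the boundary and keeps its fair coin, while the (almost-surely unique, if present) infinite cluster is eventually boundary-touching and is frozen to $+$.

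The main obstacle I expect is the extremality and ergodicity claim in item~(1). The approach is to show that $\muSpin_x^\blackp$ is the stochastic maximum in $\preceq_\black$ among all shift-invariant Gibbs measures whose black triplet satisfies FKG, and then to argue extremality by the standard monotone-coupling route: given a tail event $A$ with $\muSpin_x^\blackp(A)>0$, the conditional measure $\muSpin_x^\blackp(\,\cdot\,\vert A)$ is again a shift-invariant Gibbs measure, and its black triplet inherits FKG and dominates that of $\muSpin_x^\blackp$, forcing coincidence by maximality and hence $\muSpin_x^\blackp(A) \in \{0,1\}$ on the shift-invariant $\sigma$-algebra. The delicacy lies in checking that the conditioning preserves the FKG inequality for the triplet and in choosing a class of tail events rich enough to imply full extremality; because the sample space mixes spins and percolation variables, one must throughout package $\xi^\black$ through the lattice-valued pair $(\xi^\blackp, -\xi^\blackm)$ exactly as in Proposition~\ref{prop:fkg-joint-lip}.
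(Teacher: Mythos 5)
The construction of the limit via the monotone FKG ordering is the same route as the paper's, and items (2)--(4) are in the right spirit, but two steps as written have genuine holes.

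First, you claim that the cluster-wise sampling rule for $\sigma^\white$ given $(\sigma^\black,\xi^\black)$ is ``continuous in $(\sigma^\black,\xi^\black)$'' and that this continuity gives weak convergence of the joint distribution. This is false as stated: whether two vertices lie in the same cluster of $\triedges((\xi^\black)^*)$ is a non-local function of the configuration, and it is not a weakly continuous function on the product space. The actual mechanism the paper uses is Burton--Keane uniqueness of the infinite cluster of $\triedges((\xi^\black)^*)$ under the limiting marginal $\mu$, which ensures that the finite-volume sampling algorithm for $\sigma^\white$ (fair coins on clusters) commutes with the infinite-volume limit, because any fixed window sees an almost-surely locally determined cluster decomposition. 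Without this uniqueness input, your argument that the joint law of $(\sigma^\black,\sigma^\white)$ converges is incomplete.

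Second, the tail-event argument for item~(1) is both over-ambitious and confused. It is over-ambitious because item~(1) claims extremality and ergodicity of the \emph{marginal} law of $(\sigma^\black,\xi^\black)$, not of the full measure $\muSpin_x^\blackp$; for $0<x<1/\sqrt2$ (where the paper does not rule out an infinite cluster of $\triedges((\xi^\black)^*)$), full-measure tail triviality can genuinely fail, since the fair coin assigning the white spin to the infinite cluster is a nontrivial tail observable. It is also internally confused: if $\muSpin_x^\blackp$ were stochastically maximal, then a conditional measure $\muSpin_x^\blackp(\,\cdot\,\mid A)$ that is again Gibbs must be \emph{dominated by} $\muSpin_x^\blackp$, not dominate it; and the worry about the conditional measure ``inheriting'' the FKG inequality is a red herring --- the standard monotone argument (maximality of $\mu$, both conditionals dominated, convexity) does not require FKG for the conditional measures at all. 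In the paper, item~(1) is simply a citation to the standard theory (Grimmett, Prop.~4.10b, Thm.~4.19, Cor.~4.23), applied to the black triplet marginal, which is exactly the scope that is both needed and true for all $0<x\leq 1$.
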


\begin{proof}
	Since the underlying sample space consists of spins and the auxiliary
	family of independent random variables $U$, it suffices to prove convergence of the distribution of the spins.
	Convergence in law for the vertex percolations then follows as a corollary.
	The vertex percolations do play an important role in this convergence proof.

	We first construct the limit of the marginal distributions on black spins and vertices.
	By comparison between boundary conditions (Corollary~\ref{cor:cbc-lip}), the
	law of $(\sigma^\black,\xi^\blackp,-\xi^\blackm)$
	under~$\muSpin_{\domain,x}^\blackp$
	is stochastically decreasing
	in $\domain$.
	By standard arguments~\cite[Proposition~4.10b,
	Theorem~4.19, Corollary~4.23]{Gri06},
	the distribution of this triplet tends (in the weak topology) to some measure $\mu$
	as $\domain\nearrow\H$,
	and this limit is translation-invariant, ergodic, extremal, and satisfies the FKG inequality.

	Our objective is however to prove convergence of $(\sigma^\black,\sigma^\white)$.
	In finite volume, the conditional distribution of $\sigma^\white$ given $\sigma^\black$
	is given by flipping fair coins for the connected components of $\triedges((\xi^\black)^*)$
	(Lemma~\ref{lemma:loop_decoupling}).
	This property commutes with the volume limit if there is at most one infinite connected component.
	Uniqueness of the infinite connected component (if it exists) under $\mu$ is guaranteed by the argument of
	Burton and Keane~\cite{BurKea89}.
	This proves that $\muSpin_{x}^\blackp$ is well-defined and that the distribution
	of $(\sigma^\black,\sigma^\white)$ in this measure can be directly expressed in terms of
	$\mu$ and the above sampling algorithm.

	The FKG inequality for $(\sigma^\white,\xi^\whitep,-\xi^\whitem)$
	in $\muSpin_{x}^\blackp	$ follows from~\cite[Proposition~4.10b]{Gri06}.

	Finally, Parts~\ref{prop:inf-vol-lip:three} and~\ref{prop:inf-vol-lip:four} follow their finite-volume counter-part Lemma~\ref{lemma:loop_decoupling} via standard arguments and we omit the details.
\end{proof}

The measures $\muSpin_x^\blackm$, $\muSpin_x^\whitep$, and $\muSpin_x^\whitem$ are defined analogously.

\subsection{Extremality and delocalisation}
\label{sec:extremality-lip}

Assume the context of Proposition~\ref{prop:inf-vol-lip}.
If $\triedges((\xi^\black)^*)$ contains an infinite cluster almost surely,
then there is a nontrivial tail-measurable random variable which indicates the white spin colour
of this cluster.
In other words, the flip symmetry for the white spins is broken.
It is easy to see that in this scenario no face is surrounded by infinitely many loops,
and that the corresponding height function is localised.
This should occur for all~$0<x<1/\sqrt{2}$ according to the predicted phase diagram.
For~$1/\sqrt2\leq x\leq 1$, we use the super-duality property~\eqref{eq:super_duality} to rule out the infinite cluster in~$\triedges((\xi^\black)^*)$.
This readily implies delocalisation.

\begin{proposition}[Extremality]\label{prop:ergo-lip}
	Let~$1/\sqrt{2} \leq x \leq 1$. Then, the measure $\muSpin_x^\blackp$ is
	extremal and ergodic.
\end{proposition}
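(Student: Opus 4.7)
The plan is to reduce extremality of $\muSpin_x^\blackp$ to the absence of an infinite cluster in $\triedges((\xi^\black)^*)$, and then to exclude this cluster by combining super-duality with Zhang's non-coexistence argument.

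\emph{Reduction step.} Proposition~\ref{prop:inf-vol-lip} already gives that the marginal of $(\sigma^\black,\xi^\black)$ under $\muSpin_x^\blackp$ is extremal and ergodic, and that conditionally on this marginal, $\sigma^\white$ is obtained by assigning independent fair $\pm$ signs to the connected components of $\triedges((\xi^\black)^*)$. Since on any finite window $\sigma^\white$ depends only on the signs of finitely many components, any $\muSpin_x^\blackp$-tail event is measurable with respect to the (trivial) tail of $(\sigma^\black,\xi^\black)$ together with the signs carried by the \emph{infinite} components of $\triedges((\xi^\black)^*)$. Hence extremality is equivalent to showing
\[
\muSpin_x^\blackp\bigl(\triedges((\xi^\black)^*)\text{ has an infinite component}\bigr)=0.
\]

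\emph{Exclusion via super-duality.} Assume for contradiction that this probability is positive, hence equal to $1$ by ergodicity of the marginal. Super-duality~\eqref{eq:super_duality} yields $(\xi^\black)^*\subseteq\xi^\white$, so $\triedges(\xi^\white)$ also contains an infinite component almost surely. The finite-energy property provided by the i.i.d.\ family $(U_y)_y$ makes the Burton--Keane argument applicable, forcing this infinite component to be unique. Because $\sigma^\white\perp\xi^\white$, the spin $\sigma^\white$ is constant on it, and by the $\sigma^\white\mapsto -\sigma^\white$ symmetry of $\muSpin_x^\blackp$ (the boundary condition constrains only $\sigma^\black$), its sign is $+$ or $-$ each with probability $\tfrac12$.

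\emph{Zhang's argument.} To derive the contradiction, I invoke the FKG inequality for $(\sigma^\white,\xi^\whitep,-\xi^\whitem)$ under $\muSpin_x^\blackp$ (Proposition~\ref{prop:inf-vol-lip}) together with invariance under the symmetries of $\hexlattice$. The square-root trick then produces, with positive probability, four-arm crossings of an arbitrarily large hexagonal annulus by both a $\sigma^\white\equiv+$-component and a $\sigma^\white\equiv-$-component of $\triedges(\xi^\white)$ reaching all four sides. Planarity of $\upvert(\hexlattice)$, combined with the covering property $\xi^\black\cup\xi^\white=\upvert(\hexlattice)$ supplied by super-duality, forces such $+$- and $-$-components to meet at some $\upvert$-vertex of $\xi^\white$, contradicting the single-valuedness of $\sigma^\white$ on $\xi^\white$.

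The main obstacle is the last step. Zhang's argument is classically formulated for a single percolation satisfying FKG and exact self-duality, whereas here FKG holds only on the enriched triple $(\sigma^\white,\xi^\whitep,-\xi^\whitem)$ and self-duality is replaced by the one-sided inclusion $(\xi^\black)^*\subseteq\xi^\white$. The technical crux is thus to reformulate the four-arm crossing events as events that are increasing in this triple, and to verify that super-duality is strong enough to run the topological obstruction in the triangular lattice $\upvert(\hexlattice)$.
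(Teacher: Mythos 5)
Your reduction step is correct, and the first half of the exclusion step --- super-duality gives $(\xi^\black)^*\subseteq\xi^\white$, so $\xi^\white$, and hence (by the $\sigma^\white\mapsto-\sigma^\white$ symmetry of $\muSpin_x^\blackp$) $\xi^\whitep$, percolates with positive probability --- matches the paper's. The gap is exactly where you flag it, and it is a genuine obstruction rather than a finessable technicality: $\xi^\whitep$ and $\xi^\whitem$ are complementary \emph{sub-percolations} of the same set $\xi^\white$, not a primal/dual pair on $\upvert(\hexlattice)$, so no Zhang- or square-root-trick-type contradiction can be extracted from their joint crossings. In fact, the event you propose to manufacture (simultaneous four-arm crossings of a large annulus by a $\sigma^\white\equiv+$ component and a $\sigma^\white\equiv-$ component of $\triedges(\xi^\white)$) has probability zero precisely by the Burton--Keane uniqueness you invoked: the unique infinite $\xi^\white$-cluster carries a single sign, so $+$ and $-$ arms cannot both reach macroscopic scale around the same annulus.

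The paper closes this gap with a transfer from white back to black that is missing from your proposal. By comparison between boundary conditions (Corollary~\ref{cor:cbc-lip}), the law of $\xi^\whitep$ under $\muSpin_x^\blackp$ is stochastically dominated by its law under $\muSpin_x^\whitep$; black/white symmetry then identifies the latter with the law of $\xi^\blackp$ under $\muSpin_x^\blackp$. Therefore $\muSpin_x^\blackp(\xi^\blackp\text{ percolates})>0$. Since this event is measurable with respect to $(\sigma^\black,\xi^\black)$, whose law is already extremal by Proposition~\ref{prop:inf-vol-lip}, it in fact has probability one. Now $\xi^\blackp$ and its site-percolation dual $(\xi^\blackp)^*\supseteq(\xi^\black)^*$ form a genuine primal/dual pair on $\upvert(\hexlattice)$, both percolating almost surely, with $\xi^\blackp$ translation-invariant, lattice-symmetric, and FKG (Proposition~\ref{prop:fkg-joint-lip}); Zhang's non-coexistence theorem now applies cleanly and yields the contradiction.
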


\begin{proof}
	Proposition~\ref{prop:inf-vol-lip} says that the law of $\xi^\black$ under~$\muSpin_x^\blackp$ is extremal and ergodic.
	Suppose that all clusters of $(\xi^\black)^*$ are almost surely finite.
	Extremality and ergodicity
	of $\muSpin_x^\blackp$ then follows from the fact that
	$\sigma^\white$ is obtained by assigning~$\pm 1$ to the clusters of~$\triedges((\xi^\black)^*)$ (which are also finite) independently.

	Let $1/\sqrt{2} \leq x \leq 1$.
	In order to derive a contradiction, assume that
	\begin{equation}
		\label{eq:whatwewanttocontradict}
		\muSpin_x^\blackp(\text{$(\xi^\black)^*$ has an infinite cluster})>0.
	\end{equation}
	Using the super-duality~\eqref{eq:super_duality}, the symmetry between~$\xi^\whitep$ and~$\xi^\whitem$ under~$\muSpin_x^\blackp$ and the fact that each cluster of~$\xi^\white$ is a cluster of~$\xi^\whitep$ or a cluster of~$\xi^\whitem$,
	 \begin{equation}
		\label{eq:whatwewanttocontradict-2}
		\muSpin_x^\blackp(\text{$\xi^\whitep$ has an infinite cluster})>0.
	\end{equation}
	By the comparison between boundary conditions, for any domain~$\domain$, the distribution of~$\xi^\whitep$ under~$\muSpin_{\domain,x}^\whitep$ dominates that under~$\muSpin_x^\blackp$.
	Taking the limit~$\domain\nearrow \hexlattice$ as in Proposition~\ref{prop:inf-vol-lip} (but for~$\xi^\whitep$) and using black/white symmetry and~\eqref{eq:whatwewanttocontradict-2}, we get
	\[
		\muSpin_x^\blackp(\text{$\xi^\blackp$ has an infinite cluster})
		= \muSpin_x^\whitep(\text{$\xi^\whitep$ has an infinite cluster}) >0.
	\]
	Together with~\eqref{eq:whatwewanttocontradict}, this contradicts Zhang's non-coexistence theorem.
\end{proof}

\begin{corollary}[Delocalisation]\label{cor:inf-vol-limits-equal-lip}
	For any~$1/\sqrt{2}\leq x \leq 1$,
	we have
	\(
		\muSpin_x^\blackp = \muSpin_x^\blackm = \muSpin_x^\whitep = \muSpin_x^\whitem
	\)
	and, for every face, there exists almost surely exhibit infinitely many white and infinitely many black loops surrounding it.
\end{corollary}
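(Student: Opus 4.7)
The strategy is to show that $(\xi^\white)^*$ has no infinite cluster under $\muSpin_x^\blackp$, which via Proposition~\ref{prop:inf-vol-lip}(\ref{prop:inf-vol-lip:four}) forces the symmetry $\sigma^\black\leftrightarrow-\sigma^\black$, and then to combine this with a dual argument and the black--white symmetry of the specification to obtain the four-way equality.

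For the no-infinite-cluster statement, suppose for contradiction that $(\xi^\white)^*$ percolates under $\muSpin_x^\blackp$. Super-duality $\upvert(\hexlattice)=\xi^\black\cup\xi^\white$ gives $(\xi^\white)^*\subseteq\xi^\black$; together with $\sigma^\black\perp\xi^\black$, which forces each connected $\xi^\black$-cluster to lie entirely in either $\xi^\blackp$ or $\xi^\blackm$, an infinite cluster of $\xi^\black$ produces an infinite $\xi^\blackp$- or $\xi^\blackm$-cluster. Moreover $(\xi^\blackp)^*\supseteq(\xi^\white)^*$ also percolates, and Zhang's non-coexistence applied to the FKG percolation $\xi^\blackp$ (Proposition~\ref{prop:inf-vol-lip}) precludes $\xi^\blackp$ from percolating. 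Hence $\xi^\blackm$ percolates under $\muSpin_x^\blackp$. Finally, the stochastic comparison $\muSpin_x^\blackm\preceq_\black\muSpin_x^\blackp$ from Corollary~\ref{cor:cbc-lip}, combined with the $\sigma^\black\to-\sigma^\black$ symmetry relating the measures $\muSpin_x^\blackp$ and $\muSpin_x^\blackm$, transports the percolation of $\xi^\blackm$ under $\muSpin_x^\blackp$ into percolation of $\xi^\blackp$ under $\muSpin_x^\blackp$, contradicting the previous step.

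By Proposition~\ref{prop:inf-vol-lip}(\ref{prop:inf-vol-lip:four}) and the step above, the distribution of $\sigma^\black$ under $\muSpin_x^\blackp$ is given by independent fair coin flips on the finite clusters of $\triedges((\xi^\white)^*)$; in particular $\muSpin_x^\blackp=\muSpin_x^\blackm$, and the sign of $\sigma^\black$ fails to stabilise at arbitrarily large scales around any face $u$, producing infinitely many $\omega[\sigma^\black]$-loops. The analogous argument with the roles of the colours swapped---using that $(\xi^\black)^*$ has no infinite cluster from Proposition~\ref{prop:ergo-lip} together with Proposition~\ref{prop:inf-vol-lip}(\ref{prop:inf-vol-lip:three})---gives $\muSpin_x^\blackp$ invariant under $\sigma^\white\to-\sigma^\white$ and infinitely many $\omega[\sigma^\white]$-loops. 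Combining this double symmetry with the black--white swap of $\muSpin_{\domain,x}$ and Corollary~\ref{cor:cbc-lip} yields the four-way equality of measures, and Proposition~\ref{prop:lip-spin-loop} translates the loop statements into infinitely many black and white loops of the loop~\O{2} model surrounding every face.

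The main obstacle is the contradiction argument ruling out an infinite cluster of $(\xi^\white)^*$: it requires a delicate interplay of super-duality, Zhang's non-coexistence (applied to $\xi^\blackp$), the stochastic comparison between boundary conditions, and the $\sigma^\black$-flip symmetry relating $\muSpin_x^\blackp$ and $\muSpin_x^\blackm$, with careful attention to which percolation assertion holds under which measure.
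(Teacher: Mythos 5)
Your strategy is genuinely different from the paper's: you aim to show directly that $(\xi^\white)^*$ has no infinite cluster under $\muSpin_x^\blackp$, then invoke the sampling rule of Proposition~\ref{prop:inf-vol-lip}, Part~\ref{prop:inf-vol-lip:four}, to obtain invariance under the black spin flip, and combine this with the white-flip invariance from Proposition~\ref{prop:ergo-lip} and the black--white swap of the specification. The paper instead works at the level of loops: conditionally on the event $C_\Lambda$ that a loop surrounds a finite box $\Lambda$, swapping the black/white colours of the outermost such loop together with every loop inside it is weight-preserving and interchanges $\sigma^\black$ with $\sigma^\white$ on $\Lambda$; hence the $\blackp$ and $\whitep$ measures agree on $\Lambda$ after conditioning on $C_\Lambda$, and Proposition~\ref{prop:ergo-lip} makes the conditioning trivial in the limit.

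The pivotal inclusion you assert, $(\xi^\blackp)^*\supseteq(\xi^\white)^*$, is false. With the paper's convention $\xi^*:=\upvert(\domain)\setminus\xi$ for site-percolation duals, it is equivalent to $\xi^\blackp\subseteq\xi^\white$, which fails: any vertex $y$ with $\sigma^\white\not\perp y$ and $\sigma^\black\equiv+$ around $y$ lies deterministically in $\xi^\blackp$ and \emph{not} in $\xi^\white$ (second case of Definition~\ref{def:b-w-perco-lip}). Super-duality gives only $(\xi^\white)^*\subseteq\xi^\black=\xi^\blackp\cup\xi^\blackm$, so an infinite cluster of $(\xi^\white)^*$ lies entirely in $\xi^\blackp$ or entirely in $\xi^\blackm$. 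Your chain of deductions closes the $\xi^\blackm$ case: flip plus $\preceq_\black$-comparison then yields $\xi^\blackp$-percolation, and since $\xi^\blackm\subseteq(\xi^\blackp)^*$, Zhang gives a contradiction. But the $\xi^\blackp$ case produces no contradiction: you only obtain percolation of $\xi^\blackp$, and no inclusion or comparison forces percolation of a disjoint set.

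The obstruction is structural. In Proposition~\ref{prop:ergo-lip}, when the infinite cluster of $(\xi^\black)^*$ lands in $\xi^\white$, the $\sigma^\white\to-\sigma^\white$ symmetry of $\muSpin_x^\blackp$ (which holds by construction, since the $\blackp$ boundary condition constrains $\sigma^\black$ only) places it in $\xi^\whitep$ with positive probability, feeding into the comparison and Zhang's theorem. To mirror this when the infinite cluster lands in $\xi^\black$, you would need the $\sigma^\black\to-\sigma^\black$ symmetry of $\muSpin_x^\blackp$ --- but that is precisely what the corollary sets out to prove. The paper's colour-swap argument on loops breaks this circularity and is the essential ingredient missing from your proposal.
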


\begin{proof}
	It is clearly enough to show that~$\muSpin_x^\blackp = \muSpin_x^\whitep$.
	Fix any domain~$\Lambda$ and consider any sequence of domains~$\domain_k\nearrow\hexlattice$ that contain~$\Lambda$.
	Let~$C_\Lambda$ be the event that $\omega[\sigma^\black]\cup \omega[\sigma^\white]$ contains a loop surrounding~$\Lambda$.
	Note that, for any~$k\geq 1$,
	\[
		\muSpin_{\domain_k,x}^\blackp(\cdot \, \vert \, C_\Lambda)|_\Lambda
		= \muSpin_{\domain_k,x}^\whitep(\cdot \, \vert \, C_\Lambda)|_\Lambda.
	\]
	Indeed, the operation of swapping the colours of the outermost loop surrounding~$\Lambda$ and all loops inside of it is measure-preserving and swaps~$\sigma^\black$ and~$\sigma^\white$ on~$\Lambda$.
	By Proposition~\ref{prop:ergo-lip}, the conditioning becomes trivial as~$k\to\infty$. Since~$\Lambda$ was arbitrary, the statement follows.
\end{proof}

\subsection{Proof of Theorem~\ref{thm:loop_soft_deloc}}
\label{subsec:proof_of_thm:loop_soft_deloc}

\begin{lemma}
	\label{lemma:thm1aux}
	Let $1/\sqrt2\leq x\leq 1$.
	Consider an increasing sequence of domains $(\Omega_k)_k\nearrow\hexlattice$
	and spin configurations~$\tau_k:\partial_\face\Omega_k\to\pm 1$.
	Then,
	\[
		\mu_k:=\muSpin_{\Omega_k,x}^\whitep(\blank|\{\sigma^\black|_{\partial_\face\Omega_k}=\tau_k\})
		\xrightarrow[k\to\infty]{}
		\muSpin_x^\blackp.
	\]
	In particular, $\muSpin_{\Omega_k,x}^{\blackp\,\whitep}\to\muSpin_x^\blackp$ as~$k\to\infty$,
	Moreover, if $\rho$ is a full-plane Gibbs measure on spins
	under which $\{\sigma^\white=-\}$ does not percolate almost surely,
	then $\rho=\muSpin_x^\blackp$.
\end{lemma}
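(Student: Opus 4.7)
The plan is to establish the ``moreover'' uniqueness claim first and then deduce the convergence $\mu_k\to\muSpin_x^\blackp$ as an immediate corollary by checking that every weak subsequential limit of $\mu_k$ satisfies the hypotheses of that statement; taking $\tau_k\equiv +$ at the end recovers the ``in particular'' claim for $\muSpin_{\Omega_k,x}^{\blackp\,\whitep}$.

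For the moreover part, let $\rho$ be a Gibbs measure with no white-minus percolation. I would first reduce to the case when $\rho$ is extremal via the ergodic decomposition, using that the non-percolation event is tail-measurable and that the reduction is compatible with the Gibbs property. Next, I would establish the stochastic domination $\rho\preceq_\black\muSpin_x^\blackp$ on the triplet $(\sigma^\black,\xi^\blackp,-\xi^\blackm)$: the Gibbs property represents $\rho|_\Omega$ as a $\rho$-mixture of finite-volume conditional measures $\muSpin_{\Omega,x}^{\tau}$, each of which I would bound on the triplet by $\muSpin_{\Omega,x}^\blackp$ via Corollary~\ref{cor:cbc-lip}(3) combined with Proposition~\ref{prop:fkg-joint-lip} (the latter to dominate general $\sigma^\white$ boundary by $\whitep$), and then pass to the infinite volume via Proposition~\ref{prop:inf-vol-lip}. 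The key symmetry lever is to apply the same domination also to the Gibbs measure $\tilde\rho$ obtained from $\rho$ by flipping $\sigma^\black$ globally --- still Gibbs and still free of white-minus percolation --- which forces $\rho(\sigma^\black(u)=+)=1/2$ to match $\muSpin_x^\blackp$. In a monotone coupling realising the stochastic domination, equality in expectation of each $\{\pm 1\}$-valued coordinate of the triplet upgrades to almost-sure coordinatewise equality, giving equality of the whole black triplet distributions. The $\sigma^\white$-marginal then follows from Proposition~\ref{prop:inf-vol-lip}(\ref{prop:inf-vol-lip:three}): by the super-duality~\eqref{eq:super_duality} and the non-percolation hypothesis, all clusters of $\triedges((\xi^\black)^*)$ are $\rho$-almost surely finite, so $\sigma^\white$ is recovered from $(\sigma^\black,\xi^\black)$ via independent fair coin flips on these clusters, yielding $\rho=\muSpin_x^\blackp$.

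For the convergence, I would take any weak subsequential limit $\nu$ of $\mu_k$ and verify the hypotheses of the moreover part. Gibbsianness of $\nu$ follows from continuity of the spin specification in Lemma~\ref{lemma:lip-marginal-black}, whose interactions are strictly local. Non-percolation of $\{\sigma^\white=-\}$ in $\nu$ is immediate because the $\whitep$ boundary forces $\sigma^\white\equiv+$ on $\partial_\face\Omega_k$ under $\mu_k$; hence for any fixed finite set $A$ of faces and any surrounding annulus contained in $\Omega_k\setminus\partial\Omega_k$, the cylinder event that $A$ is connected via $\sigma^\white=-$ to the outer boundary of the annulus has $\mu_k$-probability zero, and therefore also $\nu$-probability zero. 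The moreover part then yields $\nu=\muSpin_x^\blackp$, and uniqueness of subsequential limits gives convergence of the full sequence.

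The main obstacle will be upgrading the triplet-level stochastic domination to full joint equality. The symmetry-and-coupling argument handles the $\sigma^\black$-marginal cleanly, but identifying $\sigma^\white$ relies crucially on the super-duality relation~\eqref{eq:super_duality} --- and hence on the assumption $x\geq 1/\sqrt 2$ --- to ensure that under the non-percolation hypothesis all clusters of $\triedges((\xi^\black)^*)$ are finite. A further subtlety is the finite-volume domination $\muSpin_{\Omega,x}^{\tau}\preceq_\black\muSpin_{\Omega,x}^\blackp$ for arbitrary mixed $(\sigma^\black,\sigma^\white)$ boundary conditions $\tau$, which extends Corollary~\ref{cor:cbc-lip} by averaging over $\sigma^\white$ on $\partial_\face\Omega$ using the FKG inequality of Proposition~\ref{prop:fkg-joint-lip}.
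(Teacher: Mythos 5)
The student's proposal reverses the paper's logical order: prove the ``moreover'' uniqueness claim first via a stochastic-domination plus spin-flip-symmetry argument, then deduce the convergence by verifying those hypotheses for subsequential limits. Unfortunately both halves contain genuine gaps, and — notably — the non-percolation hypothesis (the crux of the ``moreover'' part) is never actually used where it is needed.

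\emph{Gap in the domination step.} You want to show $\rho|_\Omega\preceq_\black\muSpin_{\Omega,x}^\blackp$ by writing $\rho|_\Omega$ as a mixture of finite-volume conditionals $\muSpin_{\Omega,x}^\tau$ over \emph{arbitrary} boundary pairs $\tau=(\tau^\black,\tau^\white)$ and dominating each. But Corollary~\ref{cor:cbc-lip}(3) only covers the boundary condition ``$\whitep$ with arbitrary black spins,'' and Proposition~\ref{prop:fkg-joint-lip} establishes FKG for the black triplet only under $\whitep$ or $\whitem$ white boundary (never under arbitrary $\tau^\white$). The proof of Lemma~\ref{lemma:fkg-spins-lip} uses the global $\pm$-flip freedom of $\sigma^\white$ to factor $\Zising$ into two Ising models; a mixed $\tau^\white$ breaks that factorisation. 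There is no obvious monotonicity of the black triplet in the white boundary, so ``averaging over $\sigma^\white$ using FKG'' does not produce the needed domination. The non-percolation hypothesis must enter precisely here: the paper's proof of the ``moreover'' part explores $\{\sigma^\white=-\}$ inward from distance $R$, which terminates (by non-percolation) at a random interior boundary on which $\sigma^\white\equiv+$, yielding exactly the boundary conditions allowed in the convergence statement. Your argument never invokes this hypothesis in the domination step, which is a sign that the step cannot work as written.

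\emph{Gap in the non-percolation verification.} For the subsequential limit $\nu$ of $(\mu_k)_k$, you claim that the cylinder event ``$A$ connected by $\sigma^\white=-$ to the outer boundary of a fixed annulus'' has $\mu_k$-probability zero because $\sigma^\white\equiv+$ on $\partial_\face\Omega_k$. That is false: for $k$ large, the annulus's outer boundary lies deep inside $\Omega_k$ and this cylinder event generically has positive $\mu_k$-probability. The true zero-probability event is connection by $\sigma^\white=-$ all the way to $\partial_\face\Omega_k$, but that is not a cylinder event and does not pass to the weak limit. The paper avoids the issue entirely: for the convergence, it sandwiches the $(\sigma^\black,\xi^\black)$ marginal of any subsequential limit between $\muSpin_x^\blackm$ and $\muSpin_x^\blackp$ (Corollary~\ref{cor:cbc-lip}(3) applies directly since $\mu_k$ has a $\whitep$ boundary), concludes equality from $\muSpin_x^\blackp=\muSpin_x^\blackm$ (Corollary~\ref{cor:inf-vol-limits-equal-lip}), and then recovers the $\sigma^\white$-marginal from the a.s.~finiteness of $(\xi^\black)^*$-clusters under $\muSpin_x^\blackp$ (Proposition~\ref{prop:ergo-lip} plus Proposition~\ref{prop:inf-vol-lip}(\ref{prop:inf-vol-lip:three})), rather than through non-percolation of white minuses in the limit.
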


\begin{proof}
	Let $\mu$ denote any subsequential limit of the law of $(\sigma^\black,\xi^\black)$
	under $\mu_k$ as $k\to\infty$.
	If $\Omega$ is a domain, then Corollary~\ref{cor:cbc-lip}, Part~\ref{cbc:elaborate}, implies that
	\[
		\muSpin_{\Omega,x}^\blackm\preceq_\black\mu|_\Omega\preceq_\black \muSpin_{\Omega,x}^\blackp.
	\]
	Since this is true for any domain $\Omega$, we get
	\(
		\muSpin_{x}^\blackm\preceq_\black\mu\preceq_\black \muSpin_{x}^\blackp
	\).
	Corollary~\ref{cor:inf-vol-limits-equal-lip},
	therefore, implies that~$\mu$ and~$\muSpin_{x}^\blackp$ have the same marginal on~$(\sigma^\black,\xi^\black)$.
	Recall now the statement of Proposition~\ref{prop:inf-vol-lip}, Part~\ref{prop:inf-vol-lip:three}.
	Since all clusters of~$(\xi^\black)^*$ are $\muSpin_{x}^\blackp$-almost surely (and thus $\mu$-almost surely) finite by Proposition~\ref{prop:ergo-lip},
	the sampling statement in Proposition~\ref{prop:inf-vol-lip}, Part~\ref{prop:inf-vol-lip:three} holds true for any subsequential limit of the sequence $(\mu_k)_k$,
	and therefore we obtain the desired convergence $\mu_k\to\muSpin_{x}^\blackp$ as $k\to\infty$.

	The first of the two corollaries is obvious.
	Now consider the measure $\rho$.
	Run the following exploration process:
	first reveal the spins at all faces whose centre is at a distance at least $R$ from
	the origin, then reveal the spins at neighbours of revealed faces $u$
	where $\sigma^\white(u)=-$ until there are no faces left to reveal.
	This induces boundary conditions as in the statement of the lemma,
	and so the conditional law of the unexplored faces
	converges to $\muSpin_x^\blackp$ as $R\to\infty$.
	This means that $\rho=\muSpin_x^\blackp$.
\end{proof}

Define $\muLoop_{2,x}$ as the pushforward of $\muSpin_{x}^\blackp$ along the map
$(\sigma^\black,\sigma^\white)\mapsto \omega[\sigma^\black]\cup\omega[\sigma^\white]$.

\begin{proof}[Proof of Theorem~\ref{thm:loop_soft_deloc}]
	Consider any increasing sequence of domains~$(\domain_k)_k\nearrow\hexlattice$.
	Since $\muLoop_{\domain_k,2,x}$ is the pushforward of $\muSpin_{\domain_k,x}^{\blackp\,\whitep}$
	(Proposition~\ref{prop:lip-spin-loop}) and since $\muLoop_{2,x}$ is defined as the pushforward
	of $\muSpin_{x}^\blackp$, Lemma~\ref{lemma:thm1aux}
	implies the desired convergence~$\muLoop_{\domain_k,2,x}\to\muLoop_{2,x}$ as~$k\to\infty$.
	It is then straightforward that~$\muLoop_{2,x}$ is invariant to the symmetries of~$\hexlattice$, since applying these symmetries to~$\domain_k$ results in the same limiting measure.
	By Proposition~\ref{prop:ergo-lip}, the measure~$\muLoop_{2,x}$ is ergodic and tail trivial.
	Corallary~\ref{cor:inf-vol-limits-equal-lip} implies~\eqref{eq:inf-many-loops-lip} and hence that~$\muLoop_{2,x}$ is a Gibbs measure; it is then standard that tail triviality implies extremality.

	It remains to show that the loop~\O{2} model has no other Gibbs measures.
	At~$x=1$, this was proved in~\cite[Theorem~1.3]{GlaMan21};
	the proof presented here is slightly different.
	Let $\rho$ be a Gibbs measure for the loop~\O{2} model.
	It suffices to consider the case when $\rho$ exhibits bi-infinite paths
	almost surely.
	The idea is to turn $\rho$ into a Gibbs measure $\tilde\rho$ for the spin model
	so that we may apply Lemma~\ref{lemma:thm1aux}.

	\begin{proof}[Definition of $\tilde\rho$]\renewcommand\qedsymbol{}
	A sample $(\sigma^\black,\sigma^\white)$ from~$\tilde{\rho}$ is obtained as follows:
	\begin{enumerate}
		\item Sample a loop configuration~$\omega$ from~$\rho$.
		\item Define a loop configuration~$\omega^\black\subseteq \omega$ as follows: all
		bi-infinite paths of~$\omega$ belong to~$\omega^\black$; each finite loop
		of~$\omega$ belongs to~$\omega^\black$ with probability~$1/2$
		independently of the others.
		Let~$\omega^\white:=\omega\setminus \omega^\black$ denote the complementary
		white loop configuration.
		\item Define the spin configuration~$\sigma^\black$
		on~$\faces\hexlattice$ as follows: the spin at the origin is~$+$ or~$-$ with
		probability~$1/2$; all other spins are then determined by the
		requirement~$\omega[\sigma^\black] = \omega^\black$.
		Let~$\sigma^\white$ be the unique spin configuration
		on~$\faces\hexlattice$ such that the spins adjacent to infinite paths
		of~$\omega^\black$ are plus and~$\omega[\sigma^\white]=\omega^\white$.
	\end{enumerate}
	\end{proof}

	Observe first that
	the set $\{\sigma^\white=-\}$ does not percolate in $\tilde\rho$,
	since no bi-infinite paths are coloured white in the above construction.
	This observation and Lemma~\ref{lemma:thm1aux} jointly imply that $\tilde\rho=\muSpin_x^\blackp$
	and consequently $\rho=\muLoop_{2,x}$ as soon as we establish
	that $\tilde\rho$ is indeed a Gibbs measure for the spin measure.
	This is the purpose of the rest of the proof.

	By irreducibility of the model, it suffices to demonstrate that $\tilde\rho$
	is invariant under the Glauber dynamics.
	The latter selects a face and a colour,
	and resamples the spin of that colour at that face according to the Gibbs distribution.

	Let $u\in\faces{\hexlattice}$ denote the face chosen by the Glauber
	dynamics, and suppose that it chooses to update the black spin (the proof
	for the white spin is similar).  For any spin configuration~$\sigma^\black
	\in \{\pm 1\}^{\faces\hexlattice}$, let~$\sigma^\black_\pm$ denote the spin
	configuration which equals $\sigma^\black$ except that it assigns the
	value~$\pm$ to~$u$.  Write $\domain$ for the domain having $u$ as its only
	face.

	\begin{proof}[Definition of $\kappa$]\renewcommand\qedsymbol{}
	For any consistent pair~$(\sigma^\black,\sigma^\white)$,
	define the probability measure~$\kappa^{\sigma^\black\sigma^\white}$ on spins as follows.
	\begin{itemize}
		\item If $\omega[\sigma^\white]$ contains an edge of $\Omega$, then $\kappa^{\sigma^\black\sigma^\white}=\delta_{(\sigma^\black,\sigma^\white)}$.
		\item If $\omega[\sigma^\white]$ contains no edge of $\Omega$, then define $\kappa^{\sigma^\black\sigma^\white}$ as the unique probability measure with support $\{(\sigma^\black_+,\sigma^\white),(\sigma^\black_-,\sigma^\white)\}$ such that
		\begin{equation}\label{eq:glauber}
			\frac{\kappa^{\sigma^\black\sigma^\white}(\sigma^\black_+,\sigma^\white)}
			{\kappa^{\sigma^\black\sigma^\white}(\sigma^\black_-,\sigma^\white)}
			=
			\frac{x^{|\omega[\sigma^\black_+]\cap \domain|}}
			{x^{|\omega[\sigma^\black_-]\cap \domain|}}.
		\end{equation}
	\end{itemize}
	Observe that $\kappa:(\sigma^\black,\sigma^\white)\mapsto\kappa^{\sigma^\black\sigma^\white}$ is the probability kernel of the above Glauber dynamics.
	\end{proof}

	To conclude this proof, it suffices to demonstrate that $\tilde\rho$ is invariant under $\kappa$, that is, $\tilde\rho=\tilde\rho\kappa$.
	For this, we define another
	probability
	kernel~$\pi$ on spins.

	\begin{proof}[Definition of $\pi$]\renewcommand\qedsymbol{}
	For any consistent pair $(\sigma^\black,\sigma^\white)$ with no infinite cluster in~$\{\sigma^\white=-\}$, the probability
	measure~$\pi^{\sigma^\black\sigma^\white}$ samples a random pair~$(\tilde\sigma^\black,\tilde\sigma^\white)$  as
	follows:
	\begin{enumerate}
		\item\label{PISTEPONE} First, sample a loop configuration $\omega$ from
		$\muLoop_{\Omega,n,x}^{\omega[\sigma^\black]\cup\omega[\sigma^\white]}$.
		\item \label{PISTEPTWO}Define~$\omega^\black$ as the union of:
		\begin{itemize}
			\item All bi-infinite
			paths in~$\omega$,
			\item All loops in~$\omega[\sigma^\black]$ that do not
			intersect~$\domain$,
			\item Each loop of~$\omega[\sigma^\black]$ intersecting $\domain$
			is added with probability $1/2$ independently of all other loops.
		\end{itemize}
		\item Let~$\omega^\white:=\omega\setminus \omega^\black$
		denote the set of complementary loops.
		\item Define~$(\tilde\sigma^\black,\tilde\sigma^\white)$ as
		the unique pair of spin configurations that coincides with
		$(\sigma^\black,\sigma^\white)$ at all but a finite number of faces and
		such that~$\omega[\sigma^\black]=\omega^\black$
		and~$\omega[\sigma^\white]=\omega^\white$.
	\end{enumerate}
\end{proof}

Now make two claims.
\begin{enumerate}
	\item\label{CLAIMONE} The measure $\tilde\rho$ is invariant under $\pi$, that is, $\tilde\rho=\tilde\rho\pi$.
	\item\label{CLAIMTWO} For any consistent pair $(\sigma^\black,\sigma^\white)$ with infinite cluster in~$\{\sigma^\white=-\}$,
	the probability measure $\pi^{\sigma^\black\sigma^\white}$ is invariant under~$\kappa$,
	that is, $\pi^{\sigma^\black\sigma^\white}=\pi^{\sigma^\black\sigma^\white}\kappa$.
\end{enumerate}
Then, the desired invariance of~$\tilde\rho$ under~$\kappa$ (and hence the Gibbs property) follows from associativity of probability kernels:
\[
	\tilde\rho=\tilde\rho\pi=\tilde\rho(\pi\kappa)=(\tilde\rho\pi)\kappa=\tilde\rho\kappa.
\]

Claim~\ref{CLAIMONE} follows from the Gibbs property of $\rho$
and the definitions of $\tilde\rho$ and $\pi$.
More precisely, the distribution of the loops is invariant
under the update in Step~\ref{PISTEPONE} in the definition of $\pi$,
while the remaining steps are consistent with the construction of
$\tilde\rho$ from $\rho$.

	It remains to show Claim~\ref{CLAIMTWO}.
	Let~$(\sigma^\black,\sigma^\white)$ and $(\tilde\sigma^\black,\tilde\sigma^\white)$ denote consistent pairs such that $\{\sigma^\white=-\}$ and $\{\tilde\sigma^\white=-\}$ do not percolate.
	We shall suppose that $\tilde\sigma^\black=\tilde\sigma^\black_+$ without loss of generality.
	It suffices to show that if $(\tilde\sigma^\black,\tilde\sigma^\white)$ is in the support of $\pi^{\sigma^\black\sigma^\white}$,
	then
	\begin{equation}
		\label{eq:glauber-invariance}
		\frac{
			\pi^{\sigma^\black\sigma^\white}(\tilde\sigma^\black_-,\tilde\sigma^\white)
		}{
			\pi^{\sigma^\black\sigma^\white}(\tilde\sigma^\black_+,\tilde\sigma^\white)
		}
		=
		\frac{
			\kappa^{\sigma^\black\sigma^\white}(\tilde\sigma^\black_-,\tilde\sigma^\white)
		}{
			\kappa^{\sigma^\black\sigma^\white}(\tilde\sigma^\black_+,\tilde\sigma^\white)
		}
	\end{equation}
	If $(\tilde\sigma^\black_-,\tilde\sigma^\white)$ is not consistent then this is trivial,
	since it implies that both sides equal zero.
	If the same pair is consistent, then
	\begin{equation}
		\label{eq:explicitdefinitionofpi}
		\pi^{\sigma^\black\sigma^\white}(\tilde\sigma^\black_{\pm},\tilde\sigma^\white)\propto
		x^{|\omega[\tilde\sigma^\black_{\pm}]\cap\domain|+|\omega[\tilde\sigma^\white]\cap\domain|}.
	\end{equation}
	Indeed, the loop weight $2^{\ell(\omega;\domain)}$ in the definition of the measure~$\muLoop_{\domain,n,x}^{\omega[\sigma^\black]\cup\omega[\sigma^\white]}$ appearing in Step~\ref{PISTEPONE}
	in the definition of $\pi$ may be written as a product of two factors:
	a factor corresponding to finite loops,
	and a factor corresponding to bi-infinite paths.
	The first factor cancels with the coin flips in Step~\ref{PISTEPTWO} in the definition of $\pi$;
	the second factor does not play a role since the number of bi-infinite paths cannot change by local manipulations.
	Finally, \eqref{eq:glauber} and~\eqref{eq:explicitdefinitionofpi} imply~\eqref{eq:glauber-invariance} and this finishes the proof.
\end{proof}

\subsection{Proof of Theorem~\ref{thm:soft_Lip_deloc}}
\label{subsec:proof_of_thm:soft_Lip_deloc}

\begin{proof}[Proof of Theorem~\ref{thm:soft_Lip_deloc}]
	By the coupling of the loop~\O{2} model and the random Lipschitz (Proposition~\ref{prop:lip-spin-loop}),
	\begin{equation}\label{eq:var-via-loops-lip}
		\Var_{\muLip^0_{\domain,x}}[h(u)] = \E_{\muLoop_{\domain,2,x}}[\#\text{loops around $u$}].
	\end{equation}
	Theorem~\ref{thm:loop_soft_deloc} asserts that the right-hand side diverges
	as~$\domain\nearrow\hexlattice$.  It remains to show the logarithmic bounds
	on fluctuations.  The case~$x=1$ was treated in~\cite{GlaMan21} by adapting
	the dichotomy theorem of~\cite{DumSidTas16} to the setting with a weaker
	domain Markov property.
	The proof in~\cite{GlaMan21} relies heavily on the analysis of so-called
	\emph{double crossings}. For example, a \emph{double crossing of pluses} of $\sigma^\black$
	consists of a path through the hexagonal lattice such that $\sigma^\black$
	is valued plus on all faces adjacent to it.
	Morally, the adaptation of the proofs in~\cite{GlaMan21} to
	our setting (namely $x\in[1/\sqrt2,1]$) comes down to one simple change:
	all double crossings are replaced by the corresponding crossings for our newly introduced
	percolations $\xi^\black$ and $\xi^\white$.
	In particular, a double crossing by pluses by $\sigma^\black$ is replaced by
	a crossing by $\xi^\blackp$.
	Note that~\cite{GlaMan21} uses red and blue colours;
	we associate the colour red with black and the colour blue with white.

	The proof in~\cite{GlaMan21} is rather long and we shall explain how to adapt
	it rather than repeat it in its entirety.
	The next paragraph outlines the key adaptations.
	Then, we derive the fundamental crossing estimate,
	and give a sketch of the proof.
	Details about the adaptations of the proofs of particular lemmata of~\cite{GlaMan21} are provided in Appendix~\ref{app:dicho}.

\subsubsection*{Overview of the key similarities and adaptations}
	\begin{itemize}
		\item The two-spin representation with symmetry between~$\sigma^\black$
		and~$\sigma^\white$ remains the same.
		\item The domain Markov property of double plus circuits
		in~\cite{GlaMan21} is replaced by the same domain Markov property with
		circuits in~$\xi^\blackp$ (Lemma~\ref{lemma:loop_markov}).
		\item The FKG inequality for~$\sigma^\black$ jointly with black double plus crossings
		extends to all~$0<x\leq 1$~(Proposition~\ref{prop:fkg-joint-lip}),
		in the sense that we prove the FKG inequality for the joint distribution of~$(\sigma^\black,\xi^\blackp,-\xi^\blackm)$.
		\item The duality between crossings of~$\{\sigma^\black = +\}$ and~$\{\sigma^\black = - \}$ remains the same.
		\item The super-duality between double-crossings of~$\sigma^\black$
		and~$\sigma^\white$ is replaced by the super-duality of~$\xi^\black$
		and~$\xi^\white$ in~\eqref{eq:super_duality}.
		\item Crossing estimates for double-crossings of symmetric domains (such as squares or rhombi) under various boundary conditions
		are replaced by
		Lemma~\ref{lemma:loop_rhombus_crossing} (stated next), which asserts that the same crossing estimates hold
		true for $\xi^\black$-crossings in the more general setup.
	\end{itemize}

\subsubsection*{The crossing estimate}

Let us now introduce a \emph{rhombus of size~$m\in\Z_{\geq 0}$};
its precise dimensions do not matter, but for the sake of concreteness we
set
\[
	R_m:=
\{k+\ell e^{i\pi/3} : k,\ell \in\Z\cap[-m,m]\}
-i/\sqrt3
\subset \upvert(\hexlattice)
\]
and identify~$R_m$ with the induced subgraph of~$\upvert(\hexlattice)$.
We define the sides of~$R_m$ in a natural way:
left, right, top, and bottom sides are given by are the parts of the boundary obtained by setting~$k=-m$, $k=m$, $\ell=m$, and $\ell=-m$ respectively.
For~$\xi\subset\upvert(\hexlattice)$, we define $\calH_m(\xi)$ (resp. $\calV_m(\xi)$)
as the event that~$\xi\cap R_m$ contains a path connecting the left and right (resp. the top and bottom) sides of~$R_m$.
It is a standard consequence of planar duality on the triangular lattice
that, for any $\xi\subset \upvert(\hexlattice)$,
\[
	\text{exactly one of $\calH_m(\xi)$ and~$\calV_m(\xi^*)$ occurs.}
\]

\begin{lemma}[Crossings in symmetric domains]
	\label{lemma:loop_rhombus_crossing}
	Assume~$x\in [1/\sqrt2,1]$.

\begin{enumerate}
	\item For any domain $\domain$ and~$m\geq 1$ such that $R_m\subset \upvert(\domain)$, one has
	\begin{equation}
		\muSpin_{\domain,x}^\blackp(\calH_m(\xi^\blackp))\geq \tfrac14.\label{eq:hor-ver-duality-lip}
	\end{equation}
	\item  Assume that a domain~$\domain$ and vertices~$A,B,C,D$ on its boundary are such the axial symmetry over~$AC$ leaves~$\domain$ invariant and maps~$B$ to~$D$.
	Then,
	\begin{equation}
		\muSpin_{\domain,x}^{\blackp\, \whitep}(AB \xleftrightarrow{\xi^\blackp} CD) \geq \tfrac12.\label{eq:symmetric-domains-lip}
	\end{equation}
\end{enumerate}
\end{lemma}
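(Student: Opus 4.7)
The plan is to derive both inequalities from super-duality together with a monochromaticity property of $\xi^\black$- and $\xi^\white$-paths and the natural lattice symmetries.

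\textbf{Combinatorial core.} I first observe that two adjacent vertices $y,y'$ of the triangular lattice $\upvert(\hexlattice)$ are both incident to a common face of $\hexlattice$. Hence, if $y,y'\in\xi^\black$, then $\sigma^\black$ is constant around each vertex with values that must agree on this common face, so $y$ and $y'$ lie in the same set among $\xi^\blackp,\xi^\blackm$. Every connected component of $\xi^\black$ is therefore entirely contained in either $\xi^\blackp$ or $\xi^\blackm$; the analogous monochromaticity holds for $\xi^\white$. Combining this with super-duality $\upvert(\domain)=\xi^\black\cup\xi^\white$ and the standard planar duality of site percolation on the triangular lattice, I obtain, for any $\domain\supseteq R_m$, that the following event is sure:
\[
	\calH_m(\xi^\blackp)\cup\calH_m(\xi^\blackm)\cup\calV_m(\xi^\whitep)\cup\calV_m(\xi^\whitem).
\]

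\textbf{Part 1.} I first prove $\muSpin_x^\blackp(\calH_m(\xi^\blackp))\geq\tfrac14$ in infinite volume and then transfer back. By Corollary~\ref{cor:inf-vol-limits-equal-lip}, the four measures $\muSpin_x^\blackp$, $\muSpin_x^\blackm$, $\muSpin_x^\whitep$, $\muSpin_x^\whitem$ all coincide, so this common measure is invariant under $\sigma^\black\mapsto-\sigma^\black$ (swapping $\xi^\blackp\leftrightarrow\xi^\blackm$), under $\sigma^\white\mapsto-\sigma^\white$ (swapping $\xi^\whitep\leftrightarrow\xi^\whitem$), and under the joint swap $(\sigma^\black,\sigma^\white,U)\mapsto(\sigma^\white,\sigma^\black,1-U)$, which is measure-preserving by Definition~\ref{def:b-w-perco-lip} and exchanges $\xi^\black\leftrightarrow\xi^\white$. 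Combined with the axial reflection of $R_m$ through its long diagonal, which is a symmetry of $\upvert(\hexlattice)$ (Proposition~\ref{prop:inf-vol-lip}) and exchanges $\calH_m\leftrightarrow\calV_m$, I conclude that all four events in the dichotomy above have equal probability. As their union is certain, each has probability at least $\tfrac14$. Finally, $\calH_m(\xi^\blackp)$ is increasing in the triplet $(\sigma^\black,\xi^\blackp,-\xi^\blackm)$, so Corollary~\ref{cor:cbc-lip}(2) with $\domain'\nearrow\hexlattice$ gives $\muSpin_{\domain,x}^\blackp(\calH_m(\xi^\blackp))\geq\muSpin_x^\blackp(\calH_m(\xi^\blackp))\geq\tfrac14$.

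\textbf{Part 2.} I work directly at finite volume with $\muSpin_{\domain,x}^{\blackp\,\whitep}$. The $\blackp$ boundary condition forces $\sigma^\black\equiv+$ around any $\upvert$-vertex incident to $\partial\domain$, so by monochromaticity $\{AB\xleftrightarrow{\xi^\black}CD\}=\{AB\xleftrightarrow{\xi^\blackp}CD\}$. Symmetrically, the $\whitep$ boundary condition gives $\{BC\xleftrightarrow{(\xi^\black)^*}DA\}\subseteq\{BC\xleftrightarrow{\xi^\white}DA\}=\{BC\xleftrightarrow{\xi^\whitep}DA\}$, using super-duality $(\xi^\black)^*\subseteq\xi^\white$ and monochromaticity of $\xi^\white$. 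Planar duality inside $\domain$ then yields the deterministic dichotomy that $\{AB\xleftrightarrow{\xi^\blackp}CD\}\cup\{BC\xleftrightarrow{\xi^\whitep}DA\}$ is the sure event. The measure $\muSpin_{\domain,x}^{\blackp\,\whitep}$ is invariant, already at finite volume, under the joint swap $(\sigma^\black,\sigma^\white,U)\mapsto(\sigma^\white,\sigma^\black,1-U)$, since both boundary conditions respect the swap, and also under the assumed axial reflection through $AC$, which sends $B\mapsto D$ and hence the arc $AB$ to $AD$ and $CD$ to $CB$. Composing these two symmetries maps the first event of the dichotomy to the second, so both have probability at least $\tfrac12$.

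\textbf{Main obstacle.} The delicate point is the interplay between the boundary conditions and the joint $\sigma^\black\leftrightarrow\sigma^\white$ swap. In Part 2 this swap preserves $\muSpin_{\domain,x}^{\blackp\,\whitep}$ directly at finite volume, which drives the argument cleanly. In Part 1 the swap is broken at finite volume by the single boundary condition $\blackp$, which is why one must first pass to the infinite-volume limit -- where the identification $\muSpin_x^\blackp=\muSpin_x^\whitep$ from Corollary~\ref{cor:inf-vol-limits-equal-lip} restores the swap -- and only afterwards transfer back to finite volume via stochastic domination of the triplet.
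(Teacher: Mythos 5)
Your proof is correct, but Part~1 takes a genuinely different route from the paper. For Part~2 you argue exactly as the paper does: the deterministic dichotomy from super-duality plus the self-symmetry of $\muSpin_{\domain,x}^{\blackp\,\whitep}$ under the reflection composed with the black--white swap gives the bound $\tfrac12$.

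For Part~1, the paper stays entirely in finite volume: it picks a symmetric domain $\domain'\supset\domain$ with $\blackp\,\whitep$ boundary conditions, uses the reflection-and-swap symmetry to get $\muSpin_{\domain',x}^{\blackp\,\whitep}(\calH_m(\xi^\black))\geq\tfrac12$, splits $\calH_m(\xi^\black)=\calH_m(\xi^\blackp)\cup\calH_m(\xi^\blackm)$ by monochromaticity, and then beats $\xi^\blackm$ by $\xi^\blackp$ using the comparison between boundary conditions (Corollary~\ref{cor:cbc-lip}); a final application of the same corollary transfers to $\muSpin_{\domain,x}^\blackp$. You instead jump directly to the infinite-volume measure $\muSpin_x^\blackp$ and invoke Corollary~\ref{cor:inf-vol-limits-equal-lip} (the equality of all four full-plane limits), which restores both spin-flip symmetries and the colour swap, so that the four monochromatic crossing events become exactly equiprobable; you then transfer back to $\muSpin_{\domain,x}^\blackp$ by $\preceq_\black$-monotonicity. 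Both arguments work, and yours is arguably cleaner once Corollary~\ref{cor:inf-vol-limits-equal-lip} is in hand. The trade-off is that Corollary~\ref{cor:inf-vol-limits-equal-lip} carries the full weight of the extremality result and Zhang's non-coexistence argument, whereas the paper's Part~1 uses only the elementary package (FKG, Markov property, comparison between boundary conditions) and does not need the non-coexistence machinery. Since the crossing estimate is later used as a quantitative building block in the RSW theory, the paper's more elementary, logically lighter derivation is somewhat preferable even though it is slightly longer; your version is not circular, but it leans on a heavier downstream theorem than necessary.

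One small point worth stating explicitly in your write-up: you tacitly assume the reflection through the long diagonal of $R_m$ preserves $\upvert(\hexlattice)$; this is true for the axis of angle $\pi/6$, but not for every reflection axis of $\hexlattice$ (e.g.\ the horizontal axis maps $\upvert$-vertices to their lower partners), so the precise choice of reflection matters.
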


\begin{proof}
	\begin{enumerate}[wide, labelwidth=!, labelindent=0pt]
		\item  Consider any domain~$\domain'$ that is symmetric with respect to the line passing through a diagonal of~$R_m$ and such that~$\domain\subset \domain'\setminus \partial\domain'$ (eg. pick large enough~$N\in\N$ and define~$\domain'$ as the smallest domain such that
	$R_N\subset\upvert(\domain')$).
	The super-duality~\eqref{eq:super_duality} implies~$(\xi^\black)^* \subseteq \xi^\white$.
	Thus, by the standard percolation duality~\eqref{eq:hor-ver-duality-lip}, we have
	\[
		\muSpin_{\domain',x}^{\blackp\,\whitep}(
			\calH_m(\xi^\black)\cup\calV_m(\xi^\white)
		) = 1.
	\]
	By symmetry, the two events have the same probability
	of at least $1/2$.
	No vertex in $\xi^\blackp$ can have a neighbour in $\xi^\blackm$,
	since that would imply that black spin of some face is simultaneously
	valued $+$ and $-$.
	Thus, we have $\calH_m(\xi^\black)=\calH_m(\xi^\blackp)\cup \calH_m(\xi^\blackm)$.
	Since Corollary~\ref{cor:cbc-lip} implies
	\[
		\muSpin_{\domain',x}^{\blackp\,\whitep}(
		\calH_m(\xi^\blackp)
		)
		\geq
		\muSpin_{\domain',x}^{\blackp\,\whitep}(
		\calH_m(\xi^\blackm)
		),
	\]
	we conclude that
	\[
		\muSpin_{\domain',x}^{\blackp\,\whitep}(
			\calH_m(\xi^\blackp)
		) \geq 1/4.
	\]
	Another application of Corollary~\ref{cor:cbc-lip} yields the statement:
	\[
		\muSpin_{\domain,x}^\blackp(\calH_m(\xi^\blackp)) \geq \muSpin_{\domain',x}^{\blackp\,\whitep}(\calH_m(\xi^\blackp)) \geq \tfrac14.
	\]

	\item Similarly to the above, the super-duality~\eqref{eq:super_duality} implies that
	\[
		\muSpin_{\domain,x}^{\blackp\,\whitep}(AB \xleftrightarrow{\xi^\blackp} CD)
		+ \muSpin_{\domain,x}^{\blackp\,\whitep}(AC \xleftrightarrow{\xi^\whitep} CB)
		 = 1.
	\]
	The two events clearly have the same probability and the statement follows.
	\end{enumerate}
\end{proof}

	\paragraph{{\bf Renormalisation inequality and dichotomy.}}
	For~$n\in\N$, let~$\Lambda_n$ denote the domain consisting of faces at distance at most~$n$ from~$(0,0)$ (in the graph distance of the dual lattice~$\hexlattice^*$).
	Define~$\Circ^\blackp(n,2n)$ to be the event that there exists a~$\xi^\blackp$ circuit that is contained in~$\Lambda_{2n}$ and surrounds~$\Lambda_n$.
	For any~$\rho>2$, define~$\alpha_n:=\muSpin_{\Lambda_{\rho n},x}^\blackm(\Circ^\blackp(n,2n))$.
	The main statement that we need to prove is~\cite[Theorem~4.1]{GlaMan21}: there exists~$\rho>2$ and~$C>1$ such that, for all~$n\geq 1$,
	\begin{equation}\label{eq:renorm-ineq-lip}
		\alpha_{(\rho+2)n}\leq  C\cdot \alpha_n^2.
	\end{equation}
	Indeed, by induction, this renormalisation inequality implies the following dichotomy (\cite[Corollary~4.2]{GlaMan21}):
	either~$\inf_n \alpha_n > 0$ or~$\alpha_n$ exhibits a stretched-exponential decay (ExpDec).
	By the latter we mean existence of~$c,C>0$ and~$n_0\in \N$ such that~$\alpha_n\leq Ce^{-n^c}$, for all~$n=(\rho+2)^kn_0$, where~$k\in\N$.
	The former can be excluded using duality for double crossings and the symmetry between~$\sigma^\black$ and~$\sigma^\white$.
	The box-crossing property implies the logarithmic bound since the variance of the height function is greater or equal than the number of alternating~$\blackp\,\blackp$ and~$\blackm\,\blackm$ circuits.
	\\

	\paragraph{{\bf Sketch of the proof of~\eqref{eq:renorm-ineq-lip}.}}
	The proof goes by developing the {\em RSW theory} and proving the so-called {\em pushing lemma} (introduced in~\cite{DumSidTas16}).
	The RSW estimates are derived by showing that two long vertical $\blackp\,\blackp$ crossings starting at a (small) linear distance from each other can be connected to each other by a $\blackp\,\blackp$ path with a uniformly positive probability.
	The argument goes by exploring the leftmost and the rightmost such vertical crossings, ruling out several pathological behaviours, pushing away the boundary conditions and reducing a question to crossing of a certain planar (but going outside of~$\hexlattice$) symmetric domain that has~$\blackp$ on its inner boundary.
	By the duality (Lemma~\ref{lemma:loop_rhombus_crossing}), such domain has a~$\blackp\,\blackp$ crossing with probability at least~$1/2$.
	The pushing lemma is proven on cylindrical domains to make use of invariance to horizontal shifts:
	by the RSW estimates, it is enough to find a vertical $\blackp\,\blackp$ crossing under~$\blackp\,\blackp/\blackm\,\blackm$ boundary conditions at the top/bottom of the cylinder;
	by duality, absence of such crossing implies existence of a horizontal~$\whitep\,\whitep$ or~$\whitem\,\whitem$ crossing;
	pushing away the boundary conditions, one obtains a symmetric cylinder with~$\blackp\,\blackp/\whitep\,\whitep$ or~$\blackp\,\blackp/\whitem\,\whitem$ boundary conditions;
	again, by the duality and black/white symmetry, a box at the symmetry axis is crossed by~$\blackp\,\blackp$ with probability at least~$1/4$.\\

	We refer to Appendix~\ref{app:dicho} for more details.
\end{proof}

\section{Delocalisation of graph homomorphisms in the six-vertex model}
\label{sec:six-vertex}

This section follows as much as possible the structure of Section~\ref{section:loop_on}.

\subsection{Notation}

\subsubsection{Planar duality}
In Section~\ref{section:loop_on}, we proved Theorems~\ref{thm:loop_soft_deloc} and~\ref{thm:soft_Lip_deloc}
using planar duality arguments for site percolations on the triangular lattice $\upvert(\hexlattice)$.
The main change here is that the planar duality arguments apply to bond percolations on a square lattice
and its dual. The primal square lattice is the diagonal-adjacency graph on black squares of $\squarelattice$;
the same construction on white squares gives the dual graph.

Recall the notations introduced in Sections~\ref{sec:results-lip} and~\ref{subsec:introRCM}.
Write
\[
	d^\black:\intvert(\domain)\to E(\domain^\black);
	\qquad
	d^\white:\intvert(\domain)\to E(\domain^\white)
\]
for the bijections which assign a black and a white diagonal edges respectively to each interior
vertex.
Any percolation $\xi\in\{0,1\}^{E(\domain^\black)}$
is identified with the subset $\{\xi=1\}$
and the corresponding spanning subgraph of $\domain^\black$.
The \emph{dual} of any such percolation is the configuration
$\xi^*\in\{0,1\}^{E(\domain^\white)}$ defined by
\[
	\xi^*(d^\white_x):=1-\xi(d^\black_x).
\]
Note that $\xi$ and $\xi^*$ are really dual to each other in the usual sense.
Identical definitions with $\black$ and $\white$ interchanged apply.

Recall~$E_a^\black,E_b^\black \subset E(\squarelattice^\black)$ defined in Section~\ref{subsec:introRCM}.
Define~$E_a^\white, E_b^\white \subset E(\squarelattice^\white)$ similarly and take~$E_a:=E_a^\black \cup E_a^\white$ and~$E_b:=E_b^\black \cup E_b^\white$.

\subsection{The spin representation}
\label{sec:spin-rep-sixv}

\begin{definition}[Spin configurations]
	Let $\domain$ be a domain.
	A \emph{black spin configuration} is a function $\sigma^\black\in\{\pm 1\}^{V(\domain^\black)}$.
	For any $d_x^\black\in E(\domain^\black)$, we say that $\sigma^\black$
	\emph{agrees} at $d_x^\black$, and write $\sigma^\black\perp d_x^\black$,
	if $\sigma^\black$ assigns the same value to the two endpoints of~$d_x^\black$.
	For $A\subset E(\domain^\black)$, we write $\sigma^\black\perp A$
	if $\sigma^\black$ agrees at all edges in $A$.
	The \emph{domain wall} $\omega[\sigma^\black]\in\{0,1\}^{E(\domain^\white)}$
	of $\sigma^\black$ is defined by
	\[
		\omega[\sigma^\black]:=\{d^\white_x:\sigma^\black\not\perp d^\black_x\}.
	\]
	Similar definitions apply with the two colours interchanged.
	We shall always consider pairs $(\sigma^\black,\sigma^\white)$ of black and white spin configurations.
	Such a pair is called \emph{consistent}, and we write $\sigma^\black\perp\sigma^\white$,
	if it satisfies the following {\em ice rule}:
	\begin{equation}\label{eq:6v-ice}
		\sigma^\black\perp d_x^\black \qquad\text{or}\qquad
		\sigma^\white\perp d_x^\white
		\qquad
		\forall x\in \intvert(\domain).
	\end{equation}
	Write~$\spaceSpin(\domain)$ for the set of all consistent pairs of spin configurations on~$\domain$.
\end{definition}

\begin{definition}[Spin measure]\label{def:spin-measure-six}
	The spin measure on~$\domain$ with parameters~$a,b,c>0$ under free boundary conditions is the probability measure on~$\spaceSpin(\domain)$ defined by
	\begin{equation}
		\label{eq:6v-meas-via-edges}
		\muSpin_{\domain,a,b,c}(\sigma^\black,\sigma^\white)
		:=
		\frac1{Z_{\domain,a,b,c}}\cdot
		\left(\tfrac{a}{c}\right)^{|(\omega[\sigma^\black]\cup\omega[\sigma^\white])\cap \edgesA|}
		\left(\tfrac{b}{c}\right)^{|(\omega[\sigma^\black]\cup\omega[\sigma^\white])\cap \edgesB|},
	\end{equation}
	where~$Z_{\domain,a,b,c}$ is the partition function.
	Let us also introduce fixed boundary conditions for~$\sigma^\black$, $\sigma^\white$, or both,
	by defining:
	\begin{align*}
		\muSpin_{\domain,a,b,c}^\blackp
			:=&
			\muSpin_{\domain,a,b,c}\left(\blank\mid\{\sigma^\black|_{\partial_{\face^\black} \domain}\equiv +\}\right);
			\\
		\muSpin_{\domain,a,b,c}^\whitep
			:=&
			\muSpin_{\domain,a,b,c}\left(\blank\mid\{\sigma^\white|_{\partial_{\face^\white} \domain}\equiv +\}\right);
			\\
		\muSpin_{\domain,a,b,c}^{\blackp\,\whitep}
			:=&
			\muSpin_{\domain,a,b,c}\left(\blank\mid\{\sigma^\black|_{\partial_{\face^\black} \domain}\equiv+,\,\sigma^\white|_{\partial_{\face^\white} \domain} \equiv +\}\right);
	\end{align*}
	similar definitions apply when $+$ is replaced by $-$.
\end{definition}

\begin{definition}
	\label{def:heights-to-spins-6v}
	Let $\domain$ be a domain and $h\in\spaceHom^{0,1}(\domain)$ a graph homomorphism.
	Its \emph{spin representation} $(\sigma^\black[h],\sigma^\white[h])\in \spaceSpin(\domain)$ is defined such that (see Fig.~\ref{fig:sixvertex_percolations})
	\begin{equation}\label{eq:heights-to-spins-6v}
		\{\sigma^\black[h] = + \} = \{h \in  4\Z\};
		\qquad
		\{\sigma^\white[h] = + \} = \{h \in  4\Z+1\}.
	\end{equation}
\end{definition}

\begin{proposition}
	\label{prop:6v-spin-bij}
	Let $\domain$ be a domain and~$a,b,c>0$.
	Then, $\muSpin_{\domain,a,b,c}^{\blackp\,\whitep}$ is the pushforward of $\muHom_{\domain,a,b,c}^{0,1}$ along the map $h\mapsto (\sigma^\black[h],\sigma^\white[h])$.
\end{proposition}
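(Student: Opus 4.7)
The plan is to show that $h \mapsto (\sigma^\black[h], \sigma^\white[h])$ defines a weight-preserving bijection from $\spaceHom^{0,1}(\domain)$ onto the support of $\muSpin_{\domain,a,b,c}^{\blackp\,\whitep}$. Once this is established, the desired pushforward identity follows by comparing~\eqref{eq:6v_weight_def} with~\eqref{eq:6v-meas-via-edges} and observing that any global multiplicative constant is absorbed into the partition functions.

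First I would verify that the image lies in $\spaceSpin(\domain)$, i.e.\ the ice rule~\eqref{eq:6v-ice} holds. Fix an interior vertex $x\in\intvert(\domain)$; the graph homomorphism property forces the four consecutive increments around $x$ to lie in $\{\pm 1\}$ and to sum to zero, so exactly two are $+1$ and two are $-1$. An inspection of the six such arrangements shows that at least one of the two pairs of diagonally opposite faces has equal heights, which via~\eqref{eq:heights-to-spins-6v} translates to $\sigma^\black\perp d_x^\black$ or $\sigma^\white\perp d_x^\white$. Moreover, the boundary constraint $h\in\{0,1\}$ on $\partial_F\domain$ combined with parity-preservation forces $h\equiv 0$ on $\partial_{F^\black}\domain$ and $h\equiv 1$ on $\partial_{F^\white}\domain$, so the image lands in the support of $\muSpin_{\domain,a,b,c}^{\blackp\,\whitep}$.

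For bijectivity onto this support, note that the spins determine $h\pmod{4}$ at every face via~\eqref{eq:heights-to-spins-6v}; for any two adjacent faces $u,v$ the increment $h(v)-h(u)\in\{\pm 1\}$ is then uniquely forced as the unique element of $\{\pm 1\}$ matching the prescribed residue $\pmod{4}$. Integrating these increments from the boundary (where $h$ is fixed) yields a well-defined inverse; path-independence of the integration is guaranteed by the ice rule applied around each interior vertex.

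The remaining task, which I expect to be the main bookkeeping obstacle, is weight matching: one must prove $n_1(h)+n_2(h)=|\omega\cap\edgesA|$ and $n_3(h)+n_4(h)=|\omega\cap\edgesB|$, where $\omega:=\omega[\sigma^\black[h]]\cup\omega[\sigma^\white[h]]$. Together with $n_1+\dots+n_6=|\intvert(\domain)|$, these identities show that the two weights agree up to the constant factor $c^{|\intvert(\domain)|}$. For each interior vertex I would classify the six height patterns using Figure~\ref{fig:six_vertex}: types $5$ and $6$ are precisely the alternating patterns with both pairs of diagonally opposite faces agreeing and contribute no edge to $\omega$, whereas the remaining four types each contribute exactly one wall edge at $x$. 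The delicate point is that the direction ($\edgesA$ versus $\edgesB$) of the contributed edge depends both on the local height pattern and on the geometric orientation of the two diagonals at $x$, which alternates with the parity of the coordinates of $x$; the bookkeeping comes down to checking that the labelling in Figure~\ref{fig:six_vertex} has been chosen so that types $1,2$ are exactly those contributing a wall edge in $\edgesA$ and types $3,4$ those contributing one in $\edgesB$, regardless of the parity of $x$.
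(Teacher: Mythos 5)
Your proposal is correct and follows the same route as the paper, whose proof is a one-line assertion that the map is a weight-preserving bijection onto the support; you simply spell out the verification (ice rule, boundary values, reconstruction by mod-4 residues with path-independence, and the count $n_1+n_2=|\omega\cap\edgesA|$, $n_3+n_4=|\omega\cap\edgesB|$). The last bookkeeping point you defer is indeed just a matter of the paper's conventions: since $d_x^\black$ and $d_x^\white$ are perpendicular, the edge contributed to $\omega$ at an interior vertex always points in the direction of the \emph{agreeing} diagonal regardless of the parity of $x$, and the paper explicitly chose the labels $\edgesA,\edgesB$ to align with the weights $a,b$.
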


\begin{proof}
	The map bijectively maps $\spaceHom^{0,1}(\domain)$
	to the support of $\muSpin_{\domain,a,b,c}^{\blackp\,\whitep}$
	and preserves the weight of each configuration.
\end{proof}

\subsection{Graphical representation and super-duality}

In the six-vertex model, an analogue of the black and the white percolations for the loop~\O{2} model was introduced by Lis~\cite{Lis22}; see Fig.~\ref{fig:sixvertex_percolations}.
We will use these percolations to derive the domain
Markov property and extend the delocalisation proof from the uniform case~$a=b=c$ to
general parameters.  Let~$(U_x)_{x\in \intvert(\domain)}$ be i.i.d.\ random
variables with distribution $U([0,1])$.  We incorporate this additional randomness
in the measures defined above without a change of notation.

\begin{definition}[Black and white percolations] \label{def:b-w-perco-6v}
	Given a triplet $(\sigma^\black,\sigma^\white,U)$,
	the \emph{black percolation}~$\xi^\black \in \{0,1\}^{E(\domain^\black)}$ and the \emph{white percolation}~$\xi^\white \in \{0,1\}^{E(\domain^\white)}$ are defined as follows: for every~$x\in \intvert(\domain)$,
	\[
		\begin{cases}
			\text{$\xi^\black(d_x^\black) = 0$ and $\xi^\white(d_x^\white) = 1$}
			&\text{if $\sigma^\black \not\perp d_x^\black$,}\\
			\text{$\xi^\black(d_x^\black) = 1$ and $\xi^\white(d_x^\white) = 0$}
			&\text{if $\sigma^\white \not\perp d_x^\white$,}\\
			\text{$\xi^\black(d_x^\black) = \ind{U_y\leq a/c}$ and~$\xi^\white(d_x^\white) = \ind{U_y> 1- b/c}$}
			&\text{if $\sigma^\black  \perp d^\black_x$, $\sigma^\white  \perp d^\white_x$ and~$d_x^\black \in \edgesA$,}\\
			\text{$\xi^\black(d_x^\black) = \ind{U_y\leq b/c}$ and~$\xi^\white(d_x^\white) = \ind{U_y> 1- a/c}$}
			&\text{if $\sigma^\black  \perp d^\black_x$, $\sigma^\white  \perp d^\white_x$ and~$d_x^\black \in \edgesB$.}\\
		\end{cases}
	\]
	We have $\sigma^\black \perp \xi^\black$, and therefore,~$\xi^\black$ is the disjoint union of
\begin{align*}
	\xi^\blackp:= \{uv \in \xi^\black\colon \sigma^\black(u) = \sigma^\black(v) = +\}
	\quad \text{and} \quad
	\xi^\blackm:= \{uv \in \xi^\black\colon \sigma^\black(u) = \sigma^\black(v) = -\}.
\end{align*}
The same definitions
apply to~$\xi^\white$.
\end{definition}

\begin{figure}
\includegraphics[width=\textwidth]{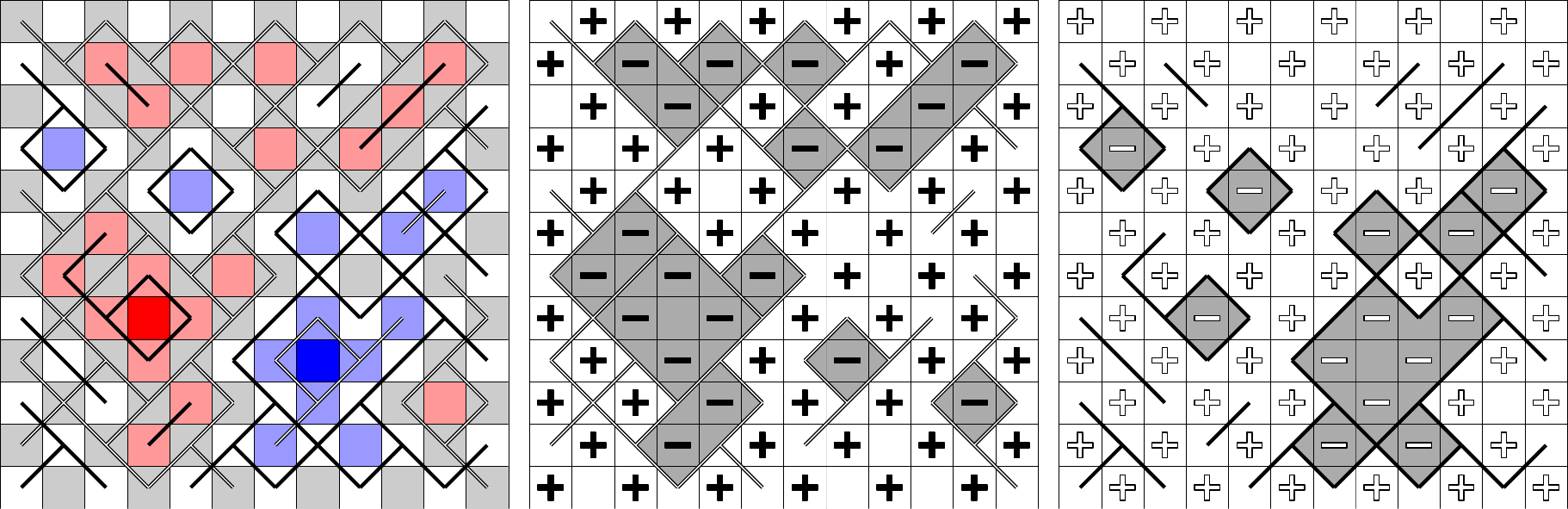}
	\caption{%
	A graph homomorphism~$h$ with the corresponding black and white
	spin configurations~$(\sigma^\black,\sigma^\white)$ and a sample of the
	black and white bond percolations~$(\xi^\black,\xi^\white)$.  \textsc{Left}:
	Colours of the faces describe the heights: the heights $-2$ to $3$ are dark
	blue, light blue, white, grey, light red, and dark red respectively.
	Super-duality holds true: for every edge~$e$ of the black sublattice,
	either~$e\in \xi^\black$ or~$e^*\in\xi^\white$; sometimes both
	hold true.  \textsc{Middle}: The black spin configuration~$\sigma^\black$ is
	determined by~$h$, the white bond percolation~$\xi^\white$ is sampled
	given~$h$.  All edges in the domain walls of~$\sigma^\black$ must be
	in~$\xi^\white$ and their duals cannot be in~$\xi^\black$.  \textsc{Right}:
	The white spin configuration~$\sigma^\white$ together with the black bond
	percolation~$\xi^\black$.
	}
	\label{fig:sixvertex_percolations}
\end{figure}


Remark that \(\omega[\sigma^\white]\subset \xi^\black\) and \(\omega[\sigma^\black]\subset\xi^\white\).
Note that when $a,b\leq c$, conditional on $(\sigma^\black,\sigma^\white)$,  the random variable $\xi^\black$ is an independent bond
percolation with the parameter at an edge $uv$ being~$0$, $1$, $a/c$ or~$b/c$
respectively in the four cases.
As in the random Lipschitz function, the coupling between
$\xi^\black$ and $\xi^\white$ is crucial for the super-duality relation:
whenever $a,b\leq c \leq a+b$, we have
\begin{equation}
	\label{eq:super_duality-6v}
		\intvert(\domain)
		=
		\{x:d_x^\black\in\xi^\black\}\cup\{x:d_x^\white\in\xi^\white\}.
\end{equation}
The duality is exact when $a+b=c$.

\begin{lemma}\label{lemma:marginal-black-6v}
	Let~$\domain$ be a domain on~$\Z^2$ and~$a,b\leq c$. Then,
	\begin{align}\label{eq:marginal-black-6v}
		\muSpin_{\domain,a,b,c}(\sigma^\black,\sigma^\white,\xi^\black) = \frac1{Z_{\domain,a,b,c}}
		&\cdot \left(\tfrac{a}{c}\right)^{|\xi^\black\cap \edgesA|}
		\cdot \left(1-\tfrac{a}{c}\right)^{|((\xi^\black)^*\setminus \omega[\sigma^\black])\cap \edgesB|}
		\cdot \left(\tfrac{a}{c}\right)^{|\omega[\sigma^\black]\cap \edgesA|}\\
		&\cdot \left(\tfrac{b}{c}\right)^{|\xi^\black\cap \edgesB|}
		\cdot \left(1-\tfrac{b}{c}\right)^{|((\xi^\black)^*\setminus \omega[\sigma^\black])\cap \edgesA|}
		\cdot \left(\tfrac{b}{c}\right)^{|\omega[\sigma^\black]\cap \edgesB|}\\
		&\cdot \ind{\sigma^\black\perp\xi^\black}
		\cdot \ind{\sigma^\white\perp(\xi^\black)^*}.\nonumber
	\end{align}
	The laws of~$(\sigma^\black,\sigma^\white,\xi^\black)$
	under~$\muSpin_{\domain,a,b,c}^\blackp$, $\muSpin_{\domain,a,b,c}^\whitep$,
	and~$\muSpin_{\domain,a,b,c}^{\blackp\,\whitep}$ are obtained by inserting
	indicators for the boundary values and updating the partition function.
\end{lemma}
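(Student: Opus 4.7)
The proof will follow the pattern of Lemma~\ref{lemma:lip-marginal-black}, adapted to the two-weight setup of the six-vertex model. The first step is to rewrite the spin measure~\eqref{eq:6v-meas-via-edges} by noting that $\omega[\sigma^\black]\subseteq E(\domain^\white)$ and $\omega[\sigma^\white]\subseteq E(\domain^\black)$ are automatically disjoint, so the exponent splits and
\[
	\muSpin_{\domain,a,b,c}(\sigma^\black,\sigma^\white)
	\propto
	\prod_{\star\in\{\black,\white\}}
	(a/c)^{|\omega[\sigma^\star]\cap\edgesA|}
	(b/c)^{|\omega[\sigma^\star]\cap\edgesB|}
	\cdot
	\ind{\sigma^\black\perp\sigma^\white}.
\]
Given $(\sigma^\black,\sigma^\white)$, Definition~\ref{def:b-w-perco-6v} lets one compute the conditional law of $\xi^\black$ as a product over $x\in\intvert(\domain)$ of independent factors: the two deterministic cases produce precisely the indicators $\ind{\sigma^\black\perp\xi^\black}$ (at vertices with $\sigma^\black\not\perp d_x^\black$, which force $d_x^\black\notin\xi^\black$) and $\ind{\sigma^\white\perp(\xi^\black)^*}$ (at vertices with $\sigma^\white\not\perp d_x^\white$, which force $d_x^\black\in\xi^\black$), while the two Bernoulli cases contribute factors of the form $(a/c)^{\ind{d_x^\black\in\xi^\black}}(1-a/c)^{\ind{d_x^\black\notin\xi^\black}}$ or the analogue with $b/c$, depending on whether $d_x^\black\in\edgesA$ or $d_x^\black\in\edgesB$.

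Multiplying and regrouping factors vertex-by-vertex then yields the claimed formula. The key geometric observation is that $d_x^\black$ and $d_x^\white$ are perpendicular diagonals at $x$, so $d_x^\black\in\edgesA$ if and only if $d_x^\white\in\edgesB$. Using this, the $(a/c)$-factor in $(a/c)^{|\xi^\black\cap\edgesA|}$ is assembled from two sources: the deterministic contributions at vertices with $d_x^\black\in\omega[\sigma^\white]\cap\edgesA$ (coming from the spin measure), and the Bernoulli contributions at vertices where both spins agree, $d_x^\black\in\edgesA$, and $d_x^\black\in\xi^\black$. The remaining Bernoulli factors at vertices with $d_x^\black\notin\xi^\black$ produce the $(1-a/c)$ and $(1-b/c)$ terms, restricted to $(\xi^\black)^*\setminus\omega[\sigma^\black]$ precisely because the deterministic case $\sigma^\black\not\perp d_x^\black$ (equivalent to $d_x^\white\in\omega[\sigma^\black]$) is to be excluded. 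Finally, the leftover spin-measure factors coming from $\omega[\sigma^\black]$ produce the factors $(a/c)^{|\omega[\sigma^\black]\cap\edgesA|}$ and $(b/c)^{|\omega[\sigma^\black]\cap\edgesB|}$. At this stage the indicator $\ind{\sigma^\black\perp\sigma^\white}$ is redundant, since at any interior vertex $x$ the simultaneous presence of $\sigma^\black\not\perp d_x^\black$ and $\sigma^\white\not\perp d_x^\white$ would contradict $\ind{\sigma^\black\perp\xi^\black}\cdot\ind{\sigma^\white\perp(\xi^\black)^*}$.

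For the boundary-condition variants, conditioning by $\sigma^\black|_{\partial_{\face^\black}\domain}\equiv +$, $\sigma^\white|_{\partial_{\face^\white}\domain}\equiv +$, or both, simply inserts the corresponding fixed-value indicators and renormalises by the new partition function, without otherwise affecting the derivation. The main obstacle is purely notational: one must carefully track which sublattice ($\squarelattice^\black$ or $\squarelattice^\white$) an edge lives on and which class ($\edgesA$ or $\edgesB$) it falls into during the vertex-by-vertex matching, using the duality $d_x^\white\in(\xi^\black)^*\Leftrightarrow d_x^\black\notin\xi^\black$ and the perpendicularity between $d_x^\black$ and $d_x^\white$. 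No new analytic ingredient beyond those in the proof of Lemma~\ref{lemma:lip-marginal-black} is required.
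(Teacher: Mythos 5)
Your proof is correct and follows essentially the same route as the paper: expand the joint law as the product of the spin-pair weight~\eqref{eq:6v-meas-via-edges} and the conditional Bernoulli law of~$\xi^\black$ from Definition~\ref{def:b-w-perco-6v}, then regroup vertex-by-vertex using the perpendicularity identity ($d_x^\black\in\edgesA\Leftrightarrow d_x^\white\in\edgesB$) and the inclusions $\omega[\sigma^\white]\subseteq\xi^\black$, $\omega[\sigma^\black]\subseteq(\xi^\black)^*$ forced by the indicators. The paper records the same regrouping as three explicit cardinality identities, but the underlying bookkeeping, including the observation that $\ind{\sigma^\black\perp\sigma^\white}$ becomes redundant given the two surviving indicators, is identical to yours.
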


Remark that, unlike for Lipschitz functions, we never need the indicator~$\ind{\sigma^\black\perp\sigma^\white}$.
The proof is essentially the same as that of Lemma~\ref{lemma:lip-marginal-black};
we provide the proof for completeness.

\begin{proof}
	Definitions~\ref{def:spin-measure-six} and~\ref{def:b-w-perco-6v} describe
	the probability of a pair of spin configurations
	$(\sigma^\black,\sigma^\white)$ and the conditional probability of a
	percolation configuration~$\xi^\black$ given~$(\sigma^\black,\sigma^\white)$
	respectively (recall that the opening probabilities are $0$, $1$, $a/c$, and $b/c$ in the
	four cases).
	The joint law is given by multiplying the two probabilities:
	\begin{align}
		\label{lineone}
		\muSpin_{\domain,a,b,c}(\sigma^\black,\sigma^\white,\xi^\black) ={}
		\frac1{Z_{\domain,a,b,c}}
		&
		\cdot
		\ind{\sigma^\black\perp\sigma^\white}
		\cdot
		\left(\tfrac{a}{c}\right)^{|(\omega[\sigma^\black]\cup\omega[\sigma^\white])\cap \edgesA|}
		\left(\tfrac{b}{c}\right)^{|(\omega[\sigma^\black]\cup\omega[\sigma^\white])\cap \edgesB|}
		\\
		&
		\label{linetwo}
		\cdot
		\ind{\sigma^\black\perp\xi^\black}
		\cdot \ind{\sigma^\white\perp(\xi^\black)^*}
		\\
		&
		\label{linethree}
		\cdot
		\left(\tfrac{a}c\right)^{|(\xi^\black\setminus\omega[\sigma^\white])\cap E_a|}
		\cdot
		\left(1-\tfrac{a}c\right)^{|((\omega[\sigma^\black])^*
	\setminus\xi^\black)\cap E_a|}
	\\&
	\label{linefour}
	\cdot
	\left(\tfrac{b}c\right)^{|(\xi^\black\setminus\omega[\sigma^\white])\cap E_b|}
	\cdot
	\left(1-\tfrac{b}c\right)^{|((\omega[\sigma^\black])^*
\setminus\xi^\black)\cap E_b|}.
\end{align}
Indeed,~\eqref{lineone} captures the probability of $(\sigma^\black,\sigma^\white)$;~\eqref{linetwo} captures the first two cases in Definition~\ref{def:b-w-perco-6v},
and~\eqref{linethree}-\eqref{linefour} capture the third and fourth cases in Definition~\ref{def:b-w-perco-6v}
respectively.
To obtain Lemma~\ref{lemma:marginal-black-6v},
observe that, knowing that $\sigma^\black\perp\xi^\black$ and
$\sigma^\white\perp(\xi^\black)^*$:
\begin{enumerate}
	\item The indicator $\ind{\sigma^\black\perp\sigma^\white}$ becomes superfluous;
	\item We have $|(\omega[\sigma^\black]\cup\omega[\sigma^\white])\cap \edgesA|+|(\xi^\black\setminus\omega[\sigma^\white])\cap E_a|=|\xi^\black\cap \edgesA|+|\omega[\sigma^\black]\cap \edgesA|$;
	\item We have~$|((\omega[\sigma^\black])^*\setminus\xi^\black)\cap E_a|
	= |((\xi^\black)^*\setminus \omega[\sigma^\black])\cap \edgesB|$;
	\item The previous two equations also hold true with $a$ and $b$ interchanged.
\end{enumerate}
This finishes the proof.
\end{proof}

\subsection{Markov property}

\begin{lemma}[Sampling~$\sigma^\white$ given~$\xi^\black$]
	\label{lemma:decoupling-6v}
	Let~$\domain$ be a domain and $0<a,b\leq c$.
	Let~$\tau^\black \in \{\pm 1\}^{V(\domain^\black)}$
	and~$\zeta^\black\subset\edges{\domain^\black}$ be such
	that~$\muSpin_{\domain,a,b,c}(\sigma^\black=\tau^\black,\xi^\black=\zeta^\black)>0$.
	Then,
	\[
		\text{the law of $\sigma^\white$ under }
		\muSpin_{\domain,a,b,c}(\blank |  \sigma^\black=\tau^\black,\xi^\black=\zeta^\black)
	\]
	is given by independent fair coin flips valued $\pm$ for the
	connected components of~$(\xi^\black)^*$.
	If the boundary condition $\whitep$ is imposed,
	then the only difference is that all clusters of~$(\xi^\black)^*$
	intersecting~$\partial_{F^\white}\domain$ are deterministically assigned the value $+$.
\end{lemma}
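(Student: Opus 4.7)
The plan is to read off the conditional law of $\sigma^\white$ directly from the joint density given in Lemma~\ref{lemma:marginal-black-6v}, mirroring exactly the argument used for Lemma~\ref{lemma:loop_decoupling} in the loop $\mathrm{O}(2)$ setting. The point is that once the triplet is expressed in the form displayed in~\eqref{eq:marginal-black-6v}, every factor except one depends only on $(\sigma^\black,\xi^\black)$; the white spin only enters through the indicator $\ind{\sigma^\white\perp(\xi^\black)^*}$.

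First I would fix $\sigma^\black=\tau^\black$ and $\xi^\black=\zeta^\black$ with positive probability and apply Lemma~\ref{lemma:marginal-black-6v}. After dropping all factors that are constants once $(\tau^\black,\zeta^\black)$ are fixed, the conditional density for $\sigma^\white$ reduces to
\[
	\muSpin_{\domain,a,b,c}(\sigma^\white\mid\sigma^\black=\tau^\black,\xi^\black=\zeta^\black)
	\;\propto\;
	\ind{\sigma^\white\perp(\zeta^\black)^*}.
\]
Next, I would unpack this indicator: by definition of the $\perp$ relation, $\sigma^\white\perp(\zeta^\black)^*$ holds if and only if $\sigma^\white$ assigns the same value to the two endpoints of every edge of $(\zeta^\black)^*$ in $\domain^\white$, i.e.\ $\sigma^\white$ is constant on each connected component of the spanning subgraph $(\zeta^\black)^*\subset\domain^\white$. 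Hence the conditional law is uniform over such configurations, which is equivalent to assigning an independent fair $\pm$ coin to each component.

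For the $\whitep$ boundary condition, I would simply add the indicator $\ind{\sigma^\white|_{\partial_{F^\white}\domain}\equiv+}$ to the conditional density, as in the second statement of Lemma~\ref{lemma:marginal-black-6v}. This forces all components of $(\zeta^\black)^*$ that meet $\partial_{F^\white}\domain$ to take value $+$ deterministically, while components disjoint from the boundary remain uniformly random and mutually independent, so the sampling rule is exactly the stated one.

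I do not expect any real obstacle; the only points requiring a bit of care are (i) confirming that the factorisation in~\eqref{eq:marginal-black-6v} really isolates $\sigma^\white$ inside a single indicator (this is where it matters that $\ind{\sigma^\black\perp\sigma^\white}$ has already been absorbed, as the remark after Lemma~\ref{lemma:marginal-black-6v} points out), and (ii) interpreting "connected components of $(\xi^\black)^*$" as components in the white diagonal graph $\domain^\white$, which is the natural ambient graph for $\sigma^\white$ under the bijection $d^\white$ between interior vertices and white diagonal edges.
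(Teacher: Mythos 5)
Your proposal is correct and matches the paper's proof essentially verbatim: both read off from Lemma~\ref{lemma:marginal-black-6v} that, once $(\sigma^\black,\xi^\black)$ is fixed, the only factor depending on $\sigma^\white$ is $\ind{\sigma^\white\perp(\xi^\black)^*}$, and both handle the $\whitep$ boundary condition by inserting the extra indicator $\ind{\sigma^\white|_{\partial_{F^\white}\domain}\equiv+}$. The two points you flag as requiring care are exactly the ones the paper relies on, so the argument is complete.
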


\begin{proof}
	By Lemma~\ref{lemma:marginal-black-6v},
	\begin{align}
		\muSpin_{\domain,a,b,c}(\sigma^\white\, \vert \, \sigma^\black,\xi^\black)
		\propto \ind{\sigma^\white\perp(\xi^\black)^*} =
		\ind{\text{$\sigma^\white$ is constant on each cluster of $(\xi^\black)^*$}}.
	\end{align}
	The boundary condition $\whitep$ introduces the extra indicator~$\ind{\sigma^\white|_{\partial_{F^\white}\domain} \equiv +}$.
\end{proof}

An \emph{even domain} is a domain $\domain$ whose squares $F(\domain)$ are precisely those
squares on or enclosed by a cycle through $\squarelattice^\black$.
We denote the set of edges on this cycle by~$\partial^\black_E\domain\subset\squarelattice^\black$.
Note that an arbitrary domain $\domain$ is even if and only if all squares sharing an \emph{edge} with $\partial\domain$
are black.
\emph{Odd domains} are defined similarly.

\begin{lemma}[Domain Markov property]
	\label{lemma:markov-6v}
	Let $\domain'$ denote any domain,
	and suppose that $\domain\subset\domain'\setminus\partial\domain'$
	denotes an even domain.
	Suppose that~$0<a,b\leq c$.
	Then, under
	\[
		\muSpin_{\domain',a,b,c}(\blank|\partial_E^\black\domain\subset \xi^\blackp),
	\]
	the law of the triplet $(\sigma^\black,\sigma^\white,\xi^\black)$ restricted to faces in $\face(\domain)$ and edges in $E(\domain^\black)$:
	\begin{enumerate}
		\item Is independent of the restriction to the complementary faces and edges, and
		\item Is precisely equal to~$\muSpin_{\domain,a,b,c}^{\blackp}$.
	\end{enumerate}
	In particular, this remains true when boundary conditions are imposed on $\muSpin_{\domain',a,b,c}$.
\end{lemma}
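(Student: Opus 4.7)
The plan is to mimic the proof of Lemma~\ref{lemma:loop_markov}, now using Lemma~\ref{lemma:marginal-black-6v} in place of Lemma~\ref{lemma:lip-marginal-black}. I would start from the explicit joint density for $(\sigma^\black,\sigma^\white,\xi^\black)$ under $\muSpin_{\domain',a,b,c}$ given in~\eqref{eq:marginal-black-6v}, impose the conditioning $\partial_E^\black\domain\subseteq\xi^\blackp$, and then factorise the resulting expression into a product of a function of the restriction to $\face(\domain)\cup E(\domain^\black)$ and a function of the complementary restriction, with the first factor matching the density of $\muSpin_{\domain,a,b,c}^{\blackp}$ given by Lemma~\ref{lemma:marginal-black-6v}.

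The conditioning is deterministic on the boundary layer: it forces $\xi^\black$ to contain every edge of $\partial_E^\black\domain$, it forces $\sigma^\black\equiv +$ on both endpoints of every such edge (in particular on $\partial_{F^\black}\domain$), and, by duality, $(\xi^\black)^*$ contains no edge dual to an edge of $\partial_E^\black\domain$. Each of the six cardinality exponents in~\eqref{eq:marginal-black-6v} measures an edge set lying in $E(\domain'^\black)$ or $E(\domain'^\white)$, and each such set splits as a disjoint union of three pieces: edges strictly inside $\domain$, edges on the boundary layer $\partial_E^\black\domain$ (or its dual), and edges strictly outside $\domain$. The middle piece is deterministic under the conditioning, so its contribution is a multiplicative constant which gets absorbed into the partition function; the remaining two pieces factor cleanly into an inside and an outside contribution.

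The two indicators $\ind{\sigma^\black\perp\xi^\black}$ and $\ind{\sigma^\white\perp(\xi^\black)^*}$ are products over interior vertices $x\in\intvert(\domain')$ of a local constraint at $x$. For those $x$ whose associated diagonal $d_x^\black$ lies in $\partial_E^\black\domain$, the first constraint is trivially satisfied by $\sigma^\black\equiv +$ while the second imposes no constraint because $d_x^\white\notin(\xi^\black)^*$; the remaining interior vertices split into strictly interior ones and strictly exterior ones. Hence both indicators factor as a product of an interior part and an exterior part. Combining everything yields the desired factorisation of the joint density, which gives the independence claim~(1); the interior factor then matches the density of $\muSpin_{\domain,a,b,c}^{\blackp}$ by another application of Lemma~\ref{lemma:marginal-black-6v}, giving~(2). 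The extension to any additional boundary conditions imposed on $\muSpin_{\domain',a,b,c}$ is immediate since such conditions add indicators on $\partial\domain'$ which live entirely in the exterior. The only real work, as in the loop case, is the bookkeeping of how the boundary edges split the six exponents; structurally the proof is identical to Lemma~\ref{lemma:loop_markov} and requires no new ideas.
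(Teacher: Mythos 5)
Your proposal is correct and follows essentially the same route as the paper: apply Lemma~\ref{lemma:marginal-black-6v}, impose the conditioning to make the boundary-layer edges and the spins at $\partial_{F^\black}\domain$ deterministic, and observe that all remaining factors in the density split into an interior and an exterior product, the interior one matching the density of $\muSpin_{\domain,a,b,c}^{\blackp}$. The only small remark is that this case is actually slightly cleaner than Lemma~\ref{lemma:loop_markov}, since the density~\eqref{eq:marginal-black-6v} carries no $\ind{\sigma^\black\perp\sigma^\white}$ indicator to worry about (as the paper notes just after Lemma~\ref{lemma:marginal-black-6v}).
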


The proof is very similar to that of Lemma~\ref{lemma:loop_markov}. We provide it for completeness.

\begin{proof}
	By Lemma~\ref{lemma:marginal-black-6v},
	\begin{align*}
		\muSpin_{\domain',a,b,c}(\sigma^\black,\sigma^\white,\xi^\black\,\vert\, \partial_E^\black\domain\subset \xi^\blackp)
		\propto &
		\left(\tfrac{a}{c}\right)^{|(\xi^\black\setminus\partial_E^\black\domain)\cap \edgesA|}
		\cdot \left(1-\tfrac{a}{c}\right)^{|((\xi^\black)^*\setminus \omega[\sigma^\black])\cap \edgesB|}
		\cdot \left(\tfrac{a}{c}\right)^{|\omega[\sigma^\black]\cap \edgesA|}\\
		\cdot
		& \left(\tfrac{b}{c}\right)^{|(\xi^\black\setminus\partial_E^\black\domain)\cap \edgesB|}
		\cdot \left(1-\tfrac{b}{c}\right)^{|((\xi^\black)^*\setminus \omega[\sigma^\black])\cap \edgesA|}
		\cdot \left(\tfrac{b}{c}\right)^{|\omega[\sigma^\black]\cap \edgesB|}\\
		\cdot
		& \, \ind{\sigma^\black\perp(\xi^\black\setminus\partial_E^\black\domain)}
		\cdot \ind{\sigma^\white\perp(\xi^\black)^*}
		\cdot \ind{\partial_E^\black\domain\subset\xi^\black}
		\cdot \ind{\left.\sigma^\black\right|_{\partial_{F^\black} \domain}\equiv +}.
	\end{align*}
	Since~$\partial_E^\black\domain \subseteq \xi^\black$, all factors except the last two can be written as a product of two terms:
	one depending only on $\face(\domain)$, $E(\domain^\black)\setminus \partial_E^\black \domain$ and~$E(\domain^\white)$
	and the other only on $\face(\domain')\setminus \face(\domain)$, $E((\domain')^\black)\setminus E(\domain^\black)$ and~$E((\domain')^\white)\setminus E(\domain^\white)$.
	Finally, taking the product of the parts of the terms that depend on~$\face(\domain)$, $E(\domain^\black)$ and~$E(\domain^\white)$, we obtain precisely $\muSpin_{\domain,a,b,c}^\blackp$, by Lemma~\ref{lemma:marginal-black-6v}.
\end{proof}

\subsection{FKG and extremality}

It was shown in~\cite[Theorem~4]{GlaPel19} that the black and white spins satisfy the FKG inequality.
Moreover, in exactly the same way as in the loop~\O{2} model, the statement can be upgraded till the FKG inequality for the triplet~$(\sigma^\black, \xi^\blackp, -\xi^\blackm)$.


\begin{proposition}[FKG for triplets]
	\label{prop:six_vertex_fkg_inequality_new}
	\label{prop:fkg-joint-six}
	Let~$\domain$ be a domain and~$0<a,b\leq c$.
	Then, under~$\muSpin_{\domain,a,b,c}$, the triplet $(\sigma^\black,\xi^\blackp,-\xi^\blackm)$ satisfies the FKG inequality.
	The same holds also for $(\sigma^\white,\xi^\whitep,-\xi^\whitem)$ and when boundary conditions of the form $\blackp,\blackm$  and/or $\whitep,\whitem$ are imposed.

	In fact, the triplet $(\sigma^\black,\xi^\blackp,-\xi^\blackm)$ also satisfies the FKG inequality
	under the measures
	\[
		\muSpin_{\domain,a,b,c}(\blank|\{\sigma^\black|_A=\tau\})
		\quad
		\text{and}
		\quad
		\muSpin_{\domain,a,b,c}^\whitep(\blank|\{\sigma^\black|_A=\tau\})
	\]
	for any $A\subset \vertices{\domain^\black}$ and $\tau:A\to\{\pm1\}$.
\end{proposition}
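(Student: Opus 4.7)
The plan is to mirror the three-step strategy used for the loop~$\O{2}$ model in Proposition~\ref{prop:fkg-joint-lip}: first the FKG lattice condition for $\sigma^\black$ alone; then conditional FKG for $(\xi^\blackp,-\xi^\blackm)$ given $\sigma^\black$; and finally combine the two via the tower property using stochastic monotonicity of $(\xi^\blackp,-\xi^\blackm)$ in $\sigma^\black$. By symmetry between black and white spins it suffices to work with the black triplet. The variants involving $\blackpm,\whitepm$ boundary conditions or conditioning on $\{\sigma^\black|_A=\tau\}$ are handled as in Proposition~\ref{prop:fkg-joint-lip}: conditioning on the value of $\sigma^\black$ does not destroy the FKG lattice condition for $\sigma^\black$, while conditioning on the value of $\sigma^\white$ is absorbed by adding extra forbidden edges to the random-cluster-type weight below.

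The FKG lattice condition for the marginal of $\sigma^\black$ under $\muSpin_{\domain,a,b,c}$ (and under all the boundary conditions in question) is the content of \cite[Theorem~4]{GlaPel19}, and it plays the role of Lemma~\ref{lemma:fkg-spins-lip}. For the conditional step, fix $\sigma^\black=\tau^\black$. The pair $(\xi^\blackp,\xi^\blackm)$ is conditionally independent by construction, so it is enough to treat $\xi^\blackp$. Starting from Lemma~\ref{lemma:marginal-black-6v}, summing out $\sigma^\white$ via Lemma~\ref{lemma:decoupling-6v} to produce a factor $2^{k((\xi^\black)^*)}$, and then summing out $\xi^\blackm$ (the closed-or-open contributions of edges in $E^-(\tau^\black)$ average out but leave behind an effective merging of clusters on the $-$ side), one arrives at a random-cluster-type weight
\begin{equation*}
  \P(\xi^\blackp\mid\sigma^\black=\tau^\black)\propto
  2^{k}\cdot
  \Bigl(\tfrac{a}{c}\Bigr)^{|\xi^\blackp\cap E_a|}\Bigl(\tfrac{b}{c}\Bigr)^{|\xi^\blackp\cap E_b|}\Bigl(1-\tfrac{a}{c}\Bigr)^{|(\xi^\blackp)^*\cap E_b|}\Bigl(1-\tfrac{b}{c}\Bigr)^{|(\xi^\blackp)^*\cap E_a|}\cdot\ind{\xi^\blackp\subset E^+(\tau^\black)},
\end{equation*}
where $E^+(\tau^\black)$ denotes the set of black edges with $\tau^\black\equiv +$ on both endpoints and $k$ counts clusters in the graph on white faces obtained by enriching $(\xi^\blackp)^*$ by all dual edges of $E(\domain^\black)\setminus E^+(\tau^\black)$. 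This satisfies the FKG lattice condition by the standard FK-percolation computation \cite[Theorem~3.8]{Gri06}, and the splitting into $E_a$ and $E_b$ causes no trouble because the product over edges factorises.

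Monotonicity in $\sigma^\black$ follows from the same formula by Holley's criterion: increasing $\sigma^\black$ enlarges $E^+(\tau^\black)$, hence enlarges the set of edges on which $\xi^\blackp$ is allowed to live, and can only decrease $k$, all of which stochastically push $\xi^\blackp$ upward; the symmetric statement for $\xi^\blackm$ matches the direction in $-\xi^\blackm$. With these ingredients in place, the full FKG inequality for $(\sigma^\black,\xi^\blackp,-\xi^\blackm)$ follows by the same tower-property argument as in the final paragraph of the proof of Proposition~\ref{prop:fkg-joint-lip}. I expect the main technical hurdle to be the rigorous derivation of the displayed formula: the Ising-style factorisation~\eqref{eq:fkg-proof-ising-decomp-pm} underlying the loop~$\O{2}$ argument must be re-run in the presence of two edge-weight classes, and one must carefully keep track of which $1-a/c$ and $1-b/c$ factors attach to which directional subset of $(\xi^\blackp)^*$ when marginalising $\xi^\blackm$ and $\sigma^\white$ simultaneously. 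This is bookkeeping rather than a conceptual obstacle, but it is the step that most demands care.
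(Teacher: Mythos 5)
Your proposal follows the same strategy as the paper's proof: FKG lattice condition for $\sigma^\black$, conditional FKG for $(\xi^\blackp,-\xi^\blackm)$, monotonicity in $\sigma^\black$, and then a tower-property argument carried over verbatim from Proposition~\ref{prop:fkg-joint-lip}. The only substantive difference is that you cite \cite[Theorem~4]{GlaPel19} for the first step, whereas the paper re-derives the FKG lattice condition for $\sigma^\black$ via the Ising-model factorisation $\Zising(\mathrm{w})=\tfrac12\Zising(\mathrm{w}^+)\Zising(\mathrm{w}^-)$. The paper makes that choice for a concrete reason: Section~\ref{sec:continuity} needs the same computation rerun on a torus with the extra $H_8$ constraint, so the derivation must be available. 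For the present proposition alone, citing is fine, but you should make sure the reference yields the FKG \emph{lattice condition} and not merely the FKG \emph{inequality}: the last part of the statement conditions on $\{\sigma^\black|_A=\tau\}$, and the FKG inequality is not in general stable under such conditioning, while the lattice condition is (this is exactly why the paper emphasises the lattice condition in the last paragraph of the proof of Proposition~\ref{prop:fkg-joint-lip}).

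One small inaccuracy in your monotonicity step: with $E^+(\tau^\black)$ defined as the set of double-plus edges, enlarging $\sigma^\black$ enlarges $E^+$, hence \emph{shrinks} the set of dual enrichment edges used to define your cluster count $k$, which can only \emph{increase} $k$, not decrease it. The correct way to see the stochastic monotonicity is to pass to the dual configuration $\zeta:=(\xi^\blackp)^*$: the conditional law of $\zeta$ is an FK-type measure with $q=2$, edge parameter $1-a/c$ or $1-b/c$, and wired boundary identifications given by $(E(\domain^\black)\setminus E^+(\tau^\black))^*$. Enlarging $\sigma^\black$ removes wiring in the dual, which decreases $\zeta$, hence increases $\xi^\blackp$. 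Your qualitative conclusion is right, but as written the justification has the sign of the effect on $k$ reversed. This is exactly the kind of bookkeeping you flagged as the danger spot, and it should be straightened out before the argument is written down in full.
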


Below we provide the proof of the FKG inequality for~$\sigma^\black$ for completeness and because we will need to extend the statement to a slightly different setting in Section~\ref{sec:continuity}.
The argument is very similar to that of Lemma~\ref{lemma:fkg-spins-lip}.

\begin{proof}
	Recall that~$\Zising(\mathrm{w})$ defined by~\eqref{eq:fk-ising} is increasing and satisfies the FKG lattice condition.
	We sum~\eqref{eq:6v-meas-via-edges} over~$\sigma^\white\in\{\pm1\}^{\vertices{\domain^\white}}$ to get
	\begingroup
	\setlength{\jot}{-10pt}
	\begin{align}
		\muSpin_{\domain,a,b,c}(\sigma^\black) \cdot Z_{\domain}
			&=
			\left(\tfrac{a}{c}\right)^{|\omega[\sigma^\black]\cap \edgesA|}
			\left(\tfrac{b}{c}\right)^{|\omega[\sigma^\black]\cap \edgesB|}
			\cdot \hspace{-5mm}\sum_{\sigma^\white\in\{\pm1\}^{\vertices{\domain^\white}}}
			\hspace{-4mm}\ind{\sigma^\black\perp\sigma^\white}
			\cdot
			\left(\tfrac{a}{c}\right)^{|\omega[\sigma^\white]\cap \edgesA|}
			\left(\tfrac{b}{c}\right)^{|\omega[\sigma^\white]\cap \edgesB|}
			\\
			&=
			\left(\tfrac{a}{c}\right)^{|\omega[\sigma^\black]\cap \edgesA|}
			\left(\tfrac{b}{c}\right)^{|\omega[\sigma^\black]\cap \edgesB|} \cdot
			\Zising(\mathrm{w}(\sigma^\black)),\label{eq:fkg-proof-measure-via-ising-6v}
	\end{align}
	\endgroup
	where $\mathrm{w}(\sigma^\black):\edges{\domain^\white}\to[0,1]$ is defined by
	\[
		\mathrm{w}(\sigma^\black)_{uv} =
			J_{uv}
			\cdot
			\ind{\sigma^\black \perp (uv)^*}
			;
			\qquad
			J_{uv}:=
			\begin{cases}
				a/c &\text{if $uv\in \edgesA$,}\\
				b/c &\text{otherwise.}
			\end{cases}
	\]
	The function
	$\sigma^\black\mapsto
	\left(\tfrac{a}{c}\right)^{|\omega[\sigma^\black]\cap \edgesA|} \left(\tfrac{b}{c}\right)^{|\omega[\sigma^\black]\cap \edgesB|}$ in~\eqref{eq:fkg-proof-measure-via-ising-6v} satisfies the FKG
	lattice condition whenever~$0<a,b\leq c$
	and therefore it suffices to prove the same for~$\sigma^\black\mapsto  \Zising(\mathrm{w}(\sigma^\black))$.
	We claim that the following factorisation holds:
	\begin{equation}
		\label{eq:fkg-proof-ising-decomp-pm-6v}
		2\Zising(\mathrm{w}(\sigma^\black))= \Zising(\mathrm{w}^+(\sigma^\black))\cdot\Zising(\mathrm{w}^-(\sigma^\black)),
	\end{equation}
	where~$\mathrm{w}^\pm(\sigma^\black)$ is defined by
	\[
		\mathrm{w}^\pm(\sigma^\black)_{uv} = J_{uv} \cdot\ind{\sigma^\black\, \equiv \,\pm  \text{ at the endpoints of } (uv)^*}.
	\]

	Fix~$\sigma^\black$.
	As in the proof of Lemma~\ref{lemma:fkg-spins-lip}, we represent $\sigma^\white$ conditional on $\sigma^\black$ as the product
	of two independent Ising models.
	For any~$\sigma^\white\perp\sigma^\black$,
	we partition the domain walls~$\omega[\sigma^\white]$ into
	connected components that lie inside $\{\sigma^\black=+\}$ and those that lie inside $\{\sigma^\black=-\}$:
	\[
		\omega^+[\sigma^\white]:=\{e\in \omega[\sigma^\white]\colon\text{$\sigma^\black \equiv +$ on $e^*$}\}
		\quad
		\text{and}
		\quad
		\omega^-[\sigma^\white]:=\{e\in \omega[\sigma^\white]\colon\text{$\sigma^\black \equiv -$ on $e^*$}\}.
	\]
	Since~$\domain^\white$ is simply-connected, we can find a spin configuration $\zeta^+:\vertices{\domain^\white}\to \{\pm\}$
	such that $\omega[\zeta^+]=\omega^+[\sigma^\white]$.
	In fact, there exists exactly one other such spin configuration, namely~$-\zeta^+$.
	Similar considerations apply to $\zeta^-:=\sigma^\white/\zeta^+$.
	Observe that $(\zeta^+,\zeta^-)$ and $(-\zeta^+,-\zeta^-)$ are the only two
	pairs which separate the domain walls in the prescribed way and which factorise $\sigma^\white$.
	Conversely, if $\zeta^+,\zeta^-:\vertices\domain^\white\to \{\pm 1\}$ are two spin configurations such that
	$\omega^+[\zeta^+]=\omega[\zeta^+]$ and~$\omega^-[\zeta^-]=\omega[\zeta^-]$, then $\sigma^\white:=\zeta^+\zeta^-$
	satisfies $\sigma^\white\perp\sigma^\black$.
	In addition, the contribution of~$\sigma^\white$
	to~$\Zising(\mathrm{w}(\sigma^\black))$ is equal to the product of the contributions
	of~$\zeta^+$ to~$\Zising(\mathrm{w}^+(\sigma^\black))$ and of~$\zeta^-$
	to~$\Zising(\mathrm{w}^-(\sigma^\black))$.  Summing over all
	pairs~$(\zeta^+,\zeta^-)$ yields~\eqref{eq:fkg-proof-ising-decomp-pm-6v}.

	The end of the proof of Lemma~\ref{lemma:fkg-spins-lip} shows that both maps $\sigma^\black\mapsto \Zising(a^+(\sigma^\black))$
	and~$\sigma^\black\mapsto \Zising(a^-(\sigma^\black))$ satisfy the FKG lattice condition.
	Then, so does~$\sigma^\black\mapsto \Zising(a^+(\sigma^\black))$ by~\eqref{eq:fkg-proof-ising-decomp-pm-6v} and the proof in the case of~$\muSpin_{\domain,a,b,c}$ is finished.

	To prove the same for $\muSpin_{\domain,a,b,c}^\whitep$, we note that the distribution of~$\sigma^\black$ under this measure is the same as under
	\[
		\muSpin_{\domain,a,b,c}(\blank|
			\{\sigma^\white|_{\partial_{\face^\white}\domain}\equiv +\}
			\cup
			\{\sigma^\white|_{\partial_{\face^\white}\domain}\equiv -\}
			);
	\]
	we now can run the same proof as above with
	\(
		J'_{uv}:=J_{uv}\cdot\ind{
			\{u,v\}\not\subset\partial_{\face^\white}\domain
		}
	\).
\end{proof}

We now state the equivalent of Proposition~\ref{prop:inf-vol-lip}.

\begin{proposition}[Convergence under $\blackp$ boundary conditions]
	\label{prop:inf-vol-6v}
	Let~$0<a,b\leq c$.
	Then, for any sequence of even domains~$(\domain_k)_k\nearrow\hexlattice$,
	we have the following weak convergence:
	\[
		\muSpin_{\domain_k,a,b,c}^\blackp\xrightarrow[k\to\infty]{}\muSpin_{a,b,c}^\blackp,
	\]
	where $\muSpin_{a,b,c}^\blackp$ is a Gibbs measure for the spin representation independent
	of the choice of the sequence $(\domain_k)_k$ and invariant under the symmetries of
	 $\squarelattice^\black$.
	 Moreover:
	 \begin{enumerate}
		\item The law of $(\sigma^\black,\xi^\black)$ is extremal.
		\item Both $(\sigma^\black,\xi^\blackp,-\xi^\blackm)$ and~$(\sigma^\white,\xi^\whitep,-\xi^\whitem)$
		satisfy the FKG inequality in $\muSpin_{a,b,c}^\blackp$.
		\item \label{prop:inf-vol-6v:three}
		The distribution of~$\sigma^\white$ given~$(\sigma^\black,\xi^\black)$ is obtained by assigning~$\pm$ to the clusters of~$(\xi^\black)^*$ via independent fair coin flips.
		\item \label{prop:inf-vol-6v:four}
		The distribution of~$\sigma^\black$ given~$(\sigma^\white,\xi^\white)$
		is obtained by assigning~$\pm$ to the clusters
		of~$(\xi^\white)^*$ via independent fair coin flips,
		except that the infinite cluster (if it exists) is assigned the
		value $+$.
	 \end{enumerate}
\end{proposition}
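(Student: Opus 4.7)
The proof will mirror that of Proposition~\ref{prop:inf-vol-lip}. The essential preliminary is a comparison between boundary conditions for even domains: for any pair of even domains with $\domain \subset \domain' \setminus \partial\domain'$, combining the Markov property (Lemma~\ref{lemma:markov-6v}) with the joint FKG inequality (Proposition~\ref{prop:fkg-joint-six}) yields
\[
	\muSpin_{\domain',a,b,c}^\blackp|_{\domain} \preceq_\black \muSpin_{\domain,a,b,c}^\blackp,
\]
where $\preceq_\black$ denotes stochastic domination in the ordering induced by the triplet $(\sigma^\black,\xi^\blackp,-\xi^\blackm)$, exactly as in Corollary~\ref{cor:cbc-lip}. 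Indeed, conditioning on $\{\partial_E^\black\domain \subseteq \xi^\blackp\}$ is an increasing event which, by the Markov property, reproduces $\muSpin_{\domain,a,b,c}^\blackp$ inside $\domain$. Consequently the marginal law of this triplet under $\muSpin_{\domain_k,a,b,c}^\blackp$ is stochastically decreasing in $k$ along any increasing sequence of even domains, and by the standard arguments \cite[Proposition~4.10b, Theorem~4.19, Corollary~4.23]{Gri06} it converges weakly to a limit $\mu$ that is invariant under the symmetries of $\squarelattice^\black$, ergodic, extremal, and satisfies the FKG inequality. This limit does not depend on the sequence chosen.

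Next I would promote this to convergence of the full four-tuple by invoking Lemma~\ref{lemma:decoupling-6v}: conditional on $(\sigma^\black,\xi^\black)$, the spins $\sigma^\white$ are distributed as independent fair $\pm$-flips on the connected components of $(\xi^\black)^*$. This sampling rule commutes with the weak limit \emph{provided} $(\xi^\black)^*$ has at most one infinite cluster under $\mu$, a property supplied by the Burton--Keane argument~\cite{BurKea89} applied to the translation-invariant ergodic FKG measure $\mu$. Define $\muSpin_{a,b,c}^\blackp$ as the law of $(\sigma^\black,\sigma^\white,\xi^\black,\xi^\white)$ obtained by feeding $\mu$ into this recipe and extending to $\xi^\white$ via the deterministic rule of Definition~\ref{def:b-w-perco-6v}; this is the desired weak limit. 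The $\squarelattice^\black$-symmetries pass to the limit from the corresponding symmetries of the finite-volume measures and of the sampling recipe, and the Gibbs property follows from weak convergence together with the continuity of the finite-range specification of the spin model. The FKG inequality for $(\sigma^\white,\xi^\whitep,-\xi^\whitem)$ in the limit is inherited by weak convergence from its finite-volume counterpart in Proposition~\ref{prop:fkg-joint-six}, while the same property for $(\sigma^\black,\xi^\blackp,-\xi^\blackm)$ is part of the construction of $\mu$.

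Part~\ref{prop:inf-vol-6v:three} holds by construction. For Part~\ref{prop:inf-vol-6v:four} I would apply the colour-swapped version of Lemma~\ref{lemma:decoupling-6v}: under $\muSpin_{\domain,a,b,c}^\blackp$ the law of $\sigma^\black$ given $(\sigma^\white,\xi^\white)$ is constant on clusters of $(\xi^\white)^*$, with every cluster meeting $\partial_{F^\black}\domain$ deterministically equal to $+$ (because of the $\blackp$ boundary condition) and all remaining clusters chosen by independent fair coin flips. As $\domain_k \nearrow \squarelattice$, the clusters of $(\xi^\white)^*$ touching the boundary coalesce, in the limit, into the unique infinite cluster (if any), whose uniqueness is again supplied by Burton--Keane, this time applied to the FKG measure on $(\sigma^\white,\xi^\whitep,-\xi^\whitem)$ under $\muSpin_{a,b,c}^\blackp$. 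The main obstacle is precisely the verification that the $\sigma^\white$-sampling recipe (and symmetrically the $\sigma^\black$-recipe) is continuous at $\mu$-typical configurations, which reduces entirely to the uniqueness-of-infinite-cluster input from Burton--Keane; a secondary technical point is that the Markov property is available only along \emph{even} domains, which is why the symmetry group appearing in the conclusion is that of $\squarelattice^\black$ rather than of the full square lattice.
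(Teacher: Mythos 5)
Your proof takes essentially the same route as the paper: monotonicity of the $(\sigma^\black,\xi^\blackp,-\xi^\blackm)$ marginal in even domains via the domain Markov property and the joint FKG inequality, weak convergence to an extremal ergodic FKG limit $\mu$ by the standard Grimmett arguments, promotion to the full spin pair by the conditional coin-flipping rule of Lemma~\ref{lemma:decoupling-6v}, with Burton--Keane supplying the uniqueness of the infinite cluster that makes the sampling rule commute with the volume limit. Your elaboration of Part~\ref{prop:inf-vol-6v:four} — that the boundary-touching clusters of $(\xi^\white)^*$ become the infinite cluster, which is therefore assigned $+$ — is exactly the detail the paper compresses into ``standard arguments,'' and you correctly flag both the continuity of the sampling recipe and the restriction to even domains as the genuine technical points.
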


The proof is very similar to that of Proposition~\ref{prop:inf-vol-lip}. We provide it for completeness.

\begin{proof}
	As in Proposition~\ref{prop:inf-vol-lip}, it is enough to establish convergence for the joint distribution of spins~$(\sigma^\black,\sigma^\white$.
	We start by the marginal on~$\sigma^\black$ and~$\xi^\black$.
	Due to the FKG property (Proposition~\ref{prop:fkg-joint-six}), the
	law of $(\sigma^\black,\xi^\blackp,-\xi^\blackm)$
	under~$\muSpin_{\domain,a,b,c}^\blackp$
	is stochastically decreasing
	in (even domains) $\domain$.
	By standard arguments~\cite[Proposition~4.10b,
	Theorem~4.19, Corollary~4.23]{Gri06},
	the distribution of this triplet converges weakly to some measure $\mu$
	as $\domain\nearrow\Z^2$,
	and this limit is translation-invariant, ergodic, extremal, and satisfies the FKG inequality.

	In finite volume, the conditional distribution of $\sigma^\white$ given $\sigma^\black$
	is given by flipping fair coins for the connected components of $(\xi^\black)^*$
	(Lemma~\ref{lemma:decoupling-6v}).
	This property commutes with taking the limit as~$\domain\nearrow \Z^2$ since $\mu$ exhibits at most one infinite connected component in~$(\xi^\black)^*$, by the argument of
	Burton and Keane~\cite{BurKea89},
	Thus, $\muSpin_{a,b,c}^\blackp$ is well-defined and its marginal on~$(\sigma^\black,\sigma^\white)$ can be sampled from~$\mu$ via the above algorithm.

	The FKG inequality for $(\sigma^\white,\xi^\whitep,-\xi^\whitem)$
	in $\muSpin_{x}^\blackp	$ follows from~\cite[Proposition~4.10b]{Gri06}.
	Finally, Parts~\ref{prop:inf-vol-6v:three} and~\ref{prop:inf-vol-6v:four} follow from Lemma~\ref{lemma:decoupling-6v} via standard arguments.
\end{proof}

For~$a,b\leq c\leq a+b$, we obtain full extremality of the measure $\muSpin_{a,b,c}^\blackp$.

\begin{proposition}[Extremality]\label{prop:ergo-6v}
	Assume that~$0<a,b\leq c \leq a+b$.
	Then, the measure $\muSpin_{a,b,c}^\blackp$ is extremal and ergodic.
\end{proposition}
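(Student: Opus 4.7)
The plan is to adapt the argument of Proposition~\ref{prop:ergo-lip} to the present setting, replacing Zhang's elementary non-coexistence argument with the general version due to Sheffield (needed because, for $a\neq b$, the model lacks the horizontal/vertical symmetry that powers Zhang's argument). The reduction is straightforward: Proposition~\ref{prop:inf-vol-6v} already provides extremality and ergodicity for the $(\sigma^\black,\xi^\black)$-marginal, and specifies that $\sigma^\white$ given $(\sigma^\black,\xi^\black)$ consists of independent fair coin flips over the clusters of $(\xi^\black)^*$. If these clusters are almost surely finite, standard arguments propagate tail-triviality, hence extremality and ergodicity, to the full measure $\muSpin_{a,b,c}^\blackp$.

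It then remains to rule out an infinite cluster in $(\xi^\black)^*$. Assume toward contradiction that one exists with positive probability. The super-duality relation~\eqref{eq:super_duality-6v} gives $(\xi^\black)^* \subseteq \xi^\white$, so $\xi^\white$ percolates. Because two adjacent edges of $\xi^\white$ cannot be assigned opposite values of $\sigma^\white$ at their common endpoint, every cluster of~$\xi^\white$ lies entirely in~$\xi^\whitep$ or entirely in~$\xi^\whitem$; the $\pm$-symmetry of the white spins under~$\muSpin_{a,b,c}^\blackp$ then gives percolation of $\xi^\whitep$ with positive probability. A comparison between boundary conditions analogous to Corollary~\ref{cor:cbc-lip} (which follows from the joint FKG inequality of Proposition~\ref{prop:fkg-joint-six} together with the domain Markov property of Lemma~\ref{lemma:markov-6v}) transfers this to $\muSpin_{a,b,c}^\whitep$, and the black/white symmetry of the six-vertex model (up to a unit translation interchanging the two checkerboard classes) converts this in turn into positive probability that $\xi^\blackp \subseteq \xi^\black$ percolates under $\muSpin_{a,b,c}^\blackp$.

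We therefore reach a situation in which, under $\muSpin_{a,b,c}^\blackp$, both $\xi^\blackp$ percolates on $\squarelattice^\black$ and $(\xi^\blackp)^* \supseteq (\xi^\black)^*$ percolates on the dual lattice $\squarelattice^\white$, which contradicts the non-coexistence theorem. The main obstacle lies in verifying its hypotheses: translation invariance and ergodicity of $\xi^\blackp$ are inherited from Proposition~\ref{prop:inf-vol-6v}; FKG for $\xi^\blackp$ comes from projecting Proposition~\ref{prop:fkg-joint-six} onto the $\xi^\blackp$-coordinate (functions monotone in $\xi^\blackp$ alone are monotone in the triplet); and~--- crucially when $a\neq b$, so that Zhang's symmetric shortcut is unavailable~--- one must appeal to the general version of the non-coexistence theorem for doubly periodic FKG bond percolations on planar lattices. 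This last step is precisely where the paper's use of Sheffield's theorem, rather than Zhang's argument, becomes essential.
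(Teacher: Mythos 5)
Your proposal is correct and follows essentially the same route as the paper's proof: reduce to ruling out an infinite cluster in $(\xi^\black)^*$ via the decomposition from Proposition~\ref{prop:inf-vol-6v}, use super-duality and white-spin symmetry to get $\xi^\whitep$ percolating, transfer to $\muSpin^\whitep_{a,b,c}$ via comparison of boundary conditions and then to $\xi^\blackp$ via black/white symmetry, and apply Sheffield's non-coexistence theorem. The only thing worth adding is that you should note explicitly that the event of percolation is a tail event in the (extremal, ergodic) law of $(\sigma^\black,\xi^\black)$, so ``positive probability'' upgrades to ``almost surely'', which is the hypothesis the non-coexistence theorem actually requires.
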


We argue as in the proof of Proposition~\ref{prop:ergo-lip}.
The main difference is that we must replace the Zhang argument in the proof by the more general
non-coexistence theorem of Sheffield~\cite[Theorem~9.3.1]{She05} (see also~\cite[Theorem~1.5]{DumRaoTas19}) which does not require invariance
under the $\pi/2$ rotation.
We provide the details for completeness.

\begin{proof}
	By Proposition~\ref{prop:inf-vol-6v}, the law of $\xi^\black$ under~$\muSpin_{a,b,c}^\blackp$ is extremal and ergodic.
	As in Proposition~\ref{prop:ergo-lip}, extremality and ergodicity of~$\sigma^\white$ will follow once we show that all clusters of $(\xi^\black)^*$ are finite.
	Assume the opposite:
	\begin{equation}
		\label{eq:whatwewanttocontradict-6v}
		\muSpin_x^\blackp(\text{$(\xi^\black)^*$ has an infinite cluster})>0
	\end{equation}
	Using the super-duality~\eqref{eq:super_duality-6v}, the symmetry between~$\xi^\whitep$ and~$\xi^\whitem$ under~$\muSpin_x^\blackp$ and the fact that each cluster of~$\xi^\white$ is a cluster of~$\xi^\whitep$ or a cluster of~$\xi^\whitem$,
	 \begin{equation}
		\label{eq:whatwewanttocontradict-2-6v}
		\muSpin_x^\blackp(\text{$\xi^\whitep$ has an infinite cluster})>0.
	\end{equation}
	By the comparison between boundary conditions, for any domain~$\domain$, the distribution of~$\xi^\whitep$ under~$\muSpin_{\domain,x}^\whitep$ dominates that under~$\muSpin_x^\blackp$.
	Taking the limit~$\domain\nearrow \hexlattice$ as in Proposition~\ref{prop:inf-vol-6v} (but for~$\xi^\whitep$) and using black/white symmetry and~\eqref{eq:whatwewanttocontradict-2-6v}, we get
	\[
		\muSpin_x^\blackp(\text{$\xi^\blackp$ has an infinite cluster})
		= \muSpin_x^\whitep(\text{$\xi^\whitep$ has an infinite cluster}) >0.
	\]
	Together with~\eqref{eq:whatwewanttocontradict-6v}, this contradicts Sheffield's non-coexistence theorem.
\end{proof}

Recall that in the loop~\O{2} model ergodicity readily implies delocalisation and~$\muSpin_x^\blackp = \muSpin_x^\blackm$ (Corollary~\ref{cor:inf-vol-limits-equal-lip}).
The situation is more involved in the six-vertex model because~$\sigma^\black$ and~$\sigma^\white$ are supported on different lattices and hence we cannot swap them easily.

\subsection{Delocalisation via $\T$-circuits}

\begin{proof}[Proof of qualitative delocalisation (Equation~(\ref{thm:hard_six_deloc:qual}) in Theorem~\ref{thm:hard_six_deloc})]
	We follow the alternative proof for delocalisation in the uniform
	case~$a=b=c$ suggested in~\cite[Section~9]{GlaPel19}.  The innovation
	of that argument lies in so-called {\em $\T$-circuits} (that is, circuits in the
	triangular connectivity) and a related coupling (see Fig.~\ref{fig:T-circuits}).

	Omit~$a,b,c$ in the proof for brevity, and fix $(\Omega_k)_k$.
	We first show delocalisation assuming
	\begin{equation}
		\label{eq:largebothcircuits-6v}
		\muSpin^\blackp\left(\parbox{25em}{each face is surrounded by infinitely many disjoint $\xi^\black$-circuits and infinitely many disjoint $\xi^\white$-circuits}\right) = 1.
	\end{equation}
%

	Then, Proposition~\ref{prop:inf-vol-6v} implies $\muSpin^\blackp=\muSpin^\blackm$ and, as in the proof of Lemma~\ref{lemma:thm1aux}, we get
	\begin{equation}
		\label{eq:contrathing}
		\muSpin^{\blackp\,\whitep}_{\domain_k}
		\xrightarrow[k\to\infty]{}
		\muSpin^\blackp.
	\end{equation}
	In a slight abuse of notation, we allow the circuits to consist of only one face~--- our target face~$u$.
	For an integer~$k\geq 1$, sample $(\sigma^\black,\sigma^\white,\xi^\black,\xi^\white)$ according to~$\muSpin^{\blackp\, \whitep}_{\domain_k}$ and explore it as follows:
	\begin{enumerate}
		\item Define~$\gamma_1$ as the largest circuit of~$\xi^\white$ going around or through~$u$.
		\item For~$i\geq 1$, inductively define~$\gamma_{2i}$ as the largest circuit of~$\xi^\black$ inside~$\gamma_{2i-1}$ and~$\gamma_{2i+1}$ as the largest circuit of~$\xi^\white$ inside~$\gamma_{2i}$.
		\item We stop when arrive at a circuit that contains~$u$.
	\end{enumerate}
	Let $N_{k}$ be the random variable equal to the number of successfully constructed circuits.
	For~$i=1,\dots,N_k$, define~$\calD_i$ as the set of faces in~$\domain_k$ on~$\gamma_i$ or outside of it.
	Note that, for every~$i\geq 1$ even (resp. odd), $\sigma^\black$ (resp.~$\sigma^\white$) takes a constant value on~$\gamma_i$.
	The spin at~$\gamma_1$ has to be~$+$, since otherwise there must be a larger circuit of a different parity.
	For all~$i\geq 2$, by the domain Markov property (Lemma~\ref{lemma:markov-6v}), the spin at~$\gamma_i$, conditioned on the values~$(\sigma^\black,\sigma^\white,\xi^\black,\xi^\white)$ on~$\calD_i$, is~$+$ or~$-$ with probability~$1/2$.

	Let~$\P_{\domain_k}$ denote the coupling of~$\muHom^{0,1}_{\domain_k}$ and~$\muSpin^{\blackp\, \whitep}_{\domain_k}$ induced by Proposition~\ref{prop:6v-spin-bij}.
	The correspondence implies that the height function~$h$ takes constant value on~$\gamma_i$ (that we denote by~$h(\gamma_i)$).
	Moreover,~$h(\gamma_1) = 1$ if~$\gamma_1$ is a~$\xi^\whitep$-circuit and
	$h(\gamma_1) = -1$ otherwise.
	We now show that, for~$i\geq 2$,
	\begin{equation}\label{eq:height-given-exterior-circuits}
		\P_{\domain_k}(h(\gamma_i) = h(\gamma_{i-1}) + 1  \,\vert \, h_{|\calD_i})
		= \P_{\domain_k}(h(\gamma_i) = h(\gamma_{i-1}) - 1  \, \vert \, h_{|\calD_i}) = 1/2.
	\end{equation}
	We only treat the case when~$\gamma_i$ is a~$\xi^\whitep$-circuit, since the case of a~$\xi^\whitem$-circuit is analogous.
	Without loss of generality, we can assume that~$\sigma^\white \equiv +$ on~$\gamma_{i-1}$.
	The cluster~$\calC_i$ of~$\{\sigma^\white = +\}$ on~$\squarelattice^\white$ containing~$\gamma_{i-1}$ must include a face adjacent to a face on~$\gamma_i$.
	Indeed, otherwise there exists a circuit of edges of~$\squarelattice^\black$ whose exterior boundary consists of faces of~$\calC_i$, and on its interior boundary the spin~$\sigma^\white$ takes a constant value~$-$.
	Clearly, this circuit must belong to~$\xi^\black$, which leads to a contradiction.

	Finally, applying~\eqref{eq:height-given-exterior-circuits} consecutively for~$i=2,3,\dots,N_k$, we see that, given~$N_k$ and~$\gamma_1$, \dots, $\gamma_{N_k}$, the law of~$h(u)$ is that of the simple random walk starting at~$h(\gamma_1)$ and making~$N_k-1$ steps.
	Recall that, in the case of the random Lipschitz function, the corresponding statement with the circuits~$\gamma_i$ replaced by the loops is almost trivial.
	By the total variance formula,
	\begin{align}
		\Var_{\domain_k}[h(u)]
		&= \E_{\domain_k}[\Var_{\domain_k}[h(u) \vert N_k, \gamma_1]] + \Var_{\domain_k}[\E_{\domain_k}[h(u) \vert N_k, \gamma_1]] \notag\\
		&= \E_{\domain_k}[N_k - 1] + \Var_{\domain_k}[\ind{h(\gamma_1)=1} - \ind{h(\gamma_1)=-1}].
		\label{eq:var-of-graph-hom}
	\end{align}
	By~\eqref{eq:largebothcircuits-6v}, the measure~$\muSpin^\blackp$ exhibits infinitely many alternating~$\xi^\black$- and~$\xi^\white$-circuits almost surely.
	Then, for any~$L\geq 1$, for all~$k$ large enough,
	\[
		\muSpin_{\domain_k}^\blackp(N_k > L) > \tfrac12.
	\]
	Thus, the right-hand side of~\eqref{eq:var-of-graph-hom} diverges yielding~\eqref{thm:hard_six_deloc:qual}.

	It remains to show~\eqref{eq:largebothcircuits-6v}.
	By the super-duality~\eqref{eq:super_duality-6v}, it is enough to rule out existence of infinite clusters in~$\xi^\black$ and in~$\xi^\white$.
	The statement about~$\xi^\white$ follows easily form the ergodicity established in Proposition~\ref{prop:ergo-6v}:
	if an infinite cluster in~$\xi^\white$ exists, then it is unique by the argument of Burton and Keane~\cite{BurKea89}
	and, due to the invariance under a global flip of the white spin, this cluster is in~$\xi^\whitep$ with probability exactly~$1/2$.
	We now assume that~$\xi^\black$ contains an infinite cluster.
	By the above reasoning, this cluster is unique and must belong to~$\xi^\blackp$ almost surely.
	Define a finite- and an infinite-volume measures on graph homomorphisms under~$0$ boundary conditions.
	\begin{itemize}
		\item $\muHom_{\domain_k}^0$:
		sample a pair~$(\sigma^\black,\sigma^\white)$ according to~$\muSpin^\blackp$,
		assign height~$0$ to the boundary cluster of~$\{\sigma^\black = +\}$,
		and assign heights to all other faces according to~\eqref{eq:heights-to-spins-6v}.
		\item $\muHom^0$:
		sample a pair~$(\sigma^\black,\sigma^\white)$ according to~$\muSpin^\blackp$,
		assign height~$0$ to the infinite cluster of~$\{\sigma^\black = +\}$,
		and assign heights to all other faces according to~\eqref{eq:heights-to-spins-6v}.
	\end{itemize}
	It is easy to see that~$\muHom_{\domain_k}^0$ converges to~$\muHom^0$.
	Indeed, for any~$n\geq 1$ and~$\epsilon >0$, take~$N\geq n$ large enough so that, for any domain~$\domain \supset \Lambda_N$, the measures~$\muSpin_\domain^\blackp$ and~$\muSpin^\blackp$ can be coupled to agree on~$\Lambda_n$ with probability~$1-\eps$ and both measures with probability~$1-\eps$ exhibit a unique cluster of~$\{\sigma^\black = +\}$ crossing from~$\Lambda_n$ to~$\partial\Lambda_N$ and this cluster is connected to~$\partial \domain$ (for~$\muSpin_\domain^\blackp$) or to infinity (for~$\muSpin^\blackp$).
	Assuming this coupling and on the above event, the height functions agree on~$\Lambda_n$ and the statement follows.
	We omit some details because this is reminiscent to the classical convergence argument for Potts measures sampled from the random-cluster model via the Edwards--Sokal coupling~\cite[Theorem~4.91]{Gri06}.

	The finite-volume measures~$\muHom_{\domain_k}^0$ have FKG~\cite[Proposition~5.1]{GlaPel19}
	and therefore so does the limit~\cite[Proposition~4.10b]{Gri06}
	(see also~\cite[Lemma~9.2.1]{She05}).
	Similarly, define~$\muHom^1$: sample a pair~$(\sigma^\black,\sigma^\white)$ according
	to~$\muSpin^\whitep$, assign height~$1$ to the infinite cluster
	of~$\{\sigma^\white = +\}$, and assign heights to the other squares
	according to~\eqref{eq:heights-to-spins-6v}.
	Write $\preceq$ for stochastic domination of heights; we claim it is sufficient to prove
	\begin{equation}\label{eq:0-and-1-meas-equal-6v}
		\muHom^1\preceq \muHom^0  .
	\end{equation}
	Indeed, iterating this inequality yields $\muHom^4\preceq\muHom^0$
	where the former measure is simply obtained by shifting the height function
	up by four, which is clearly contradictory.

	\begin{figure}
		\begin{center}
			\includegraphics{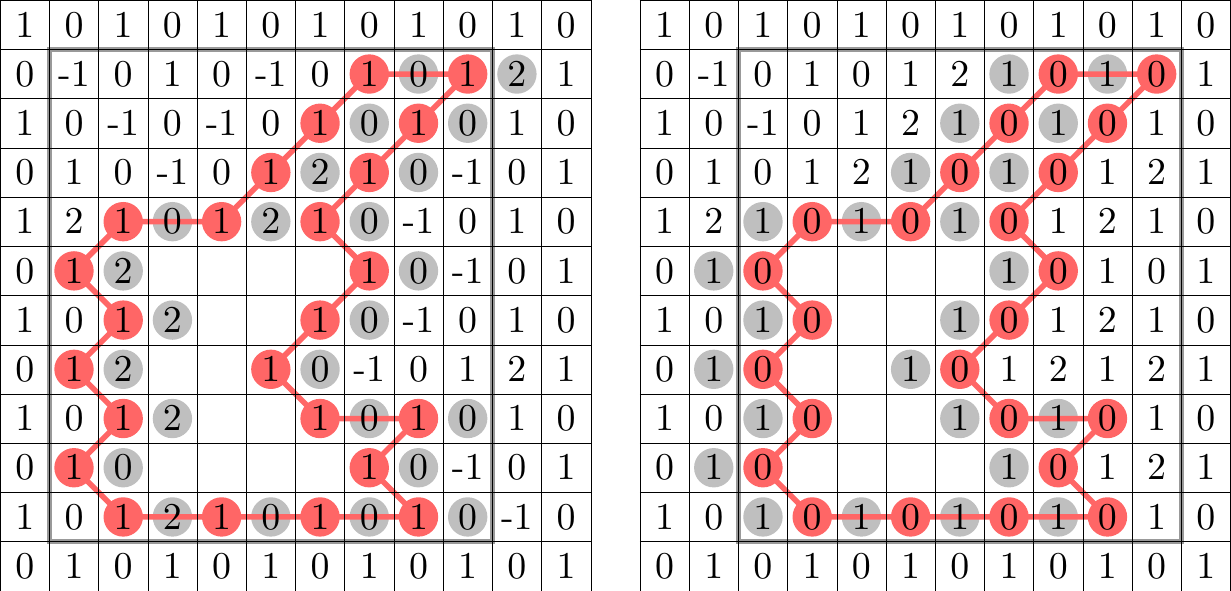}
			\caption{
				A coupling via $\T$-circuits for~$\muHom^0$ and~$\muHom^1$.
				{\scshape Left}: A sample $h^0\sim\muHom^0$.
				The circuit $\gamma_N$, in red,
				is the largest $\T^\white$-circuit through $\{h^0\geq 1\}$
				that is contained in $\Lambda_N$, the grey box.
				{\scshape Right}: The measures $\muHom^0$ and~$\muHom^1$
				can be coupled such that $\gamma_N+(1,0)$
				is the largest $\T^\black$-circuit through $\{h^1\leq0\}$
				contained in $\Lambda_N+(1,0)$,
				where $h^1\sim\muHom^1$.
				{\scshape Both}: The union of $\gamma_N$ and $\gamma_N+(1,0)$
				contains a path of faces on which $h^0\geq h^1$,
				so that the conditional distribution of $h^0$ on the unexplored
				part (the faces without height) dominates that of $h^1$.
			}
			\label{fig:T-circuits}
		\end{center}
	\end{figure}

	We show~\eqref{eq:0-and-1-meas-equal-6v} by applying the non-coexistence theorem to the {\em site percolation} duality on a certain triangulation.
	Define the graph~$\T^\black$ on the even faces $F^\black(\squarelattice)$ by connecting
	each face centred at~$(i,j)$ to the six faces centred at $(i\pm1,j\pm 1)$ and~$(i\pm
	2,j)$.  The graph~$\T^\black$ is isomorphic to the triangular lattice.
	Define the graph~$\T^\white$ on $F^\white(\squarelattice)$ in a similar fashion,
	that is, by shifting all edges in $\T^\black$ by $e_1:=(1,0)$.

	Symmetry and non-coexistence (\cite[Theorem~9.3.1]{She05}
	and~\cite[Theorem~1.5]{DumRaoTas19})
	imply that
	the set
	$\{h\in\{-1,-3,\dots\}\}$
	does almost surely not percolate for $\muHom^0$
	in the $\T^\white$-connectivity.
	By self-duality of site percolation on the triangular lattice,
	this means that $\muHom^0$-almost surely the set
	$\{h\in\{1,3,\dots\}\}$ contains infinitely many disjoint $\T^\white$-cycles around the origin.

	For $n\in\Z_{\geq 1}$ we let $\Lambda_n$ denote the set of faces
	whose centre is at a Euclidean distance at most $n$
	from $(0,0)$.
	To prove~\eqref{eq:0-and-1-meas-equal-6v},
	it suffices to find,
	for fixed $n\in\Z_{\geq 1}$ and $\epsilon>0$,
	a coupling $\pi$ of $h^0\sim\muHom^0$
	and $h^1\sim \muHom^1$ in such a way that
	\begin{equation}
		\label{condoncoupling}
		\pi(h^0|_{\Lambda_n}\geq h^1|_{\Lambda_n})\geq 1-\epsilon.
	\end{equation}
	We now construct this coupling.

	Let $P_N\subset\T^\white$ denote the set of edges in $\T^\white$
	connecting faces in $\Lambda_N\cap \{h\in\{1,3,\dots\}\}$.
	Let $\gamma_N\subset P_N$ denote the largest circuit
	of $P_N$ surrounding $(0,0)$, or set $\gamma_N:=\varnothing$
	if such a circuit does not exist.
	Write $I(\gamma_N)\subset\faces{\squarelattice}$ for the set of faces
	strictly surrounded by $\gamma_N$.
	Since $\cup_N P_N$ contains infinitely many disjoint circuits around
	the origin $\muHom^0$-almost surely, we may find a fixed $N\in\Z_{\geq 1}$
	so that
	\[
		\muHom^0(\Lambda_{n+8}\subset I(\gamma_N))
		\geq 1-\varepsilon.
	\]
	Observe that $\gamma_N$ may be explored by exploring the connected
	component of $P_N^*$ intersecting the complement of $\Lambda_N$,
	which reveals the heights in $\faces{\squarelattice}\setminus I(\gamma_N)$
	but not those in $I(\gamma_N)$.

	We rely on the following fundamental symmetry:
	if~$h\sim \muHom^0$, then the law of the height function $\tilde h$ defined by
	\begin{equation}
		\label{eq:shifteq}
		\tilde{h}(u):= 1-h(u-e_1)
	\end{equation}
	is $\muHom^1$.
	Define the coupling $\pi$ as follows (see Fig.~\ref{fig:T-circuits}).
	\begin{enumerate}
		\item Explore $\gamma_N$ for $h^0$ in the measure $\muHom^0$,
		revealing $h^0|_{\faces{\squarelattice}\setminus I(\gamma_N)}$.
		\item This means that the law of $h^0$ in $I(\gamma_N)$ conditional on the revealed
		heights is given exactly by the Gibbs specification induced by~\eqref{eq:6v_weight_def}.
		\item The height function $h^1$ on $\faces{\squarelattice}\setminus (I(\gamma_N)+e_1)$
		is defined through~\eqref{eq:shifteq}, that is,
		\[
			h^1(u):= 1-h^0(u-e_1)\qquad\forall u\in \faces{\squarelattice}\setminus (I(\gamma_N)+e_1).
		\]
		\item By the previous three statements and~\eqref{eq:shifteq}, if the
		values of $h^1$ on $I(\gamma_N)+e_1$ are sampled
		according to the Gibbs specification, then $h^1$ has the law of $\muHom^1$.
	\end{enumerate}
	At this stage, $h^0$ is defined on the complement of $I(\gamma_N)$,
	and $h^1$ is defined on the complement on $I(\gamma_N)+e_1$.
	Write $F(\gamma_N)$ for the set of faces visited by $\gamma_N$.
	Then the definitions of $\gamma_N$ and $h^1$ imply that:
	\begin{equation}
		\label{heightfdom}
		h^0|_{F(\gamma_N)}\geq 1
		;
		\qquad
		h^1|_{F(\gamma_N)+e_1}\leq 0.
	\end{equation}
	Now define $I:=I(\gamma_N)\cap (I(\gamma_N)+e_1)$,
	and write $\partial I\subset \faces{\squarelattice}\setminus I$ for the set of
	faces which share a vertex with a face in $I$ (such faces are adjacent or diagonally
	adjacent to $I$).
	Note the following key property: the definition of $\T$-circuits implies
	that $\partial I\subset F(\gamma_N)\cup (F(\gamma_N)+e_1)$.
	We now continue the construction of our coupling.
	\begin{enumerate}[resume]
		\item Explore the values of $h^0$ and $h^1$
		on $\faces{\squarelattice}\setminus I$ that are not yet known, by sampling them
		from the Gibbs distribution (independently for the two height functions).
		\item Since $\partial I\subset F(\gamma_N)\cup (F(\gamma_N)+e_1)$,
		we observe that~\eqref{heightfdom} and the fact that graph homomorphisms
		are $1$-Lipschitz implies
		\[
			h^0|_{\partial I}\geq h^1|_{\partial I}.
		\]
		\item Now the six-vertex model satisfies a Markov property over $\partial I$ and the conditional distributions of~$h^0$ and~$h^1$ satisfy the FKG inequality.
		By the comparison between boundary conditions, the conditional law of $h^0$ on $I$
		dominates the conditional law of $h^1$ on $I$.
		Thus, by Strassen's theorem, we may extend $\pi$ to a coupling
		such that $h^0|_I\geq h^1|_I$.
	\end{enumerate}
	Now $\pi$ satisfies~\eqref{condoncoupling}
	because
	\[
		\pi(h^0|_{\Lambda_n}\geq h^1|_{\Lambda_n})
		\geq
		\pi(\Lambda_n\subset I)
		\geq\muHom(\Lambda_{n+8}\subset I(\gamma_N))
		\geq 1-\epsilon.
	\]
	This finishes the proof.
\end{proof}

\subsection{Logarithmic delocalisation}
\begin{proof}[Proof of~(\ref{thm:hard_six_deloc:quant}) in Theorem~\ref{thm:hard_six_deloc}]
	Finally, we note that in the symmetric case~$a=b$, the argument of~\cite{GlaMan21} can be adapted to prove a logarithmic bound on delocalisation; see the proof of Theorem~\ref{thm:soft_Lip_deloc} for a sketch and Appendix~\ref{app:dicho} for more details.
	The main difference is that instead of duality for black and white {\em site} percolations on the triangular lattice in the loop~\O{2} model, we use duality for black and white {\em bond} percolations on the square lattice in the six-vertex model.
	There is however one additional subtlety coming from a different lattice geometry: in the six-vertex model, we do not have duality between crossings of~$\{\sigma^\black = +\}$ and~$\{\sigma^\black = -\}$ because they live at the vertices of the square lattice.
	This becomes crucial in the Step~$1$ of the RSW part in the proof of Theorem~\ref{thm:soft_Lip_deloc}: instead of~$(\Z^2)^\black$-crossings of~$\{\sigma^\black = +\}$ (that is, in the diagonal connectivity of~$\Z^2$), the duality guarantees only~$\T^\black$-crossings.
	We now explain why this is sufficient to run the Step~$2$ .
	We use the notation from the proof of Theorem~\ref{thm:soft_Lip_deloc}.

	Indeed, assume that the bottom and top parts of~$\Gamma_L$ and~$\Gamma_R$ are connected by pluses of~$\sigma^\black$ in~$\T^\black$-connectivity and explore the exterior-most such crossings.
	Then, all even vertices on the exterior boundary of~$\chi_1$ and~$\chi_2$ either have minuses of~$\sigma^\black$ or are located outside of the strip.
	We set~$\sigma^\black$ to be minus at all these vertices.
	By the FKG inequality, this can only decrease the measure.
	It is easy to see that the edges of~$(\Z^2)^\white$ separating~$\chi_1$ from these minuses at the exterior boundary form a path that belongs to~$\xi^\white$.
	Denote this path by~$\overline{\chi}_1$ and define the path~$\overline{\chi}_2$ similarly.
	Note that if~$\overline{\chi}_1\subseteq \xi^\whitep$ and~$\overline{\chi}_2\subseteq \xi^\whitem$ (or vice versa), then they are not connected to each other in~$\xi^\white$ and hence the super-duality~\eqref{eq:super_duality-6v} implies that~$\Gamma_L$ and~$\Gamma_R$ are connected by a path of~$\xi^\blackp$, which is the required statement.

	Thus, by symmetry, we can assume that~$\overline{\chi}_1, \overline{\chi}_2\subseteq \xi^\whitep$.
	Finally, we forget about the pluses of~$\sigma^\black$ at~$\chi_1$ and~$\chi_2$ (again, by the FKG inequality, this can only stochastically decrease the measure).
	Now, similarly to~\cite[Fig.~$13$]{GlaMan21}, we use reflections and push away the~$\xi^\blackp$-boundary conditions (partially replacing them by~$\xi^\whitep$ conditions) to obtain a planar symmetric domain~$\calD$ with boundary points~$A,B,C,D$ such that: the arcs~$AB$ and~$CD$ belong to~$\xi^\blackp$; the arcs~$BC$ and~$DA$ (containing~$\overline{\chi}_1$ and~$\overline{\chi}_2$ respectively) belong to~$\xi^\whitep$.
	Since the boundary conditions on~$\calD$ are symmetric between~$\xi^\black$ and~$\xi^\white$, a~$\xi^\blackp$ crossing from~$(AB)$ to~$(CD)$ exists with probability~$1/2$.
	This crossing always contains a crossing from~$\Gamma_L$ to~$\Gamma_R$.
	Noting that all the operations described above could only decrease the probability of such event, we get the desired lower bound on its probability in the original measure.
	This finishes the proof of the RSW part and everything else adapts from the loop~\O{2} to the six-vertex model in a straightforward fashion.
\end{proof}

\newcommand\cleft{\operatorname{left}}
\newcommand\cright{\operatorname{right}}
\newcommand\fZ{\mathfrak{Z}}
\newcommand\Ln{L_{\operatorname{non}}}
\newcommand\Lc{L_{\operatorname{contr}}}
\newcommand\phitorus{\phi^{\operatorname{tor}}}

\section{Continuity of the phase transition in the random-cluster model}
\label{sec:continuity}

We shall take a straightforward route to deriving continuity of the phase
transition for $q\in[1,4]$ from the delocalisation result:
we study a single observable in the Baxter--Kelland--Wu (BKW) coupling~\cite{BaxKelWu76}
of the type appearing in the work of Dubédat~\cite[p.~398]{Dub11}.
The handling of boundary conditions is important and nontrivial in the application of the BKW
coupling. We choose to work on the torus, following closely the setting of Lis~\cite{Lis21}.
To this end, we quotient all our full-plane graphs by $2n\Z^2$
for any integer $n\in\Z_{\geq 1}$;
we define
\[
    \squarelattice_n:=\squarelattice/   2n\Z^2,
\]
and define $\squarelattice^\black$ and $\squarelattice^\white$ in a similar fashion.
The interest is in the limit~$n\to\infty$.

The proofs in this section are restricted to the symmetric setting of Theorem~\ref{thm:continuity_asymetric}: $p_a=p_b=p$.
The proofs readily extend to the asymmetric case $p_a\neq p_b$ after updating the weights
appearing in the BKW coupling. These weights are most naturally obtained by applying a linear map
to all our lattices, then relating the new weights to the lengths and angles of the different
edges in these transformed lattices; see~\cite[Section~5.1]{BaxKelWu76} for details.

\subsection{The Baxter--Kelland--Wu coupling}

We define an \emph{oriented loop} as a closed self-avoiding walk that consists
of alternating vertical and horizontal edges of~$\squarelattice_n$.
We do not care about the starting point of each loop, that is, we identify loops
which are equal up to a cyclic permutation of the edges.
An \emph{oriented loop configuration} is a set of oriented loops
such that each edge in~$E(\squarelattice_n)$ appears in exactly one oriented loop (Fig.~\ref{fig:BKWcomplete}).
  Note that no oriented loop can
cross itself or other loops because subsequent edges make a $\pi/2$ angle at
their intersection point.
Write $\calL_n^\rightarrow$ for the set of oriented loop configurations.

For any walk $p$ on $\squarelattice_n^*:=(F(\squarelattice_n),E^*(\squarelattice_n))$
and $L^\rightarrow\in\calL_n^\rightarrow$,
let $\int_pL^\rightarrow$ denote the number of times the path $p$
is crossed by a loop of~$L^\rightarrow$ from right to left minus the number of times the path
is crossed by a loop of~$L^\rightarrow$ from left to right.
Write $H_8(L^\rightarrow)$ for the indicator of the event that
$\int_pL^\rightarrow\in 8\Z$, for {\em any closed} walk $p$
on the dual torus $\squarelattice_n^*$.
We also let $p^k$ denote the walk which starts at the black square at the origin,
and then makes $2k$ steps to the right (in particular, it ends at a black square).

Let $\lambda\in[0,\frac\pi3]$. The \emph{weight} of any oriented loop configuration $L^\rightarrow\in\calL_n^\rightarrow$
is defined through
\begin{equation}
    \label{orientedweight}
    w(L^\rightarrow):=e^{i\frac\lambda4(\cleft(L^\rightarrow)-\cright(L^\rightarrow))}H_8(L^\rightarrow),
\end{equation}
where $\cleft(L^\rightarrow)$ and $\cright(L^\rightarrow)$ denote the number of left-
and right turns of loops in $L^\rightarrow$ respectively.
The indicator $H_8(L^\rightarrow)$ appears for technical reasons and will help us later
when extending the FKG inequality to spin measures on the torus.

The following two quantities are at the centre of our analysis:
\begin{equation}
    \label{eq:crazyobservables}
    \fZ_n:=\sum_{L^\rightarrow\in\calL^\rightarrow_n}w(L^\rightarrow);
    \qquad
    \fZ_{n,k}:=
    \sum_{L^\rightarrow\in\calL^\rightarrow_n}w(L^\rightarrow)e^{i\frac\pi8\int_{p^k}L^\rightarrow}.
\end{equation}
The idea is now as follows: if we forget the loop structure of each oriented loop configuration $L^\rightarrow$
but not the orientations of the edges, then we obtain a six-vertex configuration;
if we forget the orientation of each loop but not the loop structure,
then we obtain a random-cluster percolation configuration.
This allows one to express $\fZ_n$ and $\fZ_{n,k}$ in terms of both models,
leading to an identity between the two.
In order to keep the exposition as simple as possible, we only equate the two
in a certain limit, which enables us to separate entirely the discussion of the
six-vertex model and that of the random-cluster model in relation to~\eqref{eq:crazyobservables}.
For further details on the BKW coupling, we refer to~\cite{Dub11}.

\begin{figure}
	\includegraphics[width=\textwidth]{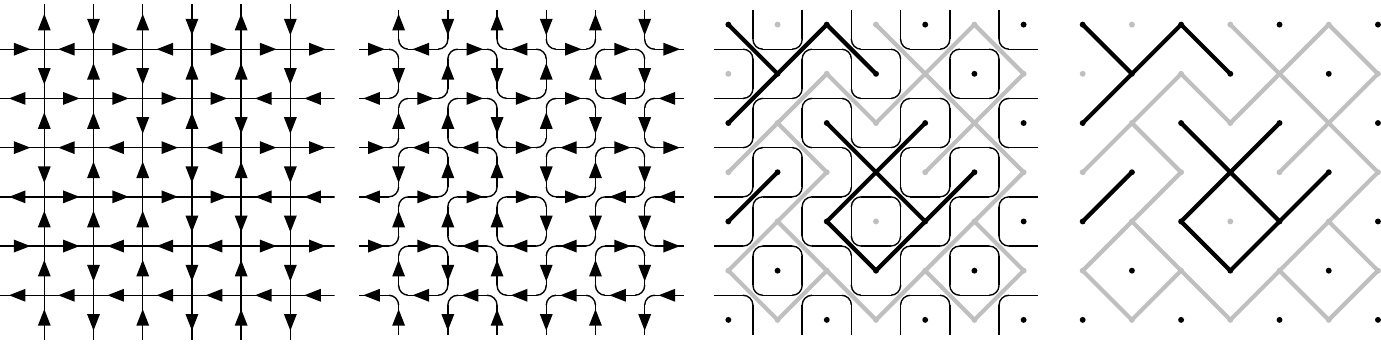}
	\caption{
        The BKW coupling.
        The six-vertex configuration is obtained by forgetting the connectivity
        structure, keeping the orientations of the edges.
        The random-cluster configuration is obtained by interpreting the loops
        as interfaces between primal and dual clusters.
	}
	\label{fig:BKWcomplete}
\end{figure}

\subsection{The six-vertex model}

This subsection establishes the following result.

\begin{lemma}
    \label{lemma:contlemma1}
    For any $\lambda\in[0,\frac\pi3]$, we have
    \[
        \lim_{k\to\infty}\lim_{n\to\infty}\frac{\fZ_{n,k}}{\fZ_n}=0.
    \]
\end{lemma}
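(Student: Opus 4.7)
The plan is to recognise $\fZ_{n,k}/\fZ_n$ as an expectation of a character-type observable in a six-vertex model on the torus, pass to the full-plane limit, and close by invoking the delocalisation Theorem~\ref{thm:hard_six_deloc}. Write $u_0$ and $u_k$ for the (black) endpoints of the path $p^k$.

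First, I would apply the Baxter--Kelland--Wu dictionary. Grouping oriented loop configurations according to their underlying edge orientations yields a six-vertex configuration on $\squarelattice_n$, and the local sum of $e^{i\frac\lambda4(\cleft(L^\rightarrow)-\cright(L^\rightarrow))}$ over the loop decompositions compatible with a fixed such orientation factorises into local six-vertex weights with parameters $(a,b,c)$ satisfying $a=b$ and $a,b\leq c\leq a+b$ for $\lambda\in[0,\tfrac\pi3]$. The indicator $H_8$ forces the six-vertex height function $h$ to have all monodromies in $8\Z$, hence to be well-defined globally up to an overall shift, and along $p^k$ one reads off $\int_{p^k}L^\rightarrow = h(u_k) - h(u_0)$. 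Consequently
\[
    \frac{\fZ_{n,k}}{\fZ_n} = \E_n^{\mathrm{6v}}\bigl[e^{i\frac\pi8(h(u_k)-h(u_0))}\bigr],
\]
where $\E_n^{\mathrm{6v}}$ denotes expectation under the six-vertex measure on $\squarelattice_n$ conditioned on the $H_8$ event.

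Next I would pass to the infinite-volume limit $n\to\infty$, following the framework of~\cite{Lis21}. The $H_8$ constraint is a global topological condition on monodromies at scale $n$ and becomes invisible to fixed local observables in the limit. Combining tightness, the FKG comparisons of Proposition~\ref{prop:fkg-joint-six}, and translation invariance on the torus, the torus measure converges (along a subsequence if needed) to a translation-invariant full-plane six-vertex Gibbs measure $\mu^\infty$ with parameters in the delocalisation regime, so that for each fixed $k$
\[
    \lim_{n\to\infty}\frac{\fZ_{n,k}}{\fZ_n} = \E^{\mu^\infty}\bigl[e^{i\frac\pi8(h(u_k)-h(u_0))}\bigr].
\]

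Finally, for $k\to\infty$ I would exploit the nested-circuit structure underlying Theorem~\ref{thm:hard_six_deloc}. Given $N_0\in\N$ and $\epsilon>0$, for $k$ large enough the infinite-volume measure produces, with probability at least $1-\epsilon$, at least $N_0$ disjoint alternating $\xi^\black$- and $\xi^\white$-circuits around $u_k$ that avoid $u_0$. By the Markov property and the $\pm 1$ update rule across each such circuit,
\[
    h(u_k) - h(u_0) = Y_{\mathrm{ext}} + \sum_{j=1}^{N_0} X_j,
\]
where $(X_j)_{j=1}^{N_0}$ are conditionally i.i.d.\ symmetric $\pm 1$ given the configuration outside the outermost circuit (which also determines $Y_{\mathrm{ext}}$). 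The tower property then yields
\[
    \bigl|\E^{\mu^\infty}[e^{i\frac\pi8(h(u_k)-h(u_0))}]\bigr| \leq (\cos\tfrac\pi8)^{N_0} + \epsilon,
\]
and taking $N_0\to\infty$, $\epsilon\to 0$ finishes the proof. The hard part will be the second step: extracting a bona fide probability-measure full-plane limit under complex weights and the topological $H_8$ constraint. This is precisely the technology that the torus setup of~\cite{Lis21} is designed to provide; Step~3 then follows almost for free from the delocalisation already established.
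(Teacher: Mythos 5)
Your overall strategy is the same as the paper's, and Steps~1 and~3 are essentially correct: the BKW dictionary with $a=b=1$, $c=2\cos\frac\lambda2$ gives the identity $\frac{\fZ_{n,k}}{\fZ_n}=\muSpin_n[e^{i\frac\pi8\int_{p^k}h_\nabla}]$, and in the full-plane limit the nested alternating $\xi^\black$-/$\xi^\white$-circuits make the height increment a simple random walk of diverging length, so the observable vanishes (your $(\cos\tfrac\pi8)^{N_0}+\epsilon$ bound is a fine variant of the paper's residue-mod-16 argument).

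There is, however, a genuine gap in Step~2, and you have misidentified where the difficulty lies. After Step~1 you are already working with the real, nonnegative spin measure $\muSpin_n$ of~\eqref{eq:torusspindef}; ``complex weights'' are no longer an issue. The problem is that Proposition~\ref{prop:fkg-joint-six} is stated and proved for simply connected domains of $\squarelattice$, not for the torus with the $H_8$ constraint, so you cannot ``combine tightness, the FKG comparisons of Proposition~\ref{prop:fkg-joint-six}, and translation invariance'' as you write. You need a torus FKG lattice condition for $\sigma^\black$, and the proof in the planar case (Lemma~\ref{lemma:fkg-spins-lip}/Proposition~\ref{prop:fkg-joint-six}) breaks on the torus: the factorisation $2\Zising(\mathrm{w}(\sigma^\black))=\Zising(\mathrm{w}^+(\sigma^\black))\,\Zising(\mathrm{w}^-(\sigma^\black))$ uses simple connectivity in an essential way, since it requires writing $\sigma^\white=\zeta^+\zeta^-$ with $\omega[\zeta^\pm]$ confined to $\{\sigma^\black=\pm\}$. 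You treat $H_8$ as a topological nuisance that ``becomes invisible to fixed local observables'', but in the paper it is precisely the device that rescues this factorisation: for $\sigma^\white\perp\sigma^\black$, the condition $H_8(\sigma^\black,\sigma^\white)=1$ is equivalent to the existence of the decomposition $\sigma^\white=\zeta^+\zeta^-$ (equivalently, to the existence of a lift of the height to a genuine $\Z$-mod-$8$ function), yielding the torus identity~\eqref{eq:fkgtorusdecomp} and hence the FKG lattice condition. Without this, you also cannot identify your subsequential limit $\mu^\infty$ with $\muSpin^\blackp$ (the paper sandwiches $\muSpin^\blackm\preceq_\black\mu\preceq_\black\muSpin^\blackp$ using the torus domain Markov property and FKG, then invokes $\muSpin^\blackm=\muSpin^\blackp$ from Theorem~\ref{thm:hard_six_deloc}), so the delocalisation and infinitely-many-circuits input to your Step~3 is not available for $\mu^\infty$ as you have defined it. To repair the proposal you would need to (i)~establish the torus version of the domain Markov property on contractible subdomains, and (ii)~prove the FKG lattice condition for $\sigma^\black$ under $\muSpin_n$, using $H_8$ as the paper does, before taking the limit.
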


\begin{proof}
    Fix $\lambda$. Take $a=b=1$ and~$c=2\cos\frac\lambda2 \in[1,2]$.
    The proof has three steps.
    \begin{enumerate}
        \item\label{RATIO_ONE} The ratio~$\tfrac{\fZ_{n,k}}{\fZ_n}$ equals
        an observable of some spin measure on~$\squarelattice_n$.
        \item\label{RATIO_TWO} As $n\to\infty$, this spin measure converges weakly
        to the measure~$\muSpin^{\blackp}$ studied extensively in Section~\ref{sec:six-vertex}.
        The observable behaves well in this limit as it is local and uniformly bounded.
        \item\label{RATIO_THREE} Delocalisation of~$\muHom^0$ implies that the observable
       vanishes as $k\to\infty$.
    \end{enumerate}

    \subsubsection*{Step~\ref{RATIO_ONE}}
    Fix $n$.
    We consider spin pairs $(\sigma^\black,\sigma^\white)$
    with $\sigma^\black\in\{\pm1\}^{V(\squarelattice^\black_n)}$
    and $\sigma^\white\in\{\pm1\}^{V(\squarelattice^\white_n)}$.
    Define consistency as in Section~\ref{sec:spin-rep-sixv} and
    write $\spaceSpin(\squarelattice_n)$ for the set of consistent spin pairs.
    Each pair $(\sigma^\black,\sigma^\white)\in\spaceSpin(\squarelattice_n)$ encodes an integer height modulo four.
    Define the gradient function~$h_\nabla=h_\nabla^{\sigma^\black\sigma^\white}$ on ordered pairs of adjacent faces such that, for any adjacent~$u\in V(\squarelattice^\black_n)$ and~$v\in V(\squarelattice^\white_n)$,
    \[
    	h_\nabla(v,u) = - h_\nabla(u,v) = (-1)^{\sigma^\black(u)\sigma^\white(v)}.
    \]
    For any walk $p=(p_k)_{0\leq k\leq m}$ on $\squarelattice_n^*$,
    define $\int_ph_\nabla:=\sum_{k=1}^m h_\nabla(p_{k-1},p_k)$.
    Note that
    \begin{itemize}
        \item If $p$ is a closed contractible walk on the torus, then $\int_p h_\nabla=0$;
        \item If $p$ is any closed walk on the torus, then $\int_p h_\nabla\in 4\Z$.
    \end{itemize}
    The latter holds true because the spins define the height function modulo four (see Definition~\ref{def:heights-to-spins-6v}).
    Write $H_8(\sigma^\black,\sigma^\white)$ for the indicator of the event that
    $\int_p h_\nabla\in 8\Z$, for any closed walk $p$ on~$\squarelattice_n$.
    Finally, define $\muSpin_n$ as the following probability measure on $\spaceSpin(\squarelattice_n)$:
    \begin{equation}
        \label{eq:torusspindef}
        \muSpin_n(\sigma^\black,\sigma^\white):=\tfrac1{Z_n}\cdot (\tfrac1c)^{|\omega[\sigma^\black]\cup\omega[\sigma^\white]|}H_8(\sigma^\black,\sigma^\white),
    \end{equation}
    where $Z_n$ is the partition function.
    We claim that
    \begin{equation}
        \label{eq:BKWsixvertexFINITEdomain}
        \frac{\fZ_{n,k}}{\fZ_n}=\muSpin_n[e^{i\frac\pi8\int_{p^k}h_\nabla}].
    \end{equation}
    The proof of the claim is standard in the context of the BKW coupling.
    First, observe that each oriented loop configuration
    $L^\rightarrow\in\calL^\rightarrow$ induces an edge orientation
    on~$\squarelattice_n$ (Fig.~\ref{fig:BKWcomplete}).  Clearly, $\int_{p^k}L^\rightarrow$ is a function of
    this six-vertex configuration and does not depend on a particular loop
    structure.  Summing over all oriented loop configurations inducing this
    six-vertex configuration, one deduces from~\eqref{orientedweight} that
    vertices of types $a$, $b$, and $c$ receive weights $1$, $1$, and
    $c=e^{i\lambda/2}+e^{-i\lambda/2}$ respectively.  Each six-vertex
    configuration determines the pair~$(\sigma^\black,\sigma^\white)$ up to a
    global spin flip.  The weight~\eqref{eq:torusspindef} of this pair does not
    depend on this global spin flip and is given by assigning the weights $1$,
    $1$, and $c$ to vertices of types $a$, $b$, and $c$ respectively.  Moreover,
    the path integrals of~$(\sigma^\black,\sigma^\white)$ and of the oriented
    loop representation agree, which yields~\eqref{eq:BKWsixvertexFINITEdomain}.

    \subsubsection*{Step~\ref{RATIO_TWO}}
    We aim to prove that $\muSpin_n\to\muSpin^\blackp$.
    Before proving this, observe that~\eqref{eq:torusspindef} implies that $\muSpin_n$ is invariant under resampling
    any collection of spins according the Gibbs probability kernels
    \emph{conditional on remaining in the same homotopy class}.
    Define a \emph{contractible domain} as a domain that is a strict subset of the torus
    and whose boundary cycle is contractible on the torus.
    The homotopy class of a spin configuration cannot change
    by changing the spin values on a contractible domain.
    Therefore, $\muSpin_n$ is invariant under resampling its spins on a contractible
    domain according to the \emph{unconditioned} Gibbs probability kernel on spins on that domain.

    We enhance the measure $\muSpin_n$ with the vertex percolation $\xi^\black$ as in Definition~\ref{def:b-w-perco-6v}.
    The domain Markov property (Lemma~\ref{lemma:markov-6v}) readily extends to the case when~$\domain'=\squarelattice_n$ and when~$\domain\subsetneq \squarelattice_n$ is contractible.
    We now claim that the FKG inequality for triplets~$(\sigma^\black,\xi^\blackp,-\xi^\blackm)$ (Proposition~\ref{prop:fkg-joint-six}) also holds for~$\muSpin_n$.
    We first derive $\lim_{n\to\infty}\muSpin_n=\muSpin^\blackp$ from this claim.
    Indeed, by the FKG inequality and the domain Markov property, for any contractible domain $\domain$
    and any large enough integer $n$, we have
    \[
        \muSpin_\domain^\blackm\preceq_\black\muSpin_n|_\domain \preceq_\black \muSpin_\domain^\blackp.
    \]
    Hence, any subsequential limit~$\mu$ of $(\muSpin_n)_n$ is also sandwiched between the two measures.
    Letting $\domain\nearrow\squarelattice$, we obtain
    \[
        \muSpin^\blackm     \preceq_\black \mu \preceq_\black  \muSpin^\blackp.
    \]
    In the proof of Theorem~\ref{thm:hard_six_deloc}, we show that~$\muSpin^\blackm = \muSpin^\blackp$ and this measure almost surely exhibits infinitely many alternating circuits of~$\xi^\blackp$ and~$\xi^\blackm$.
    Then, the marginal of~$\mu$ on~$(\sigma^\black,\xi^\black)$ coincides with that of~$\muSpin^\blackp$.
    By Proposition~\ref{prop:inf-vol-6v}, this implies~$\mu=\muSpin^\blackp$.
    Since the subsequential limit was arbitrary, tightness implies the desired weak convergence.

    It remains to prove that the triplet~$(\sigma^\black,\xi^\blackp,-\xi^\blackm)$ sampled from~$\muSpin_n$ satisfies the FKG inequality.
    Note that it is enough to establish the FKG lattice condition for~$\sigma^\black$ (analogue of Lemma~\ref{lemma:fkg-spins-lip}), since then the joint FKG property can be derived in the same way as in Proposition~\ref{prop:fkg-joint-lip}.
    Compared to Lemma~\ref{lemma:fkg-spins-lip}, there are two important differences:
    first, the law of~$\sigma^\white$ given~$\sigma^\black$ has additional restrictions and is not an Ising model (due to~$H_8$);
    second, the torus is not simply-connected, which becomes relevant in~\eqref{eq:fkg-proof-ising-decomp-pm}.
    We argue that the two effects cancel out, namely that~\eqref{eq:fkg-proof-measure-via-ising}
    and~\eqref{eq:fkg-proof-ising-decomp-pm} jointly turn into
    \begin{equation}
        \label{eq:fkgtorusdecomp}
        Z_n\cdot\muSpin_n(\sigma^\black)=\tfrac12\cdot(\tfrac1c)^{|\omega[\sigma^\black]|}\cdot
        \Zising(a^+(\sigma^\black))\cdot\Zising(a^-(\sigma^\black)),
    \end{equation}
    where $a^\pm(\sigma^\black)$ and~$\Zising(a)$ are defined similarly to the proof of Lemma~\ref{lemma:fkg-spins-lip}:
    \begin{align*}
    	a^\pm(\sigma^\black)_{uv}:=&\begin{cases}
            1/c &\text{if $\sigma^\black$ equals $\pm$ on both endpoints of $uv^*$,}\\
            0&\text{otherwise,}
        \end{cases}\\
        \Zising(a):=&\sum_{\sigma^\white\in\{+,-\}^{V(\squarelattice^\white_n)}}
        \prod_{uv\in E(\squarelattice_n^\white)}
		w_{uv}(a);
        \qquad
        w_{uv}(a):=\begin{cases}
            1 & \text{if $\sigma_u^\white=\sigma_v^\white$,}\\
            a_{uv}&\text{otherwise.}
        \end{cases}
    \end{align*}
    We focus on proving~\eqref{eq:fkgtorusdecomp}, as the FKG lattice condition for~$\sigma^\black$ follows from it readily.

    We fix $\sigma^\black$ and reason as in the proof of Lemma~\ref{lemma:fkg-spins-lip}, using one extra piece of information.
    Namely, for any $\sigma^\white\perp\sigma^\black$, the following two are equivalent:
    \begin{enumerate}
        \item $H_8(\sigma^\black,\sigma^\white)=1$,
        \item $\sigma^\white$ may be written as the product $\zeta^+\cdot\zeta^-$
        where $\zeta^+,\zeta^-\in\{+,-\}^{V(\squarelattice^\white_n)}$
        are chosen such that $\sigma^\black$ equals $+$ at all endpoints of edges
        in $\omega[\zeta^+]$ and $-$ at all endpoints of edges in~$\omega[\zeta^-]$
        (see also the proof of~\eqref{eq:fkg-proof-ising-decomp-pm}).
    \end{enumerate}
    By the definition of $H_8$, the first statement is equivalent to $h_\nabla$
    having a lift to a well-defined height function modulo $8$ whose height difference
    on adjacent squares is $\pm1$.
    We now prove the latter is equivalent to the second statement.
    For a given height function $h'$ modulo $8$,
    the corresponding functions $\sigma^\black$, $\zeta^+$, and~$\zeta^-$ are given by
    \begin{equation}
\label{eq:triplespin}
\begin{split}
        &\{\sigma^\black=+\}=\{h'\in\{8\Z,4+8\Z\}\};
        \\
        &\{\zeta^+=+\}=\{h'\in\{1+8\Z,3+8\Z\}\}
        ;\quad
        \{\zeta^-=+\}=\{h'\in\{1+8\Z,7+8\Z\}\}.
\end{split}
    \end{equation}
    It is straightforward to check that~$(\sigma^\black,\zeta^+\zeta^-)$ is a spin representation of~$h'$ and hence satisfies the ice rule.
    Conversely, given $\sigma^\black$, $\zeta^+$, and~$\zeta^-$,
    we define a height function~$h'$ modulo~$8$ as follows:
    \begin{itemize}
        \item At white squares, it is uniquely defined by $\zeta^+$, $\zeta^-$, and~\eqref{eq:triplespin};
        \item At any black square $u$, the height~$h'$ modulo~$8$ is defined by
        \begin{equation}\label{eq:h-mod-8-via-spins}
            h'(u):=
            \begin{cases}
                8\Z &\text{if $\sigma^\black(u)=+$ and $\zeta^-(v)=+$ at all~$v\sim u$,}\\
                2+8\Z &\text{if $\sigma^\black(u)=-$ and $\zeta^+(v)=+$ at all~$v\sim u$,}\\
                4+8\Z &\text{if $\sigma^\black(u)=+$ and $\zeta^-(v)=-$ at all~$v\sim u$,}\\
                6+8\Z &\text{if $\sigma^\black(u)=-$ and $\zeta^+(v)=-$ at all~$v\sim u$.}
            \end{cases}
        \end{equation}
    \end{itemize}
    One of these four cases must occur because $\zeta^-$ must be constant
    on the four white squares adjacent to $u$ if $\sigma^\black(u)=+$,
    and $\zeta^+$ must be constant on the same set if $\sigma^\black(u)=-$.
    It is also easy to see that at most one of
    the cases in~\eqref{eq:h-mod-8-via-spins} occurs, and therefore~$h'$ is
    well-defined.  Finally, \eqref{eq:triplespin}
    and~\eqref{eq:h-mod-8-via-spins} are consistent, in a sense that~$h'$
    differs at neighbouring faces by~$\pm 1$ modulo~$8$.

    \subsubsection*{Step~\ref{RATIO_THREE}}
    We have now established that
    \begin{equation}
        \label{eq:limitobservable}
        \lim_{n\to\infty}\frac{\fZ_{n,k}}{\fZ_n}=\muSpin^\blackp[e^{i\frac\pi8\int_{p^k}h_\nabla}].
    \end{equation}
    It suffices to demonstrate that the right-hand side tends to zero as $k\to\infty$.
    In the proof of Theorem~\ref{thm:hard_six_deloc}, we showed that the origin is surrounded by infinitely many $\xi^\black$-circuits
    and infinitely many $\xi^\white$-circuits (see Proposition~\ref{prop:ergo-6v} and the observation that
    $\muSpin^\blackp=\muSpin^\whitep$).
    We now use the same exploration procedure as in that proof: explore the outermost
    $\xi^\black$-circuit in $[-2k,2k]^2$, then the outermost~$\xi^\white$-circuit within that black circuit, and so forth.
    Now suppose that, each time we encounter a $\xi^\black$-circuit (resp.~$\xi^\white$-circuit),
    we flip a fair coin to decide if we invert the values of the spins~$\sigma^\white$ (resp.~$\sigma^\black$)
    within that circuit.
    Then, $\int_{p^k}h_\nabla$ becomes a simple random walk whose (always even) length is given by the number of circuits
    discovered before reaching the black square at $(0,0)$.
    The length of this random walk tends to infinity as $k\to\infty$
    by~\eqref{eq:largebothcircuits-6v},
    which implies that $\int_{p^k}h_\nabla+16\Z$ converges to
    the uniform distribution on the $8$ even residues modulo $16$
    as $k\to\infty$.
    Therefore, the right-hand side of~\eqref{eq:limitobservable}
    tends to zero.
\end{proof}

\subsection{The random-cluster model}
\label{subsec:continuityproof:rcm}

\begin{lemma}
    \label{lemma:contlemma2}
    Fix $q\in[1,4]$ and choose $\lambda\in[0,\frac\pi3]$ such that $\sqrt q = 2\cos \lambda$.
    Then,
    \[
        \lim_{k\to\infty}\lim_{n\to\infty}\frac{\fZ_{n,k}}{\fZ_n}=0
    \]
    implies that $\phifree_{\pselfdual(q),\pselfdual(q),q}=\phiwired_{\pselfdual(q),\pselfdual(q),q}$,
    and that neither $\eta$ nor its dual percolates.
\end{lemma}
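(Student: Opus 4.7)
The plan is to use the Baxter--Kelland--Wu re-summation in the opposite direction to Lemma~\ref{lemma:contlemma1}: to rewrite~$\fZ_n$ and~$\fZ_{n,k}$ as random-cluster observables on the torus at the self-dual point $(\pselfdual(q),q)$, to deduce from the vanishing of their ratio that the two-point function of~$\phiwired$ decays, and to conclude non-percolation and the coincidence of the free and wired measures by classical arguments.

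First, I would carry out the BKW resummation on unoriented loop configurations. Unoriented loop configurations~$L$ on~$\squarelattice_n$ are in bijection with $\eta\in\{0,1\}^{E(\squarelattice_n^\black)}$ via the primal/dual interface picture (Fig.~\ref{fig:BKWcomplete}), and the number of loops of~$L$ is, up to a topological correction, twice the cluster count of~$\eta$. For any contractible loop~$\gamma$ of~$L$, one has $\cleft-\cright=\pm 4$, and so summing $e^{i\lambda(\cleft-\cright)/4}$ over the two orientations of~$\gamma$ produces $e^{i\lambda}+e^{-i\lambda}=2\cos\lambda=\sqrt q$, while summing $e^{i\lambda(\cleft-\cright)/4+i\pi\int_{p^k}L^\rightarrow/8}$ produces $2\cos(\lambda+\pi m_\gamma/8)$, where $m_\gamma\in\{-1,0,1\}$ is the well-defined signed crossing number of~$\gamma$ with~$p^k$. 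Non-contractible loops together with the indicator~$H_8$ give a homotopy-dependent correction of bounded modulus which we expect to contribute $o_n(1)$ after normalisation. Consequently,
\[
    \frac{\fZ_{n,k}}{\fZ_n}
    =\E_{\phitorus_n}\!\left[\prod_{\gamma}\frac{\cos(\lambda+\pi m_\gamma(\eta)/8)}{\cos\lambda}\right]+o_n(1),
\]
where $\phitorus_n$ is the torus random-cluster measure at $(\pselfdual(q),q)$ and the product is over the loops of the unoriented configuration associated with~$\eta$.

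Second, I would extract from this representation the decay of the two-point function. On the event $\{0\xleftrightarrow{\eta}(2k,0)\}$, both endpoints of~$p^k$ lie in the same primal cluster, so no loop of~$L$ separates them, every $m_\gamma$ equals~$0$, and the product equals~$1$. On the complement, at least one $m_\gamma$ is nonzero, and the per-loop factors then lie in an interval strictly to the left of~$1$ on average. Following the algebraic identities of Dub\'edat~\cite{Dub11} and the torus setup of Lis~\cite{Lis21}, I expect to establish a bound of the form
\[
    \operatorname{Re}\frac{\fZ_{n,k}}{\fZ_n}\;\geq\; c\cdot\phitorus_n(0\xleftrightarrow{\eta}(2k,0))-o_n(1)
\]
with $c=c(q)>0$. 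Using that $\phitorus_n$ can be compared with $\phiwired_{\pselfdual(q),q}$ for increasing events as $n\to\infty$ (via FKG-type arguments of the kind developed in Section~\ref{sec:six-vertex}), the hypothesis then yields $\phiwired_{\pselfdual(q),q}(0\xleftrightarrow{\eta}(2k,0))\to 0$ as $k\to\infty$.

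Third, the two claims of the lemma follow by classical arguments. By the Burton--Keane uniqueness theorem and ergodicity of $\phiwired$, we have $\phiwired(0\xleftrightarrow{\eta}(2k,0))\geq \phiwired(0\xleftrightarrow{\eta}\infty)^2$, so vanishing of the two-point function forces the absence of an infinite primal cluster in~$\phiwired$. Self-duality at $(\pselfdual(q),q)$ (Section~\ref{subsec:introRCM}) then gives the absence of an infinite dual cluster in~$\phifree$. The identity $\phifree=\phiwired$ follows by the classical sandwich argument from $\phifree\preceq\phiwired$ and the absence of infinite clusters in~$\phiwired$ (Section~\ref{subsec:introRCM}). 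The main obstacle is the second step: because the BKW weights are genuinely complex for~$q<4$ and the per-loop factors can exceed~$1$ in modulus when $m_\gamma=\pm 1$ and~$\lambda$ is close to~$\pi/3$, a naive estimate is insufficient, and delicate cancellation has to be controlled by exploiting the positivity of the per-loop factors throughout~$\lambda\in[0,\pi/3]$ together with the FKG inequality for the random-cluster model.
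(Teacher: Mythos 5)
Your overall architecture matches the paper's: resum the BKW weights over orientations to obtain a random-cluster observable, then deduce non-percolation at self-duality. However there is a genuine gap in your Step~2, and one missing ingredient.

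\textbf{The gap: the FKG comparison is in the wrong direction.} You claim that ``$\phitorus_n$ can be compared with $\phiwired_{\pselfdual(q),q}$ for increasing events as $n\to\infty$'' to deduce $\phiwired(0\leftrightarrow (2k,0))\to0$ from the vanishing of $\phitorus_n(0\leftrightarrow(2k,0))$. But the comparison between boundary conditions sandwiches the torus measure \emph{between} free and wired on subdomains, giving $\phitorus\preceq\phiwired$ for increasing events. Hence $\phitorus(0\leftrightarrow(2k,0))\leq\phiwired(0\leftrightarrow(2k,0))$: vanishing of the left-hand side tells you nothing about $\phiwired$. The paper circumvents this by working directly with a subsequential weak limit $\phitorus$ of $(\phitorus_n)_n$ and \emph{not} trying to compare it with $\phiwired$ a priori. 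Instead, it verifies that $\phitorus$ is a translation-invariant, self-dual \emph{Gibbs} measure of the random-cluster model — this is non-trivial because the torus weight $w'(L)=(\sqrt q)^{|\Lc|}\,2^{|\Ln|}\,p_8(|\Ln|)$ differs from the standard torus random-cluster weight by the factor $2^{|\Ln|}p_8(|\Ln|)$, and one needs a finite-energy argument (as in Lis) to show this factor does not affect the Gibbs property in the limit. Once one knows $\phitorus$ is a Gibbs measure, no percolation in $\phitorus$ directly forces $\phitorus=\phifree$ (explore dual circuits from infinity, which exist because all primal clusters are finite), and self-duality of $\phitorus$ then gives $\phitorus=\phiwired$ by the same reasoning applied to $\eta^*$. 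Your ``classical sandwich argument'' at the end is correct, but it needs to be applied to $\phitorus$, not $\phiwired$.

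\textbf{Missing localization.} In Step~1 you write the ratio as the $\phitorus_n$-expectation of a product over \emph{all} loops plus an ``$o_n(1)$'' correction. Both the product and the topological corrections (non-contractible loops, $H_8$) are non-local, so the passage from $\phitorus_n$ to the limit $\phitorus$ is not a matter of weak convergence of a bounded local observable. The paper handles this with a local event $A_r$ (at most one loop outside $B_r$ crosses $p^k$, and if so it crosses within a single excursion into $B_r$), on which the non-local correction contributes a single, uniformly bounded-below factor $c_\lambda>0$, and then lets $r\to\infty$ using Burton--Keane to drive $\phitorus(A_r^c)\to 0$. Your observation that the per-loop factors are strictly positive for $\lambda\in[0,\pi/3]$ is exactly what makes this truncation safe (one can drop factors without changing sign), but the localization step itself needs to be made explicit; without it, the identity you write down does not by itself survive $n\to\infty$.
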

\begin{proof}
    We drop the subscripts $\pselfdual(q)$ and $q$ for brevity.
    The proof has three steps.
    \begin{enumerate}
        \item\label{FK_ONE} The ratio~$\frac{\fZ_{n,k}}{\fZ_n}$ equals
        an observable of some random-cluster measure
        on~$\squarelattice_n$.
        \item\label{FK_TWO} As $n\to\infty$, any subsequential limit~$\phitorus$ of this measure in the weak topology is a self-dual shift-invariant Gibbs measure  of the random-cluster model with parameters $(\pselfdual(q),q)$.
        \item\label{FK_THREE} The limit in~$n$ of the observable is greater or equal than~$\phitorus((0,0)\leftrightarrow (2k,0))$ times some constant~$c_\lambda >0$.
        By our assumption, taking now the limit as~$k\to\infty$ yields zero, and therefore~$\phitorus$ exhibits no infinite cluster. Thus~$\phitorus = \phifree$ and, hence, by self-duality, we also obtain that $\phitorus=\phiwired$.
    \end{enumerate}

    \subsubsection*{Step~\ref{FK_ONE}}
    A \emph{loop configuration} on~$\squarelattice_n$ is a partition of its edges into disjoint cycles of alternating vertical and horizontal edges (Fig.~\ref{fig:BKWcomplete}).
    Write $\calL_n$ for the set of loop configurations on $\squarelattice_n$.
    We may calculate $\fZ_n$ by first summing over $L\in\calL_n$ and
    then over all \emph{oriented} loop configurations configurations corresponding to~$L$.
    Given $L$, there are two ways to orient each loop.
    Each loop~$\ell$ contributes to the weight~\eqref{orientedweight} as follows:
    \begin{itemize}
        \item If~$\ell$ is contractible, then it contributes $e^{i\lambda}$ or $e^{-i\lambda}$
        depending on its orientation, and it does not affect~$H_8$,
        \item If~$\ell$ is non-contractible then it makes as many left turns as it makes right turns, and hence contributes~$1$ to the exponential,
        while the indicator $H_8$ tests if the ($\pm1$-valued) orientations
        sum to a number in $8\Z$.
    \end{itemize}
    Taking into account the above and recalling that $\sqrt q=e^{i\lambda}+e^{-i\lambda}$, we obtain
    \begin{equation}
        \label{eq:alternativeFZn}
        \fZ_n
        =
        \sum_{L\in\calL_n}
        w'(L),
        \qquad
        \text{where}
        \qquad
        w'(L):=
        (\sqrt q)^{|\Lc|}
        2^{|\Ln|}
        p_8(|\Ln|),
    \end{equation}
    and
    where
    \begin{itemize}
        \item $\Lc\subset L$ denotes the set of contractible loops,
        \item $\Ln:=L\setminus \Lc$ denotes the set of non-contractible loops,
        \item $p_8(m)$ denotes the probability that an $m$-step simple
        random walk ends in $8\Z$.
    \end{itemize}
    For any $\ell^\rightarrow$, write
    $\int_{p^k}\ell^\rightarrow$ for the number of
    times $p^k$ is crossed from right to left by $\ell^\rightarrow$ minus the number of times
    it is crossed from left to right.
    For any $u\in\Z^2/(2n\Z)^2$,
    write $u\triangleleft\ell$, for some contractible unoriented loop $\ell$
    on $\squarelattice_n$ whenever $\ell$ surrounds $u$
    (this notion is well-defined, see~\cite[Chapter~2]{rolfsen2003knots}).
    To obtain a similar identity for $\fZ_{n,k}$,
    observe that the contribution of each loop
    to the extra exponential in the definition on the right in~\eqref{eq:crazyobservables}
    is precisely
    \[
        e^{i\frac\pi8\int_{p^k}\ell^\rightarrow}.
    \]
    Note that if $\ell^\rightarrow$ is contractible
    and oriented counterclockwise, then
    \[
        \int_{p^k}\ell^\rightarrow=\ind{(0,0)\triangleleft\ell^\rightarrow}    -\ind{(2k,0)\triangleleft\ell^\rightarrow}.
    \]
    By summing over $L\in\calL_n$ and taking into account these extra factors,
    we get
    \begin{equation}
        \label{eq:alternativeFZnk}
        \fZ_{n,k}
        =
        \sum_{L\in\calL_n}
        w'(L)
        \left[\prod_{\ell\in \Lc}
            \frac{\cos(\lambda+\frac\pi8(\ind{(0,0)\triangleleft\ell}    -\ind{(2k,0)\triangleleft\ell}))}{\cos\lambda}
        \right]
        E(\Ln),
    \end{equation}
    where $E(\Ln)$ is the expectation of
    \[
        e^{i\frac\pi8\sum_{\ell^\rightarrow\in \Ln^\rightarrow}\int_{p^k}\ell^\rightarrow}
    \]
    when the loops in $\Ln$ are oriented uniformly at random conditional on the
    sum of their orientations lying in $8\Z$.
    We will only use that~$|E(\Ln)|\leq 1$.

    Let $\phitorus_n$ be the probability measure on $\calL_n$ defined by
    \[
        \phitorus_n(L)\propto w'(L).
    \]
    By~\eqref{eq:alternativeFZnk}, we can express~$\frac{\fZ_{n,k}}{\fZ_n}$ as the following observable for~$\phitorus_n$:
    \begin{equation}
        \label{eq:eqintermediateblabla}
        \frac{\fZ_{n,k}}{\fZ_n}=\phitorus_n\left[E(\Ln)\prod\nolimits_{\ell\in \Lc}
        \frac{\cos(\lambda+\frac\pi8(\ind{(0,0)\triangleleft\ell}    -\ind{(2k,0)\triangleleft\ell}))}{\cos\lambda}\right].
    \end{equation}

    \subsubsection*{Step~\ref{FK_TWO}}
    We now identify each $L\in\calL$ with the set of
    edges $\eta\in\{0,1\}^{E(\squarelattice^\black_n)}$
    such that~$L$ consists of the interfaces between~$\eta$ and~$\eta^*$,
    see Fig.~\ref{fig:BKWcomplete}.
    Thus, $\phitorus_n$ can be viewed as a probability measure on bond percolations on~$\squarelattice_n^\black$.
    Clearly, the sequence~$(\phitorus_n)_n$ is tight in the weak topology and thus has subsequential limits; let~$\phitorus$ be one of them.

    Our next goal is to prove that~$\phitorus$ is a shift-invariant self-dual Gibbs measure for the random-cluster model at~$(\pselfdual(q),q)$.
    Indeed, this statement without the weight $p_8(|\Ln|)$ in~\eqref{eq:alternativeFZn} is proved in~\cite[Proof of Lemma~3.1]{Lis21} via the classical Burton--Keane argument.
    The argument readily extends to our measure (with the factor~$p_8(|\Ln|)$ included)
    by checking the finite energy condition.
    The finite energy condition is immediate since $|\Ln|\in 2\Z_{\geq 0}$ and $\frac14\leq p_8(a)\leq 1$ for all $a\in2\Z_{\geq 0}$.
    Thus, $\phitorus$ is indeed a Gibbs measure.
    Since each $\phitorus_n$ is self-dual and invariant to translations, so is~$\phitorus$.

    The Burton--Keane argument applies to $\phitorus$,
    which means that both $\eta$ and $\eta^*$ exhibit at most one infinite cluster.
    In particular, there is at most one infinite interface almost surely.
    We write $\Lc$ for the set of finite interfaces (loops) in the limit.
    We will now show that
    \begin{equation}
        \label{eq:next}
        \lim_{n\to\infty}
        \frac{\fZ_{n,k}}{\fZ_n}
        \geq
        c_\lambda\cdot
        \phitorus\left[\prod\nolimits_{\ell\in\Lc}
        \frac{\cos(\lambda+\frac\pi8(\ind{(0,0)\triangleleft\ell}-\ind{(2k,0)\triangleleft\ell}))}{\cos\lambda}\right],
    \end{equation}
    where~$c_\lambda:= \tfrac{\cos \lambda +\tfrac\pi8}{\cos \lambda}>0$ is a constant.

    Understanding the asymptotic behaviour of the observable on the right in~\eqref{eq:eqintermediateblabla}
    is nontrivial because it is not local.
    To prove that the inferior limit of the right-hand side of~\eqref{eq:eqintermediateblabla}
    bounds the right-hand side of~\eqref{eq:next},
    we proceed as follows.
    For any~$r\geq 1$, denote the ball of radius~$r$ around~$(0,0)$ by~$B_r$.
    Let~$\Lc^r$ denote the set of contractible loops contained
    in~$B_r$ and let $A_r$ denote the event that the following holds (see Fig.~\ref{excursion}):
    \begin{itemize}
        \item At most one loop in $L\setminus \Lc^r$ intersects $p^k$;
        \item If such a loop~$\ell$ exists, then all intersections of~$\ell$ with~$p^k$ belong to the same connected component of~$\ell\cap B_r$.
    \end{itemize}
    The event~$A_r$ is local. Make the following observations.
     \begin{itemize}
        \item The integrands in~\eqref{eq:eqintermediateblabla} and~\eqref{eq:next}
        are uniformly bounded because the path $p^k$ can be intersected at most~$2k$ times.
        \item The integrands are also nonnegative because $\lambda\in[0,\frac\pi3]$.
        \item We have $\lim_{r\to\infty}\phitorus(A_r)=1$ by the Burton--Keane argument.
     \end{itemize}
     We end the proof of~\eqref{eq:next} by a series of equations (justified below):
     \begin{align}
        &\lim_{n\to\infty}
        \phitorus_n\left[E(\Ln)\prod\nolimits_{\ell\in \Lc}
        \frac{\cos(\lambda+\frac\pi8(\ind{(0,0)\triangleleft\ell}    -\ind{(2k,0)\triangleleft\ell}))}{\cos\lambda}\right]
        \\&
        \qquad=
        \lim_{r\to\infty}
        \lim_{n\to\infty}
        \phitorus_n\left[E(\Ln)\prod\nolimits_{\ell\in \Lc}
        \frac{\cos(\lambda+\frac\pi8(\ind{(0,0)\triangleleft\ell}    -\ind{(2k,0)\triangleleft\ell}))}{\cos\lambda}1_{A_r}\right]
        \\&\qquad\geq
        c_\lambda\cdot
        \lim_{r\to\infty}
        \lim_{n\to\infty}
        \phitorus_n\left[\prod\nolimits_{\ell\in \Lc^r}
        \frac{\cos(\lambda+\frac\pi8(\ind{(0,0)\triangleleft\ell}    -\ind{(2k,0)\triangleleft\ell}))}{\cos\lambda}1_{A_r}\right]
        \\&
        \qquad=c_\lambda\cdot
        \lim_{r\to\infty}
        \phitorus\left[\prod\nolimits_{\ell\in \Lc^r}
        \frac{\cos(\lambda+\frac\pi8(\ind{(0,0)\triangleleft\ell}    -\ind{(2k,0)\triangleleft\ell}))}{\cos\lambda}1_{A_r}\right]
        \\&
        \qquad =c_\lambda\cdot
        \phitorus\left[\prod\nolimits_{\ell\in \Lc}
        \frac{\cos(\lambda+\frac\pi8(\ind{(0,0)\triangleleft\ell}    -\ind{(2k,0)\triangleleft\ell}))}{\cos\lambda}\right].
     \end{align}
     Indeed, the first equality holds true because $A_r$ is local, $\phitorus(A_r)$ tends to~$1$ as $r\to\infty$,
     and the integrand is uniformly bounded.
     The inequality follows since all factors are non-negative and, on the event~$A_r$, the unique loop~$\ell \not\in \Lc^r$ intersecting~$p_k$ (if such~$\ell$ exists) satisfies $|\int_{p^k}\ell^\rightarrow|\leq 1$ and, hence, contributes at least
     \[
        c_\lambda\leq\min_{k\in\{-1,0,1\}}\frac{\cos (k\lambda+\frac\pi8)}{\cos (k\lambda)}.
     \]
     The second equality holds since the integrand is local and uniformly bounded,
     so that weak convergence of the measure implies convergence of the integral.
     The last equality follows from the dominated convergence theorem since~$\phitorus(A_r)$ tends to~$1$.

\begin{figure}
    \includegraphics{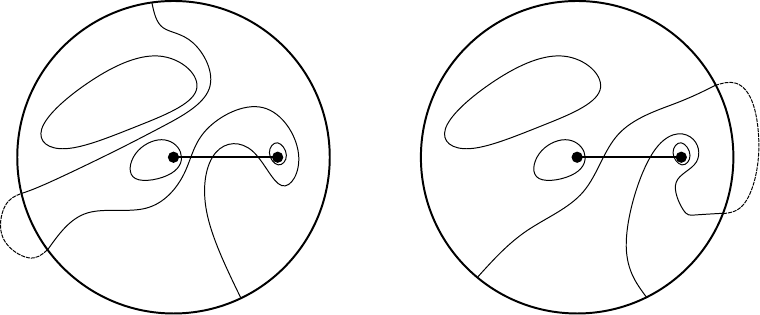}
    \caption{\textsc{Left}: A loop configuration belonging to $A_r$.
    The large loop cannot have a net intersection count other than $-1$, $0$,
    or $1$ with the line segment $p^k$ due to planarity.
    \textsc{Right}: A loop configuration not belonging to $A_r$.
    In this case, the large loop leaves the ball in between visits to $p^k$.}
    \label{excursion}
\end{figure}

    \subsubsection*{Step~\ref{FK_THREE}}
    The integrand in~\eqref{eq:next} is nonnegative,
    since $\lambda\in[0,\frac\pi3]$.
    Moreover, if $(0,0)$ and $(2k,0)$ are $\eta$-connected,
    then no loop makes a nontrivial net intersection with~$p_k$, and therefore the integrand equals $1$.
    Therefore, we get
    \[
        c_\lambda\cdot\phitorus((0,0)\leftrightarrow(2k,0))\leq\lim_{n\to\infty}
        \frac{\fZ_{n,k}}{\fZ_n}.
    \]
    In particular,
    \[
        \lim_{k\to\infty}\phitorus((0,0)\leftrightarrow(2k,0)) =0.
    \]
    This means that~$\phitorus$ does not exhibit an infinite cluster,
    and thus $\phitorus=\phifree$.
    By self-duality, we also get that $\eta^*$ does not percolate,
    and therefore~$\phitorus=\phiwired$.
\end{proof}

\appendix

\section{Logarithmic bound for the variance.}
\label{app:dicho}

As described in Section~\ref{subsec:proof_of_thm:soft_Lip_deloc}, the proof of~\cite{GlaMan21} (uniform case) can be adapted to show quantitative bounds on the delocalisation proven in Theorem~\ref{thm:loop_soft_deloc}.
This appendix contains some details regarding this derivation.
\\

	\paragraph{{\bf Excluding (ExpDec).}}
	\cite[Lemma~5.3]{GlaMan21}, assuming (ExpDec), implies that, for any~$\kappa>0$, there exists~$C_1$ such that
		\[
			\muSpin_{\kappa n,x}^\blackm(\Lambda_n \xleftrightarrow{\blackp} \Lambda_{2n}^c) < e^{-C_1n^c}, \text{ for any } n\geq 1.
		\]
	The proof is a standard application of the FKG inequality for~$\xi^\blackp$ and Corollary~\ref{cor:cbc-lip} (comparison between boundary conditions); see~\cite[Figs.~17,~18]{GlaMan21}.
	Further, \cite[Lemma~5.4]{GlaMan21}, assuming (ExpDec), shows that, for any~$\kappa>0$, there exists~$C_2$ such that
	\[
		\muSpin_{\kappa n,x}^\blackm(\Lambda_n \nxlra{\blackm} \Lambda_{2n}^c) < e^{-C_2n^c}, \text{ for any } n\geq 1.
	\]
	For contradiction, we assume the above does not hold.
	Then, we consider~$C_2$ that is small compared to~$C_1$ and get a stretched-exponential lower bound for the event that neither~$\xi^\blackp$ nor~$\xi^\blackm$ contains a crossing from~$\Lambda_n$ to~$\Lambda_{2n}^c$.
	The latter event implies, by super-duality~\eqref{eq:super_duality}, that either~$\Circ^\whitep(n,2n)$ or~$\Circ^\whitem(n,2n)$ occurs.
	Thus, by symmetry, we get a lower bound for~$\muSpin_{\Lambda_{\kappa n}}^\blackm(\Circ^\whitep(n,2n))$.
	We then explore the outermost circuit of~$\xi^\whitep$, use the domain Markov property and apply the same bound with inverted colours: we construct a circuit of~$\xi^\blackp$ under~$\blackm$ b.c. at a stretched-exponential cost, which contradicts~(ExpDec).

	Using again symmetries of the lattice and the FKG inequality, one constructs infinitely many circuits of~$\xi^\blackm$~(\cite[Lemma~5.5]{GlaMan21}): (ExpDec) implies that, for any~$\eps>0$, there exists~$n_1\in\N$ such that
	\[
		\muSpin_x^\blackm(\bigcap_{j\geq 1} \Circ^\blackm(2^jn_1,2^{j+1}n_1)] > 1-\eps.
	\]
	Since~$\muSpin_x^\blackp = \muSpin_x^\blackm$, we can construct the circuits of~$\xi^\blackp$ and~$\xi^\blackm$ simultaneously at all scales. This contradicts~(ExpDec).\\

	\paragraph{{\bf Logarithmic fluctuations via~\eqref{eq:renorm-ineq-lip}.}}
	By the above, we deduce from~\eqref{eq:renorm-ineq-lip} that~$\inf_n \alpha_n > 0$.
	It is then standard (\cite[Corollary~5.2]{GlaMan21}) that, for all~$a>1$, there exists~$c>0$ such that
	\begin{equation}\label{eq:circuits-every-scale-lip}
		\muSpin_{\Lambda_{an}}^\blackm(\Circ^\blackp(n,an)) \geq c, \text{ for all } n\geq \tfrac{3}{a-1}.
	\end{equation}
	Exploring the outermost circuit~$\Gamma_1$ of~$\xi^\blackp$ and using the domain Markov property allows to construct inside~$\Gamma_1$ a circuit~$\Gamma_2$ of~$\xi^\blackm$ that surrounds~$\Lambda_{n/a}$.
	This implies existence of a loop of~$\omega[\sigma^\black]$ between~$\Gamma_1$ and~$\Gamma_2$.
	Iterating this procedure and using that it is independent for different scales, we obtain that
	\[
		\E_{\domain,2,x}(\omega) \geq c\log n, \text{ for every domain }\domain\supseteq \Lambda_n.
	\]
	The upper bound requires an additional argument: we bound the probability to have two loops in an annulus.
	Indeed, conditioned on~$\omega=\omega[\sigma^\black]\cup\omega[\sigma^\white]$, every loop of~$\omega$ is in~$\omega[\sigma^\black]$ or in~$\omega[\sigma^\white]$ independently with probability~$1/2$.
	Thus, it is enough to upper bound the probability to have at least two loops in an annulus such that the outermost circuit~$\Gamma_1$ is in~$\omega[\sigma^\white]$ and the second-outermost circuit~$\Gamma_2$ is in~$\omega[\sigma^\black]$.
	By definition, $\Gamma_1\cap \upvert(\hexlattice) \subseteq \xi^\blackp$ or~$\Gamma_1\cap \upvert(\hexlattice) \subseteq \xi^\blackm$.
	Also, $\Gamma_2$ is the domain wall between a circuit of pluses of~$\sigma^\black$ and a circuit of minuses of~$\sigma^\black$.
	Thus, we have a circuit of pluses of~$\sigma^\black$ inside a circuit of~$\xi^\blackm$ (or the same but with reversed signs) and both circuits are in a given annulus.
	It is standard that~\eqref{eq:circuits-every-scale-lip} implies that the probability of such event is uniformly below~$1$ and we obtain
	\[
		\E_{\domain,2,x}(\omega) \leq C\log n, \text{ for every domain }\domain\subset \Lambda_n.
	\]
	Together with~\eqref{eq:var-via-loops-lip}, this gives the desired bounds on the variance of a random Lipschitz function;
	see the proofs of \cite[Theorems~1.1,1.2]{GlaMan21} for more details.\\

	\paragraph{{\bf RSW theory.}}
	Hence, it is enough to prove the renormalisation inequality~\eqref{eq:renorm-ineq-lip}.
	At the core of the proof is an RSW estimate~\cite[Proposition~4.7]{GlaMan21}: under symmetric~$\xi^\blackp/\xi^\blackm$ boundary conditions, we obtain a lower bound for the probability that a rectangle is horizontally crossed by~$\xi^\blackp$ in terms of the probability that a slightly wider rectangle is vertically crossed by~$\xi^\blackm$.
	The RSW is stated on a cylinder, since the proof relies on horizontal translations.
	For~$M,N\geq 1$, we define~$\Rect_{M,N}$ as the domain consisting of faces of~$\hexlattice$ whose centres are inside~$[-M,M-1/2]\times[0,N]$.
	Identifying the left and right boundaries of~$\Rect_{M,N}$, we obtain a cylinder that we denote by~$\Cyl_{M,N}$.
	We extend the definition of~$\muSpin_{\domain,x}$ to the cylinder in a natural way and define
	\[
		\muSpin_{\Cyl_{M,N}}^{\blackp/\blackm} :=
		\muSpin_{\Cyl_{M,N},x}(\blank \mid
		\sigma^\black|_{\text{top of } \Cyl_{M,N}} \equiv +,
		\sigma^\black|_{\text{bottom of } \Cyl_{M,N}} \equiv -).
	\]
	It is easy to see (\cite[Lemma~4.3]{GlaMan21}) that~$\muSpin_{\Cyl_{M,N}}^{\blackp/\blackm}$ conditioned on having a straight vertical top-bottom crossing of~$\xi^\blackp$ corresponds to the spin measure on~$\Rect_{M,N}$ under Dobrushin b.c.: $\sigma^\black\equiv -$ on the bottom side and~$\sigma^\black\equiv +$ on all other sides. We denote this measure by~$\muSpin_{\Rect_{M,N}}^{\blackp/\blackm}$.
	Also, the FKG inequality for triplets can be extended to~$\muSpin_{\Cyl_{M,N}}^{\blackp/\blackm}$ in a straightforward manner (\cite[Lemma~4.4]{GlaMan21}).
	Let~$\calC_h^\blackp([a,b]\times [c,d])$ (resp.~$\calC_v^\blackp([a,b]\times [c,d])$) denote the event of existence of a horizontal (resp. vertical) crossing of the rectangle~$[a,b]\times [c,d]$ by~$\xi^\blackp$.
	\cite[Proposition~4.7]{GlaMan21} states the following:
	there exists a function~$\psi:(0,1]\to (0,1]$ such that, for all~$N,M,n,k\geq 1$ with~$n+k\leq N$ and~$2n<M$,
	\begin{equation}\label{eq:RSW-lip}
		\muSpin_{\Cyl_{M,N}}^{\blackp/\blackm}(\calC_h^\blackp([-2n,2n]\times [k,k+n])\geq
		\psi(\muSpin_{\Cyl_{M,N}}^{\blackp/\blackm}(\calC_v^\blackp([-3n,3n]\times [k,k+n])).
	\end{equation}
	The main idea of the proof is reminiscent of the arguments for the random-cluster model~\cite{BefDum12,DumSidTas17}:
	$(i)$ find two vertical~$\xi^\blackp$-crossings that are not far from each other; $(ii)$ prove that they are connected to each other by a~$\xi^\blackp$-path; $(iii)$ this gives arcs of~$\xi^\blackp$ that connect points on the bottom side of the rectangle at a linear distance from each other; $(iv)$ use these arcs as building blocks to obtain a long horizontal crossing via the FKG inequality.
	\cite[Lemmata~4.7-4.11]{GlaMan21} prove~\eqref{eq:RSW-lip} in several pathological cases and reduce the problem to the case when vertical~$\xi^\blackp$-crossings are ``well-behaved'': in short, this says that the drift of the top part of any vertical $\xi^\blackp$-crossing compared to its bottom is small;  the middle part of any~$\xi^\blackp$-path does fluctuate but not too much.
	The proofs are standard and rely only on the FKG inequality for~$\xi^\blackp$, invariance to horizontal shifts and symmetries of the lattice.
	Relying on these properties and on the fact that the vertical crossings are well-behaved, one readily accomplishes the items~$(i)$, $(iii)$ and~$(iv)$ mentioned above.
	The heart of the proof is in step~$(ii)$, which corresponds to~\cite[Lemma~4.12]{GlaMan21} and describes how to connect two vertical $\xi^\blackp$-crossings.
	Let~$\Gamma_L$ and~$\Gamma_R$ be respectively the left-most and the right-most vertical $\xi^\blackp$-crossings that start and end in~$(0,\eps n)\times \{n\}$ and end in~$(0,\eps n)\times \{n+k\}$.

	{\bf Step~1:} Connect the bottom (and top) parts of~$\Gamma_L$ and~$\Gamma_R$ by a path pluses of~$\sigma^\black$.

	Indeed, since~$\Gamma_L$ and~$\Gamma_R$ are well-behaved, their bottom parts are close to each other.
	Then, we can find a domain~$\calD$ that is contained between~$\Gamma_R$ and~$\Gamma_L$ and has boundary points~$A,B,C,D$ such that:
	$AC$ is a vertical interval; $B$ and~$D$ are symmetric with respect to~$AC$; the boundary arcs~$(AB)$ and~$(CD)$ of~$\calD$  are in~$\Gamma_L$ and~$\Gamma_R$ respectively.
	By Corollary~\ref{cor:cbc-lip}, the measure in~$\calD$ stochastically dominates the measure with~$\blackp$ conditions on the arcs~$(AB),(CD)$ and~$\blackm$ conditions on the arcs~$(AD),(CB)$.
	By duality, deterministically, either~$(AB)\xleftrightarrow{\sigma^\black=+} (CD)$ or~$(AD)\xleftrightarrow{\sigma^\black=-} (CB)$.
	By symmetry, the former event has a uniformly positive probability under~$\muSpin_\calD^{\blackp/\blackm}$, and hence also in the original measure.

	{\bf Step~2:} Connect~$\Gamma_L$ and~$\Gamma_R$ by a $\xi^\blackp$-path.

	Assume the bottom and top parts of~$\Gamma_L$ and~$\Gamma_R$ are connected to each other by pluses of~$\sigma^\black$.
	Explore the exterior-most such crossings and denote them by~$\chi_1$ and~$\chi_2$.
	Using reflections and pushing away the $\xi^\blackp$-boundary conditions as on~\cite[Fig.~13]{GlaMan21}, we obtain a  planar domain~$\calD$ with boundary points~$A,B,C,D$ with the same symmetry properties as in Step~1 and such that:
	the boundary of~$\calD$ consists of arcs belonging to~$\xi^\blackp$ and paths~$\chi_1$ and~$\chi_2$;
	any crossing from~$(AB)$ to~$(CD)$ necessarily contains a crossing from~$\Gamma_L$ to~$\Gamma_R$.
	By Corollary~\ref{cor:cbc-lip}, the distribution of~$\xi^\blackp$ on~$\calD$ dominates the one under boundary conditions given by a setting~$\sigma^\black \equiv +$ at all boundary faces of~$\calD$ and setting~$\sigma^\black \equiv -$ everywhere outside.
	It is easy to see that this measure coincides with~$(\muSpin_{\calD,x}^{\blackp\,\whitep} + \muSpin_{\calD,x}^{\blackp\,\whitem})/2$.
	By the super-duality~\eqref{eq:super_duality} of~$\xi^\black$ and~$\xi^\white$ and using the same argument as for the square-crossing property (Lemma~\ref{lemma:loop_rhombus_crossing}), we show that~$\xi^\blackp$ contains a crossing from~$(AB)$ and~$(CD)$ (and hence from~$\Gamma_L$ to~$\Gamma_R$) with a uniformly positive probability.\\

	\paragraph{{\bf Pushing lemma.}}
	The last ingredient in the proof of the renormalisation inequality~\eqref{eq:renorm-ineq-lip} (and hence a dichotomy statement)
	is the pushing lemma~\cite[Corollary~4.14]{GlaMan21}: for any~$C_h\geq 3$ and~$C_v\geq 1$, there exists~$\delta>0$ such that, for any~$n\geq 1$,
	\begin{equation}\label{eq:pushing-lip}
		\muSpin_{\Rect_{C_hn, C_vn}}^{\blackp/\blackm}(\calC_h^\blackp(\Rect_{C_hn,n})) >\delta.
	\end{equation}
	The derivation of~\eqref{eq:renorm-ineq-lip} from this follows closely the original argument for the random-cluster model~\cite{DumSidTas17} and we do not provide further details; see~\cite[Section~4.4]{GlaMan21}.

	Our last goal is to derive~\eqref{eq:pushing-lip} from the RSW estimate~\eqref{eq:RSW-lip}.
	To make use of the latter, we need an input~\cite[Proposition~4.13]{GlaMan21} stating that at least some crossings exist with positive probability: for any~$C\geq 3$, there exists~$\delta>0$ such that, for all~$n\geq 1$,
	\begin{equation}
	\label{eq:input-for-rsw-lip}
		\begin{aligned}
			\text{either }
			\muSpin_{\Cyl_{Cn, 5n}}^{\blackp/\blackm}(\calC_h^\blackp([-2n,2n]\times [3n,4n])) &> \delta, \\
			\text{ or }
			\muSpin_{\Cyl_{Cn, 5n}}^{\blackp/\blackm}(\calC_v^\blackp([-3n,3n]\times [3n,4n])) &> \delta.
		\end{aligned}
	\end{equation}
	We first show how this and the RSW estimate~\eqref{eq:RSW-lip} imply the pushing lemma~\eqref{eq:pushing-lip}.
	Indeed, if either of the two alternatives in~\eqref{eq:input-for-rsw-lip} occurs, the RSW~\eqref{eq:RSW-lip} gives that
	\[
		\muSpin_{\Cyl_{Cn, 5n}}^{\blackp/\blackm}(\calC_h^\blackp([-2n,2n]\times [3n,4n]) > \delta',
	\]
	for some~$\delta'$ that depends only on~$C$.
	We can then lengthen the crossing (\cite[Corollary~4.6]{GlaMan21}) by glueing together two horizontal crossings long overlapping rectangles.
	The only subtlety here is that the glueing is done by crossing a parallelogram~$3n\times n$ in a short direction under~$\blackp$ boundary conditions via a two-step procedure (as in the proof of the RSW): first use a standard duality to construct two vertical crossings of pluses of~$\sigma$; then push away the~$\blackp$ boundary conditions and apply duality~\eqref{eq:super_duality} to get a $\xi^\blackp$-crossing.
	Iterating the procedure, the crossings in cylinders can be made as long as needed and then it remains to note that~$\muSpin_{\Rect_{C_hn, C_vn}}^{\blackp/\blackm}$ stochastically dominates~$\muSpin_{\Cyl_{C_hn, C_vn}}^{\blackp/\blackm}$.

	We now focus on~\eqref{eq:input-for-rsw-lip}.
	Let~$\Strip_j$, for~$j=1,\dots,5$ denote the part of~$\Cyl_{Cn,5n}$ between heights~$(j-1)n$ and~$jn$.
	The proof is based on a duality derived from~\eqref{eq:super_duality}: either~$\Strip_2$ is crossed from top to bottom by~$\xi^\blackp$ or~$\Strip_2$ contains a circuit of~$\xi^\white$ that goes around the cylinder.
	Assume the former event occurs.
	Then,
	\[
		\text{either~$\calC_h^\blackp([kn-2n,kn+2n]\times [n,2n])$ or~$\calC_v^\blackp([kn-3n,kn+3n]\times [n,2n])$ occurs,}
	\]
	for some integer~$k\in [0,C]$.
	By invariance to horizontal shifts, we get a lower bound on~$\muSpin_{\Cyl_{Cn, 5n}}^{\blackp/\blackm}(\calC_h^\blackp([-2n,2n]\times [n,2n])$ or on~$\muSpin_{\Cyl_{Cn, 5n}}^{\blackp/\blackm}(\calC_v^\blackp([-3n,3n]\times [n,2n])$.
	We now embed~$\Cyl_{Cn,5n}$ in an infinite cylinder and push below by~$2n$ the~$\blackp/\blackm$ boundary conditions.
	By Corollary~\ref{cor:cbc-lip}, this increases the measure in~$\Strip_2$ but the resulting measure is exactly (the shift of) the restriction of $\muSpin_{\Cyl_{Cn, 5n}}^{\blackp/\blackm}$ to~$\Strip_4$, whence~\eqref{eq:input-for-rsw-lip} follows.

	It remains to consider the case~$\Strip_2$ contains a circuit of~$\xi^\white$ that goes around the cylinder.
	By symmetry, we can assume that there is a circuit of~$\xi^\whitep$.
	We explore the lowermost such circuit and then push the top~$\blackp$ boundary conditions to form a symmetric cylindrical domain.
	Note that~$\Strip_4$ (on which our events of interest are supported) is exactly at the axis of symmetry.
	By the super-duality~\eqref{eq:super_duality}, at least one of the following events occurs:
	$\calC_h^\blackp([-2n,2n]\times [3n,4n]$, $\calC_h^\blackm([-2n,2n]\times [3n,4n]$,
	$\calC_v^\whitep([-2n,2n]\times [3n,4n]$, $\calC_v^\whitem([-2n,2n]\times [3n,4n]$.
	Then~\eqref{eq:input-for-rsw-lip} follows by symmetry and Corollary~\ref{cor:cbc-lip}.

\bibliographystyle{amsalpha}
\bibliography{biblicomplete.bib}

\end{document}